\newcommand{\Deg}{N}
\newcommand{\mysim}{\text{\ ``$\sim$''\ }}
\newcommand{\Narea}{N_{\mathit{area}}}
\newcommand{\carea}{c_{\mathit{area}}}
\renewcommand{\bold}[1]{\medskip \noindent {\bf #1 }\nopagebreak}
\renewcommand{\epsilon}{\varepsilon}
\newcommand{\Vol}{\operatorname{Vol}}
\newcommand{\area}{\operatorname{area}}
\newcommand{\PSL}{\operatorname{PSL}(2,{\R})}
\newcommand{\SL}{\operatorname{SL}(2,{\R})}
\newcommand{\SLZ}{\operatorname{SL}(2,{\Z})}
\newcommand{\GL}{\operatorname{GL}(2,{\R})}
\newcommand\N{\mathbb N}
\newcommand\Z{\mathbb Z}
\newcommand{\T}{\mathbb T}
\newcommand\Q{\mathbb Q}
\newcommand{\R}{{\mathbb R}}
\newcommand{\reals}{{\mathbb R}}
\newcommand\C[1]{{\mathbb C}^{#1}}
\newcommand{\cx}{{\mathbb C}}
\newcommand{\CP}{{\mathbb C}\!\operatorname{P}^1}
\newcommand{\cB}{{\mathcal B}}
\newcommand{\cC}{{\mathcal C}}
\newcommand{\cH}{{\mathcal H}}
\newcommand{\cQ}{{\mathcal Q}}
\newcommand{\cP}{\mathcal P}
\newcommand{\cR}{\mathcal R}
\newcommand{\kk}{\mathbf{k}}
\newcommand{\sss}{\mathbf{s}}
\newcommand{\dd}{\mathbf{d}}
\newcommand{\one}{\mathbf{1}}
\newcommand{\ee}{\mathbf{e}}
\newcommand{\nn}{\mathbf{n}}
\newcommand{\bbb}{\mathbf{b}}
\newlength{\halfbls}\setlength{\halfbls}{.5\baselineskip}
\def\@ifundefined#1#2#3%
\theoremstyle{plain} 
\newtheorem{theorem}{Theorem}[section]
\newtheorem*{NoNumberTheorem}{Theorem}
\newtheorem{prop}[theorem]{Proposition}
\newtheorem{proposition}[theorem]{Proposition}
\newtheorem*{NNProposition}{Proposition}
\newtheorem{lemma}[theorem]{Lemma}
\newtheorem{cor}[theorem]{Corollary}
\newtheorem{corollary}[theorem]{Corollary}
\theoremstyle{definition} 
\newtheorem{definition}[theorem]{Definition}
\newtheorem{Convention}[theorem]{Convention}
\newtheorem{remark}[theorem]{Remark}
\numberwithin{equation}{section}
\newlength{\figboxwidth}
\newcommand{\appendixmode}{
        \setcounter{section}{0}
        \renewcommand{\thesection}{\Alph{section}}
}
\newcommand{\into}{\hookrightarrow}
\begin{document}

\title[Right-angled billiards and volumes of moduli spaces]
{Right-angled billiards and volumes of moduli spaces of quadratic
differentials on $\CP$}

\date{May 17, 2015}

\begin{abstract}

We  use the relation between the volumes of the strata of meromorphic
quadratic  differentials  with  at  most  simple  poles  on $\CP$ and
counting  functions  of  the number of (bands of)
simple closed geodesics in
associated  flat  metrics with singularities to prove a very explicit
formula   for   the  volume  of  each  such  stratum  conjectured  by
M.~Kontsevich a decade ago.

Applying  ergodic  techniques  to  the Teichm\"uller geodesic flow we
obtain
quadratic asymptotics for the number of (bands of) closed
trajectories  and  for  the number of generalized diagonals in almost
all right-angled billiards.
\end{abstract}

\author{Jayadev~S.~Athreya}
\address{
Department of Mathematics,
University of Illinois,
Urbana, IL~61801 USA}
\email{jathreya@illinois.edu}

\author{Alex Eskin}
\address{
Department of Mathematics,
University of Chicago,
Chicago, Illinois 60637, USA
}
\email{eskin@math.uchicago.edu}

\author{Anton Zorich}
\address{
Institut Universitaire de France;
Institut de Math\'matiques de Jussieu --
Paris Rive Gauche,
UMR7586,
B\^atiment Sophie Germain,
Case 7012,
75205 PARIS Cedex 13, France}
\email{anton.zorich@imj-prg.fr}

\makeatletter
\let\@wraptoccontribs\wraptoccontribs
\makeatother

\contrib[With an Appendix by]{Jon Chaika}
\address{Department of Mathematics,
University of Utah,
Salt Lake City, Utah~84112, USA.
}
\email{chaika@math.utah.edu}

\thanks{
J.S.A. is partially supported by NSF grants DMS 1069153, NSF grants DMS 1107452, 1107263, 1107367 RNMS: GEometric structures And Representation varieties (the GEAR Network), and NSF CAREER grant DMS 1351853.
A.E. is partially supported by NSF grants DMS 0244542 and DMS 0604251.
A.Z. is partially supported by IUF and by ANR ``GeoDyM''
}

\maketitle

\setcounter{tocdepth}{1} 
\tableofcontents


\section{Introduction}
\label{intro} \noindent Motivated by the study of computing
asymptotics for the number of generalized diagonals and for the
number of closed billiard trajectories  in right-angled polygons, we
were naturally led to questions on Masur--Veech volumes of strata of
moduli spaces of quadratic differentials on $\CP$. Our main result,
explicitly computing these volumes, resolves a conjecture of
M.~Kontsevich.
\subsection{Volumes of moduli spaces of quadratic differentials}
\label{sec:subsec:intro:volumes}

\begin{theorem}[Kontsevich Conjecture]
\label{theorem:volume}
The volume of any stratum $\cQ_1(d_1, \dots, d_k)$ of meromorphic quadratic
differentials with at most simple poles on $\CP$ (i.e. $d_i\in \{-1\,;\,0\}\cup\N$
for $i=1,\dots,k$, and $\sum_{i=1}^k d_i = -4$) is equal to
\begin{equation}
\label{eq:volume}
\Vol\cQ_1(d_1, \dots, d_k) =  2\pi^2\cdot \prod_{i=1}^k v(d_i)
\end{equation}
(where all the zeroes and poles are ``named''.)
\end{theorem}

\noindent Here, the function $v$ is defined on integers $n$ greater than or equal to $-1$ by
\begin{equation}
\label{eq:v}
v(n):=\cfrac{n!!}{(n+1)!!} \cdot \pi^n \cdot
\begin{cases}
\pi& \text{ when $n$ is odd}\\
2  & \text{ when $n$ is even}
\end{cases}
\end{equation}
for     $n=-1,0,1,2,3\dots$, and   the     double    factorial
$n!!=n\cdot(n-2)\cdot\dots$  is the product of all even (respectively
odd) positive integers smaller than or equal to $n$. By convention we
set
$$
(-1)!! = 0!!=1 \ ,
$$
which implies that
$$
v(-1)=1 \qquad\text{and}\qquad v(0)=2.
$$

This  formula  for  the  volume (up to some normalization factor) was
conjectured  by M.~Kontsevich about ten years ago. It is much simpler
than   the   formula  for  the  volumes  of  the  strata  of  Abelian
differentials          found          by         A.~Eskin         and
\mbox{A.~Okounkov}~\cite{Eskin:Okounkov}.

When  this paper was written, there was not a single stratum of quadratic
differentials for which the explicit volume was known,  though  an  algorithm  of  computation was presented
in~\cite{EO2}.  In addition to this work,  there  is  some  very  recent  progress in
evaluation  of  volumes of low-dimensional strata in genera different
from   $0$.   Rigorous  formal  methods  used  in~\cite{Goujard}  (in
particular, implementation of the algorithm~\cite{EO2}) are confirmed
by   independent  numerical  experiments~\cite{experimental:volumes}.
However,   any   known   approach  involves significant computer-assisted
computations,  and  is  limited  to volumes of strata of sufficiently
small dimension, while Theorem~\ref{theorem:volume} provides a simple
formula for \textit{all} strata in genus $0$.

Returning to our original motivation, we obtain as an important
application    of Theorem~\ref{theorem:volume}  asymptotics for the
number of closed  trajectories  and for the  number  of  generalized
diagonals in right-angled                       polygons
(see \S\ref{sec:subsec:introduction:beginning}  below).  This choice
is particularly natural in the context of this paper since we have to
solve an  analogous  problem  for quadratic differentials and to
compute the  corresponding  Siegel--Veech constants $c_\cC$ for the
strata of quadratic  differentials  in  genus  $0$ anyway: it makes
part of the proof  of Theorem~\ref{theorem:volume}. This theorem also
immediately provides    asymptotics    for    certain  Hurwitz
numbers,   see \S\ref{sec:subsec:counting:pillowcase:covers}.
Another example of  applications  is  discussed
in~\cite{Delecroix:Zorich} where the values of volumes and the
related Siegel--Veech constants are used to compute  Lyapunov
exponents  of  the Hodge bundle over hyperelliptic loci  in  the
strata  of  quadratic differentials and to compute the diffusion
rate  for  interesting  families  of generalized wind-tree
billiards~\cite{Delecroix:Hubert:Lelievre}.

\subsection*{Strategy of the proof.}
We   start   by   solving   the   counting   problems  for  quadratic
differentials.  The  Siegel--Veech  constant $\carea$ responsible for
the  exact  quadratic  asymptotics of the weighted number of bands of
regular closed geodesics on almost any flat sphere in a given stratum
$\cQ(d_1,\dots,d_n)$  of meromorphic quadratic differentials with
at      most      simple      poles   on $\CP$  was     recently     computed
in~\cite{Eskin:Kontsevich:Zorich},
\begin{equation}
\label{eq:carea:answer}
\carea(\cQ(d_1,\dots,d_n))=
-\cfrac{1}{8\pi^2}\,\sum_{j=1}^n \cfrac{d_j(d_j+4)}{d_j+2}\,.
\end{equation}
Developing techniques elaborated in~\cite{Eskin:Masur:Zorich} for the
strata  of  Abelian  differentials  and  using  the  further  results
from~\cite{Boissy:configurations}   and~\cite{Masur:Zorich}   on  the
\textit{principal  boundary} of the strata of quadratic differentials
we  express the Siegel--Veech constant $\carea$
in genus $0$
in  terms  of  the  ratio  of  the  volumes  of  appropriate  strata,
\begin{equation}
\label{eq:carea:in:terms:of:volumes}
\carea(\cQ(d_1,\dots,d_n))=
\frac{\text{Explicit polynomial in volumes of simpler strata}}
{\Vol(\cQ(d_1,\dots,d_n))}\,.
\end{equation}
In  this  way  we  obtain a series of identities on the volumes of the
strata  of  meromorphic  quadratic  differentials with at most simple
poles  in  genus zero. The resulting identities recursively determine
the  volumes of all strata.
The     proof     of     Theorem~\ref{theorem:volume},    given    in
\S\ref{sec:induction},     consists    in    verifying    that    the
expression~\eqref{eq:volume}    for    the   volume   satisfies   the
combinatorial     identities    implied    by~\eqref{eq:carea:answer}
and~\eqref{eq:carea:in:terms:of:volumes}.  Part  of this verification
is performed in Appendix~\ref{sec:appendix:identity}.

\begin{remark}[Normalization conventions]
Note  that  the  convention  that  all zeroes and poles are ``named''
affects   the   normalization:   we   compute   the  volumes  of  the
corresponding  covers  over  strata with ``anonymous'' singularities.
For  example,  the  stratum  $\cQ(1,-1^5)$ of quadratic differentials
with  ``anonymous''  zeroes  and  poles  is isomorphic to the stratum
$\cH(2)$  of  holomorphic  Abelian  differentials;  by convention the
volume  elements  are  chosen to be invariant under this isomorphism.
However, by~\eqref{eq:volume} we have
$$
\Vol\cQ_1(1, -1^5) =  2\pi^2\cdot v(1)\cdot\left(v(-1)\right)^5=
2\pi^2\cdot\frac{\pi^2}{2}\cdot 1^5=
5!\cdot\frac{\pi^4}{120}=5!\cdot\Vol\cH_1(2)\,,
$$
which  corresponds to $5!$ ways to \textit{give names} to five simple
poles.

Similarly,
$$
\Vol\cQ_1(2, -1^6) =
2\pi^2\cdot v(2)\cdot\left(v(-1)\right)^6=
2\pi^2\cdot\frac{4\pi^2}{3}\cdot 1^6=
\frac{6!}{2!}\cdot\frac{\pi^4}{135}=\frac{6!}{2!}\cdot\Vol\cH_1(1,1)\,.
$$
This  time  there  is  an extra factor $\frac{1}{2!}$ responsible for
\textit{forgetting the names} of the two zeroes of $\cH(1,1)$.
\end{remark}

\subsection{Counting pillowcase covers}
\label{sec:subsec:counting:pillowcase:covers}

One  of  the  ways to compute the volumes of the strata of Abelian or
quadratic  differentials  (actually,  the only one before the current
paper)    is    to    count    \textit{square-tiled    surfaces}   or
\textit{pillowcase  covers},  see~\cite{Eskin:Okounkov},  \cite{EO2},
\cite{Eskin:Okounkov:Pandharipande}, \cite{Zorich:volumes}.
  In  the current paper we follow an alternative method,
and,  thus, our result implies an explicit expression for the leading
term  of the function counting associated pillowcase covers, when the
degree of the cover tends to infinity.

Namely, following~\cite{EO2} we define a \textit{pillowcase
cover} of degree
$4d$
as a ramified cover
\begin{equation}
\label{eq:Eskin:Okounkov:pillowcase:cover}
\pi: \hat\cP \rightarrow \cP
\end{equation}
over  the pillowcase orbifold $\cP=\big(\C{}/(\Z\oplus i\Z)\big)/\pm$
(as  in  Figure~\ref{fig:square:pillow}) with ramification data given
as  follows.  Let  $\eta$  be a partition and $\nu$ a partition of an
even  number  into \textit{odd} parts. Viewed as a map to the sphere,
$\pi$  has  profile $(\nu,2^{2d-|\nu|/2})$ over $0\in\cP$ and profile
$(2^{2d})$ over the other three corners of $\cP$. Additionally, $\pi$
has  profile  $(\eta_i,1^{4d-\eta_i})$ over $\ell(\eta)$ given points
of  $\cP$  and unramified elsewhere, where $\ell(\eta)$ is the number
of  parts  in $\eta$. This ramification data determines the genus $g$
of $\hat\cP$ by
$$
2-2g=\chi(\hat\cP) = \ell(\eta)+\ell(\nu) - |\eta|-|\nu|/2\,.
$$
We  consider only those ramification data for which $g=g(\hat\cP)$ in
the    above    formula    is    equal    to    zero,
\begin{equation}
\label{eq:condition:genus:0}
\ell(\eta)+\ell(\nu) - |\eta|-|\nu|/2=2\,.
\end{equation}
\begin{figure}[htb]
   %
   %
\includegraphics{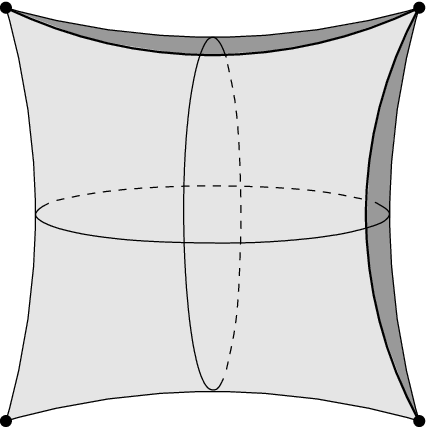}
\vspace{55pt}
\caption{
\label{fig:square:pillow}
Pillowcase orbifold.}
\end{figure}
Denote   by   $\operatorname{Cov}^0_{4d}(\eta,\nu)$   the  number  of
inequivalent  degree  $4d$  connected  covers  $\pi:\hat\cP \to \cP$
with ramification data $(\eta,\nu)$.

Denote by $\cQ(\eta, \nu)$ the moduli
space  of quadratic differentials with singularity data $\{\nu_i-2\}$
and      $\{2\eta_i-2\}$.      Condition~\eqref{eq:condition:genus:0}
guarantees  that  $\cQ(\eta,  \nu)$  is  nonempty, and corresponds to
genus zero.

Consider now the same partitions $\eta,\nu$ as above
and a ramified cover
$$
\pi_{\scriptscriptstyle \boxplus}: \hat\cP \rightarrow \cP
$$
of the same degree $4d$
over  the  pillowcase  orbifold  $\cP$
with  ramification  data  given  as
follows:     $\pi_{\scriptscriptstyle    \boxplus}$    has    profile
$(2\eta,\nu,2^{2d-|\eta|-|\nu|/2})$   over   $0\in\cP$   and  profile
$(2^{2d})$   over  the  other  three  corners  of  $\cP$.  The  cover
$\pi_{\scriptscriptstyle \boxplus}$ is unramified elsewhere. Applying the
Riemann--Hurwitz             formula             and            using
relation~\eqref{eq:condition:genus:0}  we  see  that covers with such
ramification  profile again have genus zero. The  corresponding  flat  surface  belongs  to the same
stratum  $\cQ(\eta,\nu)$ as before.
Denote          by         $\operatorname{Cov}^{0,{\scriptscriptstyle
\boxplus}}_{4d}(\eta,\nu)$  the  number  of  inequivalent degree $4d$
connected  covers $\pi_{\scriptscriptstyle \boxplus}:\hat\cP \to \cP$
with   ramification   data   $(\eta,\nu)$   as   above.

Theorem~\ref{theorem:volume} and the
Theorem~\ref{th:pillowcases:as:volume} below provide very simple
asymptotic formulae for the Hurwitz numbers
$\operatorname{Cov}^0_{4d}(\eta,\nu)$ and
$\operatorname{Cov}^{0,{\scriptscriptstyle \boxplus}}_{4d}(\eta,\nu)$.

\begin{theorem}
\label{th:pillowcases:as:volume}
For     any    ramification    data    $(\eta,    \nu)$    satisfying
condition~\eqref{eq:condition:genus:0}           the          numbers
$\operatorname{Cov}^0_{4d}(\eta,\nu)$
and

$\operatorname{Cov}^{0,{\scriptscriptstyle \boxplus}}_{4d}(\eta,\nu)$
of  pillowcase  covers of type
$(\eta,\nu)$ admit the following limits:
\begin{align}
\label{eq:volume:through:covers}
\lim_{\Deg \to \infty}
\frac{1}{\Deg^{\ell(\eta)+\ell(\nu)-2}}
\sum_{d=1}^{\Deg} \operatorname{Cov}^0_{4d}(\eta,\nu)
&=
2^{\ell(\eta)}\cdot
\frac{\Vol\cQ_1(\eta,\nu)}{2(\ell(\eta)+\ell(\nu)-2)}
\,,
\\
\lim_{\Deg \to \infty}
\frac{1}{\Deg^{\ell(\eta)+\ell(\nu)-2}}
\sum_{d=1}^{\Deg}
\operatorname{Cov}^{0,{\scriptscriptstyle \boxplus}}_{4d}(\eta,\nu)
&
=
\frac{\Vol\cQ_1(\eta,\nu)}{2(\ell(\eta)+\ell(\nu)-2)}
\,,
\end{align}
where $\Vol\cQ_1(\eta,\nu)$ is given by equation~\eqref{eq:volume}.
\end{theorem}

Theorem~\ref{th:pillowcases:as:volume}       is       proved
in~\S\ref{subsec:lattice:square:tiled:pillowcase}.

Note  that the more natural direct geometric approach to the counting
of pillowcase covers leads to rather involved combinatorial problems.
We   present  this  alternative  geometric  approach  in  a  separate
paper~\cite{AEZ:pillowcase:covers}.

\begin{remark}
There  are  several  different  combinatorial approaches to computing
volumes of strata, based on counting (pillowcase) covers.

For  the  strata  of  Abelian differentials, the problem is solved in
\cite{Eskin:Okounkov};  see  also~\cite{Zorich:volumes}  for  a  more
direct  but much less efficient approach. Many of these combinatorial
approaches  can be pushed to produce some complicated expressions for
the  volumes  in  Theorem~\ref{theorem:volume}.  Currently,  the most
efficient  approach  to calculation of volumes of strata of quadratic
differentials  (independently  of  genus) is suggested in~\cite{EO2}.
The  exact  values  of volumes of all strata up to dimension $11$ are
presented  in~\cite{Goujard}  based  on  the algorithm of~\cite{EO2};
this result is close to limits of current computational capacities of
modern  computers  in  manipulating huge tables of characters. For an
approach    based    on    Kontsevich'   solution   to   the   Witten
conjecture~\cite{Kontsevich}   see~\cite{AEZ:pillowcase:covers};  one
more  version  developing  ideas  of  Eskin and Okounkov is suggested
in~\cite{Rios-Zertushe:2012};   see  also~\cite{experimental:volumes}
for  yet another approach. Paper~\cite{Goujard} suggests a comparison
of various approaches.

However, we were not able to get the
simple expressions (\ref{eq:volume}) using any of these methods. In
fact, our proof of Theorem~\ref{theorem:volume} is not purely
combinatorial, but has analytic,
geometrical and dynamical inputs (and is motivated by consideration of
Lyapunov exponents). It thus remains a challenge to give a more direct
proof of Theorem~\ref{theorem:volume}, in particular bypassing
\cite{Eskin:Kontsevich:Zorich}.
\end{remark}

\subsection{Counting trajectories of right-angled billiards}
\label{sec:subsec:introduction:beginning}
Currently  it  is  not  known  whether  there  exists a single closed
billiard trajectory in every obtuse triangle (see~\cite{Schwartz} for
some  progress  in  this  direction  and for further references). The
situation  with  billiards  in \textit{rational} polygons (that is in
polygons  with  angles  which  are  rational  multiples  of $\pi$) is
understood much better: trajectories of such billiards are related to
geometry of certain compact flat surfaces with conical singularities,
which  are  thoroughly  studied  starting with the landmark papers of
H.~Masur~\cite{Masur:interval}  and  W.~Veech~\cite{Veech:Gauss}.  In
particular,  it  is known that a billiard in any rational polygon has
infinitely  many  closed trajectories~\cite{KMS}, and furthermore the
number  of trajectories of length at most $L$ is bounded between $c_1
L^2$  and  $c_2 L^2$ for some $0 < c_1 < c_2$  and for
$L$ large enough, see \cite{Masur:lower} and \cite{Masur:upper}.

In the current paper we study families of right-angled billiards like
the  ones in Figures~\ref{fig:non:embeddable:polygons} and
\ref{fig:family:of:rectangular:polygons}. Namely,
we  assume that the billiard table is a topological disk endowed with
a  flat  metric,  and  that  the  boundary  of  the disk is piecewise
geodesic  such  that  the angle at every corner of the boundary is an
integer multiple of $\frac{\pi}{2}$.
Note  that  by allowing integer multiples $k\pi/2$ with $k \geq 5$, we can obtain billiard tables which may
not be       embeddable         in       the         plane
(see~Figure~\ref{fig:non:embeddable:polygons}). In particular, we can
consider helical right-angled billiards.

\begin{figure}[htb]
\centering
%
   %
\includegraphics{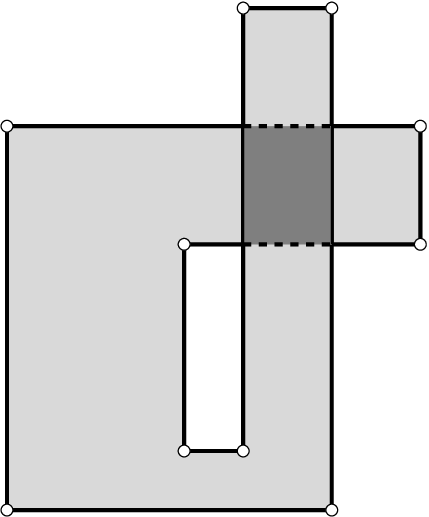}
\includegraphics{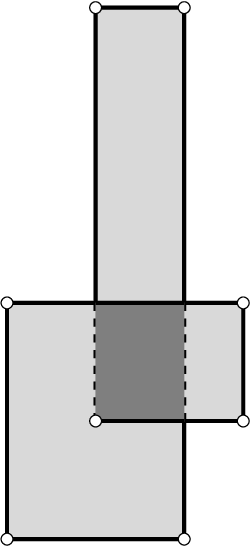}
\vspace{95bp}
\caption{
\label{fig:non:embeddable:polygons}
A Right-angled billiard  table  which is not
embeddable   into   the   plane.
   }
\end{figure}

We  consider  families  of  polygons  sharing  the same collection of
interior         corner        angles        $\left(\frac{\pi}{2}k_1,
\frac{\pi}{2}k_2,\ldots, \frac{\pi}{2}k_n\right) $. Actually, it will
be  convenient  to  consider a slightly larger space $\cB(k_1,\ldots,
k_n)$  of  ``directional  billiards'' distinguishing a billiard table
$\Pi$  and  the same table turned by angle $\phi$. The measure in the
space $\cB(k_1,\dots,k_n)$ is the product measure of Lebesgue measure
arising from the side lengths and the angular measure $d\phi$.

We  count  the  number  of generalized diagonals of bounded length in
such  billiards (that is, the number of trajectories of bounded length
which  start  in  some  fixed  corner  $P_i$ and arrive to some fixed
corner  $P_j$,  see  Figure~\ref{fig:family:of:rectangular:polygons})
and  the  number  of  closed billiard trajectories of bounded length.
Note, that closed regular trajectories are never isolated in rational
billiards: they always form bands of ``parallel'' closed trajectories
of             the             same            length,            see
Figure~\ref{fig:family:of:rectangular:polygons}.  Thus, when counting
closed  trajectories  one  actually  counts the number of such bands.
Sometimes,  it  is  natural  to  count  the bands with a weight which
registers     the     thickness     of    the    band,    see    e.g.
Theorem~\ref{theorem:Narea:weak:asymp}      at     the     end     of
\S\ref{sec:subsec:introduction:beginning}.
By  convention  we  always  count  \textit{non-oriented} generalized
diagonals and \textit{non-oriented} closed billiard trajectories.

To  give  an  idea  of  the  general  theorems  stated  in  detail  in
\S\ref{sec:configurations:counting}       and       developed      in
\S\ref{sec:Siegel:Veech}, we present the following  representative
results.

\begin{theorem}
\label{theorem:pi:2:general}
For  any  right-angled  billiard  $\Pi$  outside  of a zero
measure  set  in  any  family  $\cB(k_1,  \ldots,  k_n)$  the  number
$N_{ij}(\Pi,L)$  of  generalized  diagonals  of  length  at  most $L$
joining   a   pair   of   fixed   corners   $P_i,  P_j$  with  angles
$\frac{\pi}{2}$  has  the  following  quadratic  asymptotics as
$L\to\infty$:
\begin{equation}
\label{eq:theorem:pi:2:general}
N_{ij}(\Pi,L) \sim \cfrac{1}{2\pi}\cdot
\cfrac{L^2}{\text{Area of the billiard table}}\ .
\end{equation}
\end{theorem}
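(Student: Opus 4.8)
The plan is to pass from the billiard to a flat surface, convert the counting of generalized diagonals into the counting of saddle connections, and then feed this into the Eskin--Masur quadratic asymptotics together with an evaluation of the relevant Siegel--Veech constant. First, a right-angled billiard $\Pi$ is a rectilinear polygon, and reflecting it in its sides realizes it as the quotient of a closed flat surface; equivalently, the standard unfolding assigns to $\Pi$ a meromorphic quadratic differential $q(\Pi)$ on $\CP$, a corner of angle $\frac{\pi}{2}k_r$ becoming a singularity of order $k_r-2$. Since the exterior angles of $\Pi$ sum to $2\pi$ one has $\sum_r(k_r-2)=-4=4\cdot 0-4$, so $q(\Pi)$ lies in the stratum $\cQ(k_1-2,\dots,k_n-2)$ of quadratic differentials on $\CP$; in particular the two corners $P_i,P_j$ of angle $\frac{\pi}{2}$ become two simple poles, and $\area(\Pi)$ equals $\Vol(q(\Pi))$ up to an explicit combinatorial normalization. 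A generalized diagonal of $\Pi$ from $P_i$ to $P_j$ then corresponds to a saddle connection of the fixed ``type'' $V$ joining these two poles, and conversely; keeping track of the finitely many lifts of each corner and of the identification of a diagonal with its reverse gives $N_{ij}(\Pi,L)=\const\cdot N_V(q(\Pi),L)$.

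Next, the image of $\cB(k_1,\dots,k_n)$ in the stratum, with its (side-lengths $\times\,d\phi$) measure, is preserved up to closure by the $SL(2,\R)$-action and carries a finite measure in the Lebesgue class of the Masur--Veech measure; a genericity statement for the Teichm\"uller geodesic flow (of the kind supplied in the appendix) guarantees that almost every $\Pi$ is a typical point for the relevant ergodic averages. One may then invoke the Eskin--Masur theorem: for a.e.\ point of the stratum, $N_V(q,L)\sim c_V\cdot\dfrac{\pi L^2}{\Vol(q)}$, with the Siegel--Veech constant $c_V$ almost everywhere equal to a constant depending only on the stratum and on $V$. Combined with the first paragraph, this already yields $N_{ij}(\Pi,L)\sim\const\cdot\dfrac{L^2}{\area(\Pi)}$, and the whole problem is reduced to identifying the constant.

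It remains to show this constant is universally $\frac1{2\pi}$, and this is the main obstacle. The configuration $V$ degenerates in the simplest way possible: shrinking the saddle connection merges the two simple poles (cone angle $\pi+\pi$) into a single regular point, so the surgery has a universal local weight, and $c_V$ is expressed through the ratio of Masur--Veech volumes of $\cQ(k_1-2,\dots,k_n-2)$ and of the stratum obtained by deleting those two poles. The hard part is to see that, after the combinatorial normalizations of the unfolding, this volume ratio contributes exactly the stratum-independent value $\frac1{2\pi}$. Here I would either verify the constant directly in a base case --- the rectangle family $\cB(1,1,1,1)$, where $q(\Pi)$ is a flat torus (a pillowcase cover) and the count becomes the classical count of primitive integer vectors of a prescribed parity in a Euclidean ball, computable by the Gauss circle argument --- and then propagate it via the recursion for the volumes, or else check the volume ratio directly against the explicit Kontsevich-type volume formula for strata on $\CP$ established in the paper. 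The cancellation that makes the answer independent of $(k_1,\dots,k_n)$ is the real content; the unfolding is bookkeeping and the passage to quadratic asymptotics is the now-standard Masur--Veech--Eskin--Masur machinery.
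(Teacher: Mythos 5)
Your overall strategy---unfold, identify generalized diagonals with saddle connections of a fixed configuration, run the Siegel--Veech/Eskin--Masur machinery, and compute the constant by degenerating the configuration---is indeed the paper's strategy, but two of your key steps are wrong as stated. First, the image of $\cB(k_1,\dots,k_n)$ in $\cQ(k_1-2,\dots,k_n-2)$ does \emph{not} carry a measure in the Lebesgue class of the Masur--Veech measure: it is a real $(n-2)$-dimensional subset of a stratum of real dimension $2(n-2)$, hence has measure zero, so the Eskin--Masur almost-everywhere theorem says nothing about it. The paper gets around this by showing (Proposition~\ref{prop:map:onto}) that the tangent space to the billiard locus surjects onto the unstable subspace of the Teichm\"uller flow, and then proving a bespoke circle-averaging ergodic theorem for points of this transversal (Theorem~\ref{theorem:bikhoff:chaika}). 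Your gesture toward ``a genericity statement of the kind supplied in the appendix'' points the right way, but the mechanism you give (absolute continuity) is false, and without the transversality input there is no almost-everywhere statement over $\cB$.

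Second, and more seriously, your evaluation of the constant rests on a misidentification of the configuration. A saddle connection joining two simple poles on $\CP$ never occurs in isolation: it is \^homologous to a loop based at some zero $P_l$, the two together bounding a cylinder (the ``pocket'', configuration III); equivalently, a generalized diagonal between two $\tfrac{\pi}{2}$-corners always bounds a band of periodic trajectories. The degeneration is therefore not ``merge the two poles into a regular point''---that would produce singularity data summing to $-2$, which is not a stratum on $\CP$---but ``collapse the cylinder'', landing in $\cQ(d_1,\dots,d_l-2,\dots,d_k)$ with the two $-1$'s removed. Which zero $P_l$ sits at the far boundary is part of the configuration, so the relevant constant is the \emph{sum} over all zeroes of the individual pocket constants $c_l=\tfrac{d_l+1}{2(k-4)\pi^2}$, and the universality of $\tfrac{1}{2\pi}$ is exactly the telescoping identity $\sum_l(d_l+1)=k-4$ coming from $\sum_l d_l=-4$, not a single surgery with a universal local weight. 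Your fallback of checking the torus case and ``propagating via the recursion'' does not substitute for this: the individual constants genuinely depend on the stratum and on $l$, and only their sum is universal. Finally, the length convention must be tracked: the pocket is counted by the holonomy of the closed geodesics, which are twice as long as the diagonal, giving a factor of $4$ in $L^2$ that cancels against the factor $\tfrac14$ from passing between $\Pi$ and its orienting double cover.
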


Theorem~\ref{theorem:pi:2:general}         is        proved        in
~\S\ref{sec:subsec:sv:pocket}, using the theorem
  proved by Jon Chaika in Appendix~\ref{sec:chaika}.

\begin{figure}[htb]
\centering
%
   %
\includegraphics{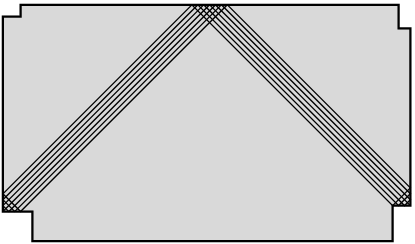}
\includegraphics{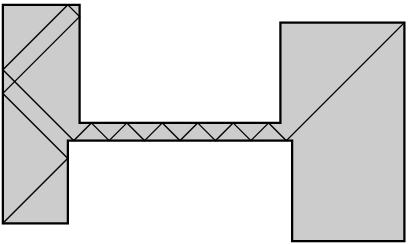}
\begin{picture}(0,0)(0,13)
\put(-171,-62){$P_i$}
\put(-27,-61){$P_j$}
\put(18,-66){$P_i$}
\put(164,11){$P_j$}
\end{picture}
\vspace{75bp}
\caption{
\label{fig:family:of:rectangular:polygons}
A  family  $\cB(k_1,\dots,k_n)$  of  right-angled polygons; a band of
periodic  trajectories on the left, and a generalized diagonal on the
right.
   }
\end{figure}

The fact that this asymptotics does not depend at all on the billiard
table  is  at  the  first glance counterintuitive. What is even more
surprising  is  that  it  is universal: it is the same not
only  for  almost all billiard tables inside each family, but it does
not  vary  even from one family to another! In particular, though the
shape    of    the    two    polygons    of    the   same   area   in
Figure~\ref{fig:family:of:rectangular:polygons}  is  quite different,
the  number  of  trajectories  of  length  at  most  $L$  joining the
right-angle   corner   $P_i$  to  the  right-angle  corner  $P_j$  is
approximately  the  same in both cases, and is approximately the same
as  the  number  of  trajectories  of  length at most $L$ joining two
corners of the usual rectangular billiard of the same area when $L\gg
1$.

The   situation  becomes  more  complicated  when  we  consider  other
types of corners  of  the  billiard.  Consider,  for  example, an
$\operatorname{L}$-shaped       billiard       table       as      on
Figure~\ref{fig:L:shaped:billiard}.
Let   $P_1,\dots,   P_5$   be   the   right-angle   corners   of  the
$\operatorname{L}$-shaped  billiard, and let $P_0$ be the corner with
the interior angle $\frac{3\pi}{2}$.

\begin{figure}[htb]
   %
\includegraphics{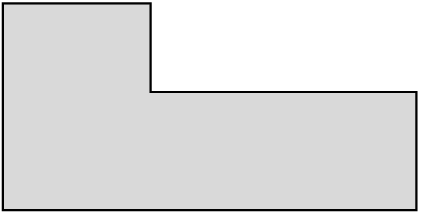}
\begin{picture}(0,0)(0,0)
\put(-64,-65){$P_3$}
\put(-64,-4){$P_4$}
\put(-16,-4){$P_5$}
\put(-16,-25){$P_0$}
\put(46,-25){$P_1$}
\put(46,-65){$P_2$}
\end{picture}
\vspace{60bp}
\caption{
\label{fig:L:shaped:billiard}
$\operatorname{L}$-shaped billiard.
   }
\end{figure}

\medskip
\begin{theorem}
\label{theorem:L:shaped:almost:all}
For almost any $\operatorname{L}$-shaped billiard $\Pi$ the number
$N_{i0}(\Pi,L)$  of  generalized  diagonals  of  length  at  most $L$
joining  a  fixed  corner  $P_i$  with  angle $\frac{\pi}{2}$ and the
corner  $P_0$  with  angle  $\frac{3\pi}{2}$  has  the following
quadratic asymptotics as $L\to\infty$:
\begin{equation}
\label{eq:theorem:L:shaped:almost:all}
N_{i0}(\Pi,L) \sim \cfrac{2}{\pi}\cdot
\cfrac{L^2}{\text{Area of the billiard table}}\ .
\end{equation}
\end{theorem}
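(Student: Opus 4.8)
The plan is to realize the billiard as a flat surface in an explicit stratum of quadratic differentials on $\CP$, to count its saddle connections by the Siegel--Veech theory of \S\ref{sec:Siegel:Veech}, and then to substitute the two relevant volumes into the resulting formula. First I would pass to the double: gluing the $\operatorname{L}$-shaped table $\Pi$ to its mirror copy $\bar\Pi$ along the boundary produces a closed flat surface $S$ on which the billiard flow in $\Pi$ becomes the straight-line flow, with $\area(S)=2\area(\Pi)$. Each billiard corner of angle $\theta$ becomes a cone point of $S$ of angle $2\theta$, so the five right-angle corners $P_1,\dots,P_5$ give five simple poles (cone angle $\pi$) and the corner $P_0$ of angle $\frac{3\pi}{2}$ gives a simple zero (cone angle $3\pi$); since $1+5\cdot(-1)=-4$ and $S$ has genus $0$, this exhibits $S$ as a meromorphic quadratic differential on $\CP$ in the stratum $\cQ(1,-1^5)$. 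Under this dictionary a generalized diagonal of $\Pi$ joining $P_i$ to $P_0$ corresponds to a saddle connection of $S$ joining the simple pole $\sigma_i$ to the simple zero $\sigma_0$, and, after accounting for the reflection exchanging the two copies and for the non-oriented convention, $N_{i0}(\Pi,L)$ equals a fixed elementary constant times the number $N_{\sigma_i\sigma_0}(S,L)$ of such saddle connections of length $\le L$. This bookkeeping is routine, but it must be done carefully since the final numerical constant depends on it.

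Next I would invoke the Siegel--Veech counting results of \S\ref{sec:configurations:counting}--\S\ref{sec:Siegel:Veech}: for almost every $S\in\cQ(1,-1^5)$ one has $N_{\sigma_i\sigma_0}(S,L)\sim c_{\mathrm{sv}}\cdot\pi L^2/\area(S)$, where $c_{\mathrm{sv}}$ is the Siegel--Veech constant of the configuration ``one saddle connection joining the simple zero to a single fixed simple pole''. Collapsing such a saddle connection merges a cone point of angle $3\pi$ with one of angle $\pi$ into a cone point of angle $3\pi+\pi-2\pi=2\pi$, i.e.\ a regular (marked) point, so the boundary stratum is the pillowcase stratum $\cQ(-1^4)$, and the Siegel--Veech formula established in \S\ref{sec:Siegel:Veech} yields
\[
c_{\mathrm{sv}}\;=\;\operatorname{const}\cdot\frac{\Vol\cQ(-1^4)}{\Vol\cQ(1,-1^5)}
\]
with an explicit elementary constant. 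Both volumes are provided by the main volume theorem of the paper (Kontsevich's volume formula); inserting them, together with the unfolding constant and $\area(S)=2\area(\Pi)$, collapses everything to $N_{i0}(\Pi,L)\sim \tfrac{2}{\pi}\cdot L^2/\area(\Pi)$, which is~\eqref{eq:theorem:L:shaped:almost:all}.

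I expect the main obstacle to be the passage from ``almost every surface in $\cQ(1,-1^5)$'' to ``almost every $\operatorname{L}$-shaped billiard'': the surfaces coming from $\operatorname{L}$-shaped tables form a positive-codimension, hence measure-zero, locus inside the stratum, so the generic Siegel--Veech asymptotics do not apply to it for free. What is needed is that, for almost every table in the family $\cB(1,1,1,1,3)$ with respect to the measure built from the side lengths and $d\phi$, the forward Teichm\"uller geodesic issued from the associated surface equidistributes in $\cQ(1,-1^5)$, so that both the quadratic asymptotics and the value of $c_{\mathrm{sv}}$ are inherited along this locus. This is precisely the ergodic input provided by Jon Chaika's theorem in Appendix~\ref{sec:chaika}, used exactly as in the proof of Theorem~\ref{theorem:pi:2:general}; once it is granted the argument above goes through, the only substantive difference from Theorem~\ref{theorem:pi:2:general} being that here the relevant configuration is the ``zero-to-pole'' one rather than the universal ``pocket'' configuration of \S\ref{sec:subsec:sv:pocket}, which is why the constant is $\tfrac{2}{\pi}$ rather than $\tfrac{1}{2\pi}$.
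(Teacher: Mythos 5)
Your proposal is correct and follows essentially the same route as the paper: pass to the double of $\Pi$ to land in $\cQ(1,-1^5)$, compute the Siegel--Veech constant of the type~I (zero-to-pole) configuration via the principal boundary and the volume formula, obtaining $c_I=8/\pi^2$, and transfer the generic quadratic asymptotics to the measure-zero billiard locus through Theorem~\ref{theorem:SV:for:billiards}, whose factor $\tfrac14$ yields the constant $\tfrac{2}{\pi}$ and which rests on Chaika's equidistribution theorem exactly as you indicate. (Two cosmetic slips: the family is $\cB(1^5,3)$, not $\cB(1,1,1,1,3)$, and the boundary stratum carries a marked point, i.e.\ it is $\cQ(0,-1^4)$ with twice the volume of $\cQ(-1^4)$; both are correctly absorbed in the paper's formula~\eqref{eq:Siegel:Veech:constant:sc}.)
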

The proof of this theorem also relies in part on
  Theorem~\ref{theorem:birkhoff:chaika} proved by Jon~Chaika in
  Appendix~\ref{sec:chaika}.

The  naive intuition does not help: the angle $\frac{3\pi}{2}$ at the
corner  $P_0$  is  \textit{three}  times  larger than in the previous
case,  while  the  constant  in  the  asymptotics  for  the number of
generalized  diagonals  is  \textit{four}  times  larger  than in the
previous  statement.  Currently  we  have  no idea how to obtain this
factor  $4$  without  using  techniques of the Teichm\"uller geodesic
flow,  Lyapunov  exponents  of  the  Hodge bundle, and the computation of
volumes  of  the moduli spaces of meromorphic quadratic differentials
with  at  most  simple poles on $\CP$.
Theorem~\ref{theorem:L:shaped:almost:all}      is      proved      in
\S\ref{sec:subsec:sv:typeI}.

Using  recently  developed  technology, one can prove weak asymptotic
formulas   similar to
Theorem~\ref{theorem:pi:2:general}      and
Theorem~\ref{theorem:L:shaped:almost:all}   for  individual  billiard
tables. In particular, the following holds:
\begin{theorem}
\label{theorem:L:shaped:precise}
Suppose $\Pi$ is an $\operatorname{L}$-shaped billiard table as in
Figure~\ref{fig:L:shaped:billiard}. Let
\begin{displaymath}
a = \frac{|P_3 P_4|}{|P_1 P_2|}, \qquad
b = \frac{|P_2 P_3|}{|P_4 P_5|}\,.
\end{displaymath}
Then,
\begin{itemize}
\item[{\rm (i)}] If $a$ and $b$ are both rational, or if there exists
a  non-square integer $D > 0$ such that $a,b \in \Q(\sqrt{D})$ and $a
+ \bar{b} = 1$ (where $\bar{b}$ is the Galois conjugate of $b$), then
\begin{equation}
\label{eq:Bainbridge}
N_{ij}(\Pi,L) \sim c_{ij} \cfrac{L^2}{\text{Area of the billiard table}}\,,
\end{equation}
\item[{\rm (ii)}] For any other $\operatorname{L}$-shaped billiard
  table, we have the ``weak asymptotic formulas''
\begin{displaymath}
N_{ij}(\Pi,L) \mysim \cfrac{1}{2\pi}\cdot
\cfrac{L^2}{\text{Area of the billiard table}}\ .
\end{displaymath}
and
\begin{displaymath}
N_{i0}(\Pi,L) \mysim \cfrac{2}{\pi}\cdot
\cfrac{L^2}{\text{Area of the billiard table}}\ .
\end{displaymath}
The meaning of the ``weak asymptotic $\mysim$ is defined in
\S\ref{sec:subsec:counting:theorems}.
\end{itemize}
\end{theorem}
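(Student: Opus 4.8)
\medskip
\noindent
The plan is to unfold the billiard to a translation surface and then combine the classification of $SL(2,\R)$--orbit closures in the stratum $\mathcal H(2)$ with the Siegel--Veech machinery. First I would apply the Katok--Zemlyakov unfolding: since every angle of the $\operatorname{L}$--shaped table is an integer multiple of $\tfrac{\pi}{2}$, its sides are alternately horizontal and vertical and the linear holonomy group generated by the reflections in the sides is the Klein four--group $\{\pm\one,\pm\operatorname{diag}(1,-1)\}$, so the unfolding is a four--fold cover that glues up to a compact translation surface $S=S(a,b)$. A local computation of cone angles shows that the five right--angle corners $P_i$ become regular marked points while the reflex corner $P_0$ of angle $\tfrac{3\pi}{2}$ becomes a single conical point of angle $6\pi$; hence $S\in\mathcal H(2)$, and $(a,b)$ is exactly the pair of moduli of an $\operatorname{L}$--shaped translation surface in McMullen's sense. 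Under this correspondence a generalized diagonal from $P_i$ to $P_j$ (resp.\ to $P_0$) lifts to a straight segment on $S$ joining $\widetilde P_i$ to $\widetilde P_j$ (resp.\ to the singularity) --- that is, to a saddle connection of a prescribed configuration $\cC$ --- and $N_{ij}(\Pi,L)$, $N_{i0}(\Pi,L)$ equal the corresponding configuration counting functions $N_{\cC}(S,L)$ up to an explicit combinatorial factor recording the degree of the cover and the passage to non--oriented trajectories; closed bands are treated identically with cylinder configurations.

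By the Eskin--Masur quadratic asymptotics, whenever $S$ is generic for the affine invariant measure $\mu_{\cM}$ on its orbit closure $\cM=\overline{SL(2,\R)\cdot S}$ one has $N_{\cC}(S,L)\sim c_{\cC}(\cM)\cdot\pi L^2/\area(S)$, with $c_{\cC}(\cM)$ the Siegel--Veech constant of $\cM$ attached to $\cC$. For part (i) I would invoke the Calta--McMullen description of Veech surfaces in $\mathcal H(2)$: the surface $S(a,b)$ is a lattice surface exactly when it is arithmetic --- the case $a,b\in\Q$, for which $S$ is square--tiled after rescaling --- or when it is an eigenform for real multiplication by an order in $\Q(\sqrt D)$, which in these coordinates is precisely the condition $a,b\in\Q(\sqrt D)$ with $a+\bar b=1$; then $\cM=SL(2,\R)\cdot S$ is a closed orbit lying on an arithmetic Teichm\"uller curve, respectively on a Weierstrass curve $W_D$. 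For a lattice surface the Veech dichotomy upgrades the asymptotics from ``$\mu_{\cM}$--almost every surface'' to the surface $S$ itself, which yields~\eqref{eq:Bainbridge} (and its analogue for $N_{i0}$); here $c_{ij}$ is the finite Siegel--Veech constant of the relevant Teichm\"uller curve --- a rational number read off from the square--tiled combinatorics in the arithmetic case, and Bainbridge's evaluation of the Siegel--Veech constant of $W_D$ in the eigenform case, whence the label of~\eqref{eq:Bainbridge}.

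For part (ii), if $(a,b)$ satisfies none of the conditions in (i) then $S(a,b)$ is not a Veech surface, so by McMullen's classification of orbit closures in $\mathcal H(2)$ --- there is no positive--dimensional affine invariant submanifold strictly between a closed orbit and the whole stratum --- the orbit closure is $\cM=\mathcal H(2)$. Since the quadratic asymptotics for an individual non--Veech surface are not available, I would instead use Chaika's appendix result Theorem~\ref{theorem:bikhoff:chaika}: every translation surface is Birkhoff generic for the Teichm\"uller geodesic flow on its orbit closure in almost every direction. Combined with the integrability of the Siegel--Veech transform $\widehat f$ on $\mathcal H(2)$ and the identity expressing $\int\widehat f\,d\mu_{\mathcal H(2)}$ through $c_{\cC}(\mathcal H(2))$, Birkhoff's ergodic theorem then gives, for \emph{every} such $S$ and almost every direction, the weak (logarithmically averaged) asymptotics of \S\ref{sec:subsec:counting:theorems} with constant $c_{\cC}(\mathcal H(2))\cdot\pi/\area(S)$; transporting this back through the unfolding factor reproduces $\tfrac{1}{2\pi}$ for $N_{ij}$ and $\tfrac{2}{\pi}$ for $N_{i0}$, as in Theorems~\ref{theorem:pi:2:general} and~\ref{theorem:L:shaped:almost:all}.

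The genuinely substantive inputs lie outside this scheme or are developed elsewhere in the paper: (a) McMullen's classification of $SL(2,\R)$--orbit closures in $\mathcal H(2)$, together with Bainbridge's explicit Siegel--Veech constants for the Weierstrass curves, which is what makes the dichotomy of the statement --- and the concrete value $c_{ij}$ in (i) --- possible; (b) the passage from a full--measure statement to one valid for \emph{every} surface in a dense orbit, which is exactly Chaika's Birkhoff--genericity theorem in the appendix and is indispensable for (ii); and (c) the actual numerical values of $c_{\cC}(\mathcal H(2))$, responsible in particular for the counterintuitive ``factor $4$'' between the two constants in (ii), which rest on the computation of volumes of moduli spaces of quadratic differentials that forms the backbone of the paper. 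By contrast the unfolding and the bookkeeping of configurations are routine. I expect the main difficulty to be item (b): pinning down the precise sense of the weak asymptotics $\mysim$ so that Chaika's genericity theorem delivers exactly that statement --- no stronger, no weaker --- for every non--Veech $S$.
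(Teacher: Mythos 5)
Your reduction (unfolding to $\cH(2)$, or equivalently to the flat sphere in $\cQ(1,-1^5)$ and its canonical double cover) and your treatment of part (i) match the paper's: case (i) is exactly the Veech/lattice case identified by Calta and McMullen \cite{Calta}, \cite{McMullen:Billiards}, and the strong quadratic asymptotics there come from Veech's work on Teichm\"uller curves \cite{Veech:surfaces}, with Bainbridge supplying the constants. That part is fine.

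Part (ii), however, rests on a misstatement of the key tool. Theorem~\ref{theorem:bikhoff:chaika} is \emph{not} the assertion that ``every translation surface is Birkhoff generic for the Teichm\"uller flow on its orbit closure in almost every direction.'' As stated and proved in the appendix, it is an almost-everywhere statement over the billiard family: the conclusion holds for $\mu_{\cB}$-almost every $q_\Pi\in\cB_1$, and its proof (exponential mixing plus the transversality of $\cB_1$ to the stable foliation via Proposition~\ref{prop:map:onto}) gives nothing about any individual table. The paper is explicit about this limitation: Theorem~\ref{theorem:SV:for:billiards}, which is what Chaika's theorem yields, ``is an almost everywhere statement, and does not imply any type of asymptotic formula for an individual billiard table.'' But part (ii) is a statement about \emph{any} non-Veech $\operatorname{L}$-shaped table, so your argument does not reach it. The paper instead invokes Theorem~2.12 of \cite{Eskin:Mirzakhani:Mohammadi} (resting on the $P$-invariant measure classification of \cite{Eskin:Mirzakhani}), which is precisely the every-surface equidistribution statement producing the weak asymptotics $\mysim$, combined with McMullen's classification \cite{McMullen:SL2R} to identify the orbit closure as the full stratum and hence pin down the constant. (The every-surface Birkhoff-genericity statement you have in mind is a later Chaika--Eskin result whose proof itself depends on \cite{Eskin:Mirzakhani:Mohammadi}, so there is no route around that input.) A minor further imprecision: McMullen's classification in genus two does admit intermediate invariant submanifolds, namely the eigenform loci $\Omega E_D$; the dichotomy you want holds in $\cH(2)$ only because the eigenforms there are automatically Veech surfaces, which deserves a word of justification.
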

In  the  case  (i)  the Siegel--Veech constants $c_{ij}$ for rational
values  of  parameters  $a,b$  can  be computed by the formula due to
E.~Gutkin  and  C.~Judge~\cite{Gutkin:Judge}.  For  $i,j\neq  0$  and
$a,b\in\Q(\sqrt{D})$   the   constants   $c_{ij}$   are  computed  by
M.~Bainbridge, see \cite[Theorem 1.5 and \S{14}]{Bainbridge:Euler}.
\begin{proof}
Theorem~\ref{theorem:L:shaped:precise}  is  a  compilation of several
different results. In case (i), the polygon $\Pi$ is a Veech polygon,
which gives rise to a Teichm\"uller curve,
see~\cite{Calta}, \cite{McMullen:Billiards}.
The  existence of an asymptotic formula such as (\ref{eq:Bainbridge})
for  such  a  situation was proved in the pioneering work of W.~Veech
\cite{Veech:surfaces}.

Let
\begin{displaymath}
U  = \begin{pmatrix} 1 & \ast
  \\ 0 & 1 \end{pmatrix} \subset \SL, \qquad
P = \begin{pmatrix} \ast & \ast \\ 0
  & \ast \end{pmatrix} \subset \SL.
\end{displaymath}
The fact that weak asymptotic formulas such as those of part (ii) hold
for       any       rational       billiard       table       follows
from~\cite[Theorem~2.12]{Eskin:Mirzakhani:Mohammadi},  which uses the
general       invariant      measure      classification      theorem
of~\cite{Eskin:Mirzakhani}  for  the  action  of $P$ on moduli space.
However,    to    evaluate    the    constant    for   an   arbitrary
$\operatorname{L}$-shaped  table,  one  also  has  to  appeal  to the
explicit classification of $\SL$-invariant affine submanifolds in the
moduli   space   of   Abelian  differentials  in  genus  $2$  due  to
\mbox{C.~McMullen}, \cite{McMullen:SL2R}.
\end{proof}

We  note  that  asymptotic counting formulas for individual billiards
are  associated with invariant measure classification theorems on the
action  of  subgroups  of  $\SL$  on  (certain subsets of) the moduli
space.  In  particular, when a measure classification theorem for the
action   of   the  subgroup  $U$  exists  (e.g.  in  the  case  of  a
Teichm\"uller  curve), one can get a strong asymptotic formula. Also,
a  measure  classification theorem for the action of the subgroup $P$
leads  to  a  weak  asymptotic  formula.

For  other  examples when a
classification  of invariant measures for the action of $U$ (and thus
strong    asymptotic    formulas)    are    known   see   \cite{EMS},
\cite{Eskin:Marklof:Morris},                    \cite{Calta:Wortman},
\cite{Bainbridge:L:shaped}.
All  examples  of  individual
billiard  tables  for  which  the  (strong) quadratic
asymptotics  was  known are, essentially, covered by several families
of  triangles depending on one integer parameter; by several sporadic
triangles beyond these families; by a square with a specially located
barrier;  and by a family of $\operatorname{L}$-shaped tables with or
without   a   wall   for   special   values   of  parameters  of  the
$\operatorname{L}$-shaped table.

In~\S\ref{sec:subsec:counting:theorems}      for      each     family
$\cB_1(k_1,\ldots,k_n)$  of  right-angled  billiards  we describe all
geometric  types  of  generalized  diagonals  and all closed billiard
trajectories  which  can  be  found on a billiard $\Pi$ outside of a
zero  measure set in $\cB_1(k_1,\ldots,k_n)$. For such $\Pi$, and
each such geometric
type  we  prove  (strong)  quadratic  asymptotics  for  the number of
associated
generalized  diagonals (or of bands of closed billiard trajectories),
and explicitly evaluate the constant in the quadratic asymptotics.

  %
  %

Theorem~\ref{theorem:labyrinth}
below   illustrates   an   application   of   the general
Theorem~\ref{theorem:invididual:billiards}     and    of  the  general
Theorems~\ref{th:SV:I}--\ref{th:SV:IV}  to billiards more complicated
than the $\operatorname{L}$-shaped ones, see Figure~\ref{fig:B44:33}.

\begin{figure}[htb]
%
   %
\includegraphics{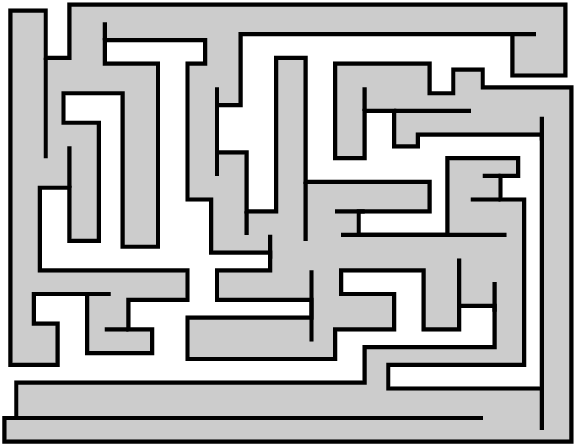}
\vspace{80bp}
\caption{
\label{fig:B44:33}
A billiard table from the family $\cB(4^m,3^n,1^{m+n+4})$.
   }
\end{figure}

By  $\cB(4^m,3^n,1^{m+n+4})$  we  denote  the  family of right-angled
billiards  with  $m$  corners  with  angles  $2\pi$ (endpoints of the
walls);  $n$  corners  with  interior  angles  $3\pi/2$, and with the
remaining $n+m+4$ corners with interior angles $\pi/2$.

\begin{theorem}
\label{theorem:labyrinth}
Consider  two  distinct  corners $P_i,P_j$ of a
billiard $\Pi$ in any family $\cB(4^m,3^n,1^{m+n+4})$. Assume that at
least  one  of  the  interior  angles  $k_i\pi/2$  and  $k_j\pi/2$ is
different  from  $\pi/2$ (i.e. $k_i,k_2$ are not simultaneously equal
to $1$).

For almost any $\Pi$, any  generalized  diagonal  $\delta$  joining  $P_i$ to $P_j$ and non
parallel   to  a  side  of  $\Pi$  never  bounds  a  band  of  closed
trajectories.  No  other  generalized diagonal in $\Pi$ has a segment
parallel  to  any  segment of $\delta$. For almost any $\Pi$, the
number $N_{ij}(\Pi,L)$ of
such  generalized  diagonals  of length at most $L$ has the following
asymptotics as $L\to+\infty$:
$$
N_{ij}(\Pi,L) \sim c_{ij}
\cdot
\cfrac{L^2}{\text{Area of the billiard table}}\,,
$$
where the constant $c_{ij}$ depends only on the angles $k_i\pi/2$ and
$k_j\pi/2$ at $P_i$ and $P_j$ correspondingly; its value is presented
in the following table:
$$
\begin{array}{r|lll}
\text{angle}   &\cfrac{4\pi}{2} &\cfrac{3\pi}{2}     &\cfrac{\pi}{2}\\
[-\halfbls]\\
\hline
\cfrac{4\pi}{2}&\cfrac{9}{10}   &\cfrac{45}{64}      &\cfrac{9}{32}\\
[-\halfbls]\\
\cfrac{3\pi}{2}&\cfrac{45}{64}  &\cfrac{16}{3\pi^2}  &\cfrac{2}{\pi^2}\\
[-\halfbls]\\
\cfrac{\pi}{2} &\cfrac{9}{32}   &\cfrac{2}{\pi^2}    &\cfrac{1}{2\pi^2}
\end{array}
$$
\end{theorem}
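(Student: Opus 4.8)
The plan is to recognise the theorem as a special case of the general counting machinery developed in \S\ref{sec:configurations:counting}--\S\ref{sec:Siegel:Veech} — Theorem~\ref{theorem:invididual:billiards} together with Theorems~\ref{th:SV:I}--\ref{th:SV:IV} — specialised to the single family $\cB(4^m,3^n,1^{m+n+4})$, and then to evaluate the resulting Siegel--Veech constant using the explicit volume formula for strata of quadratic differentials on $\CP$. First I would unfold a table $\Pi$ in the family: gluing $\Pi$ to its mirror copy along the boundary (equivalently, passing to the $\ZZ\times\ZZ$-cover adapted to the two reflection axes) produces a flat surface $\widehat\Pi$ carrying a meromorphic quadratic differential on $\CP$ in a fixed stratum $\cQ$ depending only on $m$ and $n$, in which the corner $P_a$ of angle $\tfrac{\pi}{2}k_a$ becomes a singularity $\sigma_a$ whose order is governed by $k_a$ — a genuine corner of angle $\tfrac{\pi}{2}k$ contributes a singularity of order $k-2$, so that $k=1,3,4$ give simple poles, simple zeros, and double zeros, the walls accounting for the remaining simple poles, exactly as in \S\ref{sec:configurations:counting}. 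Under this unfolding a non-oriented generalised diagonal from $P_i$ to $P_j$ of length $L$ corresponds to a saddle connection of length $L$ on $\widehat\Pi$ joining $\sigma_i$ to $\sigma_j$, "non parallel to a side of $\Pi$" translates into "direction neither horizontal nor vertical", and $\operatorname{Area}(\widehat\Pi)=2\operatorname{Area}(\Pi)$.

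The next and conceptually central step is to pin down the configuration of saddle connections involved, and with it the structural claims of the theorem. The hypothesis that $k_i$ and $k_j$ are not simultaneously $1$ means at least one of $\sigma_i,\sigma_j$ is a genuine zero; since $\widehat\Pi$ has genus $0$ there is no handle that can be pinched off, and I would show that consequently the only configuration realising a joining of $\sigma_i$ to $\sigma_j$ in a generic direction consists of a \emph{single} saddle connection $\gamma$, whose principal boundary is obtained by simply letting $\sigma_i$ and $\sigma_j$ collide into one singularity of order $d_i+d_j$ (into a regular marked point when $d_i+d_j=0$). In particular such a $\gamma$ is not $\pm$-homologous to any other saddle connection and bounds no cylinder of closed geodesics; translated back to the billiard this says that for a.e.\ $\Pi$ the diagonal $\delta$ bounds no band and no other generalised diagonal is parallel to a segment of $\delta$, which is precisely the asserted genericity — and it is also what places us in the regime of isolated saddle connections to which the quadratic counting asymptotics of Theorems~\ref{th:SV:I}--\ref{th:SV:IV} apply. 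The excluded case $k_i=k_j=1$ is genuinely different: two simple poles cannot collide, and the relevant configuration is the "pocket" of \S\ref{sec:subsec:sv:pocket}; this is why the bottom-right entry of the table has to be argued separately, consistently with Theorem~\ref{theorem:pi:2:general}.

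With the configuration identified, Theorem~\ref{theorem:invididual:billiards} and Theorems~\ref{th:SV:I}--\ref{th:SV:IV} yield, for a.e.\ $\Pi$, the asymptotics $N_{ij}(\Pi,L)\sim c_{ij}\,L^2/\operatorname{Area}(\Pi)$, in which, up to the explicit normalisation accounting for the non-oriented count, the factor $\pi$ in the Siegel--Veech asymptotics, and $\operatorname{Area}(\widehat\Pi)=2\operatorname{Area}(\Pi)$, the constant $c_{ij}$ is the Siegel--Veech constant of the above configuration in $\cQ$. By the Eskin--Masur--Zorich-type principal-boundary formula this Siegel--Veech constant equals a combinatorial factor — built from the numbers $d_i+2$ and $d_j+2$ of separatrices at the two colliding singularities together with a normalising factor depending only on $\dim_{\cx}\cQ$ — times the ratio $\Vol\cQ'/\Vol\cQ$, where $\cQ'$ is $\cQ$ with $\sigma_i,\sigma_j$ replaced by the single singularity of order $d_i+d_j$. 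Now I would insert the explicit volume formula for strata on $\CP$: because that formula is of a form multiplicative over the singularities up to a prefactor governed only by the dimension, the factors attached to the singularities other than $\sigma_i,\sigma_j$ cancel in $\Vol\cQ'/\Vol\cQ$, and the surviving dimension-dependent pieces cancel against the normalising factor of the Siegel--Veech formula; what is left depends only on $d_i$ and $d_j$. Evaluating this for the three admissible orders $d\in\{-1,1,2\}$ then produces the six entries of the table.

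The main obstacle is the step of the second paragraph: proving that on $\CP$ with these singularity types \emph{every} saddle connection joining $\sigma_i$ to $\sigma_j$ in a non-coordinate direction gives rise to the naive collision configuration and to nothing more exotic — no $\pm$-homologous companion, no bounding cylinder, no additional components of the principal boundary. Once this local analysis of configurations is in place, the remaining steps are bookkeeping with the principal-boundary formula and the volume formula; but it is exactly the cancellation in that bookkeeping that explains the a priori counterintuitive "locality" of $c_{ij}$ — its independence of $m$ and $n$, the factor $4$ already met in Theorem~\ref{theorem:L:shaped:almost:all}, and the survival of the $\pi^2$'s in precisely those entries of the table that involve a simple pole.
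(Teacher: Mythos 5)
Your proposal follows essentially the paper's own route: recognize that, since at least one of the two corners is a genuine zero, the relevant configuration is type~I by Boissy's classification of \^homologous saddle connections on $\CP$ (this is exactly what Propositions~\ref{prop:only:four:types} and~\ref{pr:no:other:for:billiards} supply, so the ``main obstacle'' you flag is already disposed of in the paper), then compute $c_\cC$ from the principal-boundary formula~\eqref{eq:ratio:Siegel:Veech:constant:sc} together with the volume formula~\eqref{eq:volume}, and transfer to the billiard with the factor $\tfrac14$ from~\eqref{eq:SV:for:billiards}. The one correction: to get the strong asymptotics ``$\sim$'' for almost every $\Pi$ you must invoke Theorem~\ref{theorem:SV:for:billiards} (which rests on Chaika's Theorem~\ref{theorem:bikhoff:chaika}), not Theorem~\ref{theorem:invididual:billiards}, which yields only the weak asymptotics $\mysim$ for individual tables.
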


Note  that  the  values of the constants do not depend neither on the
numbers  $n$  or  $m$  of corners, nor on the particular shape of the
billiard.   The  proof  of  this  theorem  also  relies  in  part  on
Theorem~\ref{theorem:birkhoff:chaika}    proved   by   Jon~Chaika   in
Appendix~\ref{sec:chaika}.

We  complete  this  section  with an illustration of further counting
problems  where  one  can  apply  our techniques. Let $\Narea(\Pi,L)$
denote  the  number of bands of closed periodic billiard trajectories
of  length  at most $L$ counted with a weight given by the normalized
area  of  the  band. More precisely, we count the area of overlapping
domains  of  the band twice: the area of the band is naively measured
as  the  area of the associated cylinder on the flat sphere, that is,
the  width  of  the  band  times the length of the closed trajectory,
normalized  by  the  area  of the billiard table. Having measured the
area  of  the band, we divide it by the area of the billiard table to
get the weight of the band.

\begin{theorem}
\label{theorem:Narea:weak:asymp}
For   any  billiard  $\Pi$  in  any  family  $\cB(k_1,\dots,k_n)$  of
right-angled  billiards  the weighted number $\Narea(\Pi,L)$ of bands
of  closed  billiard trajectories of length at most $L$ satisfies the
following weak asymptotics as $L\to\infty$:
$$
\Narea(\Pi,L) \mysim
\cfrac{1}{16\pi}\,
\sum_{j=1}^n\left(\cfrac{4}{k_j}-k_j\right)
\cdot
\cfrac{L^2}{\text{Area of the billiard table}}\ .
$$
For almost any billiard $\Pi$ in the same family, the asymptotics is,
actually, exact:
\begin{equation}
\label{eq:billiard:carea}
\Narea(\Pi,L) \sim
\cfrac{1}{16\pi}\,
\sum_{j=1}^n\left(\cfrac{4}{k_j}-k_j\right)
\cdot
\cfrac{L^2}{\text{Area of the billiard table}}\ .
\end{equation}
\end{theorem}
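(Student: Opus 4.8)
The plan is to pass from the billiard to a flat sphere and then run the Siegel--Veech machinery of \S\S\ref{sec:configurations:counting}--\ref{sec:Siegel:Veech} in three steps. \textbf{Step 1 (unfolding).} To a directional billiard $\Pi$ in $\cB(k_1,\dots,k_n)$ we associate the flat sphere $S(\Pi)$ obtained by gluing $\Pi$ to a mirror image of itself along its boundary; the corner $P_j$ of angle $\tfrac{\pi}{2}k_j$ becomes a cone point of angle $\pi k_j$, hence a singularity of order $d_j=k_j-2$ of the associated quadratic differential, and since the exterior angles of the polygon sum to $2\pi$ one has $\sum_j(k_j-2)=-4$, so $S(\Pi)$ lies in the stratum $\cQ(k_1-2,\dots,k_n-2)$ of quadratic differentials with at most simple poles on $\CP$. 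Under this correspondence a band of closed billiard trajectories of length $\ell$ and width $w$ is carried to a flat cylinder on $S(\Pi)$ whose circumference and area equal $\ell$ and $\ell w$ up to fixed elementary factors coming from the doubling. Hence $\Narea(\Pi,L)$ equals, up to an explicit elementary constant, the \emph{area--weighted} cylinder counting function of $S(\Pi)$, evaluated at a length proportional to $L$ and divided by the area of $S(\Pi)$.

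\textbf{Step 2 (dynamics and universality).} The area--weighted cylinder count of a flat surface is a Siegel--Veech counting function; by the Siegel--Veech theory (Eskin--Masur), for a surface generic with respect to an $\SL$--invariant ergodic probability measure $\nu$ it is asymptotic to $\pi\,\carea(\nu)\,L^2$, with $\carea(\nu)$ the area Siegel--Veech constant of $\nu$. For almost every $\Pi$ in the family the genericity of $S(\Pi)$, and hence the exact asymptotics (\ref{eq:billiard:carea}), is supplied by Theorem~\ref{theorem:bikhoff:chaika} of Appendix~\ref{sec:chaika} together with the general counting Theorems~\ref{theorem:invididual:billiards} and \ref{th:SV:I}--\ref{th:SV:IV}, exactly as in Theorems~\ref{theorem:pi:2:general} and \ref{theorem:L:shaped:almost:all}; for \emph{every} $\Pi$ the weak asymptotics $\mysim$ follows from the Eskin--Mirzakhani--Mohammadi equidistribution of large circle averages applied to the orbit closure of $S(\Pi)$, as in Theorem~\ref{theorem:L:shaped:precise}(ii). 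What makes the constant the same for every $\Pi$ --- and even from one family to another --- is that the area Siegel--Veech constant of an $\SL$--invariant affine submanifold of a stratum of quadratic differentials on $\CP$ depends only on the orders $d_j=k_j-2$ of the singularities: in the Eskin--Kontsevich--Zorich--type identity expressing this constant through the Lyapunov exponents of the Hodge bundle over the affine submanifold, the relevant sum of exponents has one term per handle of the base surface and is therefore empty for $\CP$, so $\carea$ is forced to equal a purely combinatorial expression in the $d_j$.

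\textbf{Step 3 (evaluation of the constant).} This combinatorial expression is a sum of local contributions, one per singularity, the contribution of a singularity of order $d_j$ being proportional to $\dfrac{d_j(d_j+4)}{d_j+2}=-\Bigl(\dfrac{4}{k_j}-k_j\Bigr)$; the precise constant of proportionality, in the normalization relevant to $\Narea$, is obtained either from the genus--$0$ Eskin--Kontsevich--Zorich identity above or, as in the main body of this paper, from the Eskin--Masur--Zorich formula expressing $\carea$ through the ratio of the volume of $\cQ(k_1-2,\dots,k_n-2)$ to the volumes of its codimension--one degenerations, fed by the explicit volume formula for strata on $\CP$ proved here. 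Summing the local terms and carrying the doubling and area--normalization factors of Step~1 through yields
\[
\Narea(\Pi,L)\;\sim\;\frac{1}{16\pi}\,\sum_{j=1}^n\left(\frac{4}{k_j}-k_j\right)\cdot\frac{L^2}{\text{Area of the billiard table}}\,,
\]
and the sum on the right is positive since at least four of the $k_j$ equal $1$ (in fact it is always $\ge 12$).

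\textbf{Main obstacle.} The dynamical inputs --- Eskin--Masur, Eskin--Mirzakhani--Mohammadi, and Chaika's Theorem~\ref{theorem:bikhoff:chaika} --- are available once the general counting theorems are in place, and the genus--$0$ vanishing of the Lyapunov side that makes the constant universal is soft. The substantive work, which is the technical heart of the paper, lies in Step~3: establishing the explicit formula for the volumes of the strata of quadratic differentials on $\CP$ and setting up the Eskin--Masur--Zorich--type relation between $\carea$ and these volumes --- in particular identifying the relevant one--cylinder degenerations and their combinatorial multiplicities --- so that the volume ratios collapse to the clean sum of local contributions displayed above.
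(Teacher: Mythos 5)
Your proposal is correct and follows essentially the same route as the paper: unfold to the flat sphere so that $\Narea(\Pi,L)=\Narea(S,L)$ after the doubling factors cancel, apply Chaika's Theorem~\ref{theorem:bikhoff:chaika} (resp.\ Eskin--Mirzakhani--Mohammadi) for the exact a.e.\ (resp.\ weak everywhere) asymptotics, and read off the universal constant from the Eskin--Kontsevich--Zorich formula for $\carea$. One clarification: for this particular theorem the paper simply quotes $\carea(\cQ(k_1-2,\dots,k_n-2))=\tfrac{1}{8\pi^2}\sum_j\bigl(4/k_j-k_j\bigr)$ from \cite[Theorem~3]{Eskin:Kontsevich:Zorich} and then tracks the normalization, so the volume computation you single out as the main obstacle is not actually needed here --- it enters only for the per-configuration constants, where the paper runs the logic in the opposite direction to deduce the volumes from $\carea$.
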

The weak asymptotics for all billiards follows, as before, from~\cite[Theorem 2.12]{Eskin:Mirzakhani:Mohammadi}. The strong asymptotics (\ref{eq:billiard:carea}) is proved in
~\S\ref{sec:point:weak}, using Jon Chaika's
Theorem~\ref{theorem:birkhoff:chaika} which is proved in
Appendix~\ref{sec:chaika}.  The constant in the corresponding counting
function is directly related to the \textit{Siegel--Veech area
  constant} for the corresponding stratum of meromorphic quadratic
differentials on $\CP$ discussed in \S\ref{sec:subsec:intro:volumes}.

\subsection{Right-Angled billiard tables and quadratic differentials}
\label{sec:subsec:intro:rab:qd}

Given  a  right-angled  billiard $\Pi$ in $\cB(k_1,\dots,k_n)$ we can
glue  a  topological  sphere  from  two  superposed  copies  of $\Pi$
identifying  the  boundaries  of  the  two  copies by isometries, see
Figure~\ref{fig:billiard:and:sphere}.  By  construction the resulting
topological  sphere  is  endowed  with  a  flat metric. Note that the
metric  is  regular  on  interior  of  the  segments  coming from the
boundary  of  $\Pi$:  one can unfold a neighborhood of any such point
into a small regular flat domain.

\begin{figure}[htb]
\includegraphics{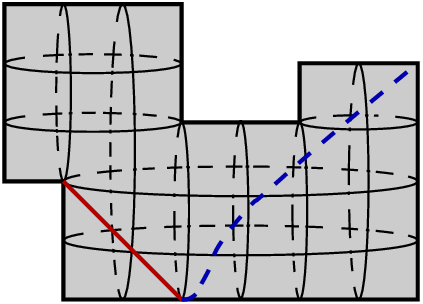}
\includegraphics{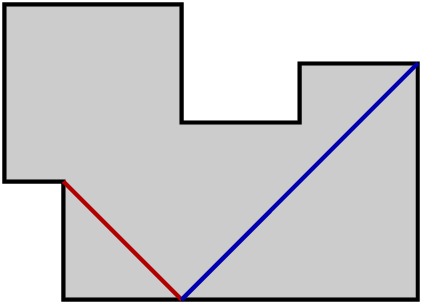}
\includegraphics{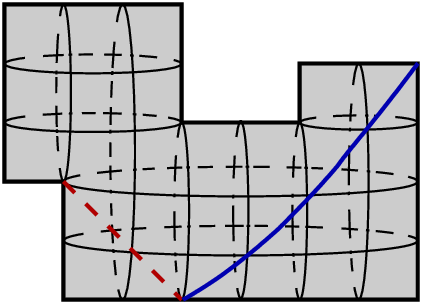}
\vspace{80pt}
\caption{
\label{fig:billiard:and:sphere}
Flat  spheres  glued  from two copies of a right-angled billiard. The
angle  by  which  the  billiard  table is rotated with respect to the
horizontal  position  encodes  the  ``phase''  of  the  corresponding
quadratic differential. A general generalized diagonal in the polygon
gives rise to two distinct saddle connections on the flat sphere.}
\end{figure}

However,  the  resulting  flat  metric has conical singularities with
cone  angles  $\pi  k_1,\dots, \pi k_n$ at the points coming from the
vertices  of  $\Pi$.  By construction the linear holonomy of the flat
metric with isolated singularities belongs to the group $\Z/2\Z$: the
parallel  transport  along  a  short  path encircling a conical point
$P_j$  brings a tangent vector $\vec v$ either to itself or to $-\vec
v$ depending on the parity of $k_j$.

It  is  known  that a flat metric with isolated conical singularities
and  with  holonomy in $\Z/2\Z$ on a closed surface defines a complex
structure  and  a  meromorphic  quadratic  differential  $q$  in this
complex   structure   defined   up  to  multiplication  by  a  scalar
$e^{i\phi}$.  Choosing  a line direction $\pm\vec v$ at some point of
the  resulting  flat  sphere as a ``horizontal'' direction we fix the
scalar  $e^{i\phi}$.  In  an  appropriate  flat  local coordinate $z$
outside  of  the  conical points the resulting quadratic differential
has  the  form  $(dz)^2$.  A  conical  singularity  with a cone angle
$k_i\pi$  corresponds  to  a  zero  of  the quadratic differential of
degree $k_i-2$, where a ``zero of degree $-1$'' is a simple pole.

Actually,  the two structures are synonymous: a meromorphic quadratic
differential  $q$  with  at  most  simple  poles on a Riemann surface
defines  a canonical flat metric with isolated conical singularities,
with  linear monodromy in $\Z/2\Z$ and with a distinguished foliation
by   straight   lines   in   the   flat   metric  (see  the  original
papers~\cite{Masur:interval}         and~\cite{Veech:Gauss}        or
surveys~\cite{Masur:Tabachnikov} and~\cite{Zorich:Houches}).

By   construction  closed  billiard  trajectories  in  $\Pi$  are  in
canonical  one-to-two correspondence with closed regular geodesics on
the associated flat sphere, and generalized diagonals on $\Pi$ are in
the   natural   one-to-two  correspondence  with  the  \textit{saddle
connections}     on     the     associated     flat    sphere,    see
Figure~\ref{fig:billiard:and:sphere}. Thus, the two counting problems
are closely related.

It   is   known   by   work   of   Veech~\cite{Veech:siegel}  and  of
Eskin-Masur~\cite{Eskin:Masur}  that  almost  all  flat  spheres in a
given  stratum  $\cQ(d_1, \dots, d_n)$ satisfy a quadratic asymptotic
formula  for  the  number  of  saddle connections. However, we cannot
immediately  translate  this  result  to  right-angled  billiards. An
elementary  count  shows  that  the  space  $\cB(k_1,\dots,k_n)$  has
\textit{real}   dimension   $n-2$,   while   the  associated  stratum
$\cQ(k_1-2,\dots,k_n-2)$  has \textit{complex} dimension $n-2$. Thus,
flat spheres constructed from right-angled billiards form a subset of
measure  zero,  and  ``almost  all''  results  for the strata are not
applicable to families of billiards. This is the common difficulty of
translating results valid for flat surfaces to billiards.

In  our  specific  case we are lucky enough to get a subspace of flat
spheres  ``of  billiard origin'' which is transversal to the unstable
foliation  of  the Teichm\"uller flow (see \S\ref{sec:billiards:qd}).
This  allows us to  apply  certain  techniques  of hyperbolic
dynamics to obtain some ergodic results in slightly weaker form. As a
corollary  we  obtain  the  desired  information  on quadratic
asymptotics  in the counting problems for almost all billiards. The
corresponding ergodic technique is presented in \S\ref{sec:ergodic}. A
key tool we use is Theorem~\ref{theorem:birkhoff:chaika} proved by Jon
Chaika in Appendix~\ref{sec:chaika}.

\subsection{Reader's guide}
The paper (like Caesar's Gaul) is composed of three parts.
The   reader   interested   only  in  the  billiards  may  read  only
\S\ref{sec:configurations:counting}
(and
optionally \S\ref{sec:billiards:qd} and \S\ref{sec:ergodic}).
The ergodic theorem we use in
  \S\ref{sec:ergodic} is due to Jon Chaika, and is proved in Appendix~\ref{sec:chaika}.

The   part   where   we   compute   the   volume   of   any   stratum
$\cQ_1(d_1,\dots,d_n)$ of meromorphic quadratic differentials with at
most   simple   poles   on   $\CP$   and   where   we   compute   the
\textit{Siegel--Veech constants} for these strata is independent from
the     rest     of     the     paper.    It    is    presented    in
\S\ref{sec:subsec:configurations},
\S\S\ref{sec:subsec:coordiantes:in:stratum}--\ref{sec:subsec:hat:homologous:saddle:connections}
and   in  \S\S\ref{sec:Siegel:Veech}--\ref{sec:induction}  (with  one
verification         in        Appendix~\ref{sec:appendix:identity}).

Finally,  Appendix~\ref{sec:pillowcase:covers}  devoted to pillowcase
covers is completely independent of the rest of the
paper.

\subsection{Historical remarks}
The  formula  for  the  volume  of  the strata of
quadratic  differentials  was  guessed  by  M.~Kontsevich more than a
decade  ago.  At this time formula~\eqref{eq:carea:answer} related to
Lyapunov   exponents  was  known  experimentally.  The  Siegel--Veech
constant~\eqref{eq:carea:in:terms:of:volumes}  has  especially simple
form for the strata $\cQ(d,-1^{d+4})$ of quadratic differentials with
a    single    zero    and    only    simple    poles    on    $\CP$.
Comparing~\eqref{eq:carea:answer}         and        a        version
of~\eqref{eq:carea:in:terms:of:volumes}   M.~Kontsevich   obtained  a
conjectural  formula  for  $\Vol\cQ_1(d,-1^{d+4})$.  Motivated by the
simplicity of the resulting expression as a function of $d$ he stated
a  guess that $\Vol\cQ_1(d_1,\dots,d_k)$ for any stratum in genus $0$
might  be expressed as a product of the corresponding expressions for
all $d_i$.

\subsection*{Acknowledgments}
The  authors  are grateful to M.~Kontsevich for the conjecture on the
volumes        and        for        collaboration       in       the
work on Lyapunov exponents essential for the current paper.

Part  of  this  paper  strongly  relies  on  techniques  developed  in
collaboration  with  H.~Masur.  We  are  grateful to him for his very
important contribution.

We  thank  C.~Boissy  for  the list of configurations of \^homologous
saddle  connections  in  genus  $0$,  elaborated specifically for our
needs, and for his pictures of configurations.

We  thank  P.~Paule  for  the kind permission to use the ``MultiSum''
package,  which  was helpful in developing certain intuition with the
combinatorial identity.

We thank E.~Goujard who carefully read the first version of this
paper, provided us with a list of typos,
and      indicated      to     us     a     missing     factor     in
formula~\eqref{eq:volume:through:covers}   and   necessity  to  prove
Lemma~\ref{lm:chessboard:coloring}        as       explained       in
Remark~\ref{rm:genus:0:only}.

We are grateful to P.~Hubert and to anonymous referee whose
suggestions helped to improve the presentation and clarity
of the arguments.

The  authors  are  happy  to  thank  IHES,  IMJ,  IUF,  MPIM,
MSRI, and the
Universities   of   Chicago,  of  Illinois  at  Urbana-Champaign,  of
Rennes~1,  and  of  Paris~7 for hospitality during the preparation of
this paper.


\section{Configurations and Counting Theorems}
\label{sec:configurations:counting}

\subsection{Types of saddle connections and generalized diagonals}
\label{sec:subsec:configurations}
We  distinguish  the  following  four  ways  of  getting  generalized
diagonals  in  a right-angled billiard. They correspond to four types
of configurations of saddle connections on a flat sphere defined by a
meromorphic     quadratic    differential    with    simple    poles,
see~\cite{Eskin:Masur:Zorich}   and~\cite{Masur:Zorich}  for  general
information   on   \textit{configurations   of   saddle  connections}
and~\cite{Boissy:configurations} for specific case of $\CP$.
\smallskip

\textbf{I. Saddle connection joining distinct singularities.} In this
situation  (see  Figure~\ref{fig:I:distinct:singularities}) we have a
generalized  diagonal  joining  a  corner  $P_i$ with the inner angle
$k_i\frac{\pi}{2}$, where $k_i\ge 3$, to a distinct corner $P_j$.

\begin{figure}[htb]
%
   %
\includegraphics{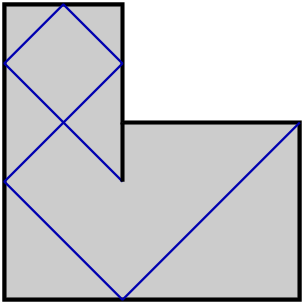}
\includegraphics{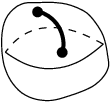}
\begin{picture}(0,0)(0,5)
\put(-49,-15){$P_j$}
\put(-86,-35){$P_i$}
\put(50,-15){$P_j$}
\put(63,-35){$P_i$}
\end{picture}
\vspace{60bp}
\caption{
\label{fig:I:distinct:singularities}
Type  I.  On  the  left:  a generalized diagonal joining two distinct
corners  of  the  billiard, where at least one of the two corners has
inner  angle  at  least $\frac{3\pi}{2}$. It does not bound a band of
closed  trajectories.  On  the  right:  a  saddle connection on $\CP$
joining a zero to a distinct zero (or to a pole).
   }
\end{figure}

The  induced flat metric on $\CP$ has an associated saddle connection
of  the  same  length joining the zero $P_i$ to the distinct zero (or
simple pole) $P_j$.
\smallskip

\textbf{II.  Saddle  connection  joining  a  zero  to  itself.}  This
situation  (see Figure~\ref{fig:II:zero:to:itself}) can happen only
when  we  have  a corner $P_i$ with a corner angle $k_i\frac{\pi}{2}$
with  $k_i\ge  4$.  In  this  case we can have a generalized diagonal
joining the corner $P_i$ to itself such that it does not bound a band
of  closed  regular  trajectories.

\begin{figure}[htb]
%
   %
\includegraphics{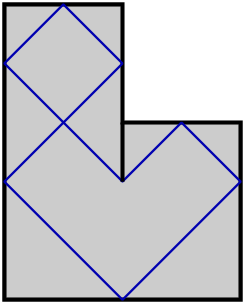}
\includegraphics{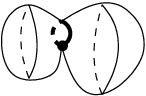}
\begin{picture}(0,0)(0,5)
\put(-86,-35){$P_i$}
\put(79,-35){$P_i$}
\end{picture}
\vspace{60bp}
\caption{
\label{fig:II:zero:to:itself}
Type  II.  On  the left: a generalized diagonal returning to the same
corner.  For this type, it does not bound closed trajectories. On the
right:  the  corresponding saddle connection joining a zero (of order
at least $2$) to itself.
   }
\end{figure}

For  the  induced  flat metric on $\CP$ we get a corresponding saddle
connection  of  the same length joining the zero $P_i$ to itself such
that  the  total  angle $k_i\pi$ at the singularity $P_i$ is split by
the  separatrix  loop  into  two  sectors  having the angles strictly
greater  than  $\pi$  (which  is  equivalent  to  the  condition that
generically such a saddle connection does not bound a cylinder filled
with  periodic  geodesics).
\smallskip

\textbf{III.     A    ``pocket''.}    In    this    situation    (see
Figure~\ref{fig:III:pocket}) we have a band of periodic trajectories.
The  boundary  of  the band is composed of two generalized diagonals.
The  first  generalized diagonal joins a pair of corners $P_i$, $P_j$
with   inner  angles  $\frac{\pi}{2}$.  The  length  of  this  saddle
connection  is  twice  shorter  than  the length of periodic billiard
trajectory  in  the  band.  The  second  generalized diagonal joins a
corner  $P_i$  with inner angle $k_i\frac{\pi}{2}$ with $k_i\ge 3$ to
itself.  The  length  of  this  saddle  connection is the same as the
length of periodic billiard trajectory in the band.

\begin{figure}[htb]
%
   %
\includegraphics{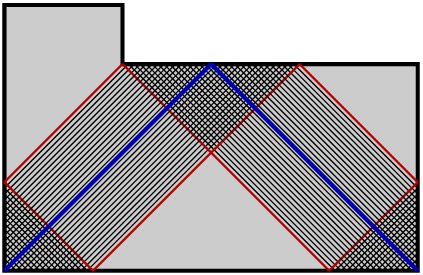}
\includegraphics{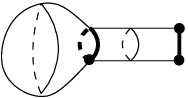}
\begin{picture}(0,0)(0,-13)
\put(-145,-93){$P_i$}
\put(-100,-21){$P_l$}
\put(-16,-93){$P_j$}
\end{picture}
\begin{picture}(0,0)(-150,0)
\put(-70,-52){$P_l$}
\put(-18,-16){$P_i$}
\put(-18,-52){$P_j$}
\end{picture}
\vspace{80bp}
\caption{
\label{fig:III:pocket}
Type  III.  On the left: a band of closed trajectories bounded by two
generalized   diagonals.  One  of  generalized  diagonals  joins  two
distinct  corners  with  angles $\frac{\pi}{2}$; the other returns to
the   same   corner.  On  the  right:  the  corresponding  ``pocket''
configuration  with  a  cylinder  bounded  on  one  side  by a saddle
connection  joining  two  simple  poles,  and  by a saddle connection
joining a zero to itself on the other side.
   }
\end{figure}

For  the associated flat metric on $\CP$ we get a cylinder filled with
closed  regular  trajectories.  One of the boundary components of the
cylinder  degenerates to a saddle connection joining two simple poles
$P_i$,  $P_j$.  Clearly, this saddle connection is twice shorter than
the length of the periodic trajectories. The other boundary component
is  a  saddle  connection joining the zero $P_l$ to itself. The total
angle  $k_l\pi$  at  the singularity $P_l$ is split by the separatrix
loop  into two sectors, such that the sector adjacent to the cylinder
has  angle $\pi$. The length of this saddle connection is the same as
the  length  of the periodic trajectories in the cylinder.
\smallskip

\textbf{IV.   A   ``dumbbell''.}   In   this   last   situation  (see
Figure~\ref{fig:IV:dumbbell})  we  again  have  a  band  of  periodic
trajectories.  The  boundary  of  the  band  is again composed of two
generalized  diagonals,  but this time the first generalized diagonal
joins the corner $P_i$ with inner angle $k_i\frac{\pi}{2}$ to itself,
and  the  second generalized diagonal joins the distinct corner $P_j$
with  inner  angle  $k_j\frac{\pi}{2}$  to itself. Both $k_i,k_j$ are
greater  than  or  equal  to  $3$.  The  length  of each of these two
generalized  diagonals  is  the  same as the length of every periodic
billiard trajectory in the band.

\begin{figure}[htb]
\centering
%
   %
\includegraphics{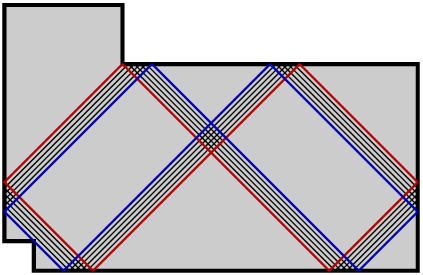}
\includegraphics{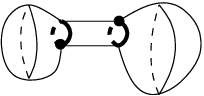}
\begin{picture}(0,0)(-25,10)
\put(-165,-65){$P_i$}
\put(-127,0){$P_j$}
\put(33,-35){$P_i$}
\put(62,-4){$P_j$}
\end{picture}
\vspace{70bp}
\caption{
\label{fig:IV:dumbbell}
Type IV. On the left: a band of periodic trajectories, such that each
of the two bounding generalized diagonals returns to the same corner.
On the right: a ``dumbbell'' composed of two flat spheres joined by a
cylinder.  Each  boundary  component  of  the  cylinder  is  a saddle
connection joining a zero to itself.
   }
\end{figure}

For the associated flat metric on $\CP$ we get a cylinder filled with
closed  regular  trajectories.  On each of the boundary components of
the  cylinder  we  have  a  saddle  connection joining the zero $P_i$
(correspondingly  $P_j$)  to  itself.  The  length of each of the two
saddle  connections  is  the  same  as  the  length  of  the periodic
trajectories in the cylinder.
\smallskip

The  following two Propositions explain why we distinguish these four
particular types of configurations
(see more details
in ~\S\ref{sec:subsec:hat:homologous:saddle:connections} which discusses
a homological interpretation of these statements).

\begin{proposition}
\label{prop:only:four:types}
Almost  any  flat  surface $S$ in any stratum $\cQ_1(d_1, \dots,d_n)$
different  from  the pillowcase stratum $\cQ_1(-1^4)$ does not have a
single  pair  of parallel saddle connections different from the pairs
involved in configurations of types $I,II,III,IV$.
\end{proposition}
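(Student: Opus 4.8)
The plan is to show that, on a generic surface, two saddle connections are parallel if and only if they are \^homologous, and then to invoke the classification of configurations of \^homologous saddle connections on flat spheres.

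\textbf{Parallel implies \^homologous, generically.} Recall that each stratum $\cQ(d_1,\dots,d_n)$ carries period coordinates: passing to the canonical double cover $\pi\colon\hat S\to S$ with holomorphic $1$-form $\hat\omega$ satisfying $\hat\omega^2=\pi^*q$, a chart is given by the periods of $\hat\omega$ over a $\Z$-basis of the anti-invariant relative homology $H_1^-(\hat S,\hat\Sigma;\Z)$, and the chart map is a biholomorphism onto an open set in a complex vector space of dimension $\dim_{\cx}\cQ(d_1,\dots,d_n)=n-2$. A saddle connection $\gamma$ on a surface in such a chart has a well-defined class $c(\gamma)\in H_1^-(\hat S,\hat\Sigma;\Z)$ (up to sign), and its holonomy vector $\operatorname{hol}(\gamma)=\langle\hat\omega,c(\gamma)\rangle\in\cx$ is a linear functional of the period coordinates; call $\gamma_1,\gamma_2$ \^homologous when $c(\gamma_1)$ and $c(\gamma_2)$ are proportional over $\Q$. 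Since $d_i\ge-1$ and $\sum d_i=-4$ one has $n\ge4$, and $n=4$ forces $(d_1,\dots,d_n)=(-1,-1,-1,-1)$; thus for every stratum other than the pillowcase $\dim_{\cx}\cQ\ge3$. If $\gamma_1,\gamma_2$ are not \^homologous, then $c(\gamma_1),c(\gamma_2)$ are linearly independent, so in period coordinates $\bigl(\operatorname{hol}(\gamma_1),\operatorname{hol}(\gamma_2)\bigr)$ is a linear submersion onto an open subset of $\cx^2$; the parallelism locus $\{(z,w):\operatorname{Im}(z\bar w)=0\}\subset\cx^2$ has real codimension one, so its preimage has measure zero (and, parallelism being scale-invariant, this passes to the area-one hypersurface $\cQ_1$).

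\textbf{A countable union.} Cover $\cQ_1$ by countably many period charts. On each chart the saddle connections that occur are indexed, up to isotopy, by their (countably many) classes in $H_1^-(\hat S,\hat\Sigma;\Z)$, and wherever such a class is represented by an honest saddle connection its holonomy is the corresponding linear functional. Applying the previous step to every pair of non-\^homologous classes in every chart and taking the union of the resulting null sets, we obtain a measure-zero set $Z\subset\cQ_1$ off which any two parallel saddle connections are \^homologous.

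\textbf{The classification.} It remains to identify the possible \^homologous configurations. By the general theory of configurations of \^homologous saddle connections of \cite{Eskin:Masur:Zorich} and \cite{Masur:Zorich}, made explicit for the strata on $\CP$ by C.~Boissy in \cite{Boissy:configurations}, on a generic flat sphere in any stratum other than $\cQ_1(-1^4)$ a maximal collection of mutually \^homologous saddle connections realizes one of the four configurations I, II, III, IV described above (the collection consisting of a single saddle connection in types I and II, and of the two boundary components of a cylinder --- parallel by construction --- in types III and IV). Combined with the previous step, this shows that for $S\notin Z$ every pair of parallel saddle connections on $S$ is one of the pairs occurring in configurations I--IV, which is the assertion. (The pillowcase $\cQ_1(-1^4)$ must be excluded: there $S$ is a flat torus modulo the elliptic involution, and the four simple poles, being the images of the four half-period points, are always joined by pairs of parallel saddle connections --- the ``opposite sides'' of the torus --- lying in none of the types I--IV, all of which require a genuine zero.)

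\textbf{Main difficulty.} The only substantial external input is the classification used in the last step: one needs the complete list of configurations of \^homologous saddle connections in genus zero, which is Boissy's refinement of the principal-boundary analysis of \cite{Eskin:Masur:Zorich} and \cite{Masur:Zorich}. The remainder is the standard genericity argument in period coordinates --- a countable union of proper real-analytic subsets is null --- whose only delicate point is the observation that discarding the pillowcase stratum is exactly what guarantees $\dim_{\cx}\cQ\ge2$ and hence the submersion in the first step.
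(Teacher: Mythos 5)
Your proof is correct and follows essentially the same route as the paper's: a period-coordinate genericity argument showing that two parallel saddle connections are generically \^homologous (the paper cites Proposition~4 of \cite{Masur:Zorich} for this step, which is proved by exactly the countable-union-of-null-sets argument you write out), followed by Boissy's classification \cite{Boissy:configurations} of configurations of \^homologous saddle connections on $\CP$ into types I--IV. One small correction to your closing remark: the pillowcase is not excluded because of the dimension count (one has $\dim_{\cx}\cQ(-1^4)=2$, so your submersion onto $\cx^2$ still applies there), but because on the pillowcase there are \^homologous parallel pairs --- the opposite edges you mention --- lying outside the I--IV list; that is, the exclusion is needed at the classification step, exactly as your own parenthetical explains.
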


Proposition~\ref{prop:only:four:types}       is       proved       in
~\S\ref{sec:subsec:hat:homologous:saddle:connections}.
An analogous statement can be formulated for right-angled billiards.

\begin{proposition}
\label{pr:no:other:for:billiards}

For    almost    any    right-angled    billiard    in   any   family
$\cB(k_1,\dots,k_n)$ the following property holds. Consider a pair of
trajectories,  where  each  trajectory  is  either  a  closed regular
trajectory or a generalized diagonal. Suppose that these trajectories
are  not  parallel to any side of the polygon. If some segment of the
first   trajectory   is  parallel  to  some  segment  of  the  second
trajectory, then both trajectories make part of one of configurations
I--IV described in~\ref{sec:subsec:configurations}.
\end{proposition}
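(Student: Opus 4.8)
\medskip

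The plan is to transport the statement to the associated flat spheres and to rerun, over the billiard locus, the argument behind Proposition~\ref{prop:only:four:types}.

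First I would set up the translation. Fix a family $\cB(k_1,\dots,k_n)$ of directional billiards; for $\Pi\in\cB(k_1,\dots,k_n)$ let $S(\Pi)$ be the flat sphere obtained by gluing two copies of $\Pi$, carrying a meromorphic quadratic differential in the stratum $\cQ:=\cQ_1(k_1-2,\dots,k_n-2)$, and let $\sigma$ be the isometric involution exchanging the two copies, whose fixed locus is the union of the corners and of the segments of $S(\Pi)$ coming from $\partial\Pi$. As the direction is part of the data defining $\Pi$, the locus $\mathcal L\subset\cQ$ of flat spheres of billiard origin in this family is invariant under the rotation action on $\cQ$; moreover, reflecting the extra ($\sigma$\nobreakdash-) symmetry of billiard spheres, $\mathcal L$ is a real-linear subspace of $\cQ$ in period coordinates, and since $\dim_{\R}\mathcal L=n-2=\dim_{\cx}\cQ$ it is in fact a real form (see \S\ref{sec:billiards:qd}). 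Recall from \S\ref{sec:subsec:intro:rab:qd} the one-to-two correspondences on $S(\Pi)$: closed regular billiard trajectories correspond to closed regular geodesics, equivalently to cylinders, and generalized diagonals correspond to saddle connections, with the two objects over a given trajectory or diagonal exchanged by $\sigma$. Since every side of a right-angled polygon points in one of two perpendicular directions, ``parallel to a side of $\Pi$'' translates to ``horizontal or vertical'' on $S(\Pi)$, for the normalization in which those two directions are the coordinate axes. Finally, by the descriptions in \S\ref{sec:subsec:configurations} and the accompanying figures, the four types of configurations of parallel saddle connections (and cylinders) on $S(\Pi)$ descend under $S(\Pi)\to\Pi$ precisely to billiard types I--IV. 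Hence the Proposition reduces to the assertion that, for almost every $\Pi$ in the family, $S(\Pi)$ carries no pair of parallel saddle connections — cylinder boundaries included, as in Proposition~\ref{prop:only:four:types} — outside configurations I--IV.

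The difficulty is that $\mathcal L$ has half the real dimension of $\cQ$, so it is a null set in the stratum and Proposition~\ref{prop:only:four:types} cannot be quoted. Instead I would repeat its proof restricted to $\mathcal L$. For two saddle connections $\gamma_1,\gamma_2$ on $S(\Pi)$ the holonomy vectors $v_1,v_2\in\cx$ are $\cx$-linear functionals of the local period coordinates, so the parallelism locus $\{\Im(\overline{v_1}\,v_2)=0\}$ is a proper real-analytic subset of $\cQ$ unless $v_1$ and $v_2$ are $\cx$-proportional, i.e.\ unless $\gamma_1,\gamma_2$ are homologous on the orientation double cover (``forced parallel''); and in genus $0$ the homologous configurations are, by the classification from~\cite{Boissy:configurations} used in \S\ref{sec:subsec:hat:homologous:saddle:connections}, exactly I--IV. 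What must be shown is that this dichotomy survives restriction to $\mathcal L$: for every pair $\gamma_1,\gamma_2$ that is \emph{not} homologous on the double cover, the parallelism locus meets $\mathcal L$ in a \emph{proper} real-analytic subset. Granting this, the set of billiards $\Pi$ carrying some parallel pair outside I--IV, other than the horizontal and vertical (``parallel to a side'') ones, is a countable union of such proper subsets of $\mathcal L$, hence of measure zero; translating back via the one-to-two correspondence proves the Proposition. The stratum $\cQ_1(-1^4)$, corresponding to rectangular tables ($n=4$, all $k_j=1$), is the excluded case of Proposition~\ref{prop:only:four:types}; for rectangles the claim about trajectories and diagonals not parallel to a side is elementary and is verified directly.

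The main obstacle is precisely the persistence claim: since $\mathcal L$ is only half-dimensional it could, a priori, be contained in one of the countably many parallelism loci attached to non-homologous pairs, and ruling this out requires the finer geometry of $\mathcal L$. Using that $\sigma$ relates each saddle connection to its $\sigma$-image and acts on periods by a real-linear involution, restriction to $\mathcal L$ rewrites $\Im(\overline{v_1}\,v_2)$ as the imaginary part of a holomorphic function of the period coordinates; because $\mathcal L$ is a real form, that imaginary part can vanish identically on $\mathcal L$ only subject to a strong algebraic constraint on the pair of holonomy functionals. I expect that this constraint, combined with the transversality of $\mathcal L$ to the unstable foliation of the Teichm\"uller flow proved in \S\ref{sec:billiards:qd} — which ensures that every relevant deformation of the periods is realized by varying the side lengths and the direction of $\Pi$ — forces $v_1$ and $v_2$ to be $\cx$-proportional, contradicting non-homology on the double cover. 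Turning this implication into a proof, while at the same time keeping track of which saddle connections can arise as cylinder boundaries so that the configurations on $S(\Pi)$ are exactly I--IV and the excluded objects are precisely those parallel to a side, is where the real work lies.
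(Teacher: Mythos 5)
Your reduction is sound and matches the paper's: pass to generalized diagonals (closed trajectories are bounded by them), use Boissy's classification to see that any pair of parallel trajectories outside configurations I--IV would come from a pair of non-\^homologous saddle connections, and then show that for each such pair the parallelism condition cuts out a null set of the billiard family. But you stop exactly at the step that constitutes the proof. You write that the half-dimensionality of the billiard locus ``could, a priori'' let it sit inside a parallelism locus, that you ``expect'' the $\sigma$-symmetry and the transversality of Proposition~\ref{prop:map:onto} to rule this out, and that ``turning this implication into a proof \dots is where the real work lies.'' That is an admitted gap, not an argument; and the route you sketch (a real-form/holomorphicity constraint forcing $\cx$-proportionality of the holonomy functionals) is not what makes the statement true and is not obviously workable.

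What actually closes the gap is elementary and concrete. In the basis of $H_1^-(\hat S,\{\hat P_i\};\Z)$ built from the broken line $P_1,\dots,P_{n-1}$, the period of the $m$-th basis cycle on a billiard sphere is $2\,l_m\,e^{i(k_2+\dots+k_m)\pi/2+i\phi}$ with $l_m=|P_mP_{m+1}|$; hence the holonomy of any saddle connection, being the pairing with an \emph{integer} cycle $\hat c=\sum a_i\hat\gamma_i+\sum b_j\hat\gamma_j$, has direction determined by $\tan\phi=\bigl(\sum a_i l_i\bigr)/\bigl(\sum b_j l_j\bigr)$ (numerator over vertical sides, denominator over horizontal ones). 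Two parallel generalized diagonals therefore impose
$$\frac{\sum a_i l_i}{\sum b_j l_j}=\frac{\sum c_i l_i}{\sum d_j l_j}\,,$$
and since non-\^homologous saddle connections have non-proportional cycles $\hat c_1,\hat c_2$, the integer vectors $(a;b)$ and $(c;d)$ are non-proportional, so this is a \emph{nontrivial} bilinear relation in the independent real coordinates $l_1,\dots,l_{n-2}$ --- a measure-zero condition. A countable union over all integer data $(a,b,c,d)$ remains null. Note that the angle $\phi$ is eliminated from the relation, so no equidistribution or holomorphicity input is needed; the transversality statement of Proposition~\ref{prop:map:onto} plays no role here. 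Your proposal correctly frames the problem but does not prove it.
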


Proposition~\ref{pr:no:other:for:billiards}  mimics  Proposition  7.4
in~\cite{Eskin:Masur:Zorich};       it       is       proved       in
~\S\ref{sec:subsec:The:subspace:of:billiards}.

\bold{Configurations  of saddle connections.} In addition to the type
I--IV of a saddle connection, we may specify some extra combinatorial
information, for example the indices (``names'') of all singularities
involved.  For  saddle  connections  of type IV, where a
cylinder is joining two spheres, we specify not only the zeroes $P_i$
and  $P_j$  at  the  boundary components of the cylinder, but we also
specify   the   subcollections   $P_{i_1},  \dots,  P_{i_{k_1}}$  and
$P_{j_1},  \dots,  P_{j_{k_2}}$ of numbered zeroes an poles which get
to  the  first and to the second sphere correspondingly. We call this
information the \textit{configuration} of a saddle connection (or the
configuration  of  saddle  connections, when there are several saddle
connections  involved  as  in  types  III and IV). By convention, the
configuration  of  saddle  connections  includes  its  type. See also
~\S\ref{sec:subsec:hat:homologous:saddle:connections}  for a homological
interpretation of a configuration of saddle connections.

\bold{Configuration    of    a    generalized   diagonal.}   By   the
\textit{configuration}  of  the  generalized  diagonal  we  mean  the
configuration of the associated saddle connections in $\CP$ described
in \S\ref{sec:subsec:intro:rab:qd}.

\subsection{Counting Theorems}
\label{sec:subsec:counting:theorems}

By the notation
\begin{displaymath}
N(L) \sim c L^2
\end{displaymath}
we mean as customary,
\begin{displaymath}
\lim_{L \to \infty} \frac{N(L)}{L^2} = c.
\end{displaymath}
For technical reasons, we will need to consider ``weak asymptotic
formulas''
\begin{displaymath}
N(L) \mysim c L^2
\end{displaymath}
which means
\begin{displaymath}
\lim_{L \to \infty} \frac{1}{L} \int_{0}^L N(e^t) e^{-2t} \, dt = c.
\end{displaymath}

The  following  theorem  (which  is  a  special  case  of  results of
\cite{Veech:siegel}  and  \cite{Eskin:Masur})  establishes  a  strong
asymptotic  formula  for  almost  all  flat surfaces in a stratum.
By   convention   we   always   count  \textit{non-oriented}  saddle
connections and \textit{non-oriented} closed flat geodesics.
\begin{theorem}
\label{theorem:SV:for:CP}
For almost any flat surface $S$ in any stratum $\cQ(d_1, \dots, d_n)$
of  meromorphic  quadratic differentials with at most simple poles on
$\CP$ the number $N_\cC(S,L)$ of occurrences of saddle connections of
length  at  most  $L$ and of fixed configuration $\cC$, has quadratic
asymptotics in $L$:
\begin{displaymath}
\label{eq:Th:N:cC:S:L}
N_\cC(S,L)\sim c_\cC\cdot
\frac{\pi L^2}{\text{Area of }S}
\,.
\end{displaymath}
The  constants $c_\cC$ are called Siegel-Veech constants. They depend
only  on  the  configuration  $\cC$  and  on $d_1, \dots, d_n$. Their
values are given in \S\ref{sec:Siegel:Veech}.
\end{theorem}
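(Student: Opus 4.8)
The plan is to derive this from the Siegel--Veech machinery of Veech~\cite{Veech:siegel} and the almost-everywhere asymptotics of Eskin--Masur~\cite{Eskin:Masur}, specialized to the strata of meromorphic quadratic differentials on $\CP$. To each flat surface $S$ in $\cQ(d_1,\dots,d_n)$ and each configuration $\cC$ one associates the discrete set $V_\cC(S)\subset\R^2$ of holonomy vectors of the saddle connections (together with their parallel partners) that realize $\cC$ on $S$, so that $N_\cC(S,L)=\#(V_\cC(S)\cap B(0,L))$, where $B(0,L)\subset\R^2$ is the disk of radius $L$. For a compactly supported function $f$ on $\R^2$ one forms the Siegel--Veech transform $\hat f(S)=\sum_{v\in V_\cC(S)}f(v)$, which satisfies the equivariance $\hat f(gS)=\widehat{f\circ g^{-1}}(S)$ for $g\in\SL$.

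The first substantial step is to prove that $\hat f$ is integrable on the unit-area locus $\cQ_1$ with respect to the Masur--Veech measure $\mu$. This is the main technical input, and it is equivalent to a bound $N_\cC(S,L)\le C(S)\,L^2$ with $C\in L^1(\cQ_1,\mu)$; one obtains it by estimating the contribution of surfaces carrying a short saddle connection, using the structure of the principal boundary recalled in \S\ref{sec:subsec:hat:homologous:saddle:connections} (and in~\cite{Eskin:Masur:Zorich},~\cite{Masur:Zorich},~\cite{Boissy:configurations}) together with the finiteness of $\mu$ (Masur~\cite{Masur:interval}, Veech~\cite{Veech:Gauss}). Once integrability is known, the $\SL$-equivariance of $\hat f$ forces the linear functional $f\mapsto\int_{\cQ_1}\hat f\,d\mu$ to be integration against an $\SL$-invariant measure on $\R^2$; since the only such measures on $\R^2\setminus\{0\}$ are scalar multiples of Lebesgue measure, this yields Veech's identity
\[
\int_{\cQ_1}\hat f\,d\mu=c_\cC\int_{\R^2}f(x,y)\,dx\,dy,
\]
which defines the Siegel--Veech constant $c_\cC$; its value, depending only on $\cC$ and on $d_1,\dots,d_n$, is computed in \S\ref{sec:Siegel:Veech}.

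It remains to pass from this space average to an almost-everywhere pointwise count. Using the ergodicity of the Teichm\"uller geodesic flow $g_t$ on each connected component of $\cQ_1$ (Masur~\cite{Masur:interval}, Veech~\cite{Veech:Gauss}), one writes $L=e^t$ and observes, following Eskin--Masur, that $e^{-2t}N_\cC(S,e^t)$ is comparable, up to a controlled error, to the average over rotations $r_\theta$ of the ergodic average $\frac1t\int_0^t\hat\chi(g_s r_\theta S)\,ds$, where $\chi$ is the indicator of a fixed small trapezoid $\{re^{i\alpha}:1\le r\le e,\ \alpha\in I\}$. The pointwise ergodic theorem, applied for a.e.\ $\theta$ and then integrated in $\theta$, together with a sandwiching of the discontinuous $\hat\chi$ between continuous functions, shows that for $\mu$-almost every $S$
\[
\lim_{L\to\infty}\frac{N_\cC(S,L)}{L^2}=\frac{\pi\,c_\cC}{\operatorname{Area}(S)}\,,
\]
which is the asserted asymptotics after rescaling $S$ to unit area. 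The exceptional pillowcase stratum $\cQ_1(-1^4)$ consists of lattice surfaces, for which the conclusion follows instead from Veech's dichotomy.

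I expect the main obstacle to be the integrability of the Siegel--Veech transform: one must rule out a superquadratic number of saddle connections in a given configuration on a positive-measure set of surfaces, which rests on the quantitative geometry of a neighborhood of the principal boundary in these genus-zero strata. A secondary delicate point is the passage from the mean asymptotics to the pointwise one, where the discrepancy between the hyperbolic geometry governing the $g_t$-averages and the Euclidean disk $B(0,L)$ must be absorbed, and where one needs that $\mu$-almost every $S$ carries no saddle connection with holonomy on the boundary of a trapezoid.
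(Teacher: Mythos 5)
Your overall framework (the Siegel--Veech transform, Veech's integration formula obtained from $\SL$-invariance and integrability, the reduction of the count to circle averages of $\widehat f$ over a trapezoid, and the Eskin--Masur estimates to handle noncompactness) is the same as the paper's, which proves Theorem~\ref{theorem:SV:for:CP} in \S\ref{sec:subsec:Reduction:to:ergodic:theory} by quoting \cite{Veech:siegel} and \cite{Eskin:Masur}. However, your final step has a genuine gap. You propose to get the pointwise statement by applying the Birkhoff ergodic theorem to the geodesic flow along almost every direction $\theta$ and then integrating in $\theta$; this controls only the Ces\`aro averages $\frac{1}{t}\int_0^t\widehat\chi(g_sr_\theta S)\,ds$, hence only the quantity $\frac{1}{t}\int_0^te^{-2s}N_\cC(S,e^s)\,ds$. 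That yields the \emph{weak} asymptotics $N_\cC(S,L)\mysim c_\cC\,\pi L^2/\area(S)$, not the strong asymptotics $N_\cC(S,L)\sim c_\cC\,\pi L^2/\area(S)$ asserted in the theorem: convergence of the logarithmic time average of $e^{-2t}N_\cC(S,e^t)$ does not force convergence of $e^{-2t}N_\cC(S,e^t)$ itself, and monotonicity of $N_\cC$ does not repair this. Relatedly, your claim that $e^{-2t}N_\cC(S,e^t)$ is ``comparable'' to the rotation average of the time-averaged quantity is not correct as stated; what the trapezoid computation \eqref{eq:circ:av} actually gives is that the circle average at the single time $t$ controls the dyadic increment $N_\cC(q,e^t)-N_\cC(q,e^t/2)$, so what is needed is convergence of $\frac{1}{2\pi}\int_0^{2\pi}\widehat f(g_tr_\theta q)\,d\theta$ as $t\to\infty$ with no further averaging in $t$.

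This is exactly where the paper, following \cite{Eskin:Masur}, invokes Nevo's pointwise ergodic theorem for spherical averages of the $\SL$-action \cite{Nevo}: it gives convergence of the circle averages themselves for almost every $q$, which is strictly stronger than Birkhoff along geodesic rays and is what upgrades the weak asymptotics to the strong ones. (The distinction between $\sim$ and $\mysim$ is precisely the point of \S\ref{sec:subsec:counting:theorems}; the Ces\`aro-only argument you describe is the one the paper is forced to use later for billiards, where Nevo's theorem is unavailable on the measure-zero billiard locus.) Your treatment of the remaining ingredients --- integrability of $\widehat f$ and the approximation of the discontinuous, unbounded $\widehat\chi$ --- is consistent with the Eskin--Masur bounds $\widehat f(q)\le C(\delta)\,l(q)^{-1-\delta}$ and the uniform estimates on $A_tl^{-1-\eta}$ that the paper also relies on.
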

Theorem~\ref{theorem:SV:for:CP}          is         proved         in
\S\ref{sec:subsec:Reduction:to:ergodic:theory}.           Note           that
Theorem~\ref{theorem:SV:for:CP}  has  no  relation  to  billiards, it
concerns  only flat metrics on $\CP$ induced by meromorphic quadratic
differentials  with  simple poles. In \S\ref{sec:subsec:intro:rab:qd}
we  described  how  a  right-angled  billiard table $\Pi$ canonically
determines  a  meromorphic  quadratic differential on $\CP$. However,
since  the  image  of  the  resulting  map  $\cB(k_1, \dots, k_n) \to
\cQ(k_1-2,  \dots  k_n-2)$  has  measure  $0$  in  $\cQ(k_1-2, \dots,
k_n-2)$,  results  such  as  Theorem~\ref{theorem:SV:for:CP}  do  not
immediately    imply    anything    about   right-angled   billiards.
Nevertheless, we have the following:
\begin{theorem}
\label{theorem:SV:for:billiards}
For  almost  any  billiard table $\Pi$ in any family $\cB(k_1, \dots,
k_n)$   of   right-angled  billiards  the  number  $N_\cC(\Pi,L)$  of
occurrences  of  generalized  diagonals of configuration $\cC$ and of
length at most $L$ has quadratic asymptotics in $L$:
\begin{equation}
\label{eq:SV:for:billiards}
N_\cC(\Pi,L) \sim \frac{c_\cC}{4}\cdot
\frac{\pi L^2}{\text{Area of the billiard table }\Pi}
\,,
\end{equation}
where  the  constants  $c_\cC$  are  the  corresponding Siegel--Veech
constants   $c_\cC$   for  the  stratum  $\cQ(k_1-2,\dots,k_n-2)$  in
Theorem~\ref{theorem:SV:for:CP}.
\end{theorem}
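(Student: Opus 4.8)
The plan is to reduce the statement to a saddle--connection counting statement on the flat spheres ``of billiard origin'', and then to upgrade the almost--everywhere count of Theorem~\ref{theorem:SV:for:CP} — which is valid on the whole stratum — to the measure--zero billiard locus by combining its transversality to the unstable foliation of the Teichm\"uller flow with Chaika's ergodic Theorem~\ref{theorem:bikhoff:chaika}.

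First I would make the geometric reduction. For $\Pi\in\cB(k_1,\dots,k_n)$ the doubling construction of \S\ref{sec:subsec:intro:rab:qd} produces a flat sphere $S=S(\Pi)$ carrying a meromorphic quadratic differential in the stratum $\cQ(k_1-2,\dots,k_n-2)$, with $\area(S)=2\,\area(\Pi)$, together with a natural two--to--one correspondence that takes a saddle connection on $S$ of a given configuration $\cC$ and length $\ell$ to a generalized diagonal on $\Pi$ of the same configuration and the same length. Hence $N_\cC(\Pi,L)=\tfrac12\,N_\cC(S(\Pi),L)$, and the asserted asymptotics~\eqref{eq:SV:for:billiards} is equivalent to $N_\cC(S(\Pi),L)\sim c_\cC\,\pi L^2/\area(S(\Pi))$ for almost every $\Pi$; the factor $\tfrac14$ is simply $\tfrac12$ (the two--to--one correspondence) times $\tfrac12$ (the doubling of the area). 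This is exactly the conclusion of Theorem~\ref{theorem:SV:for:CP}, but now required for a.e.\ point of the measure--zero sublocus $\{S(\Pi)\}\subset\cQ_1(k_1-2,\dots,k_n-2)$ rather than for a.e.\ point of the stratum, so it cannot simply be quoted.

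Next I would recall the Eskin--Masur mechanism underlying Theorem~\ref{theorem:SV:for:CP} (cf.\ \S\ref{sec:subsec:Reduction:to:ergodic:theory}): for a fixed configuration $\cC$ one forms the Siegel--Veech transform $\hat f_\cC$ on the stratum, and the quadratic asymptotics of $N_\cC(S,L)$ is deduced from the behaviour, as $t\to\infty$, of the circle--and--time average $\tfrac1t\int_0^t\!\int_{S^1}\hat f_\cC(g_s r_\theta S)\,d\theta\,ds$, whose limit is the space average of $\hat f_\cC$ against the $\SL$--invariant measure and equals, after the normalizations already present in the statement, the Siegel--Veech constant $c_\cC$. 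On the whole stratum this limit holds for a.e.\ $S$ by ergodicity of the Teichm\"uller geodesic flow. To push it onto the billiard locus I would invoke the transversality result of \S\ref{sec:billiards:qd}: the embedding $\cB(k_1,\dots,k_n)\hookrightarrow\cQ_1(k_1-2,\dots,k_n-2)$ is transversal to the unstable horospherical foliation, and the natural (side--lengths $\times$ angle) measure on $\cB$ disintegrates compatibly with that foliation. This is precisely the situation in which Chaika's Theorem~\ref{theorem:bikhoff:chaika} delivers, for almost every $\Pi$, the convergence of the above averages along the orbit of $S(\Pi)$ to the same space average $c_\cC$; feeding this back through the Eskin--Masur argument yields the quadratic asymptotics for $N_\cC(S(\Pi),L)$ and hence for $N_\cC(\Pi,L)$.

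The main obstacle is this last transfer of an a.e.\ ergodic statement from the stratum to a transversal of measure zero. Concretely one must (i) verify the integrability and tail bounds for $\hat f_\cC$ — finiteness of the Siegel--Veech transform and the Eskin--Masur--type estimates, carried over to the $\CP$ setting — that are the hypotheses of Theorem~\ref{theorem:bikhoff:chaika} and of the counting argument, and (ii) upgrade from the time--averaged (Ces\`aro) convergence, which only gives the weak asymptotic $N_\cC(\Pi,L)\mysim cL^2$, to the genuine asymptotic $N_\cC(\Pi,L)\sim cL^2$; the bootstrapping uses monotonicity of $L\mapsto N_\cC(\Pi,L)$ together with an a.e.\ upper bound on $N_\cC$ again provided by the Siegel--Veech estimates, plus the fact that the exceptional null set is robust under the $\SL$--action. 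Once this ergodic input is in place, the remaining steps — the doubling bijection, the bookkeeping of the $\tfrac14$, and the identification of the limit with the stratum's Siegel--Veech constant — are routine.
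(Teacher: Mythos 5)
Your overall route is the one the paper takes: double the billiard to get $S(\Pi)\in\cQ(k_1-2,\dots,k_n-2)$ with the two-to-one correspondence and the factor $\tfrac14=\tfrac12\cdot\tfrac12$; run the Eskin--Masur Siegel--Veech machinery on circle averages of $\widehat f$; use the transversality of $\cB(k_1,\dots,k_n)$ to the unstable foliation (Proposition~\ref{prop:map:onto}) so that Chaika's Theorem~\ref{theorem:bikhoff:chaika} supplies the ergodic input on the measure-zero billiard locus; and control the non-compactly-supported $\widehat f$ by the Eskin--Masur tail estimates $\widehat f(q)\le C(\delta)l(q)^{-1-\delta}$ together with the uniform bound on $(A_t l^{-1-\eta})(q_\Pi)$ and a smoothed cutoff $h_\epsilon$. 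All of that is exactly \S\ref{sec:point:weak}.

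The one step that does not work as you state it is your item (ii), the upgrade from the Ces\`aro statement $N_\cC(\Pi,L)\mysim cL^2$ to the genuine asymptotic $N_\cC(\Pi,L)\sim cL^2$ via ``monotonicity of $L\mapsto N_\cC(\Pi,L)$ together with an a.e.\ upper bound.'' Monotonicity plus a quadratic upper bound plus convergence of the logarithmic Ces\`aro mean of $N(e^t)e^{-2t}$ does not force convergence of $N(e^t)e^{-2t}$ itself: one can have $N(e^t)e^{-2t}$ equal to $c$ except on a density-zero set of times where it sits at $c+\delta$, with the transitions smoothed slowly enough to keep $N$ nondecreasing. (This is precisely why the paper is careful to distinguish $\mysim$ from $\sim$, and why the measure-classification route of Theorem~\ref{theorem:invididual:billiards} only yields weak asymptotics.) The correct source of the strong asymptotic is that Chaika's Theorem~\ref{theorem:bikhoff:chaika} is \emph{not} a Ces\`aro/Birkhoff statement: it asserts convergence of the single circle average $\frac{1}{2\pi}\int_0^{2\pi}\widehat f(g_T r_\theta q_\Pi)\,d\theta$ as $T\to\infty$ (equidistribution of the expanding circles), which via~(\ref{eq:circ:av}) gives $N_\cC(q_\Pi,e^t)-N_\cC(q_\Pi,e^t/2)\sim \pi b_\cC\cdot\tfrac34 e^{2t}$ on each dyadic scale, and the geometric series over scales then yields $N_\cC(q_\Pi,e^t)\sim \pi b_\cC e^{2t}$ directly, with no Tauberian step needed. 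Your appeal to ``robustness of the exceptional null set under the $\SL$-action'' is also not available here, since the billiard locus itself is null and the relevant null sets live inside it. With item (ii) replaced by this dyadic-scale argument, the rest of your outline is sound.
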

\medskip
\noindent Theorem~\ref{theorem:SV:for:billiards}       is       proved       in
\S\ref{sec:point:weak}, using Jon Chaika's
  Theorem~\ref{theorem:birkhoff:chaika} which is proved in
  Appendix~\ref{sec:chaika}.

The   factor   of  $\tfrac{1}{4}$  in~\eqref{eq:SV:for:billiards}  is
explained  as  follows.  Note  that  any  generalized diagonal in the
billiard  table  $\Pi$  which  is not parallel to one of the sides of
$\Pi$  canonically determines two symmetric saddle connections of the
same type on the flat surface $S$ glued from the two copies of $\Pi$,
where   the   symmetry   is   the   antiholomorphic  involution,  see
Figure~\ref{fig:billiard:and:sphere}. Hence,
$$
N_\cC(\Pi,L)=\frac{1}{2} N_\cC(S,L)\,.
$$
Note  also, that by construction the area of $S$ is twice the area of
the billiard table $\Pi$.

\begin{figure}[htb]
\includegraphics{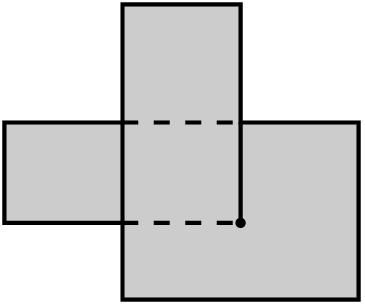}
\vspace{65bp}
\caption{
\label{fig:helical:billiard}
A  helical  billiard  corresponds  to  the  stratum $\cQ(d,-1^{d+4})$.
   }
\end{figure}

Note  that  our  billiard  table  does  not  need  to  be necessarily
embeddable   into   the   plane,  say,  we  can  consider  a  helical
right-angled  billiard  as in Figure~\ref{fig:helical:billiard}. More
precisely,  by  a  \textit{right-angled  billiard  table}  we  call a
topological  disc  endowed  with  a  flat metric having the following
properties.  The  flat  metric  is allowed to have isolated cone-type
singularities  in  the  interior  of the disc with cone angles of the
form   $l_i\pi$,  with  $l_i\in\N$.  The  boundary  of  the  disc  is
piecewise-geodesic  in  the  flat  metric, and the angles between the
geodesic segments have the form $k_j\pi/2$, with $k_j\in\N$.

In fact, some version of Theorem~\ref{theorem:SV:for:billiards}
holds for individual billiards:
\begin{theorem}
\label{theorem:invididual:billiards}  Suppose  $\Pi$  is  a  billiard
table  from  the  family  of  right-angled billiards $\cB(k_1, \dots,
k_n)$.  Furthermore,  suppose $\Pi$ is such that the flat surface $S$
glued  from  two  copies  of  $\Pi$  does  not  belong  to any proper
$\GL$-invariant     affine     submanifold     of     the     stratum
$\cQ(d_1,\dots,d_k)$. Then, for any choice
I--IV
of configuration $\cC$, the weak asymptotic formula
\begin{displaymath}
N_\cC(\Pi,L) \mysim \frac{c_\cC}{4}\cdot
\frac{\pi L^2}{\text{Area of the billiard table }\Pi}
\end{displaymath}
holds,
where   $c_\cC$   is  the  Siegel--Veech  constant
corresponding   to   the   configuration   $\cC$   in   the   stratum
$\cQ(k_1-2,\dots,k_n-2)$ (as in Theorem~\ref{theorem:SV:for:CP}).
\end{theorem}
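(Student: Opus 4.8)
The plan is to reduce the statement to a single-orbit time-averaged equidistribution statement for the Teichm\"uller geodesic flow $g_t$ on the stratum $\cQ(d_1,\dots,d_k)$, and then to apply the measure classification and equidistribution results of Eskin--Mirzakhani and Eskin--Mirzakhani--Mohammadi. The first step is to pass from the billiard $\Pi$ to the flat sphere $S$. As recorded just after Theorem~\ref{theorem:SV:for:billiards}, a generalized diagonal of $\Pi$ which is not parallel to a side determines exactly two saddle connections of $S$ of the same configuration, interchanged by the anti-holomorphic involution of $S$, so $N_\cC(\Pi,L)=\tfrac12 N_\cC(S,L)$, while $\operatorname{Area}(S)=2\operatorname{Area}(\Pi)$. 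Hence the asserted weak asymptotics for $\Pi$ is equivalent to $N_\cC(S,L)\mysim c_\cC\cdot\pi L^2/\operatorname{Area}(S)$, and it suffices to prove this for the surface $S$ under the sole hypothesis that its orbit closure $\cM:=\overline{\GL\cdot S}$ is the whole stratum.

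Next I would rewrite the weak asymptotic as a convergence statement for averages of a Siegel--Veech transform along the flow. By the definition of $\mysim$ in \S\ref{sec:subsec:counting:theorems}, $N_\cC(S,L)\mysim c L^2$ means $\tfrac1L\int_0^L N_\cC(S,e^t)e^{-2t}\,dt\to c$. By the standard Eskin--Masur unfolding, $N_\cC(S,e^t)e^{-2t}$ equals, up to a fixed universal constant and a negligible error, the circle average $\tfrac1{2\pi}\int_0^{2\pi}\widehat{\chi}_\cC(g_t r_\theta S)\,d\theta$, where $\chi_\cC$ is the indicator of a fixed bounded region of $\R^2$ adapted to the configuration $\cC$ and $\widehat{\chi}_\cC$ is its Siegel--Veech transform. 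Thus the Ces\`aro average above becomes the double average $\tfrac1L\int_0^L\!\big(\tfrac1{2\pi}\int_0^{2\pi}\widehat{\chi}_\cC(g_t r_\theta S)\,d\theta\big)\,dt$, and the goal is to show it converges to $\int_\cM\widehat{\chi}_\cC\,d\mu_\cM$, where $\mu_\cM$ is the affine $\GL$-invariant probability measure on $\cM$. Since by hypothesis $\cM$ is the entire stratum, $\mu_\cM$ is the Masur--Veech measure, and $\int\widehat{\chi}_\cC\,d\mu_\cM$ is by construction $c_\cC$ times the same universal constant; this produces the displayed asymptotics with exactly the constant $c_\cC$. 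This is the same mechanism that underlies Theorem~\ref{theorem:SV:for:billiards} for almost every $\Pi$; the new content is to run it along the single orbit of $S$ rather than almost every orbit.

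The decisive ingredient is Theorem~2.12 of \cite{Eskin:Mirzakhani:Mohammadi} (which builds on the measure classification of \cite{Eskin:Mirzakhani}): for \emph{every} flat surface $S$, the time-averaged push-forwards $\tfrac1L\int_0^L(g_t)_*\big(\tfrac1{2\pi}\int_0^{2\pi}\delta_{r_\theta S}\,d\theta\big)\,dt$ converge weak-$*$ to $\mu_\cM$. I expect the main obstacle to be that $\widehat{\chi}_\cC$ is neither bounded nor continuous: it blows up as one approaches the boundary of the stratum (surfaces with short saddle connections), so weak-$*$ convergence of measures does not by itself yield convergence of the integrals of $\widehat{\chi}_\cC$. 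Overcoming this is where essentially all the analytic work lies: I would combine the Eskin--Masur integrability estimate $\widehat{\chi}_\cC\in L^{1+\epsilon}(\mu_\cM)$ with quantitative non-divergence (``no escape of mass'') estimates for the $g_t$-action to establish uniform integrability of $\widehat{\chi}_\cC$ along the averaged orbits, which upgrades the weak-$*$ convergence to convergence of the integrals, exactly as in \cite{Eskin:Masur}. The fact that one only obtains a weak (Ces\`aro) asymptotic rather than a strong one is a direct reflection of the time-averaged, rather than pointwise-in-$\theta$, nature of the equidistribution available for a single orbit. Once the interchange of limit and integral is justified, identifying the limiting constant with $c_\cC$ is the same computation that defines the Siegel--Veech constants in \S\ref{sec:Siegel:Veech}, and the four types I--IV of configuration require no separate treatment at this stage.
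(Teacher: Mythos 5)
Your proposal is correct and follows essentially the same route as the paper, which simply declares the statement an immediate corollary of Theorem~2.12 of~\cite{Eskin:Mirzakhani:Mohammadi}; your reduction (the factor $\tfrac14$ from $N_\cC(\Pi,L)=\tfrac12 N_\cC(S,L)$ and $\operatorname{Area}(S)=2\operatorname{Area}(\Pi)$, the Ces\`aro/circle-average reformulation of \mysim, and the identification of the limit measure with Masur--Veech measure under the no-proper-affine-submanifold hypothesis) is exactly the mechanism that makes that citation work. The only difference is that you spell out the uniform-integrability issue for the unbounded Siegel--Veech transform, which the cited theorem already absorbs.
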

  %
  %
\begin{proof}
The   statement   is   an   immediate   corollary  of~\cite[Theorem 2.12]{Eskin:Mirzakhani:Mohammadi}.
\end{proof}
We     note     that     a    complete    proof    of  ~\cite[Theorem~2.12]{Eskin:Mirzakhani:Mohammadi}     involves     the     measure
classification  theorem  of~\cite{Eskin:Mirzakhani}  and is well over
200 pages long, and yields \emph{weak} asymptotic formulas.
  %
  %
The proof of Theorem~\ref{theorem:SV:for:billiards} is
  much shorter,  and uses
special     features     of     right-angled     billiards,    namely
Proposition~\ref{prop:map:onto}. However,
Theorem~\ref{theorem:SV:for:billiards} is an almost everywhere
statement, and does not imply any type of asymptotic formula for an individual
billiard table.

We also note that for most other families of billiards,
almost-everywhere statements like
Theorem~\ref{theorem:SV:for:billiards} are not available (since the
analogue of Proposition~\ref{prop:map:onto} fails.)




\section{Billiards   in  right-angled   polygons   and   quadratic
differentials}
\label{sec:billiards:qd}

In    ~\S\ref{sec:subsec:coordiantes:in:stratum}    we    describe   the
cohomological coordinates in a stratum of quadratic differentials. We
proceed  in ~\S\ref{sec:subsec:hat:homologous:saddle:connections} with a
reminder  of  the notions of \textit{\^homologous saddle connections}
and a \textit{configuration} of \^homologous saddle connections.

In    ~\S\ref{sec:subsec:The:subspace:of:billiards}   we   analyze   the
canonical  embedding  of  the  space  of  (directional)  right-angled
billiards   $\cB(k_1,\dots,k_n)$   into   the   corresponding   space
$\cQ(k_1-2,\dots,k_n-2)$  of  meromorphic  quadratic differentials on
$\CP$.  Namely,  we prove in Proposition~\ref{prop:map:onto} that its
image  projects  surjectively  onto  the  unstable  foliation  of the
Teichm\"uller geodesic flow, which allows us to apply certain ergodic
techniques  of  hyperbolic  dynamics  not  only to flat surfaces from
$\cQ(k_1-2,\dots,k_n-2)$ but to billiards from $\cB(k_1,\dots,k_n)$.

We   complete   ~\S\ref{sec:billiards:qd}   with   a   proof  of
Proposition~\ref{pr:no:other:for:billiards}.

\subsection{Coordinates in a stratum of quadratic differentials.}
\label{sec:subsec:coordiantes:in:stratum}
Consider a meromorphic quadratic differential $\psi$ having zeroes of
arbitrary  multiplicities  but  only simple poles on $\CP$. Let $P_1,
\ldots,  P_n$  be  its  singular  points  (zeros  and  simple poles).
Consider  the minimal branched double covering $p:\hat S\to \CP$ such
that   the   induced   quadratic  differential  $p^\ast\psi$  on  the
hyperelliptic  surface  $\hat  S$  is  a square of an Abelian
differential $p^\ast\psi=\omega^2$.
\begin{figure}[htb]
%
%
%
\includegraphics{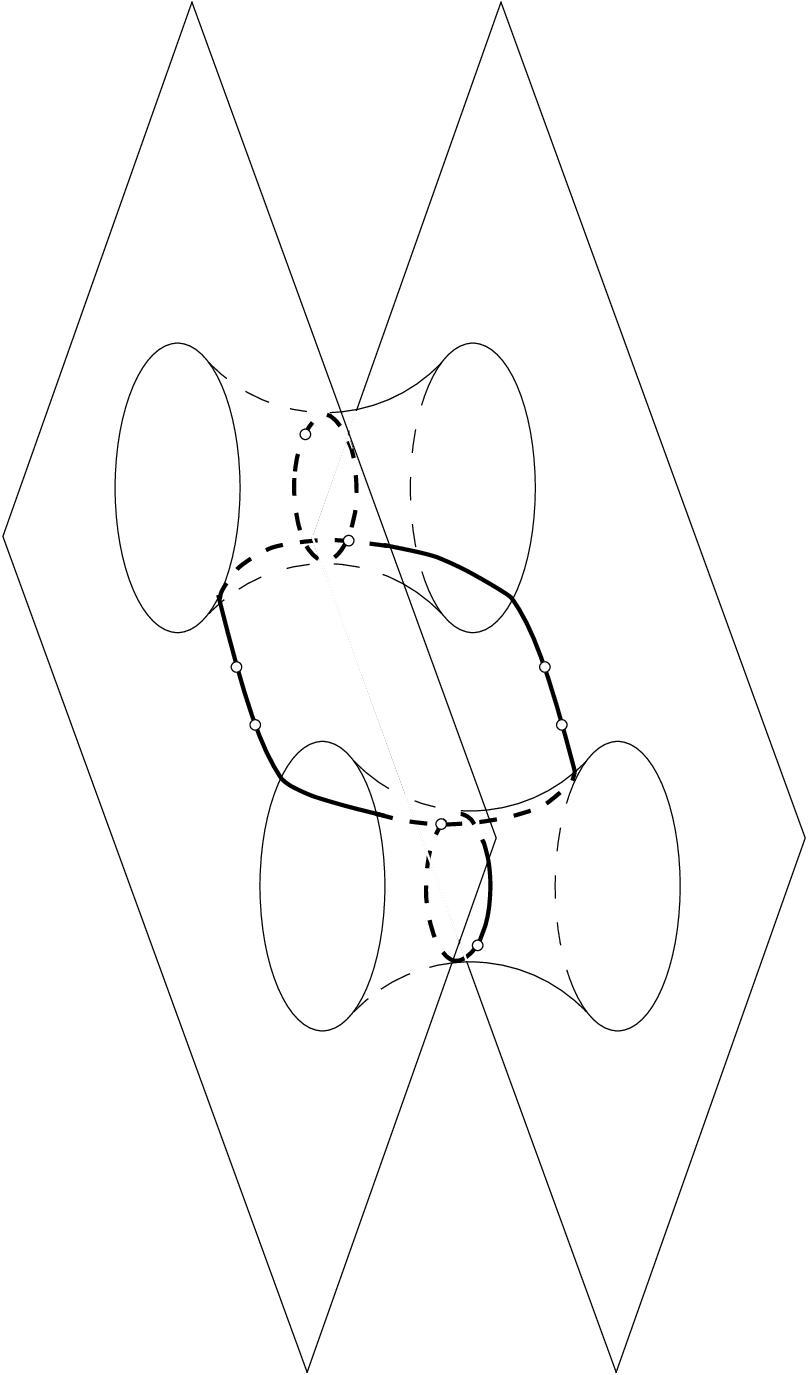}
\includegraphics{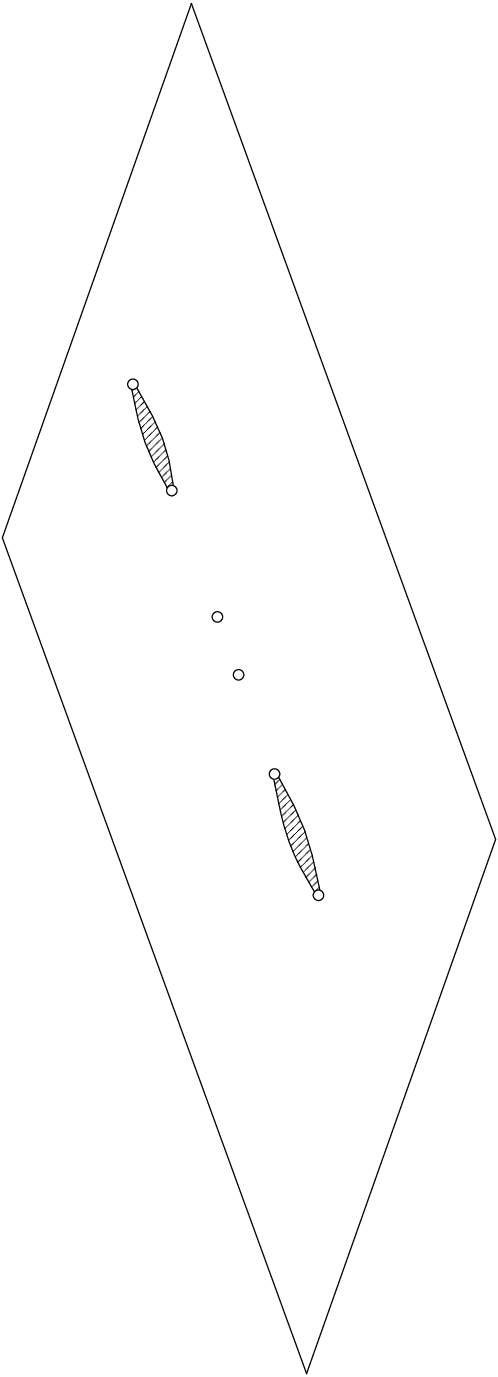}
\begin{picture}(0,0)(0,0)
\put(-53,-101){$\hat P_1$}
\put(-17,-88){$\hat P_2$}
\put(0,-58){$\hat P^+_i$}
\put(0,-118){$\hat P^-_i$}
\put(35,-65){$\hat P_{n-1}$}
\put(61,-59){$\hat P_n$}
\end{picture}
\begin{picture}(0,0)(0,0)
\put(-48,-223){$P_1$}
\put(-15,-213){$P_2$}
\put(3,-206){$P_i$}
\put(33,-194){$P_{n-1}$}
\put(65,-184){$P_n$}
\end{picture}
\vspace{240bp}
\caption{
\label{fig:hyperel}
Basis  of  cycles  in $H_1^-(\hat S,\{\hat P_1,\dots,\hat P_N\};\Z)$.
Note that the cycle corresponding to the very last slit is omitted.
}
\end{figure}

The  zeros  $\hat{P}_1,\ldots,\hat{P}_N$  of  the  resulting  Abelian
differential  $\omega$  correspond  to  the  zeros  of  $\psi$ in the
following  way:  every  zero  $P\in  \CP$ of $\psi$ of odd order is a
ramification  point  of  the  covering,  so it produces a single zero
$\hat{P}\in\hat  S$  of  $\omega$;  every zero $P\in\CP$ of $\psi$ of
even  order  is  a  regular point of the covering, so it produces two
zeros  $\hat{P}^+,\hat{P}^-\in\hat  S$ of $\omega$. Every simple pole
of  $\psi$ defines a branching point of the covering; this point is a
regular point of $\omega$.

Consider  the subspace $H_1^-(\hat S,\{\hat P_1,\dots,\hat P_N\};\Z)$
of  the relative homology of the cover with respect to the collection
of   zeroes  $\{\hat  P_1,\dots,\hat  P_N\}$  of  $\omega$  which  is
antiinvariant with respect to the induced action of the hyperelliptic
involution.  We  are  going to construct a basis in this subspace (in
complete  analogy  with  a  usual  basis  of  absolute  cycles  for a
hyperelliptic surface).

We  can  always  enumerate  the  singular  points $P_1,\dots, P_n$ of
$\psi$  in such a way that $P_n$ is a simple pole. Chose now a simple
oriented   broken   line   $P_1,\ldots,P_{n-1}$   on   $\CP$  joining
consecutively  all the singular points of $\psi$ except the last one.
For  every  arc $[P_i,P_{i+1}]$ of this broken line, $i=1,\dots,n-2$,
the  difference  of  their  two preimages defines a relative cycle in
$H_1^-(\hat  S,\{\hat P_1,\dots,\hat P_N\};\Z)$. By construction such
a   cycle   is   antiinvariant  with  respect  to  the  hyperelliptic
involution.  It  is immediate to see that the resulting collection of
cycles   forms   a   basis  in  $H_1^-(\hat  S,\{\hat  P_1,\dots,\hat
P_N\};\Z)$.

Note  that a  preimage  of a simple pole does not belong to the set
$\hat   P_1,\ldots,\hat   P_N$.   Thus,   a   preimage   of   an  arc
$[P_i,P_{i+1}]$ having a simple pole as one of the endpoints does not
define  a  cycle  in  $H_1(\hat  S,\{\hat  P_1,\dots,\hat P_N\};\Z)$.
However,  since  a  simple  pole  is  always  a  branching point, the
\textit{difference}  of  the  preimages  of  such  arc  is  already a
well-defined  relative  cycle  in  $H_1(\hat  S,\{\hat P_1,\dots,\hat
P_N\};\Z)$.

Let  $\cQ(d_1,\dots,d_n)$  be the ambient stratum for the meromorphic
quadratic   differential   $(\CP,\psi)$.   The  subspace  $H^1_-(\hat
S,\{\hat     P_1,\dots,\hat     P_N\};\C{})$    in    the    relative
\textit{cohomology}   antiinvariant   with  respect  to  the  natural
involution defines local coordinates in the stratum.
\subsection{\^Homologous saddle connections}
\label{sec:subsec:hat:homologous:saddle:connections}

We  follow the exposition in~\cite{Masur:Zorich} introducing the notions
of   a   \textit{rigid}  collection  of  saddle  connections  and  of
\textit{\^homologous}  saddle connections. Consider a flat sphere $S$
corresponding  to  a  meromorphic quadratic differential $(\CP,\psi)$
with  at  most simple poles. Any saddle connection on the flat sphere
$S$ persists under small deformations of $S$ inside $\cQ(\alpha)$. It
might  happen  that  any  deformation  of  a given flat surface which
shortens  some  specific  saddle connection necessarily shortens some
other  saddle  connections.  We  say  that  a collection $\{\gamma_1,
\dots,  \gamma_n\}$  of  saddle  connections  is  \emph{rigid} if any
sufficiently small deformation of the flat surface inside the stratum
preserves  the  proportions $|\gamma_1|:|\gamma_2|: \dots:|\gamma_n|$
of the lengths of all saddle connections in the collection.

Consider  the  canonical  double  cover  $\hat S$ over $S$ defined in
~\S\ref{sec:subsec:coordiantes:in:stratum}.  Given  a  saddle connection
$\gamma$   on   $S$   choose  an  orientation  of  $\gamma$  and  let
$\gamma',\gamma''$  be its lifts to the double cover $\hat S$ endowed
with     the     orientation     inherited    from    $\gamma$.    If
$[\gamma']=-[\gamma'']$    as   cycles   in   $H_1(\hat   S,   \{\hat
P_1,\dots,\hat   P_N\};\,\Z)$   we   let   $[\hat\gamma]:=[\gamma']$,
otherwise         we         define         $[\hat\gamma]$         as
$[\hat\gamma]:=[\gamma']-[\gamma'']$. It immediately follows from the
above  definition  that  the cycle $[\hat\gamma]$ defined by a saddle
connection   $\gamma$   is  always  \textit{primitive}  in  $H_1(\hat
S,\{\hat  P_1,\dots,\hat  P_N\};\,\Z)$  and  belongs  to  $H_1^-(\hat
S,\{\hat P_1,\dots,\hat P_N\};\,\Z)$.

Following~\cite{Masur:Zorich} we introduce the following

\begin{definition}
\label{def:homologous}
The  saddle  connections  $\gamma_1,\gamma_2$  on  a flat surface $S$
defined  by a quadratic differential $q$ are \textit{\^homologous} if
$[\hat\gamma_1]=[\hat\gamma_2]$ in $H_1(\hat S,\hat P;\,\Z)$ under an
appropriate choice of orientations of $\gamma_1, \gamma_2$.
(The   notion  ``homologous  in  the  relative  homology  with  local
coefficients  defined  by  the  canonical  double  cover induced by a
quadratic  differential''  is  unbearably  bulky, so we introduced an
abbreviation ``\^homologous''. We stress that the circumflex over the
``h''  is quite meaningful: as it is indicated in the definition, the
corresponding cycles are homologous {\it on the double cover}.)
\end{definition}

Note  that since there is no canonical way to enumerate the preimages
$\gamma',  \gamma''$  of  a  saddle connection $\gamma$ on the double
cover,  the  cycle  $[\hat\gamma]$ is defined only up to a sign, even
when  we  fix  the  orientation  of  $\gamma$.  Thus,  $\gamma_1$  is
\^homologous      to      $\gamma_2$      if      and     only     if
$[\hat\gamma_1]=\pm[\hat\gamma_2]$.

\begin{NNProposition}[H.~Masur, A.~Z.]
\label{pr:rigid:configurations:hat:homologous}
Let  $S$  be  a flat surface corresponding to a meromorphic quadratic
differential  $q$  with at most simple poles. A collection $\gamma_1,
\dots, \gamma_n$ of saddle connections on $S$ is rigid if and only if
all saddle connections $\gamma_1, \dots, \gamma_n$ are \^homologous.
\end{NNProposition}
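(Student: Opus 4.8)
The plan is to prove both implications through the cohomological coordinates on the stratum described in \S\ref{sec:subsec:coordiantes:in:stratum}, passing everything to the hyperelliptic double cover $\hat S$ where the quadratic differential becomes the square of an Abelian differential $\omega$. The key point is that period coordinates on $\cQ(d_1,\dots,d_n)$ are given by integrating $\omega$ over a basis of $H_1^-(\hat S,\{\hat P_1,\dots,\hat P_N\};\C{})$, and for a saddle connection $\gamma$ on $S$ its length (more precisely its holonomy vector, once a horizontal direction is fixed) equals $\int_{[\hat\gamma]}\omega$ up to sign, where $[\hat\gamma]\in H_1^-$ is the primitive antiinvariant cycle attached to $\gamma$ in Definition~\ref{def:homologous}. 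So the whole statement becomes a linear-algebra statement about the classes $[\hat\gamma_1],\dots,[\hat\gamma_n]$ and the linear functionals they define on the coordinate space.

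For the "if" direction, suppose $[\hat\gamma_1]=\pm[\hat\gamma_2]=\dots=\pm[\hat\gamma_n]$ in $H_1^-(\hat S,\{\hat P_i\};\Z)$. Then for every nearby quadratic differential in the stratum, $\int_{[\hat\gamma_i]}\omega = \pm\int_{[\hat\gamma_1]}\omega$, so all the holonomy vectors of $\gamma_1,\dots,\gamma_n$ stay equal up to sign; in particular their lengths stay proportional (indeed equal) under every small deformation, which is exactly rigidity. One has to check the mild technical points that each $\gamma_i$ really persists and varies real-analytically with the period coordinates, and that the correspondence $\gamma\mapsto[\hat\gamma]$ is locally constant — both are standard consequences of the fact that saddle connections deform continuously and that the double cover and the hyperelliptic involution deform with the surface.

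For the "only if" direction — which I expect to be the main obstacle — suppose the collection is rigid but, say, $[\hat\gamma_1]$ and $[\hat\gamma_2]$ are not proportional as classes in $H_1^-(\hat S,\{\hat P_i\};\C{})$. The plan is to produce a deformation inside the stratum that shortens $\gamma_1$ while \emph{not} keeping $|\gamma_2|$ proportional to $|\gamma_1|$, contradicting rigidity. Since period coordinates are honest local coordinates on the stratum, I can prescribe the new periods freely in a neighborhood; because $[\hat\gamma_1]$ is primitive in $H_1^-$, it extends to (part of) a basis, so I can choose a complex-linear perturbation of the periods that scales $\int_{[\hat\gamma_1]}\omega$ by a small real factor $1-\epsilon$ while changing $\int_{[\hat\gamma_2]}\omega$ in a direction not parallel to $\int_{[\hat\gamma_1]}\omega$ — this is possible precisely because $[\hat\gamma_1]$ and $[\hat\gamma_2]$ are linearly independent over $\C{}$. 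The two subtleties to handle carefully are: (a) ensuring the saddle connections $\gamma_1,\gamma_2$ still exist after the deformation (true for small $\epsilon$, by persistence, and one can arrange $|\gamma_1|$ to genuinely decrease) and that $[\hat\gamma_i]$ is unchanged; and (b) translating "$[\hat\gamma_1],[\hat\gamma_2]$ not \^homologous" into "$[\hat\gamma_1],[\hat\gamma_2]$ linearly independent over $\C{}$ in $H_1^-$." For (b) one notes that each $[\hat\gamma_i]$ is primitive in the lattice $H_1^-(\hat S,\{\hat P_i\};\Z)$, so proportionality over $\C{}$ forces $[\hat\gamma_1]=\pm[\hat\gamma_2]$ over $\Z$, i.e. \^homologous. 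Finally one remarks that the argument is symmetric and chains through the whole collection, so rigidity of $\{\gamma_1,\dots,\gamma_n\}$ forces all $[\hat\gamma_i]$ to coincide up to sign, completing the proof.
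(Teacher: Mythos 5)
The paper does not actually prove this proposition: it is stated with attribution to Masur--Zorich and imported from~\cite{Masur:Zorich}, so there is no in-paper argument to compare against. Your plan is essentially the standard (and, as far as the cited source goes, the intended) argument: in the period coordinates $H^1_-(\hat S,\{\hat P_1,\dots,\hat P_N\};\C{})$ the holonomy of each $\gamma_i$ is a fixed linear functional, namely evaluation against $[\hat\gamma_i]$, so rigidity amounts to pairwise proportionality of these functionals, which by primitivity of the integral classes is equivalent to $[\hat\gamma_i]=\pm[\hat\gamma_j]$, i.e.\ to being \^homologous; your reduction of ``not \^homologous'' to ``$\C{}$-linearly independent'' via primitivity is correct, as is the persistence/local-constancy discussion. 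Two small inaccuracies to repair. First, the holonomy of $\gamma$ equals $\int_{[\hat\gamma]}\omega$ only when the two lifts satisfy $[\gamma']=-[\gamma'']$; otherwise it equals $\tfrac12\int_{[\hat\gamma]}\omega$. Consequently \^homologous saddle connections need not have \emph{equal} lengths --- in the ``pocket'' configuration III the connection joining the two simple poles is half as long as its \^homologous partner --- so your parenthetical ``(indeed equal)'' is false, although the factor of $2$ is locally constant and does not affect either implication. Second, in the converse direction what must change is the \emph{ratio of moduli} of the two holonomies, not their directions: rotating $\int_{[\hat\gamma_2]}\omega$ into a ``non-parallel direction'' while preserving its modulus would not contradict rigidity. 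The clean move, available exactly because the two functionals are independent, is to scale one period by a real factor $1-\epsilon$ while holding the other fixed (or, equivalently, to note that $|f|/|g|$ cannot be locally constant for $\C{}$-independent linear functionals $f,g$). With those adjustments the proof goes through.
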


There   is  an  obvious  geometric  test  for  deciding  when  saddle
connections  $\gamma_1,  \gamma_2$  on  a translation surface $S$ are
homologous:   it   is   sufficient   to   check  whether  $S\setminus
(\gamma_1\cup\gamma_2)$     is    connected    or    not    (provided
$S\setminus\gamma_1$  and  $S\setminus\gamma_2$ are connected). It is
slightly  less obvious to check whether saddle connections $\gamma_1,
\gamma_2$  on  a flat surface $S$ with nontrivial linear holonomy are
\^homologous   or  not.  In  particular,  a  pair  of  closed  saddle
connections   might  be  homologous  in  the  usual  sense,  but  not
\^homologous;   a   pair   of  closed  saddle  connections  might  be
\^homologous  even  if  one  of  them represents a loop homologous to
zero,  and the other does not; finally, a saddle connection joining a
pair  of  {\it  distinct}  singularities  might  be \^homologous to a
saddle connection joining a singularity to itself, or joining another
pair  of  distinct  singularities. The following statement provides a
geometric  criterion  for  deciding  when  two saddle connections are
\^homologous.

\begin{NNProposition}[H.~Masur, A.~Z.]
Let  $S$  be  a  flat  surface  corresponding  to  a  meromorphic
quadratic differential $q$  with at most simple poles. Two saddle
connections $\gamma_1, \gamma_2$  on  $S$ are \^homologous if and
only  if  they have  no  interior intersections  and  one of  the
connected    components    of    the    complement    $S\setminus
(\gamma_1\cup\gamma_2)$ has trivial linear holonomy. Moreover, if
such a component exists, it is unique.
\end{NNProposition}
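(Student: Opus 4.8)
The plan is to work on the canonical double cover $p\colon\hat S\to S$ of \S\ref{sec:subsec:coordiantes:in:stratum}, on which $p^\ast q=\omega^2$ for a holomorphic Abelian differential $\omega$; write $\sigma$ for the deck involution, so that $\sigma^\ast\omega=-\omega$ and $H_1^-(\hat S,\hat P;\Z)$ is the $(-1)$--eigenspace of $\sigma_\ast$. By definition $\gamma_1$ is \^homologous to $\gamma_2$ exactly when $[\hat\gamma_1]=\pm[\hat\gamma_2]$ in that group, and throughout $\gamma_1$ and $\gamma_2$ are understood to be distinct (for $\gamma_1=\gamma_2$ the equivalence fails). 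Since $H_1(\hat S,\hat P;\Z)$ is free, I will argue with real coefficients, where $\sigma_\ast$ is semisimple and $H_1^-$ is computed by the $\sigma$--anti--invariant subcomplex of the chain complex.

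First I would dispose of the transversality condition. If $[\hat\gamma_1]=\pm[\hat\gamma_2]$ then $\int_{\hat\gamma_1}\omega=\pm\int_{\hat\gamma_2}\omega$, so $\gamma_1$ and $\gamma_2$ are parallel and of equal length; two distinct parallel geodesic segments on a flat surface cannot meet at an interior point, since a geodesic is determined there by a single one of its points together with a direction. Hence from now on $\gamma_1$ and $\gamma_2$ are interior--disjoint, and $\gamma_1\cup\gamma_2$ cuts $S$ into connected components $X_1,\dots,X_m$. Each $X_j$ either has trivial linear holonomy, in which case $p^{-1}(X_j)=X_j'\sqcup X_j''$ with $\sigma$ exchanging the two sheets and $p$ restricting to an isometry on each, or has nontrivial holonomy, in which case $p^{-1}(X_j)$ is connected and $\sigma$--invariant.

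The heart of the matter is then a dictionary. Expanding a $\sigma$--anti--invariant $2$--chain on $\hat S$ over the closures of the complementary regions of $p^{-1}(\gamma_1\cup\gamma_2)$, the coefficient of a $\sigma$--invariant region is forced to vanish, while the coefficients of $\bar X_j'$ and $\bar X_j''$ must be opposite; so the $\sigma$--anti--invariant $2$--chains are precisely the combinations $\sum a_j\big([\bar X_j']-[\bar X_j'']\big)$, the sum taken over the trivial--holonomy components. For the implication ``\^homologous $\Rightarrow$ trivial--holonomy component exists'', note that the anti--invariant representative $\tfrac12\big((\hat\gamma_1-\hat\gamma_2)-\sigma_\ast(\hat\gamma_1-\hat\gamma_2)\big)$ of $[\hat\gamma_1]-[\hat\gamma_2]$ is a nonzero $1$--chain when $\gamma_1\neq\gamma_2$; if no component had trivial holonomy it could not be the boundary of an anti--invariant $2$--chain, whence $[\hat\gamma_1]\neq[\hat\gamma_2]$, and the same argument applied to $[\hat\gamma_1]+[\hat\gamma_2]$ excludes the other sign. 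For the converse, let $X_0$ be a trivial--holonomy component. Using that $[\hat\gamma_1]$ and $[\hat\gamma_2]$ are primitive I would first check that $\partial\bar X_0$ meets \emph{both} $\gamma_1$ and $\gamma_2$: it meets at least one since $\partial\bar X_0\neq\emptyset$, and if it met only $\gamma_1$ then $\partial(\bar X_0'-\bar X_0'')$ would be a nonzero multiple of $\hat\gamma_1$ that bounds, forcing $[\hat\gamma_1]=0$. Applying Stokes' theorem to the holomorphic (hence closed) form $\omega$ on $\bar X_0'$ gives $\int_{\partial\bar X_0'}\omega=0$; since the combinatorics of $\partial\bar X_0'$ is unchanged as $S$ varies over a chart of period coordinates, and $\int_{\gamma_i''}\omega=-\int_{\gamma_i'}\omega$, this is equivalent to a homology relation $N_1[\hat\gamma_1]+N_2[\hat\gamma_2]=0$, where $N_i$ equals, up to a nonzero scalar, the difference of the net multiplicities of the two lifts $\gamma_i'$ and $\gamma_i''$ of $\gamma_i$ along $\partial\bar X_0'$. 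Granting $N_1,N_2\neq0$, primitivity of the two classes yields $[\hat\gamma_1]=\pm[\hat\gamma_2]$, i.e.\ $\gamma_1$ and $\gamma_2$ are \^homologous. Uniqueness follows the same way: two trivial--holonomy components would give two relations between $[\hat\gamma_1]$ and $[\hat\gamma_2]$ whose combination kills one of them, contradicting primitivity.

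The step I expect to be the real obstacle is exactly the nonvanishing $N_1,N_2\neq0$ in the converse, that is, ruling out an accidental cancellation between the two lifts of a saddle connection that occurs twice along $\partial\bar X_0$. This cannot be settled topologically: one must use the flat geometry --- the sector angles at the endpoints of $\gamma_1$ and $\gamma_2$, together with the fact that (as $\gamma_1$ and $\gamma_2$ are interior--disjoint) every edge of $\partial\bar X_0$ is an entire saddle connection --- to pin down how $X_0$ is glued to the rest of $S$ along $\gamma_1$ and $\gamma_2$. Concretely the verification reproduces, case by case, the local pictures of the four configuration types I--IV of \S\ref{sec:subsec:configurations}, and in general proceeds as in \cite{Masur:Zorich}; the uniqueness clause then comes out of the same bookkeeping.
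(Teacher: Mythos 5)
Your homological framework on the double cover $\hat S$ --- the dictionary identifying $\sigma$--anti--invariant $2$--chains with combinations $\sum a_j([\bar X_j']-[\bar X_j''])$ over the trivial--holonomy components --- is the right one, and the disjointness step and the forward implication are sound. (For calibration: the paper itself gives no proof of this Proposition; it is quoted with attribution from \cite{Masur:Zorich}, so there is no in-text argument to compare against.) The genuine gap is exactly the one you flag: as written, the converse terminates in ``the verification proceeds as in \cite{Masur:Zorich}'', which leaves the proof incomplete. But you have misdiagnosed that step. The nonvanishing $N_1,N_2\neq 0$ \emph{can} be settled topologically, by the very device you already use for the special case where $\partial\bar X_0$ meets only $\gamma_1$. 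Write $\partial\bar X_0'=a_1\gamma_1'+b_1\gamma_1''+a_2\gamma_2'+b_2\gamma_2''$, so that $\partial(\bar X_0'-\bar X_0'')=N_1(\gamma_1'-\gamma_1'')+N_2(\gamma_2'-\gamma_2'')$ with $N_i=a_i-b_i$. If $N_1=N_2=0$, then $\bar X_0'-\bar X_0''$ is a $2$--cycle on the closed connected surface $\hat S$, hence a multiple of the fundamental class; but its coefficients on the distinct regions $X_0'$ and $X_0''$ are $+1$ and $-1$, which is impossible. If exactly one vanishes, say $N_2=0$, then $N_1(\gamma_1'-\gamma_1'')$ bounds, so $[\hat\gamma_1]=0$ (the group is free), contradicting primitivity --- or simply $\int_{\hat\gamma_1}\omega\neq 0$. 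Hence both $N_i$ are nonzero, the chain $\bar X_0'-\bar X_0''$ itself exhibits the relation $N_1[\gamma_1'-\gamma_1'']+N_2[\gamma_2'-\gamma_2'']=0$ (each $[\gamma_i'-\gamma_i'']$ being $[\hat\gamma_i]$ or $2[\hat\gamma_i]$), and primitivity gives $[\hat\gamma_1]=\pm[\hat\gamma_2]$. This also makes your Stokes/period--coordinate detour unnecessary: you do not need to vary $S$ over a chart to promote a period identity to a homology relation, because the $2$--chain \emph{is} the homology.

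Two smaller repairs. Your uniqueness argument fails in the case where the two relations produced by two trivial--holonomy components $X_0$ and $Y_0$ are proportional: then no linear combination ``kills one of'' the classes. That case is excluded by the same $2$--cycle trick --- proportionality of $\partial(\bar X_0'-\bar X_0'')$ and $\partial(\bar Y_0'-\bar Y_0'')$ would make a nonzero combination $\lambda(\bar X_0'-\bar X_0'')-(\bar Y_0'-\bar Y_0'')$ a $2$--cycle with unequal coefficients on distinct regions, which is impossible; once the two boundary $1$--chains are linearly independent in the $2$--dimensional span of $\gamma_1'-\gamma_1''$ and $\gamma_2'-\gamma_2''$, some rational combination bounds $\gamma_1'-\gamma_1''$ alone, forcing $[\hat\gamma_1]=0$, a contradiction. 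Finally, note that the ``$2$--cycles are multiples of the fundamental class'' step uses connectedness of $\hat S$, i.e.\ that $q$ is not globally a square; this holds automatically for all strata on $\CP$ considered in the paper, since $\sum d_i=-4$ forces simple poles, which are branch points.
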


Now everything is ready for the proof of Proposition~\ref{prop:only:four:types}.

\begin{proof}[Proof of Proposition~\ref{prop:only:four:types}]
Configurations I and II involve a single saddle connection. Using the
above  criterion it is immediate to check that all saddle connections
involved  in  configurations III and IV are \^homologous. Thus, these
configurations  are  rigid, and we can find them on almost every flat
surface in the stratum.

Theorem~2.2  in~\cite{Boissy:configurations}  applies general results
from~\cite{Masur:Zorich}  to  classify all possible configurations of
\^homologous saddle connections on $\CP$, and shows that there are no
such configurations different from types I--IV.

To   complete   the   proof   it   remains   to  apply  Proposition~4
from~\cite{Masur:Zorich}  which  claims  that  for  almost every flat
surface  in  any  stratum, two saddle connections are parallel if and
only  if  they  are \^homologous. This statement is proved following
the  lines  of Proposition 7.4 in~\cite{Eskin:Masur:Zorich}; see also
the analogous proof of Proposition~\ref{pr:no:other:for:billiards} in
~\S\ref{sec:subsec:The:subspace:of:billiards} below.
\end{proof}

\subsection{The subspace of billiards.}
\label{sec:subsec:The:subspace:of:billiards}

Consider  now   the  map
$$
\cB(k_1,\dots,k_n)\into\cQ(k_1-2,\dots,k_n-2)\,.
$$
In       the       chosen       coordinates       in      $H^1_-(\hat
S,\{\hat{P}_1,\ldots,\hat{P}_N\};\C{})$  the  image  of a directional
billiard $\Pi$ is presented by a point
\begin{multline}
\left(2\int_{P_1}^{P_2} dz, \ldots, 2\int_{P_{n-2}}^{P_{n-1}} dz\right)=
\\
\left(2|P_1 P_2|e^{i\phi},
2|P_2 P_3|e^{(k_2\pi)/2+i\phi},
\ldots,
2|P_{n-3} P_{n-2}|e^{(k_2+\ldots+k_{n-2})\pi/2+i\phi}\right)\,.
\end{multline}
The  components  of  the projection of this vector to the $H^1_-(\hat
S,\{\hat{P}_1,\ldots,\hat{P}_N\};\R{})$ are of the form
$$
\pm  2\sin(\phi)  |P_i P_{i+1}|\quad \mbox{ or }\quad \pm
2\cos(\phi)    |P_i   P_{i+1}|
$$
depending  on  the  parity  of  $k_2+\ldots+k_i$.  Thus,  for  $\phi$
different  from  an  integer  multiple of $\pi/2$ the composition map
$T_\ast\cB  \to H^1_-(\hat S,\{\hat{P}_1,\ldots,\hat{P}_N\};\R{})$ is
a surjective map. We have proved

\begin{proposition}
\label{prop:map:onto}
Consider        the        canonical       local        embedding
$$
\cB(k_1,\ldots,k_n) \into \mathcal{Q}(k_1-2,\ldots,k_n-2).
$$
For      almost      all       directional      billiards      in
$\cB(k_1,\ldots,k_n)$ the projection of the tangent  space
$T_\ast\cB(k_1,\ldots,k_n)$  to  the unstable  subspace of
the Teichm\"uller geodesic flow is a surjective map.
\end{proposition}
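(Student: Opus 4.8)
The plan is to differentiate the canonical embedding $\cB(k_1,\dots,k_n)\into\cQ(k_1-2,\dots,k_n-2)$ directly in the period coordinates of \S\ref{sec:subsec:coordiantes:in:stratum}, and to exhibit inside $T_\ast\cB(k_1,\dots,k_n)$ an explicit family of $n-2$ tangent vectors whose images in the unstable subspace are linearly independent.

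First I would pin down the target. In the coordinates furnished by the cohomology $H^1_-(\hat S,\{\hat P_1,\dots,\hat P_N\};\C{})$, the Teichm\"uller geodesic flow $g_t=\operatorname{diag}(e^t,e^{-t})$ expands the subspace $H^1_-(\hat S,\{\hat P_1,\dots,\hat P_N\};\R{})$ of classes with real period coordinates and contracts the complementary subspace $i\,H^1_-(\hat S,\{\hat P_1,\dots,\hat P_N\};\R{})$ of classes with purely imaginary period coordinates. Hence the unstable subspace at a point of the stratum is canonically identified with $H^1_-(\hat S,\{\hat P_1,\dots,\hat P_N\};\R{})$, and the projection onto it is simply ``take real parts of all period coordinates''. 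So it suffices to prove that the composition $T_\ast\cB(k_1,\dots,k_n)\to H^1_-(\hat S,\{\hat P_1,\dots,\hat P_N\};\C{})\to H^1_-(\hat S,\{\hat P_1,\dots,\hat P_N\};\R{})$ is surjective for almost every directional billiard $\Pi$.

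Next I would compute this composition. Relabelling the corners so that $P_n$ is a simple pole (which is always possible, since $\sum_i k_i=2n-4$ forces at least four indices with $k_i=1$) and using the broken line $P_1,\dots,P_{n-1}$ of \S\ref{sec:subsec:coordiantes:in:stratum}, the image of $\Pi$ has $i$-th period coordinate $z_i=2\int_{P_i}^{P_{i+1}}dz=2|P_iP_{i+1}|\,e^{i\alpha_i}$, with $\alpha_i=\phi+(k_2+\dots+k_i)\tfrac{\pi}{2}$ and $\alpha_1=\phi$, exactly the vector displayed just before the statement. Its $i$-th coordinate in $H^1_-(\hat S,\{\hat P_1,\dots,\hat P_N\};\R{})$ is therefore $\operatorname{Re}z_i=2|P_iP_{i+1}|\cos\alpha_i$, which equals $\pm2|P_iP_{i+1}|\cos\phi$ or $\pm2|P_iP_{i+1}|\sin\phi$ according to the parity of $k_2+\dots+k_i$. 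Now I would vary the side lengths $|P_1P_2|,\dots,|P_{n-2}P_{n-1}|$ one at a time: each such infinitesimal variation is tangent to $\cB(k_1,\dots,k_n)$, since it extends to a genuine deformation of $\Pi$ once the two remaining side lengths $|P_{n-1}P_n|$, $|P_nP_1|$ are adjusted to restore the two real closing-up conditions, the relevant $2\times2$ system being invertible because the two edges meeting at the right-angled corner $P_n$ are orthogonal. Along these $n-2$ tangent directions the composition above is represented by a diagonal matrix with $i$-th diagonal entry $\pm2\cos\phi$ or $\pm2\sin\phi$; for $\phi$ not an integer multiple of $\tfrac{\pi}{2}$ none of these entries vanishes, so the images of these $n-2$ vectors are linearly independent and hence span $H^1_-(\hat S,\{\hat P_1,\dots,\hat P_N\};\R{})$, which is $(n-2)$-dimensional. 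Since the locus $\{\phi\in\tfrac{\pi}{2}\Z\}$ has measure zero in $\cB(k_1,\dots,k_n)$, this establishes surjectivity for almost every $\Pi$ and proves the proposition.

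The computation is routine once the coordinates are set up, and I do not expect a genuine obstacle; the only point that deserves a careful word is the claim that the chosen side-length variations are honestly tangent to $\cB(k_1,\dots,k_n)$ --- i.e.\ the solvability of the two closing-up equations for the last two side lengths, which uses the orthogonality of the two edges at $P_n$ --- together with the harmless genericity assumption $\phi\notin\tfrac{\pi}{2}\Z$. Everything else is the bookkeeping with the angles $\alpha_i$ already indicated in the lines preceding the statement.
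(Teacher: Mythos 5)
Your proposal is correct and follows essentially the same route as the paper: writing the image of $\Pi$ in the period coordinates of \S\ref{sec:subsec:coordiantes:in:stratum}, observing that the real parts of the coordinates are $\pm 2\cos(\phi)\,|P_iP_{i+1}|$ or $\pm 2\sin(\phi)\,|P_iP_{i+1}|$ according to the parity of $k_2+\dots+k_i$, and concluding surjectivity onto the $(n-2)$-dimensional unstable subspace for $\phi\notin\tfrac{\pi}{2}\Z$. The only difference is that you spell out why the individual side-length variations are genuinely tangent to $\cB$ (solvability of the closing-up conditions at the corner $P_n$), a point the paper leaves implicit.
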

\medskip



We      complete     this     section     with     a     proof     of
Proposition~\ref{pr:no:other:for:billiards}.

\begin{proof}[Proof of Proposition~\ref{pr:no:other:for:billiards}]
By  assumption  we  do  not consider generalized diagonals and closed
billiard  trajectories  parallel  to  the sides of the polygon. First
note that without loss of generality we can consider only generalized
diagonals:  any  closed regular trajectory makes part of a band which
is  bounded  on both sides by a (chain of) generalized diagonals, see
Figure~\ref{fig:family:of:rectangular:polygons}.

Let  $l_m=|P_m  P_{m+1}|$  for $m=1,\dots,n-2$. Recall that $l_i$ are
the   independent  coordinates  in  the  space  $\cB(k_1,\dots,k_n)$.
Unfolding the billiard along a generalized diagonal we see that every
generalized  diagonal  (non  parallel  to  one  of  the  sides of the
polygon) defines a relation
$$
\frac{\sum a_i l_i}{\sum  b_j l_j}=\tan(\phi)\,,
$$
where  $0<\phi<\pi/2$;  the  sum  in  the numerator is taken over the
vertical  sides  of  the polygon; the sum in the denominator is taken
over  the  horizontal  sides;  and  all $a_i$ and $b_j$ are integers.
Since the second generalized diagonal has a segment going in the same
direction $\phi$, it also defines a relation
$$
\frac{\sum c_i l_i}{\sum  d_j l_j}=\tan(\phi)\,,
$$
where  the  sum  in the numerator is taken over the vertical sides of
the  polygon; the sum in the denominator is taken over the horizontal
sides; and all $c_i$ and $d_j$ are integers.

Each  generalized diagonal determines a saddle connection $\gamma$ on
the  corresponding  flat  sphere,  which  in  turn  defines  a  cycle
$\pm\hat\gamma\in   H_1^-(\hat  S,\{\hat  P_1,\dots,\hat  P_N\};\Z)$.
Moreover,  up  to appropriate choice of signs of the basic vectors in
the  basis  from  ~\S\ref{sec:subsec:coordiantes:in:stratum}  the  cycle
corresponding  to  the  first generalized diagonal has the form $\hat
c_1:=\sum  a_i  \hat  \gamma_i  +  \sum b_j \hat\gamma_j$ and the
cycle  corresponding  to the second generalized diagonal has the form
$\hat c_2:=\sum c_i \hat \gamma_i + \sum d_j \hat\gamma_j$.

Assume  that the two generalized diagonals do not make part of any of
configurations        I--IV.       By       the       result       of
Boissy~\cite{Boissy:configurations}  there  are  no configurations of
\^homologous  saddle  connections  on $\CP$ other than configurations
I--IV. This implies that the corresponding saddle connections are not
\^homologous,  and,  hence,  the cycles $\hat c_1$ and $\hat c_2$ are
not proportional. This implies that the relation
$$
\frac{\sum a_i l_i}{\sum  b_j l_j}=\frac{\sum c_i l_i}{\sum  d_j l_j}
$$
is  a  nontrivial  relation on coordinates $l_1,\dots,l_{n-2}$. Thus,
the  set, satisfying this condition, has measure zero. Taking a union
over  the  countable  collection  of  possible conditions (countable,
because  we  have  to  consider  all possible collections of integers
$a_i,b_j,c_i,d_j$) we still get a set of measure zero.
\end{proof}


\section{Values of the Siegel--Veech constants}
\label{sec:Siegel:Veech}

In  this section, we derive formulas for the Siegel--Veech constant of
each  configuration  of  saddle  connections.  There are two kinds of
formulas.  The  first  kind  expresses the Siegel--Veech constant as a
ratio of volumes of strata, with explicit combinatorial coefficients.
These  formulas  will be stated and proved in this section. The second
kind   of   formula  gives  the  Siegel--Veech  constants  as  numbers
(depending  only  on  the  stratum  and  the configuration). They are
proved    by    plugging    the   expression~\eqref{eq:volume}   from
Theorem~\ref{theorem:volume}  into  the formula of the first kind. We
also       present       these      formulas      here;      however,
Theorem~\ref{theorem:volume}     will     only     be    proved    in
\S\ref{sec:induction}.  For this reason we have attempted to separate
the  formulas  which  depend on Theorem~\ref{theorem:volume} from the
formulas which do not.

The results obtained in this section are based on techniques developed
in  the    papers~\cite{Eskin:Masur},       \cite{Eskin:Masur:Zorich},
and~\cite{Masur:Zorich} written in collaboration with H.~Masur.

\subsection{Normalization of the volume element.}
\label{sec:subsec:notmalization}

Recall   that   for   any   flat   surface   $S$   in   any   stratum
$\cQ(d_1,\dots,d_k)$  we  have  a  canonical  ramified  double  cover
$\hat{S}\to  S$  such  that the induced quadratic differential on the
Riemann  surface $\hat S$ is a global square of a holomorphic Abelian
differential. We have seen in ~\S\ref{sec:subsec:coordiantes:in:stratum}
that the subspace
$H^1_-(\hat S,\{\hat{P}_1,\ldots,\hat{P}_N\};\C{})$
antiinvariant with respect to the induced action of the hyperelliptic
involution  on  relative cohomology provides local coordinates in the
corresponding     stratum     $\cQ(d_1,\dots,d_n)$    of    quadratic
differentials.     We     define    a    lattice    in    $H^1_-(\hat
S,\{\hat{P}_1,\ldots,\hat{P}_N\};\C{})$ as the subset of those linear
forms  which  take  values  in $\Z\oplus i\Z$ on $H^-_1(\hat S,\{\hat
P_1,\dots,\hat P_N\};\Z)$.

We  define  the  volume element $d\mu$ on $\cQ(d_1,\dots,d_k)$ as the
linear      volume      element      in      the     vector     space
$H^1_-(\hat{M}^2_g,\{\hat{P}_1,\ldots,\hat{P}_N\};\C{})$
normalized  in  such  way  that  the  fundamental domain of the above
lattice has volume $1$.

We warn the reader that for $N>1$ this lattice is a proper sublattice
of index $4^{N-1}$ of the lattice
$$
H^1_-(\widehat S,\{\widehat P_1, \dots, \widehat P_N\};\cx)\ \cap\
H^1(\widehat S,\{\widehat P_1, \dots, \widehat P_N\};\Z\oplus i\Z)\,.
$$
Indeed,  if a flat surface $S$ defines a lattice point for our choice
of  the  lattice,  then the holonomy vector along a saddle connection
joining  distinct  singularities can be half-integer. (However, the
holonomy  vector along any \textit{closed} saddle connection is still
always integer.)

The  choice  of one or another lattice is a matter of convention. Our
choice  makes  formulae  relating enumeration of pillowcase covers to
volumes  simpler;  see  Appendix~\ref{sec:pillowcase:covers}. Another
advantage   of   our  choice  is  that  the  volumes  of  the  strata
$\cQ(d,-1^{d+4})$   and   of  the  hyperelliptic  components  of  the
corresponding strata of Abelian differentials are the same (up to the
factors responsible for the numbering of zeroes and of simple poles).

\begin{Convention}
\label{con:area:1:2}
Similar  to  the  case  of Abelian differentials we now choose a real
hypersurface $\cQ_1(d_1,\dots,d_k)$ of flat surfaces of fixed area in
the  stratum  $\cQ(d_1,\dots,d_k)$.
We abuse notation by denoting by
$\cQ_1(d_1,\dots,d_k)$  the  space of flat surfaces of area $1/2$ (so
that the canonical double cover has area $1$).
\end{Convention}

The volume element $d\mu$ in the embodying space $\cQ(d_1,\dots,d_k)$
induces  naturally  a  volume  element  $d\mu_1$  on the hypersurface
$\cQ_1(d_1,\dots,d_k)$  in  the  following  way.  There  is a natural
$\cx^\ast$-action        on        $\cQ(d_1,\dots,d_k)$:       having
$\lambda\in\cx^\ast$ we associate to the flat surface $S=(\CP,q)$ the
flat  surface
\begin{equation}
\label{eq:Cstar:action}
\lambda\cdot S:=(\CP,\lambda^2\cdot q)\,.
\end{equation}
In  particular, we can represent any $S\in\cQ(d_1,\dots,d_k)$ as $S =
r  S_{(1)}$, where $r\in\reals_+$, and where $S_{(1)}$ belongs to the
``hyperboloid'': $S_{(1)}\in\cQ_1(d_1,\dots,d_k)$. Geometrically this
means that the metric on $S$ is obtained from the metric on $S_{(1)}$
by  rescaling  with  linear  coefficient  $r$. In particular, vectors
associated  to  saddle connections on $S_{(1)}$ are multiplied by $r$
to  give  vectors  associated  to corresponding saddle connections on
$S$.  It  means  also that $\area(S) = r^2\cdot\area(S_{(1)})=r^2/2$,
since  $\area(S_{(1)})  = 1/2$. We define the \textit{volume element}
$d\mu_1$    on    the   ``hyperboloid''   $\cQ_1(d_1,\dots,d_k)$   by
disintegration of the volume element $d\mu$ on $\cQ(d_1,\dots,d_k)$:
\begin{equation}
\label{eq:disintegration}
d\mu = r^{2n-1} \, dr\, d\mu_1\, ,
\end{equation}
where
$$
2n=\dim_\reals\cQ(d_1,\dots,d_k)=
2\dim_{\C{}}\cQ(d_1,\dots,d_k)=2(k-2)\,.
$$
Using  this  volume element we define the total \textit{volume of the
stratum} $\cQ_1(d_1,\dots,d_k)$:
\begin{equation}
\label{eq:int:cF:nu1}
\Vol\cQ_1(d_1,\dots,d_k):= \int_{\cQ_1(d_1,\dots,d_k)}d\mu_1\,.
\end{equation}

For     a     subset     $E\subset\cQ_1(d_1,\dots,d_k)$     we    let
$C(E)\subset\cQ_1(d_1,\dots,d_k)$ denote the ``cone'' based on $E$:
\begin{equation}
\label{eq:cone}
 C(E):=\{S=rS_{(1)}\,|\, S_{(1)}\in E,\ 0<r\le 1\}\,.
\end{equation}
Our  definition  of  the  volume element on $\cQ_1(d_1,\dots,d_k)$ is
consistent with the following normalization:
\begin{equation}
\label{eq:normalization}
\Vol(\cQ_1(d_1,\dots,d_k)) =
\dim_{\R{}} \cQ(d_1,\dots,d_k)\cdot\mu(C(\cQ_1(d_1,\dots,d_k))\,,
\end{equation}
where  $\mu(C(\cQ_1(d_1,\dots,d_k))$  is  the  total  volume  of  the
``cone''  $C(\cQ_1(d_1,\dots,d_k))\subset\cQ(d_1,\dots,d_k)$ measured
by means of the volume element $d\mu$ on $\cQ(d_1,\dots,d_k)$ defined
above.

\subsection{$\SL$-action}\label{subsec:sl2:affine}

There  is  an  action  of  $\SL$  on  the  moduli  space of quadratic
differentials   that  preserves  the  stratification,  and  moreover,
preserves (~\cite{Masur:interval, Veech:Gauss}) the measures on $\cQ$
and $\cQ_1$ described above. Recall that a quadratic differential $q$
determines  (and  is determined by) by an atlas of charts to $\mathbb
C$  whose  transition maps are of the form $z \mapsto \pm z+c$. Since
$\SL$  acts  on  $\mathbb  C$ via linear maps on $\R^2$, given a
quadratic differential $q$ and a matrix $g \in \SL$, define the
quadratic differential $g\cdot q$ via post-composition of charts with
$g$. This action generalizes the action of $\SL$ on the space of
(unit-area)  flat  tori  $\SL/\SLZ$.  Note  that $\SL$
preserves  the  area  of  the  quadratic  differential  $q$,  and  in
particular it preserves the level surface $\cQ_1(d_1,\dots,d_k)$.

\subsection{Strata of surfaces with marked points}
In  this section we shall also consider the strata $\cQ_1(\alpha)$ of
flat  surfaces  $S=(\CP,q)$  where  we  mark  a  regular point on the
surface.  Say,  $\cQ_1(2,1^2,0,-1^8)$  will  denote  the  stratum  of
meromorphic  quadratic  differentials on $\CP$ with one zero of order
$2$,  two  zeroes  of  order $1$ denoted by $1^2$,
eight simple poles $-1^8$,
and one additional marked point: ``zero of order $0$''.

Let  $\alpha=\{d_1,\dots,d_k\}$  be  a set with multiplicities, where
$d_i\in\{-1,  1,  2,  3,  \dots\}$ for $i=1, \dots, k$, and $\sum d_i
=-4$. A stratum with a marked point $\cQ(0, d_1, \dots, d_k)$ has the
natural  structure  of  a fiber bundle over the corresponding stratum
without marked points $\cQ(d_1, \dots, d_k)$. This bundle has the
surface  $S$  (punctured at all singularities $P_1, \dots, P_k$) as a
fiber  over  the  ``point''  $S\in\cQ(d_1, \dots, d_k)$. Clearly, the
dimension  of  the  ``universal  curve''  $\cQ(0,  d_1,  \dots, d_k)$
satisfies
\begin{equation}
\label{eq:dim:marked:points}
\dim_\cx\cQ(0,d_1\dots,d_k)=
\dim_\cx\cQ(d_1\dots,d_k)+1=k-1\,.
\end{equation}
By convention  we {\it  always} mark a point on  a flat torus. We
denote the corresponding stratum $\cH(0)$; it has dimension two:
$\dim_\cx\cH(0)=2$.

The  natural measure on the stratum $\cQ(0,d_1\dots,d_k)$ with marked
points  disintegrates  into  a  product  measure,  where  the measure
$d\mu_0$  along  the fiber is proportional to the Lebesgue measure on
$S$  induced  by  the flat metric on $S$, and the measure on the base
$\cQ(d_1\dots,d_k)$   is   the   natural   measure  $d\mu_1$  on  the
corresponding stratum taken without marked points.

When  the  flat  structure  on $S$ is defined by a \textit{quadratic}
differential  the  measure  of  the  fiber  $S$ is different from the
measure of the analogous fiber $S$ with the flat structure defined by
an        \textit{Abelian}       differential.       Namely,       by
Convention~\ref{con:area:1:2} the area of the surface $S$ in terms of
our  flat metric defined by the quadratic differential is $1/2$. Note
also,  that  a saddle connection $\gamma$ joining a zero and a marked
point      and      having      half-integer      linear     holonomy
$\pm\mathit{hol}(\gamma)\in\R^{2}$    defines    an   integer   cycle
$\hat\gamma\in  H_1^-(\hat  M^2_g,\{\hat  P_1,\dots, \hat P_N\};\Z)$.
Hence,  our  choice  of  the fundamental domain of the lattice in the
relative   cohomology   $H^1_-(\hat   M^2_g,\{\hat   P_1,\dots,  \hat
P_N\};\C{})$   described   in  ~\S\ref{sec:subsec:notmalization}
implies that the component $d\mu_0$ of the disintegrated measure along
the fiber $S$ is
\begin{equation}
\label{eq:4dxdy}
d\mu_0 = 4dx\hspace{1pt}dy\,,
\end{equation}
i.e.  $4$  times  the  standard Lebesgue measure coming from the flat
metric.  This gives $\mu_0(S)=2$ for the total measure of each fiber,
which  implies  the  following  relation  between  the volumes of the
strata:
\begin{equation}
\label{eq:vol:with:marked:points}
\Vol \cQ_1(0, d_1, \dots, d_k) = 2 \Vol \cQ_1(d_1, \dots, d_k)\,.
\end{equation}
Recall  that  $v(0)=2$,  see~\eqref{eq:v};  so  this is coherent with
formula~\eqref{eq:volume} for the volume.

\subsection{Volume of a stratum of disconnected flat surfaces}
\label{sec:subsec:Strata:of:Disconnected:Surfaces}
It     will     be     convenient     to    consider    the    strata
$\cQ(\alpha')=\cQ(\alpha'_a)\times\cQ(\alpha'_b)$,   of  closed  flat
surfaces  $S$  having  two  components  $S_a\sqcup S_b$ of prescribed
types.  Such  strata play especially important role in the context of
the        \textit{principal       boundary}       discussed       in
~\S\ref{sec:subsec:principal:boundary}.  In the consideration below each
of  $\alpha'_a,  \alpha'_b$ might contain an entry ``$0$'' or not. In
other  words, the strata $\cQ(\alpha'_a), \cQ(\alpha'_b)$ are allowed
to have a marked point.

\begin{Convention}
\label{conv:disconn:strata}
Using  notation  $\alpha'=\alpha'_a\sqcup\alpha'_b$  for  the  strata
$\cQ(\alpha')$  of disconnected surfaces we assume that we keep track
of how $\alpha'$ is partitioned into $\alpha'_a$ and $\alpha'_b$.
\end{Convention}

We  shall  need  the  expressions  for the volume element and for the
total  volume  of  such strata. The corresponding expressions for the
strata  of  Abelian  differentials were obtained in \S 6.2 pp. 81--82
in~\cite{Eskin:Masur:Zorich}.   Though   the   corresponding  formula
translates  to  the  strata  of  quadratic  differentials without any
difficulties  we  present  this  simple  calculation since it is very
instructive   in  view  of  calculation  of  Siegel--Veech  constants
performed below.

We  write  $S_i  =  r_i  S_i^{(1)}$, where
$area\big(S_i^{(1)}\big)    =   \frac{1}{2}$;   $i\in\{a,b\}$.   Then
$area(S_i) = r_i^2\cdot\frac{1}{2}$. Let
$$
n_i:=\dim_\C{}\cQ(\alpha'_i);\qquad
n:=\dim_\C{}\cQ(\alpha')=n_a+n_b\,.
$$
Let  $d\mu^a$ (correspondingly $d\mu^b$) be the volume element on the
stratum   $\cQ(\alpha'_a)$  (correspondingly  $\cQ(\alpha'_b)$).  Let
$d\mu^a_1$  (correspondingly  $d\mu^b_1$)  be the hypersurface volume
element    on    the    ``unit    hyperboloid''    $\cQ_1(\alpha'_a)$
(correspondingly $\cQ_1(\alpha'_b)$). We have
$$
d\mu(S) =d\mu^a(S_a)\cdot d\mu^b(S_b)  =
r_a^{2n_a-1} r_b^{2n_b-1} dr_a\,dr_b\, d\mu^a_1\,d\mu^b_1\,.
$$
Set
$$
W=\Vol\cQ_1(\alpha'_a)\cdot\Vol\cQ_1(\alpha'_b)\,.
$$
Then,
$$
\mu(C(\cQ_1(\alpha'))  =
W\cdot
\int_{\substack{r_a^2+r_b^2\le 1\\r_a> 0;\ r_b>0}}
r_a^{2n_a - 1} r_b^{2n_b - 1}\,dr_a\,dr_b=
W\cdot\frac{1}{4}\frac{ (n_a-1)! (n_b-1)!}{n!}\,,
$$
where  we  have left the computation of the integral over the disk as
an exercise. Hence, applying~\eqref{eq:normalization} we get
\begin{multline}
\label{eq:total:volume:of:nonprimitive:stratum}
\Vol\cQ_1(\alpha')=2n\cdot \mu(C(\cQ_1(\alpha') )
=\\=
\frac{1}{2}\cdot
\frac{(\dim_\cx\cQ(\alpha'_a)-1)! (\dim_\cx\cQ(\alpha'_b)-1)!}
{(\dim_\cx\cQ(\alpha')-1)!}\cdot
\Vol\cQ_1(\alpha'_a)\cdot\Vol\cQ_1(\alpha'_b)\,.
\end{multline}

Repeating  literally   the   same   arguments   we   obtain  the
corresponding formula for the volume elements:
\begin{equation}
\label{eq:volume:element:of:nonprimitive:stratum}
d\mu_1=
\frac{1}{2}\cdot
\frac{(\dim_\cx\cQ(\alpha'_a)-1)! (\dim_\cx\cQ(\alpha'_b)-1)!}
{(\dim_\cx\cQ(\alpha')-1)!}\cdot
d\mu^a_1\,d\mu^b_1\,.
\end{equation}

\subsection{Reduction to ergodic theory}
\label{sec:subsec:Reduction:to:ergodic:theory}

In this section we recall the strategy given in~\cite{Eskin:Masur} to
obtain the quadratic asymptotics in Theorem~\ref{theorem:SV:for:CP}.

Fix  an  unordered collection
$(d_1,   \ldots,   d_n)$  of  integers  $d_i  \in  \N  \cup  \{-1\}$,
$i=1,\dots,n$,  satisfying  $\sum_{i=1}^n  d_i = -4$, and let $\cQ_1$
denote  the  stratum  $\cQ_1(d_1, \ldots, d_n)$. Note that every such
stratum  is  nonempty and connected. Let $\mu_1$ denote the canonical
$\PSL$-invariant  measure on $\cQ_1$. Fix a configuration $\cC$ as in
\S\ref{sec:subsec:configurations}.   To  each  saddle  connection  we
associate  a  holonomy  vector in the Euclidean plane $\R^{2}$ having
the same length and the same line direction as the saddle connection.
By  convention  the  configuration  III  is represented by the closed
saddle  connection  joining  a  zero  to  itself (the holonomy vector
associated  to the partner saddle connection joining two simple poles
is  parallel  but  twice  shorter).  Since  by  convention the saddle
connections  are not oriented, the holonomy vector is defined up to a
sign,  so  we actually consider a pair of opposite holonomy vectors
$\pm\vec  v$.  Given  a  flat surface $S=(\CP,q)  \in \cQ_1$, let
$V_\cC(S)$ be the set of holonomy vectors of saddle connections whose
configuration  is  $\cC$. For any flat surface $S$
the set $V_\cC(S)$ is a discrete subset of
$\R^2$. We are interested in the asymptotics of the number
\begin{equation}
\label{eq:N:C:S:L}
N_\cC(S, L) = \frac{1}{2}\big|V_\cC(S) \cap B(0, L)\big|\,,
\end{equation}
of saddle connections of type $\cC$ on the flat surface $S$ of length
at most $L$. The weight $1/2$ in the above expression compensates the
fact  that  each  saddle  connection  is  represented by two holonomy
vectors $\pm\vec v$.

In  the  remainder  of \S\ref{sec:ergodic}, the stratum $\cQ$ and the
configuration  $\cC$  are  fixed.  We  will often omit $\cC$ from the
notation,  and  we will use the abbreviated notation $q$ for the flat
surface $S=(\CP,q)$.

\subsubsection{Siegel--Veech formulas}
\label{Siegel:Veech:formulas}
\noindent Given $f \in C_c(\R^2)$, define the \textit{Siegel--Veech}
transform $\widehat{f}:\cQ_1 \rightarrow \R$ by

\begin{equation}
\label{eq:siegel:veech:def}
\widehat{f}(q) = \frac{1}{2}\sum_{v \in V_\cC(q)} f(v)\,.
\end{equation}

\noindent We have the \textit{Siegel--Veech formula}
(\cite{Veech:siegel}, Theorem 0.5). There is a constant (called the
\textit{Siegel--Veech} constant) $b_\cC(\cQ)$ so that:

\begin{equation}
\label{eq:siegel:veech}
\frac{1}{\mu_1(\cQ_1)}\int_{\cQ_1} \widehat{f}(q)d\mu_1(q) =
b_\cC(\cQ_1)\int_{\R^2} f(x,y)\, dx dy\,.
\end{equation}

\noindent  Let
$$g_t  = \left( \begin{array}{cc} e^{t}  & 0 \\ 0 & e^{-t}
\end{array}\right)$$

$$r_{\theta} =\left( \begin{array}{cc} \cos
\theta & \sin \theta \\ -\sin \theta & \cos \theta \end{array}\right).$$

\noindent Let $f$ be (a smoothed version of) the indicator function of the trapezoid $\mathcal T$ defined by the points
$$(1,1), (-1, 1), (1/2, 1/2), (-1/2, 1/2).$$

\noindent Note that the area of this trapezoid is $3/4$.

We then have, for $t \gg 0$, and any $v \in \R^2$ (\cite{Eskin:Masur}, Lemma 3.4):

\begin{equation}
\label{eq:eskinmasur:3.4}
\frac{1}{2\pi}\int_0^{2\pi} f(g_t r_{\theta} v) d\theta \approx \begin{cases}
\frac{e^{-2t}}{\pi} & e^{t}/2 \le \|v\| \le e^t\\ 0 & \mbox{otherwise}
\end{cases}.
\end{equation}
(See \cite{Eskin:Masur} for the exact meaning of $\approx$). Heuristically, the integral measures the proportion of angles $\theta$ so that $r_{\theta} v \in g_{-t} \mathcal T$. The trapezoid $g_{-t} \mathcal T$ has vertices at $$(e^{-t},e^t), (-e^{-t}, e^t), (e^{-t}/2, e^t/2), (-e^{-t}/2, e^t/2).$$ The range of (inverse) slopes is of size $2e^{-2t}$, and thus the length of the interval of $\theta$'s satisfying $r_{\theta} v \in g_{-t} \mathcal T$ is also of size $2e^{-2t}$, if $v$ has length in between $e^t/2$ and $e^t$, and zero otherwise. Dividing by $2\pi$ to get the proportion, we obtain (\ref{eq:eskinmasur:3.4}).
Combining (\ref{eq:siegel:veech}) and (\ref{eq:eskinmasur:3.4}), we obtain
\begin{equation}
\label{eq:circ:av}
\frac{e^{2t}}{2\pi} \int_{0}^{2\pi} \widehat{f}(g_t r_{\theta} q) d\theta \approx \frac{1}{\pi} \left(
N_\cC(q, e^{t}) - N_\cC(q, e^t/2)\right).
\end{equation}

\subsubsection{Equidistribution results}

The equation (\ref{eq:circ:av}) reduces the problem of studying
\begin{displaymath}
\lim_{t \rightarrow \infty} e^{-2t} N_\cC(q, e^t)
\end{displaymath}
to that of studying the limiting behavior of
\begin{displaymath}
\frac{1}{2\pi}\int_{0}^{2\pi} \widehat{f}(g_t r_{\theta} q) d\theta
\end{displaymath}
Assuming this limit exists, and is equal to $c$, a geometric series
calculation shows
\begin{displaymath}
\lim_{t \rightarrow \infty}e^{-2t} N_\cC(q, e^t) = \frac{4}{3} \pi c.
\end{displaymath}
Assuming further that Lebesgue measure supported on the circles $\{g_t
r_{\theta} q\}_{0 \le \theta < 2\pi}$ converges, as $t \rightarrow
\infty$, to
the absolutely continuous $\SL$-invariant
measure $\mu_1$ on $\cQ_1$, we would have that $c =
\frac{1}{\mu_1(\cQ_1)}\int_{\cQ_1} \widehat{f}(q)d\mu_1(q)$, and then using
(\ref{eq:siegel:veech}), we would obtain, since the area of the
trapezoid is $3/4$,
\begin{displaymath}
\lim_{t \rightarrow \infty}e^{-2t} N_\cC(q, e^t) = \pi b_\cC(\cQ).
\end{displaymath}
In fact, this is the approach used in~\cite{Eskin:Masur}. There, the
key tool is a general ergodic theorem on $\SL$-actions, proved
by A.~Nevo~\cite{Nevo} which shows
\begin{displaymath}
\lim_{t \rightarrow \infty}
\int_{0}^{2\pi} \widehat{f}(g_t r_{\theta} q)\,d\theta =
\frac{1}{\mu_1(\cQ_1)}\int_{\cQ_1} \widehat{f}(q)\,d\mu_1(q),
\end{displaymath}
for almost every $q \in \cQ$. However, since the set of billiards has
measure $0$, this does not yield any information about them. We will
instead use Theorem~\ref{theorem:birkhoff:chaika} to
obtain our results.

\subsection{Siegel--Veech constants and the principal boundary of strata}
\label{sec:subsec:principal:boundary}

In  this  section  we  present a strategy for evaluation Siegel--Veech
constants.     This     strategy     was     successfully     applied
in~\cite{Eskin:Masur:Zorich}  to  compute all Siegel--Veech constants
for  all connected components of the strata of Abelian differentials.
In   this   section   we   present   the  general  scheme  elaborated
in~\cite{Eskin:Masur:Zorich} and developed in~\cite{Masur:Zorich}. In
the further sections we adjust it to the concrete cases of configurations
of       saddle       connections       I--IV       described      in
~\S\ref{sec:subsec:configurations}.

Fix a stratum $\cQ(\alpha)$ of meromorphic quadratic differentials on
$\CP$,  where  $\alpha=\{d_1,\dots,d_k\}$.  Consider  a configuration
$\cC$  of  one  of  the types I--IV (in the case of general strata in
higher  genus  it  would  be any configuration of \^homologous saddle
connections).           We           have           seen           in
~\S\ref{sec:subsec:Reduction:to:ergodic:theory}   that   to   each  flat
surface   $S\in\cQ(\alpha)$   we  can  associate  a  discrete  subset
$V_\cC(S)\subset  \R^{2}$  of  holonomy vectors of saddle connections
whose  configuration  is $\cC$. By construction the set $V_\cC(S)$ is
centrally  symmetric  with respect to the origin. To any function $f$
with  compact  support  on $\R^2$ formula~\eqref{eq:siegel:veech:def}
associates  its \textit{Siegel--Veech transform} $\widehat{f}$ defined
on  the  stratum  $\cQ$.  By  definition~\eqref{eq:siegel:veech:def},
choosing  the  characteristic  function  $\chi_L$ of a closed disk of
radius $L$ centered at the origin of $\R^2$ as a function $f$, we get
as  $\widehat\chi_L(S)$  the  counting  function $N_\cC(S, L)$ of the
number  of saddle connections of type $\cC$ and of length at most $L$
on the flat surface $S$ defined by~\eqref{eq:N:C:S:L}.

Applying Siegel--Veech formula~\eqref{eq:siegel:veech} we obtain
\begin{equation}
\label{eq:SV:formula:for:chi}
\frac{1}{\mu_1(\cQ_1)}\int_{\cQ_1} \widehat\chi_L(S)\,d\mu_1(S) =
b_\cC(\cQ)\int_{\R^2} \chi_L(x,y)\, dx dy=
b_\cC(\cQ)\cdot\pi L^2\,.
\end{equation}

By  the  results  of  A.~Eskin  and  H.~Masur~\cite{Eskin:Masur}, for
almost all flat surfaces $S$ in the stratum $\cQ_1$ one has
\begin{equation}
\label{eq:N:equals:b:pi:L:2}
N_\cC(S, L)=\widehat{\chi}_L(S)\sim b_\cC\cdot \pi L^2
\end{equation}
with  the  same constant $b_\cC$ as in~\eqref{eq:SV:formula:for:chi}.

Formula~\eqref{eq:SV:formula:for:chi}     can     be    applied    to
$\widehat{\chi}_L$  for  any  value of $L$. In particular, instead of
taking  large  $L$  we can choose a very small $L=\varepsilon \ll 1$.
The corresponding function $\widehat{\chi}_\varepsilon(S)$ counts how
many  (collections of) $\varepsilon$-short saddle connections (closed
geodesics) of the type $\cC$ we can find on a flat surface $S\in\cQ$.

Consider  a subset $\cQ_1^{\varepsilon}(\cC)\subset\cQ_1$ of surfaces
of  area $1/2$ having a saddle connection shorter than $\varepsilon$.
Consider                a                smaller               subset
$\cQ_1^{\varepsilon,thin}\subset\cQ_1^{\varepsilon}$     of     those
surfaces   of   area  $1/2$  in  $\cQ_1$  which  have  at  least  two
distinct collections of \^homologous
saddle  connections  of  type $\cC$ and of length at
most  $\epsilon$.  Finally, define $\cQ_1^{\varepsilon,thick}$ as the
complement $\cQ_1^{\varepsilon}-\cQ_1^{\varepsilon,thin}$.

For    the    flat    surfaces    $S$    outside    of   the   subset
$\cQ^{\varepsilon}_1(\cC)$  there  are  no  saddle connections of the
type        $\cC$       shorter       than       $\epsilon$,       so
$\widehat{\chi}_\varepsilon(S)=0$ for such surfaces. For surfaces $S$
from the subset $\cQ^{\varepsilon,thick}_1(\cC)$ there is exactly one
collection  like this, so $\widehat{\chi}_\varepsilon(S)=1$. Finally,
for     the     surfaces    $S$    from    the    remaining    subset
$\cQ^{\varepsilon,thin}_1(\cC)$                one                has
$\widehat{\chi}_\varepsilon(S)\ge  1$.  A,~Eskin  and  H.~Masur  have
proved           in~\cite{Eskin:Masur}           that          though
$\widehat{\chi}_\varepsilon(S)$  might be large on $\cQ^{\varepsilon,
thin}_1$ the measure of this subset is so small that
$$
\int_{\cQ^{\varepsilon, \mathit{thin}}_1(\cC)}
\widehat{\chi}_\varepsilon(S)\ d\mu_1 = o(\varepsilon^2)
$$
and hence
$$
\int_{\cQ_1} \widehat{\chi}_\varepsilon(S)\ d\mu_1 =
\Vol\cQ^{\varepsilon, thick}_1(\cC)\  +\ o(\varepsilon^2)\,.
$$
This   latter   volume   is   almost   the   same   as   the   volume
$\Vol\cQ^\varepsilon_1(\cC)$, namely, by~\cite{Masur:Smillie} one has
$\Vol\cQ^\varepsilon_1(\cC)=
\Vol\cQ^{\varepsilon, \mathit{thick}}_1(\cC) + o(\varepsilon^2)$.
Taking into consideration that
$$
\int_{\R^2} \chi_\varepsilon(x,y)\, dx\, dy=\pi\varepsilon^2
$$
and  applying  Siegel--Veech formula~\eqref{eq:SV:formula:for:chi} we
get
$$
\cfrac{\Vol\cQ^{\varepsilon}_1(\cC)}
{\Vol\cQ_1}
+o(\varepsilon^2)=
b_\cC\cdot\pi\varepsilon^2
$$
which  implies  the  following formula for the Siegel--Veech constant
$b_\cC$:
\begin{equation}
\label{eq:bC:equals:Vol:over:Vol}
b_\cC=\lim_{\epsilon\to 0} \frac{1}{\pi\epsilon^2}\cdot
\cfrac{\Vol\cQ^{\varepsilon}_1(\cC)}
{\Vol\cQ_1}\,.
\end{equation}

We  complete  this  section  by  establishing  an elementary relation
between     the    Siegel--Veech    constant    $b_\cC$    used    in
\S\ref{sec:ergodic}  and  in  \S\ref{sec:subsec:principal:boundary}  and  the
Siegel--Veech         constant         $c_\cC$         used        in
\S\ref{sec:configurations:counting}.     Recall     that     counting
function~\eqref{eq:N:equals:b:pi:L:2}
$$
N_\cC(S, L)\sim b_\cC\cdot \pi L^2
$$
counts  the  number  of saddle connections of type $\cC$ of length at
most     $L$     on     the     flat    surface    $S\in\cQ_1$.    By
convention~\ref{con:area:1:2}  surfaces from $\cQ_1$ have area $1/2$.
Thus,  applying  the  asymptotic  formula~\eqref{eq:Th:N:cC:S:L} from
Theorem~\ref{theorem:SV:for:CP}  to  the  flat surface $S\in\cQ_1$ we
get
$$
N_\cC(S,L)\sim c_\cC\cdot
\frac{\pi L^2}{\text{Area of }S}=2c_\cC\cdot\pi L^2\,,
$$
which implies that
\begin{equation}
\label{eq:b:equals:2c}
b_\cC=2c_\cC\,.
\end{equation}

\subsection{Principal boundary}

When  saddle  connections  of configuration $\cC$ are contracted by a
continuous deformation, the limiting flat surface decomposes into one
or  several  connected  components  represented by nondegenerate flat
surfaces $S'_1, \dots, S'_m$. Let the initial surface $S$ belong to a
stratum  $\cQ(\alpha)$, where $\alpha$ is the set with multiplicities
$\{d_1,\dots,d_k\}$.  Let $\cQ(\alpha'_j)$ be the stratum ambient for
$S'_j$.   The  stratum  $\cQ(\alpha'_\cC)=\cQ(\alpha'_1)\sqcup  \dots
\sqcup  \cQ(\alpha'_m)$  of  disconnected  flat  surfaces $S'_1\sqcup
\dots\sqcup  S'_m$  is  referred  to as a \textit{principal boundary}
stratum  of  the stratum $\cQ(\alpha)$. The principal boundary of any
connected  component  of  any  stratum  of  Abelian  differentials is
described in~\cite{Eskin:Masur:Zorich}; the principal boundaries of
strata      of      quadratic      differentials     are    described
in~\cite{Masur:Zorich}.

The    papers~\cite{Eskin:Masur:Zorich},   \cite{Masur:Zorich}   also
present   the   inverse   construction.  Consider  any  flat  surface
$S':=S'_1\sqcup   \dots   \sqcup  S'_m\in  \cQ(\alpha'_\cC)$  in  the
principal   boundary   of  $\cQ(\alpha)$;  consider  a  vector  $\vec
v\in\R^2\simeq\C{}$   such  that  $\|\vec  v\|\le\epsilon$.  One  can
reconstruct   a   flat   surface  $S\in\cQ(\alpha)$  endowed  with  a
collection  of  saddle  connections  of  the type $\cC$ such that the
linear  holonomy  along saddle connections is represented by $\pm\vec
v$,  and  such  that  degeneration  of  $S$  contracting  the  saddle
connections  in  the  collection  gives  the  surface  $S'$. When the
configuration  $\cC$ does not involve any cylinders, any flat surface
$S'\in\cQ_1$  and  any  holonomy  vector  $\vec v$ define the surface
$S\in\cQ_1^\epsilon(\cC)$,   basically,   up  to  some  finite  order
ambiguity  which can be explicitly computed. Moreover, the measure in
$\cQ_1^\epsilon(\cC)$     disintegrates    as    the    measure    in
$\cQ_1(\alpha'_\cC)$  times  the  measure  $d\mu_0$  in  the space of
parameters  of  the  deformation. The latter space can be viewed as a
finite  cover  of  the space of holonomy vectors $\pm\vec v$, that is
the quotient of the disk $D^2_\epsilon/\pm$ of radius $\epsilon$ over
the central symmetry. As a result we get
\begin{equation}
\label{eq:volume:of:a:cusp:1}
\Vol\left(\cQ_1^\varepsilon(\cC)\right)=
(\text{explicit factor})\cdot\pi\varepsilon^2\cdot
\Vol\cQ_1(\alpha'_\cC)\ +\ o(\varepsilon^2) .
\end{equation}
Thus,    in    order    to    compute   the   constant   $b_\cC$   by
formula~\eqref{eq:bC:equals:Vol:over:Vol} it is sufficient to express
the   volume   of   $\Vol\cQ_1(\alpha')$  in  terms  of  the  volumes
$\Vol\cQ_1(\alpha'_1),  \dots,  \Vol\cQ_1(\alpha'_m)$, and to compute
the  explicit factor, responsible for the fixed finite number of flat
surfaces  $S\in\cQ_1^\epsilon(\alpha)$  which  correspond  to a fixed
flat surface $S'\in\cQ(\alpha'_\cC)$ in the boundary stratum and to a
fixed  holonomy  vector  $\vec  v$.  The first problem is simple; the
answer       to       this       problem       is       given      in
~\S\ref{sec:subsec:Strata:of:Disconnected:Surfaces};  the second problem
is   solved  for  configurations  I--IV  in  the  remaining  part  of
~\S\ref{sec:Siegel:Veech}.

The situation for configurations which involve a cylinder is slightly
more  complicated,  but  similar  to the previous one. In both cases,
applying                    formula~\eqref{eq:bC:equals:Vol:over:Vol}
and~\eqref{eq:volume:of:a:cusp:1} we express the constant $b_\cC$ as
\begin{equation}
\label{eq:volume:of:a:cusp:2}
b_\cC=
(\text{explicit combinatorial factor})\cdot
\frac{\prod_{j=1}^k\Vol\cQ_1(\alpha'_j)}
{\Vol\left(\cQ_1(\alpha)\right)}\,.
\end{equation}

\subsection{Surgeries on a flat surface}

Consider  a  flat  surface  $S'\in\cQ_1(\alpha'_\cC)$  in  a  stratum  of
meromorphic  quadratic  differentials  with  at  most simple poles on
$\CP$,  possibly with a marked point. Fix some zero, or a simple pole
(or  the  marked regular point) $P_i$. Consider a vector
$\pm\vec  v\in\R^2$,  defined  up  to reversing the direction. Assume
that  $\vec v$ is much shorter than the shortest saddle connection on
$S$.

The papers~\cite{Eskin:Masur:Zorich} and~\cite{Masur:Zorich} describe how
to  perform  a  small deformation of the surface $S'$ breaking up the
chosen  singularity  $P_i$  of  degree  $d_i$  into two singularities
$P_i',  P_i''$  of  any  two  prescribed  degrees  $d_i'$ and $d''_i$
satisfying   the   relation   $d_i'+d_i''=d_i$,   where  $d_i,  d_i',
d_i''\in\{-1,0,1,2,\dots\}$. The deformation can be performed in such
way  that the holonomy vector of the resulting tiny saddle connection
joining  the  newborn singularities $P_i', P_i''$ is exactly $\pm\vec
v$.  This  deformation  is  described in details in sections 8.1--8.2
in~\cite{Eskin:Masur:Zorich}       and       in      section      6.3
in~\cite{Masur:Zorich}.  When  at least one of $d_i', d_i''$ is even,
the  deformation is local: it does not change the metric outside of a
small  neighborhood  of  $P_i$ and it does not change the area of the
flat  surface.  When  both  $d_i',  d_i''$  are  odd  the deformation
involves  some  arbitrariness  and  involves some small change of the
area   of   the   flat   surface.   A   discussion  in  the  original
papers~\cite{Eskin:Masur:Zorich} and~\cite{Masur:Zorich} explains why
both issues might be neglected in our calculations.

The  cone  angles  at  the  distinguished  singularity  is  equal  to
$\pi(d_i+2)$.  Thus,  there  are  $(d_i+2)$  geodesic  rays in linear
direction   $\pm\vec   v$  adjacent  to  $P_i$.  Take  a  small  disk
$D^2_\epsilon$  of  radius  $\epsilon$  centered  in  the  origin and
consider  its  quotient $D^2_\epsilon/\pm$ over the action of central
symmetry.  Letting  the vector $\pm\vec v$ vary in $D^2_\epsilon/\pm$
and  taking  care  of  normalization~\eqref{eq:4dxdy}  of the measure
$d\mu_0$ on $D^2_\epsilon/\pm$ we get a set of parameters of measure
\begin{equation}
\label{eq:disk:measure:in:slit:construction}
(d+2)\cdot 4\cdot \frac{\pi\epsilon^2}{2}=
2(d+2)\cdot\pi\epsilon^2\,.
\end{equation}
For    this    configuration   the   ``$(\text{explicit   factor})$''
in~\eqref{eq:volume:of:a:cusp:1} equals $2(d+2)$.

\begin{figure}
\includegraphics{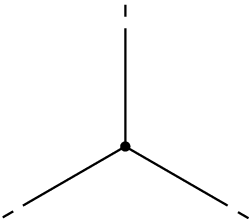}
\includegraphics{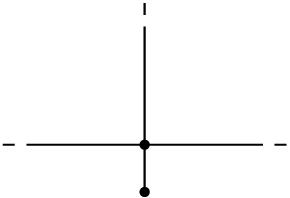}
\includegraphics{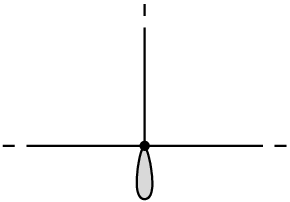}
\begin{picture}(0,0)(0,0)
\put(-4,-65){$\delta$}
\end{picture}
\vspace{80bp}
\caption{
\label{fig:breakin:up:a:zero}
Breaking  up a zero into two. In the particular case, when one of the
newborn  singularities  is  a  simple  pole,  we  can  slit along the
resulting  saddle connection of length $\delta$ to get a surface with
geodesic boundary of length $2\delta$.
   }
\end{figure}

Consider now a particular case, when one of the newborn singularities
$P_i',    P_i''$,    say,   $P_i''$   is   a   simple   pole.   Since
$d_i'+d''_i=d_i\ge  -1$,  the  singularities  $P_i', P_i''$ cannot be
simple  poles  simultaneously.  Making  a slit along the short saddle
connection   joining   $P_i'$   to   $P_i''$   we  create  a  surface
${\mathring{\!S}}$  with  geodesic boundary. Note that the cone angle
at  the singularity $P_i''$ was $\pi$. This means, that after opening
up  a slit, the point $P_i''$ becomes a regular point of the boundary
of  ${\mathring{\!S}}$,  see  Figure~\ref{fig:breakin:up:a:zero}.  In
other  words,  the  boundary  of  ${\mathring{\!S}}$ corresponds to a
single closed geodesic with linear holonomy $\pm\vec v$.

\subsection*{Parallelogram construction}
In  order to construct the subset $\cQ_1^\epsilon(\cC)$ corresponding
to  configuration  II,  we need another surgery. Given a flat
surface  $S'\in\cQ_1(\alpha')$  in a stratum of meromorphic quadratic
differentials  with  at  most  simple  poles  on  $\CP$, given a pair of
singularities  $P',P''$  on  $S'$  and  given a short vector $\pm\vec
v\in\R^{2}$,  we  construct  a  surface  with two boundary components
creating  a  pair of small holes adjacent to the chosen singularities
$P',P''$.  The  surgery  is performed in such way that the holes have
geodesic  boundary  with linear holonomy $\pm\vec v$. Let $d',d''$ be
the   degrees   of   singularities   $P',   P''$   respectively.  The
corresponding  cone  angles  are  $\pi(d'+2)$ and $\pi(d''+2)$. Thus,
there  are  $(d'+2)$  geodesic  rays  in linear direction $\pm\vec v$
adjacent  to  $P'$  and  $(d''+2)$  geodesic rays in linear direction
$\pm\vec v$ adjacent to $P''$.

The    corresponding   surgery   is   described   in   section   12.2
in~\cite{Eskin:Masur:Zorich}       and       in      section      6.1
in~\cite{Masur:Zorich}       as      the      ``\textit{parallelogram
construction}''.  This  is  a  nonlocal  construction,  so  it is not
canonical,  and  it  changes  slightly the area of the surface. Up to
this  ambiguity  (which  can  be  neglected  in  our  computations as
explained  in~\cite{Eskin:Masur:Zorich}  and in~\cite{Masur:Zorich}),
given  the data as above, there are $(d'+2)(d''+2)$ ways to construct
the  described surface with boundary ${\mathring{\!S}}$. Take a small
disk  $D^2_\epsilon$  of radius $\epsilon$ centered in the origin and
consider  its  quotient $D^2_\epsilon/\pm$ over the action of central
symmetry. Let the vector $\pm\vec v$ vary in $D^2_\epsilon/\pm$. Note
that  in  the contrary to the previous case, the saddle connection is
now \textit{closed}. Thus the measure along the fiber has the form
$$
d\mu_0=dx\,dy
$$
and  not  the  form~\eqref{eq:4dxdy} as before. This implies that for
this  configuration  the  set  of  parameters  of  deformation having
holonomy vectors in $D^2_\epsilon/\pm$ has the measure
\begin{equation}
\label{eq:disk:measure:in:parallelogram:construction}
(d'+2)(d''+2)\cdot \frac{\pi\epsilon^2}{2}\,.
\end{equation}
For    this    configuration   the   ``$(\text{explicit   factor})$''
in~\eqref{eq:volume:of:a:cusp:1} equals $\cfrac{(d'+2)(d''+2)}{2}$\,.

\subsection{Type I: A simple saddle connection joining a fixed zero to a fixed pole or to a distinct fixed zero.}
\label{sec:subsec:sv:typeI}

Now  we  finally  pass  to  explicit computation of the Siegel--Veech
constants following the strategy described above.

Throughout  the rest of this section $\cQ(d_1,\dots,d_k)$ denotes any
stratum  of  meromorphic  quadratic differentials with at most simple
poles on $\cP$ different from the stratum $\cQ(-1^4)$ of pillowcases.

\begin{theorem}
\label{th:SV:I}
For the configuration  $\cC$  of  saddle connections of type I, i.e. for
saddle  connections  joining  a  fixed  pair  $P_i,  P_j$ of distinct
singularities of orders $d_i,d_j$, the Siegel--Veech constant $c_\cC$
is expressed as follows:
\begin{equation}
\label{eq:ratio:Siegel:Veech:constant:sc}
c_\cC=(d_i+d_j+2) \cfrac{\Vol \cQ_1(d_i+d_j,d_1,d_2, \dots,
\widehat{d_i}, \dots, \widehat{d_j}, \dots, d_k)} {\Vol \cQ_1(d_1,
\dots, d_k)}.
\end{equation}
After plugging in Theorem~\ref{theorem:volume}, we get:
\begin{equation}
\label{eq:Siegel:Veech:constant:sc}
c_\cC =
\cfrac{(d_i+d_j+2)!!\ (d_i+1)!!\ (d_j+1)!!}
{(d_i+d_j+1)!!\  d_i!!\  d_j!!}\cdot
\begin{cases}\cfrac{2}{\pi^2}&\text{ when both $d_i,d_j$ are odd}\\ & \\
\cfrac{1}{2}&\text{ otherwise}
\end{cases}
\end{equation}
\end{theorem}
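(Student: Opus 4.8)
The plan is to specialize the principal-boundary machinery of~\S\ref{sec:subsec:principal:boundary} to configuration~I. Concretely, I would: (a)~identify the boundary stratum $\cQ_1(\alpha'_\cC)$ obtained by contracting a type~I saddle connection; (b)~run the inverse ``breaking up a zero'' surgery to read off the ``explicit factor'' appearing in~\eqref{eq:volume:of:a:cusp:1}; (c)~combine~\eqref{eq:volume:of:a:cusp:1} with~\eqref{eq:bC:equals:Vol:over:Vol} and the relation $b_\cC=2c_\cC$ from~\eqref{eq:b:equals:2c} to obtain~\eqref{eq:ratio:Siegel:Veech:constant:sc}; and (d)~substitute the volume formula~\eqref{eq:volume} and carry out a short parity computation to obtain~\eqref{eq:Siegel:Veech:constant:sc}.

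For step~(a): a type~I saddle connection $\gamma$ joins two distinct singularities $P_i,P_j$ (of which at least one is a genuine zero), it is the unique saddle connection in its \^homology class, and contracting it collapses $P_i$ and $P_j$ into a single cone point of cone angle $\pi(d_i+2)+\pi(d_j+2)-2\pi=\pi(d_i+d_j+2)$, i.e.\ a singularity of order $d_i+d_j$ (a marked regular point when $d_i+d_j=0$). The surface stays a sphere, so the principal boundary is the single connected genus-zero stratum $\cQ_1(\alpha'_\cC)=\cQ_1(d_i+d_j,\,d_1,\dots,\widehat{d_i},\dots,\widehat{d_j},\dots,d_k)$.

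For step~(b): the inverse construction is precisely the ``breaking up a zero into two'' surgery of~\S\ref{sec:Siegel:Veech}, applied to the merged point of order $d:=d_i+d_j$, split into orders $d_i$ and $d_j$ joined by a short saddle connection with holonomy $\pm\vec v\in D^2_\varepsilon/\pm$. Since this saddle connection joins \emph{distinct} singularities its holonomy may be half-integral, so the fiber measure is $d\mu_0=4\,dx\,dy$ as in~\eqref{eq:4dxdy}; there are $(d+2)=(d_i+d_j+2)$ geodesic prongs in direction $\pm\vec v$ at the merged point, so, exactly as in~\eqref{eq:disk:measure:in:slit:construction}, the parameter set has measure $(d_i+d_j+2)\cdot 4\cdot\tfrac{\pi\varepsilon^2}{2}=2(d_i+d_j+2)\,\pi\varepsilon^2$; thus the explicit factor equals $2(d_i+d_j+2)$. (When both $d_i,d_j$ are odd the surgery is nonlocal and perturbs the area slightly, which is harmless as in~\cite{Eskin:Masur:Zorich,Masur:Zorich}; and, unlike for configurations~III and~IV, no extra finite multiplicity arises for type~I.) Feeding $2(d_i+d_j+2)$ into~\eqref{eq:volume:of:a:cusp:1} --- with the $o(\varepsilon^2)$ contributions of $\cQ_1^{\varepsilon,\mathit{thin}}$ and of $\cQ_1^{\varepsilon}\setminus\cQ_1^{\varepsilon,\mathit{thick}}$ absorbed via~\cite{Eskin:Masur,Masur:Smillie} as recalled in~\S\ref{sec:subsec:principal:boundary} --- then into~\eqref{eq:bC:equals:Vol:over:Vol}, and finally dividing by~$2$ using~\eqref{eq:b:equals:2c}, gives formula~\eqref{eq:ratio:Siegel:Veech:constant:sc}.

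For step~(d): substituting $\Vol\cQ_1(e_1,\dots,e_m)=2\pi^2\prod_l v(e_l)$ from Theorem~\ref{theorem:volume} into~\eqref{eq:ratio:Siegel:Veech:constant:sc}, the prefactors $2\pi^2$ and all the factors $v(d_l)$ with $l\neq i,j$ cancel, leaving $c_\cC=(d_i+d_j+2)\,v(d_i+d_j)\big/\bigl(v(d_i)\,v(d_j)\bigr)$. Expanding $v$ via~\eqref{eq:v}, using the identity $(s+2)\,s!!=(s+2)!!$ valid for every integer $s\ge-1$, and separating the three parity cases for $(d_i,d_j)$ yields~\eqref{eq:Siegel:Veech:constant:sc}: when both $d_i,d_j$ are odd a surplus factor $\pi^2$ survives in the denominator, producing $\tfrac{2}{\pi^2}$, whereas in the both-even and the mixed cases the powers of $\pi$ cancel completely and the numerical constants combine to $\tfrac12$. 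I expect the only genuinely delicate point of the whole argument to be step~(b): pinning down the explicit factor $2(d_i+d_j+2)$, in particular the factor $4$ coming from the half-integral-holonomy normalization~\eqref{eq:4dxdy} together with the prong count $d_i+d_j+2$, and confirming that no further combinatorial multiplicity intervenes here. Everything downstream is a routine application of the formulas of~\S\ref{sec:subsec:principal:boundary} together with the elementary double-factorial bookkeeping.
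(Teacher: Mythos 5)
Your proposal is correct and follows essentially the same route as the paper: identify the principal boundary stratum obtained by merging $P_i$ and $P_j$ into a singularity of order $d_i+d_j$, read off the explicit factor $2(d_i+d_j+2)$ from the ``breaking up a zero'' surgery count~\eqref{eq:disk:measure:in:slit:construction}, and combine~\eqref{eq:volume:of:a:cusp:1}, \eqref{eq:bC:equals:Vol:over:Vol} and~\eqref{eq:b:equals:2c} to get~\eqref{eq:ratio:Siegel:Veech:constant:sc}. Your parity bookkeeping for the passage to~\eqref{eq:Siegel:Veech:constant:sc} via $c_\cC=(d_i+d_j+2)\,v(d_i+d_j)/\bigl(v(d_i)v(d_j)\bigr)$ is also the intended (and correct) computation.
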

\begin{proof}[Proof of (\ref{eq:ratio:Siegel:Veech:constant:sc})]
The  principal  boundary  $\cQ_1(\alpha'_\cC)$  for  this  particular
configuration  $\cC$  is obtained by collapsing the saddle connection
joining  singularities  of  degrees  $d_i$  and $d_j$. This operation
merges two singularities to a single one of degree $d=d_i+d_j$. Thus,
$$
\alpha'_\cC=
\{d_i+d_j,d_1,d_2, \dots,\widehat{d_i}, \dots, \widehat{d_j}, \dots, d_k\}\,.
$$
By~\eqref{eq:volume:of:a:cusp:1}
$$
\Vol\left(\cQ_1^\varepsilon(\cC)\right)=
(\text{explicit factor})\cdot\pi\varepsilon^2\cdot
\Vol\cQ_1(\alpha'_\cC)\ +\ o(\varepsilon^2)\,,
$$
where   the  ``$(\text{explicit  factor})\cdot\pi\varepsilon^2$''  in
formula~\eqref{eq:volume:of:a:cusp:1}  stands  for the measure of the
space  of parameters of deformation corresponding to holonomy vectors
in      $D^2_\epsilon/\pm$.     This     measure     was     computed
in~\eqref{eq:disk:measure:in:slit:construction}.    Thus,    we   can
rewrite~\eqref{eq:volume:of:a:cusp:1} as
$$
\Vol\left(\cQ_1^\varepsilon(\cC)\right)=
2(d+2)\cdot\pi\varepsilon^2\cdot
\Vol\cQ_1(d_i+d_j,d_1,d_2, \dots,\widehat{d_i}, \dots, \widehat{d_j}, \dots, d_k)\ +\ o(\varepsilon^2)\, .
$$
Applying~\eqref{eq:bC:equals:Vol:over:Vol} and~\eqref{eq:b:equals:2c}
to            the            above            expression           we
obtain~\eqref{eq:ratio:Siegel:Veech:constant:sc}.
\end{proof}

We       are       ready       to      give      a      proof      of
Theorem~\ref{theorem:L:shaped:almost:all}                      (based
on~\eqref{eq:Siegel:Veech:constant:sc}   which  would  be  proved  in
~\S\ref{sec:induction}).

\begin{proof}[Proof of Theorem~\ref{theorem:L:shaped:almost:all}]
Let    $\cQ(d_1,\dots,d_k)=\cQ(1,-1^5)$.   Let   $d_i=1$,   $d_j=-1$.
Applying~\eqref{eq:Siegel:Veech:constant:sc}  we  get  $c_I=8/\pi^2$.
Applying      Theorem~\ref{theorem:SV:for:billiards}      to      the
$\operatorname{L}$-shaped           billiard           as          in
Figure~\ref{fig:L:shaped:billiard} we get the coefficient $\frac{2}{\pi}$
in  the  weak  asymptotics  of  the  number  of generalized diagonals
joining  a  fixed  corner  with angle $\frac{\pi}{2}$ with the corner
with       angle      $\frac{3\pi}{2}$,      and      thus      prove
formula~\ref{eq:theorem:L:shaped:almost:all}                      and
Theorem~\ref{theorem:L:shaped:almost:all}.
\end{proof}

\subsection{Type II: A simple saddle connection joining a zero to itself.}
\label{sec:subsec:sv:typeII}

The configuration  $\cC$  of type II consists of a single separatrix loop
emitted  from  a  fixed zero $P_i$ of order $d_i$ such that the total
angle   $(d_i+2)\pi$  at  the  singularity  $P_i$  is  split  by  the
separatrix  loop into two sectors having the angles $(d'_i+3)\pi$ and
$(d''_i+3)\pi$. We assume that $d'_i, d''_i \ge -1$, so we \textit{do
not}  have  any  cylinders  filled  with  periodic geodesics for this
configuration. The angles satisfy the natural relation
$$
d'_i+d''_i=d_i-4 \qquad d'_i, d''_i \ge -1
$$
which implies, in particular, that $d_i\ge 2$.

Our saddle connection separates the original surface $S$ into two
parts.  Let $P_{i_1}, \dots, P_{i_{k_1}}$ be the list of
singularities (zeroes and poles) which belong to the first part and
let $P_{j_1}, \dots, P_{j_{k_2}}$ be the list of singularities
(zeroes and poles) which belong to the second part. This information is
part of the configuration of this saddle connection.

We  assume  that  the  initial  surface  $S$ does not have any marked
points;  as  usual  we  denote  by $d_n$ the order of the singularity
$P_n$. The set with multiplicities $\{d_1, \dots, d_k\}$ representing
the  orders  of  all  singularities  (zeroes and poles) on $S$ can be
obtained as a disjoint union of the following subsets:
$$
\{d_1, \dots, d_k\}=
\{d_{i_1}, \dots d_{i_{k_1}}\}\sqcup
\{d_{j_1}, \dots, d_{j_{k_2}}\}\sqcup
\{d_i\}
$$

\begin{theorem}
\label{th:SV:II}
The Siegel--Veech constant $c_\cC$ for this configuration
is expressed as follows:
\begin{multline}
\label{eq:ratio:Siegel:Veech:constant:zero:to:itself}
c_\cC=\cfrac{(d'_i+2)(d''_i+2)}{8}\ \cdot\\
\cdot\cfrac{
  (\dim_{\C{}}\cQ(d'_i,d_{i_1},  \dots, d_{i_{k_1}})-1)!\
  (\dim_{\C{}}\cQ(d''_i,d_{j_1}, \dots, d_{j_{k_2}})-1)!}
{(\dim_{\C{}}\cQ(d_1,d_2,\dots,d_k)-2)!}\ \cdot\\
\cdot\cfrac{\Vol \cQ_1(d'_i,d_{i_1}, \dots, d_{i_{k_1}})\,\cdot\,
       \Vol \cQ_1(d''_i,d_{j_1}, \dots, d_{j_{k_2}})}
{\Vol \cQ_1(d_1, \dots, d_k)}
\end{multline}
After plugging in Theorem~\ref{theorem:volume} we get:
\begin{multline}
\label{eq:Siegel:Veech:constant:zero:to:itself}
c_\cC =\cfrac{1}{8}\,\cdot\cfrac{(d'_i+2)!!\ (d''_i+2)!!\ (d_i+1)!!}
{(d'_i+1)!!\ (d''_i+1)!!\ d_i!!}\,\cdot\\
\cfrac{(k_1-2)!\,(k_2-2)!}{(k-4)!}\,\cdot\,
\begin{cases}\ 1 &\text{when both $d'_i,d''_i$}\\ &\text{are odd} \\
\cfrac{4}{\pi^2}&\text{otherwise}
\end{cases}
\end{multline}
\end{theorem}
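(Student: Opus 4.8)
The plan is to run the principal-boundary machinery of \S\ref{sec:subsec:principal:boundary} exactly as in the proof of Theorem~\ref{th:SV:I}, but with the \emph{parallelogram construction} in place of the ``breaking up a zero'' surgery. First I would identify the principal boundary stratum for configuration II. Contracting the separatrix loop based at $P_i$ shrinks the zero $P_i$ of order $d_i$ to nothing, and, since for type II the loop separates $S$, the limit surface falls apart into two flat spheres: one component $S'_a$ carrying the subcollection $P_{i_1},\dots,P_{i_{k_1}}$ together with one new zero of order $d'_i$ (from the sector of angle $(d'_i+3)\pi$), and a second component $S'_b$ carrying $P_{j_1},\dots,P_{j_{k_2}}$ together with one new zero of order $d''_i$. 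Thus the boundary stratum is the \emph{disconnected} stratum $\cQ(\alpha'_\cC)=\cQ(\alpha'_a)\times\cQ(\alpha'_b)$ with $\alpha'_a=\{d'_i,d_{i_1},\dots,d_{i_{k_1}}\}$ and $\alpha'_b=\{d''_i,d_{j_1},\dots,d_{j_{k_2}}\}$, so the volume formula~\eqref{eq:total:volume:of:nonprimitive:stratum} for strata of disconnected surfaces enters the computation.

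Next I would invoke the inverse construction. Since we assume $d'_i,d''_i\ge -1$, configuration II carries no cylinder, so formula~\eqref{eq:volume:of:a:cusp:1} applies directly: reconstructing $S\in\cQ_1^{\varepsilon}(\cC)$ from $S'_a\sqcup S'_b$, from a choice of the two distinguished zeroes (one on each component), and from a short holonomy vector $\pm\vec v\in D^2_\varepsilon/\pm$ is precisely the parallelogram construction, which can be performed in $(d'_i+2)(d''_i+2)$ ways; and because the new saddle connection is a \emph{closed} loop the fibre measure is $d\mu_0=dx\,dy$ rather than the $4\,dx\,dy$ of~\eqref{eq:4dxdy}, so the ``$(\text{explicit factor})$'' of~\eqref{eq:volume:of:a:cusp:1} equals $\tfrac{(d'_i+2)(d''_i+2)}{2}$, as recorded in~\eqref{eq:disk:measure:in:parallelogram:construction}. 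Substituting~\eqref{eq:total:volume:of:nonprimitive:stratum} into~\eqref{eq:volume:of:a:cusp:1}, using the dimension count $\dim_{\C{}}\cQ(\alpha'_a)=k_1-1$, $\dim_{\C{}}\cQ(\alpha'_b)=k_2-1$, $\dim_{\C{}}\cQ(\alpha'_\cC)=k-3$ (so that $(\dim_{\C{}}\cQ(\alpha'_\cC)-1)!=(k-4)!=(\dim_{\C{}}\cQ(d_1,\dots,d_k)-2)!$, which is how the denominator in~\eqref{eq:ratio:Siegel:Veech:constant:zero:to:itself} arises), and finally passing through~\eqref{eq:bC:equals:Vol:over:Vol} and the relation $b_\cC=2c_\cC$ from~\eqref{eq:b:equals:2c}, I expect to obtain exactly~\eqref{eq:ratio:Siegel:Veech:constant:zero:to:itself}.

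For the closed-form value~\eqref{eq:Siegel:Veech:constant:zero:to:itself} I would plug $\Vol\cQ_1=2\pi^2\prod_l v(d_l)$ from Theorem~\ref{theorem:volume} into the ratio just obtained. The products of $v$-values over $\alpha'_a$ and $\alpha'_b$ together restore all factors $v(d_l)$ with $l\ne i$, so the volume ratio collapses to $2\pi^2\,v(d'_i)v(d''_i)/v(d_i)$. Writing out definition~\eqref{eq:v} and using $d_i=d'_i+d''_i+4$ (hence the power-of-$\pi$ part of $v(d'_i)v(d''_i)/v(d_i)$ contributes $\pi^{d'_i+d''_i-d_i}=\pi^{-4}$) reduces the remaining work to a parity check on the ``odd/even'' prefactors of~\eqref{eq:v}: if $d'_i,d''_i$ are both odd then $d_i$ is even and those prefactors contribute $\pi\cdot\pi/2=\pi^2/2$, so $2\pi^2\cdot\pi^{-4}\cdot\pi^2/2=1$; in every other case the prefactors contribute $2$, giving $2\pi^2\cdot\pi^{-4}\cdot 2=4/\pi^2$. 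The double-factorial part assembles via $(d'_i+2)\,d'_i!!=(d'_i+2)!!$ and $(d''_i+2)\,d''_i!!=(d''_i+2)!!$ into the ratio $\dfrac{(d'_i+2)!!\,(d''_i+2)!!\,(d_i+1)!!}{(d'_i+1)!!\,(d''_i+1)!!\,d_i!!}$ displayed in~\eqref{eq:Siegel:Veech:constant:zero:to:itself}, and the $(k_1-2)!(k_2-2)!/(k-4)!$ factor is just $(\dim_{\C{}}\cQ(\alpha'_a)-1)!\,(\dim_{\C{}}\cQ(\alpha'_b)-1)!/(\dim_{\C{}}\cQ(d_1,\dots,d_k)-2)!$ rewritten.

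The genuinely delicate points, rather than the algebra, are: correctly recognizing that the type-II boundary stratum is disconnected (this is where the factor $\tfrac12$ and the binomial-type coefficient of~\eqref{eq:total:volume:of:nonprimitive:stratum} come from) and lining up its dimensions with the form in which~\eqref{eq:ratio:Siegel:Veech:constant:zero:to:itself} is stated; and the fact that the parallelogram construction is non-canonical and slightly changes the area when both $d'_i,d''_i$ are odd. The latter is the main analytic obstacle, but it is not new: it is dealt with exactly as in~\cite{Eskin:Masur:Zorich} and~\cite{Masur:Zorich}, where one checks that this ambiguity contributes only to the $o(\varepsilon^2)$ error in~\eqref{eq:volume:of:a:cusp:1}. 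Everything else is the routine parity/double-factorial bookkeeping sketched above.
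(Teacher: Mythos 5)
Your proposal is correct and follows essentially the same route as the paper's proof: identify the principal boundary as the disconnected stratum $\cQ(\alpha'_a)\times\cQ(\alpha'_b)$, take the explicit factor $\tfrac{(d'_i+2)(d''_i+2)}{2}$ from the parallelogram construction with fibre measure $dx\,dy$ per~\eqref{eq:disk:measure:in:parallelogram:construction}, combine with the disconnected-stratum volume formula~\eqref{eq:total:volume:of:nonprimitive:stratum} and the dimension count $\dim_{\C{}}\cQ(\alpha'_\cC)=k-3$, then pass through~\eqref{eq:bC:equals:Vol:over:Vol} and~\eqref{eq:b:equals:2c}. Your parity and double-factorial bookkeeping for~\eqref{eq:Siegel:Veech:constant:zero:to:itself} is also correct (the paper states this step without detail).
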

\begin{proof}[Proof of~\eqref{eq:ratio:Siegel:Veech:constant:zero:to:itself}]
Let
$$
\alpha'_a:=\{d_i',d_{i_1}, \dots d_{i_{k_1}}\}
\qquad
\alpha'_b:=\{d_i'',d_{j_1}, \dots d_{j_{k_2}}\}
\qquad
\alpha'_\cC:=\alpha'_a\sqcup\alpha'_b\,.
$$
Contracting  a  saddle  connection  of  type  II  and  detaching  the
resulting  singular  flat  surface  into  two  components  we  get  a
disconnected     flat    surface    $S'=S'_a\sqcup    S'_b$,    where
$S'\in\cQ(\alpha'_\cC)$.   The   stratum   of  disconnected  surfaces
$\cQ(\alpha'_\cC)$  is  the  principal boundary for configuration II.
By~\eqref{eq:volume:of:a:cusp:1}
$$
\Vol\left(\cQ_1^\varepsilon(\cC)\right)=
(\text{explicit factor})\cdot\pi\varepsilon^2\cdot
\Vol\cQ_1(\alpha'_\cC)\ +\ o(\varepsilon^2)\, .
$$
By~\eqref{eq:total:volume:of:nonprimitive:stratum} we have
$$
\Vol\cQ_1(\alpha'_\cC)=
\frac{1}{2}\cdot
\frac{(\dim_\cx\cQ(\alpha'_a)-1)! (\dim_\cx\cQ(\alpha'_b)-1)!}
{(\dim_\cx\cQ(\alpha'_\cC)-1)!}\cdot
\Vol\cQ_1(\alpha'_a)\cdot\Vol\cQ_1(\alpha'_b)\,.
$$
Note that by definition
$\dim_\cx\cQ(\alpha'_\cC)=
\dim_\cx\cQ(\alpha'_a)+\dim_\cx\cQ(\alpha'_a)$. Hence
$$
\dim_\cx\cQ(\alpha'_\cC)=
\big((k_1+1)-2\big)+
\big((k_2+1)-2\big)=(k_1+k_2)-2=k-3=
\dim_\cx\cQ(\alpha)-2\,.
$$
The     ``$(\text{explicit     factor})\cdot\pi\varepsilon^2$''    in
formula~\eqref{eq:volume:of:a:cusp:1}  stands  for the measure of the
space  of parameters of deformation corresponding to holonomy vectors
in  $D^2_\epsilon/\pm$.  For  configuration  $\cC$  of  type  II this
measure                          was                         computed
in~\eqref{eq:disk:measure:in:parallelogram:construction}.   Thus,  we
can rewrite~\eqref{eq:volume:of:a:cusp:1} as
\begin{multline*}
\Vol\left(\cQ_1^\varepsilon(\cC)\right)=
\frac{(d'+2)(d''+2)}{2}\cdot\pi\varepsilon^2\cdot\\
\frac{1}{2}\cdot\cfrac{
  (\dim_{\C{}}\cQ(d'_i,d_{i_1},  \dots, d_{i_{k_1}})-1)!\
  (\dim_{\C{}}\cQ(d''_i,d_{j_1}, \dots, d_{j_{k_2}})-1)!}
{(\dim_{\C{}}\cQ(d_1,d_2,\dots,d_k)-2)!}\ \cdot\\
\Vol \cQ_1(d'_i,d_{i_1}, \dots, d_{i_{k_1}})\,\cdot\,
       \Vol \cQ_1(d''_i,d_{j_1}, \dots, d_{j_{k_2}})\,.
\end{multline*}
Applying~\eqref{eq:bC:equals:Vol:over:Vol} and~\eqref{eq:b:equals:2c}
to the above expression we
obtain~\eqref{eq:ratio:Siegel:Veech:constant:zero:to:itself}.
\end{proof}

\subsection{A ``pocket'', i.e. a cylinder bounded by a pair of poles}
\label{sec:subsec:sv:pocket}

Consider  a  configuration  $\cC$  of type III where we have a single
cylinder filled with closed regular geodesics, such that the cylinder
is  bounded  by  a  saddle  connection joining a fixed pair of simple
poles $P_{j_1}, P_{j_2}$ on one side and by a separatrix loop emitted
from  a  fixed zero $P_i$ of order $d_i\ge 1$ on the other side. This
information is considered to be part of the configuration.
By  convention,  the affine holonomy associated to this configuration
corresponds  to  the  closed  geodesic and \textit{not} to the saddle
connection joining the two simple poles. (Such a saddle connection is
twice as short as the closed geodesic.)

\begin{theorem}
The Siegel--Veech constant $c_\cC$ for this configuration
is expressed as follows:
\begin{equation}
\label{eq:ratio:Siegel:Veech:constant:pocket}
c=\cfrac{d_i}{2(\dim_{\C{}}\cQ(d_1, \dots, d_k)-2)}\cdot
\cfrac{\Vol \cQ_1(d_1,d_2, \dots, d_i-2, \dots, d_k)}{\Vol
\cQ_1(d_1, \dots,d_i,\dots, d_k)}\,.
\end{equation}
After plugging in Theorem~\ref{theorem:volume}, we get
\begin{equation}
\label{eq:Siegel:Veech:constant:pocket}
c_\cC =\cfrac{d_i+1}{2(k-4)}\cdot\cfrac{1}{\pi^2}\,.
\end{equation}
\end{theorem}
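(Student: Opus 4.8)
The plan is to run the principal-boundary argument of~\S\ref{sec:Siegel:Veech} exactly as in the proofs of Theorems~\ref{th:SV:I} and~\ref{th:SV:II}; the one genuinely new feature is that the configuration $\cC$ now contains a cylinder, so the parameter space of the surgery includes the cylinder's height and twist, and these must be integrated out. First I would identify the principal boundary stratum $\cQ_1(\alpha'_\cC)$. Contracting the core geodesic of the cylinder collapses the whole ``pocket'': the two bounding simple poles $P_{j_1},P_{j_2}$ (two cone points of angle $\pi$ joined by the degenerating saddle connection, which is half as long as the core curve) merge into a regular point of the limit surface, while the angle-$\pi$ sector at $P_i$ adjacent to the cylinder is absorbed, dropping the cone angle at $P_i$ from $(d_i+2)\pi$ to $d_i\pi$. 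Hence $P_i$ becomes a zero of order $d_i-2$, so
\[
\alpha'_\cC=\{d_1,\dots,\widehat{d_{j_1}},\dots,\widehat{d_{j_2}},\dots,d_i-2,\dots,d_k\},
\]
a \emph{connected} stratum of meromorphic quadratic differentials on $\CP$ with $\dim_{\C{}}\cQ(\alpha'_\cC)=\dim_{\C{}}\cQ(d_1,\dots,d_k)-2$; in particular the boundary is a single stratum, not a product, so~\eqref{eq:total:volume:of:nonprimitive:stratum} plays no role here.

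Next I would describe the inverse ``pocket'' construction, following~\cite{Eskin:Masur:Zorich} and~\cite{Masur:Zorich}. Given $S'\in\cQ_1(\alpha'_\cC)$ and a holonomy vector $\pm\vec v\in D^2_\epsilon/\pm$ — which, by the convention of this configuration, is the core holonomy, the pole-to-pole saddle connection being $\pm\vec v/2$ — one glues onto $S'$ a flat cylinder with core holonomy $\pm\vec v$, capping its far boundary circle by folding it onto itself; the fold creates exactly the two simple poles and the saddle connection joining them, and is canonical once the twist of the cylinder is prescribed. The only discrete choice is the gluing of the near boundary circle at $P_i$: at the cone point of angle $d_i\pi$ there are exactly $d_i$ geodesic rays in the line direction $\pm\vec v$, giving $d_i$ distinct configurations. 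Thus the combinatorial multiplicity of this surgery is $d_i$.

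The technical heart is the measure count, i.e.\ evaluating the ``explicit factor'' in~\eqref{eq:volume:of:a:cusp:1}, which I would carry out as in the cylinder case of~\cite{Eskin:Masur:Zorich} and~\cite{Masur:Zorich}. Since the core curve is a \emph{closed} saddle connection, $\pm\vec v$ carries the $dx\,dy$-normalization of the parallelogram construction, not the $4\,dx\,dy$-normalization; the remaining deformation parameters are the cylinder height $h>0$ and a twist $\tau$ whose range is proportional to $|\vec v|$. Writing $S'=r\,S'_{(1)}$ with $S'_{(1)}\in\cQ_1(\alpha'_\cC)$ and disintegrating the Masur--Veech measure as $r^{2n'-1}\,dr\,d\mu_1$ on the boundary factor ($n'=\dim_{\C{}}\cQ(\alpha'_\cC)$), one integrates out the twist (contributing a factor $|\vec v|$), then integrates $h$ and $r$ subject to the area constraint $r^2/2+h|\vec v|=\tfrac12$, and finally integrates $\pm\vec v$ over $D^2_\epsilon/\pm$ (contributing $\tfrac12\pi\epsilon^2$). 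Bookkeeping these factors against the $d_i$-fold multiplicity and the normalization $\dim_\R\cQ(d_1,\dots,d_k)$ of~\eqref{eq:normalization} yields ``explicit factor'' $=d_i/\bigl(\dim_{\C{}}\cQ(d_1,\dots,d_k)-2\bigr)$; then~\eqref{eq:bC:equals:Vol:over:Vol} together with~\eqref{eq:b:equals:2c} gives~\eqref{eq:ratio:Siegel:Veech:constant:pocket}. I expect this step to be the main obstacle: unlike Types I and II there is a cylinder modulus integral, and it is precisely its interplay with the area-$\tfrac12$ constraint — which must be tracked carefully — that produces the denominator $\dim_{\C{}}\cQ(d_1,\dots,d_k)-2$.

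Finally, for~\eqref{eq:Siegel:Veech:constant:pocket} I would substitute~\eqref{eq:volume} into~\eqref{eq:ratio:Siegel:Veech:constant:pocket}. The numerator and denominator volumes differ only by the factors $v(d_i-2)$ versus $v(d_i)$, and since $d_i-2$ and $d_i$ have the same parity the parity-dependent term in~\eqref{eq:v} cancels, leaving
\[
\frac{v(d_i-2)}{v(d_i)}=\frac{(d_i-2)!!\,(d_i+1)!!}{(d_i-1)!!\,d_i!!}\cdot\frac1{\pi^2}=\frac{d_i+1}{d_i}\cdot\frac1{\pi^2},
\]
using $(d_i+1)!!=(d_i+1)(d_i-1)!!$ and $d_i!!=d_i(d_i-2)!!$. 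Combined with $\dim_{\C{}}\cQ(d_1,\dots,d_k)=k-2$, this turns~\eqref{eq:ratio:Siegel:Veech:constant:pocket} into $c_\cC=(d_i+1)/\bigl(2(k-4)\pi^2\bigr)$, as claimed.
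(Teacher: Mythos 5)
Your route is exactly the paper's: identify the principal boundary stratum, count the discrete choices in the inverse ``pocket'' surgery, and extract the leading coefficient of $\Vol\cQ_1^{\epsilon}(\cC)$ via~\eqref{eq:volume:of:a:cusp:1} and~\eqref{eq:bC:equals:Vol:over:Vol}. Your identification of the boundary stratum (the two simple poles $P_{j_1},P_{j_2}$ are deleted and $d_i$ drops to $d_i-2$, so $\dim_{\C{}}$ drops by $2$ and the boundary is a single connected stratum rather than a product) is correct and in fact more explicit than the paper's notation $\cQ_1(d_1,\dots,d_i-2,\dots,d_k)$, which silently omits the two poles; the multiplicity $d_i$ coming from the rays at the cone point of angle $d_i\pi$, and the final substitution of~\eqref{eq:volume} with the parity cancellation giving $v(d_i-2)/v(d_i)=(d_i+1)/(d_i\pi^2)$, are also right.

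The gap is the step you yourself flag as the main obstacle: the evaluation of the ``explicit factor'' is asserted (``bookkeeping\dots yields'') rather than carried out, and two normalization points must be pinned down for the bookkeeping to actually return $d_i/(\dim_{\C{}}\cQ(\alpha)-2)$. First, while you correctly give $\pm\vec v$ the $dx\,dy$ normalization (it is the holonomy of a \emph{closed} saddle connection), the height and twist record the holonomy of a saddle connection joining the zero $P_i$ to one of the simple poles, i.e.\ \emph{distinct} singularities, so by~\eqref{eq:4dxdy} they carry the measure $4\,dh\,dt$; dropping this factor makes the final constant too small by a factor of $4$. Second, the $\vec v$-integral does not simply contribute $\tfrac12\pi\epsilon^2$: the $\epsilon$-shortness of the waist is imposed \emph{after} rescaling the compound surface to area $1/2$, so the admissible range of $|\vec v|$ is coupled to $r_S$ and to the cylinder area. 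In the paper this coupling appears as the factor $Cusp\bigl(\epsilon\sqrt{r_S^2+r_T^2}/r_T\bigr)=2\pi\epsilon^2\,(r_S^2+r_T^2)/r_T^2$ inside the radial integrals, and it is the evaluation $\int_0^1 r_S^{2n_S-1}\int_0^{\sqrt{1-r_S^2}}r_T\,(r_S^2+r_T^2)\,dr_T\,dr_S=\frac{1}{2n_S(2n_S+4)}$ which, after multiplying by $\dim_{\R{}}\cQ(\alpha)=2n_S+4$ via~\eqref{eq:normalization}, produces the denominator $n_S=\dim_{\C{}}\cQ(\alpha)-2$. Treating the $\epsilon$-ball for $\vec v$ and the area constraint as decoupled, as your sketch literally does, would give a different constant. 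With these two points supplied, your argument coincides with the paper's proof.
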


\begin{proof}[Proof of~\eqref{eq:ratio:Siegel:Veech:constant:pocket}]
Let      $\alpha'_\cC=\{d_1,\dots,d_{i-1},d_i-2,d_{i+1},\dots,d_k\}$.
Consider  a  configuration of type III with a short saddle connection
$\gamma$  joining  a  zero  of  degree  $d_i$  to itself. Contracting
$\gamma$ we get a flat surface $S'$ in the principal boundary stratum
$\cQ(\alpha'_\cC)$.

To  go  backwards,  we  need  to  create a hole on $S'$ with geodesic
boundary  having  holonomy  $\pm\vec  v$  and  attach  a  cylindrical
``pocket''    to    this    hole;    see   the   right   picture   in
Figure~\ref{fig:III:pocket}.  The cone angle at the singularity $P_i$
of  degree  $(d_i-2)$  is  $\pi\cdot  d_i$.  Thus,  having  a surface
$S'\in\cQ_1(\alpha'_\cC)$     and     a    vector    $\pm\vec    v\in
D^2_\epsilon/\pm$ there are $d_i$ rays in line direction $\pm \vec v$
adjacent to the singularity $P_i$.

Note,  however,  that now a deformation involves not only the surface
$S'$  from  the principal boundary and a holonomy vector $\pm\vec v$,
but  also  additional  parameters  describing  the  geometry  of  the
``pocket''.  Geometrically,  a  ``pocket''  is  equivalent  to a flat
cylinder  endowed  with  a  distinguished  line  direction and with a
marked point on each of the boundary components. Thus, in addition to
the  holonomy vector $\pm \vec v$ representing the waist curve, it is
parameterized  by  the \textit{height} $h$ of the cylinder and by the
\textit{twist}  $t$  of  the cylinder, $0\le t< |\vec v|$. Parameters
$h$  and $t$ record the information about the holonomy along a saddle
connection  joining the zero $P_i$ on one side of the cylinder to one
of  the  simple  poles,  say,  $P_{j_1}$  on  the  other  side of the
cylinder.  The flat area of a ``pocket'' $T(\pm \vec v, h, t)$ equals
$|\vec v|\cdot h$.

The   measure   $d\mu$   in   $\cQ^{\epsilon,\mathit{thick}}(\alpha)$
disintegrates  into the product measure $d\mu'$ on $\cQ(\alpha')$ and
the measure $d\nu$ on the ``space of pockets'' $\cR$,
$$
d\mu(S)=d\mu(S')\cdot d\nu(T)\,.
$$
The   parameter   $\pm\vec  v$  corresponds  to  the  holonomy  along  a
\textit{closed}  saddle  connection,  while  the  parameters  $(h,t)$
correspond  to  holonomy  along  a saddle connection joining distinct
singularities.  Hence,  the  resulting measure on the space of parameters
defining a ``pocket'' is
$$
d\nu(T)=d\vec v\cdot 4dhdt\,.
$$

Following  Convention~\ref{con:area:1:2}  we  denote  by  $\cR_1$ the
hypersurface     of    pockets    of    area    $\frac{1}{2}$.    Let
$S\in\cQ_1(\alpha)$.  We  denote  by  $rS\in\cQ(\alpha)$  the surface
proportional  to  the  initial  one  with  the  coefficient  $r$;  in
particular,  $area(rS)=r^2/2$,  see Convention~\ref{con:area:1:2}. We
use  similar  notations  $r_S  S'$  and  $r_T  T$  for  surfaces from
$\cQ(\alpha')$  and  from  $\cR$  correspondingly. We recall that the
volume   elements  in  the  strata  and  the  area  elements  on  the
corresponding   ``unit   hyperboloids''   are   related   as follows,
see~\eqref{eq:disintegration}:
$$
\begin{array}{lll}
d\mu=r^{2n-1}dr\,\,d\mu_1\,,       &\text{ where }n\ \ =\dim_\C{} \cQ(\alpha)\,=2(k-2)\\
d\mu=r_S^{2n_S-1}dr_S\,\,d\mu'_1\,,&\text{ where }n_S=\dim_\C{} \cQ(\alpha')=2(k-4)\\
d\nu=r_T^{2n_T-1}dr_T\,\,d\nu_1\,, &\text{ where }n_T=\dim_\C{} \cR=2\,.
\end{array}
$$

Let   $S'\in\cQ_1(\alpha')$.  Consider  a  surface  $r_S  S'$,  where
$0<r_S\le      1$;      it     has     area     $r_S^2/2$.     Define
$\Omega(\epsilon,r_S)\subset \cR$ to be the set of pockets, such that
performing  an  appropriate  surgery  to  $r_S  S'$  and pasting in a
``pocket''   from  $\Omega(\epsilon,r_S)$  we  get  a  surface  $S\in
C(Q_1^\epsilon(\alpha))$. Ignoring a negligible change of the area of
the  surface $r_S S'$ after creating a hole, we get the following two
constraints.  The  first  constraint  imposes  the  bound  on the area
$r^2_T/2$  of  a pocket $r_T T$, where $T\in\cR_1$: the total area of
the compound surface $S$ should be at most $1/2$, so $r^2_S +r^2_T\le
1$.  The  second  constraint  imposes a bound on the length of the
waist  curve  of  the  cylinder:  after  rescaling proportionally the
compound  surface $S$ to let it have area $\frac{1}{2}$ we should get
a  waist curve of length at most $\epsilon$. Thus, the waist curve of
the original cylinder should be at most $\epsilon\sqrt{r^2_S+r^2_T}$.
Clearly,  the  set  $\Omega(\epsilon,r_S)$  does  not  depend  on the
particular   surface   $r_s   S'\in\cQ(\alpha')$,  but  only  on  the
parameters $r_S$ and $\epsilon$.

We have seen that there are $d_i$ rays in line direction $\pm \vec v$
adjacent  to  the singularity $P_i$. Using the above notations we can
represent   the   volume   of   a   cone   in   $\cQ_1(\alpha)$  over
$\cQ_1^{\epsilon}(\alpha,\cC)$     (see~\eqref{eq:cone}    for    the
definition of a \textit{cone}) as
\begin{equation}
\label{eq:initial:integral}
\mu(C(\cQ_1^{\epsilon}(\alpha,\cC)))
=d_i\cdot\Vol\cQ_1(\alpha')\cdot
\int_0^1 \nu_T(\Omega(\epsilon,r_S))
r_S^{2n_S-1} dr_S + o(\epsilon^2)\,.
\end{equation}

Denote by $\mathit{Cusp}(\epsilon)$ the volume of the $\epsilon$-thin part
of the ``unit hyperboloid'' in the space of ``pockets'':
$$
Cusp(\epsilon):=\Vol\cR_1^\epsilon\,.
$$
From the  definition  of  the  subset  $\Omega(\epsilon,r_S)$  it
immediately follows that its volume is expressed by the following
integral
\begin{equation}
\label{eq:volume:of:omega}
\nu_T(\Omega(\epsilon,r_S))=
\int_0^{\sqrt{1-r_S^2}} r_T^{2n_T-1}\cdot
Cusp\left(\frac{\epsilon\cdot\sqrt{r_S^2+r_T^2}}{r_T}\right)\,
dr_T\,.
\end{equation}
Thus, we need to evaluate the following integral
\begin{multline}
\label{eq:complete:initial:integral}
\mu(C(\cQ^{\epsilon}(\alpha,\cC)))
=d_i\cdot\Vol(\cQ_1(\alpha'))\cdot\\
\int_0^1 r_S^{2n_S-1} dr_S
\int_0^{\sqrt{1-r_S^2}} r_T^{2n_T-1}\cdot
Cusp\left(\frac{\epsilon\cdot\sqrt{r_S^2+r_T^2}}{r_T}\right)\,
dr_T\
 + o(\epsilon^2)\,.
\end{multline}

\begin{lemma}
$$
Cusp(\epsilon)=2\pi \epsilon^2.
$$
\end{lemma}
\begin{proof}
We  first  evaluate  the  volume  $\nu(C(\cR^{\epsilon}_1))$  of  the
corresponding  cone.  Pockets belonging to this cone are described by
the following conditions:
   %
$$
\begin{cases}
h\cdot|\vec v|\le 1/2\\
|\vec v|\le \epsilon\cdot \sqrt{2h\cdot|\vec v|}\,.
\end{cases}
$$
Hence
$$
\nu(C(\cR^{\epsilon}_1))=
\int_{D^2_\epsilon/\pm}d\vec v
\int_{w/(2\epsilon^2)}^{1/(2w)} 2dh
\int_0^w 2dt
= 4\pi\int_0^\epsilon w \left(\frac{1}{2w}- \frac{w}{2\epsilon^2}\right)w\,dw =
\frac{\pi\epsilon^2}{2},
$$
where  $w=|\vec  v|$.  It  remains  to apply~\eqref{eq:normalization}:
$$
\nu(C(\cR^{\epsilon}_1))= \dim_\R\cR\cdot\Vol(\cR^{\epsilon}_1)
$$
and to note that $\dim_\R\cR=4$.
\end{proof}

Having found the expression
$$
  %
Cusp\left(\frac{\epsilon\cdot\sqrt{r_S^2+r_T^2}}{r_T}\right)=
2\pi\epsilon^2\cdot\frac{r_S^2+r_T^2}{r_T^2}
  %
$$
we  can
rewrite the integral~\eqref{eq:complete:initial:integral} as
\begin{multline}
\label{eq:final:initial:integral}
\mu(C(\cQ^{\epsilon}(\alpha,\cC)))  
=d_i\cdot\Vol\cQ_1(\alpha')\cdot\\
2\pi\epsilon^2\cdot\int_0^1 r_S^{2n_S-1} dr_S
\int_0^{\sqrt{1-r_S^2}} r_T^{2n_T-1}\cdot
\frac{r_S^2+r_T^2}{r_T^2}
\,\,dr_T\
 + o(\epsilon^2)\,.
\end{multline}
Taking  into  consideration  that  $n_T=\dim_\C{}\cR=2$ we compute
the above integrals and get
\begin{equation}
\label{eq:postfinal:initial:integral}
\mu(C(\cQ_1^{\epsilon}
(\alpha,\cC)))
=d_i\cdot\Vol\cQ_1(\alpha')
\cdot\frac{2\pi\epsilon^2}{2n_S(2n_S+4)}
\
 + o(\epsilon^2)\,.
\end{equation}

It remains to note that
$$
\Vol\cQ_1^{\epsilon}(\alpha,\cC)=
\dim_\R\cQ(\alpha)\cdot
\mu(C(\cQ_1^{\epsilon}(\alpha,\cC)))
\,,
$$
see~\eqref{eq:normalization}, and that
$$
\dim_\R\cQ(\alpha)=
2\dim_\cx\cQ(\alpha)=
2(\dim_\cx\cQ(\alpha')+2)=
2n_S+4.
$$
to get
\begin{equation}
\label{eq:pocket:almost:answer}
\Vol\cQ_1^{\epsilon}(\alpha,\cC)=
\pi\epsilon^2\cdot \frac{d_i}{\dim_\cx\cQ(\alpha)-2}
\cdot\Vol\cQ_1(\alpha')
 + o(\epsilon^2)\,.
\end{equation}
Applying~\eqref{eq:bC:equals:Vol:over:Vol} and~\eqref{eq:b:equals:2c}
to the above expression we
obtain~\eqref{eq:ratio:Siegel:Veech:constant:pocket}.
\end{proof}

The  rest  of  the  discussion  in  \S\ref{sec:subsec:sv:pocket} also
depends  on  Theorem~\ref{theorem:volume}.  Consider  a slightly more
general  configuration:  as before we consider a fixed pair of simple
poles  $P_{j_1}, P_{j_2}$, but this time we do not specify which zero
do  we  have  at  the base of the cylinder. Clearly the corresponding
Siegel--Veech  constant $c^{pocket}_{j_1,j_2}$ is equal to the sum of
the Siegel--Veech constants considered above over all zeroes $P_i$ on
our surface $S$:
$$
c^{pocket}_{j_1,j_2}=
\sum_{i=1\, |\, d_i\ge 1}^k c_i\,.
$$
The following Corollary follows immediately from the
formula~\eqref{eq:Siegel:Veech:constant:pocket} above.

\begin{corollary}
For any stratum of meromorphic quadratic differentials with at
most simple poles and with no marked points on $\CP$ and for
every fixed pair $P_{j_1}, P_{j_2}$ of simple poles, the
Siegel--Veech constant $c^{pocket}_{j_1,j_2}$ is equal to
\begin{equation}
\label{c:pocket}
c^{pocket}_{j_1,j_2}=\cfrac{1}{2\pi^2}\,.
\end{equation}
\end{corollary}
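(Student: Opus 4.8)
The plan is to reduce the Corollary to the elementary combinatorial identity
\[
\sum_{i\,:\,d_i\ge 1}(d_i+1) \;=\; k-4,
\]
valid for any stratum $\cQ(d_1,\dots,d_k)$ of quadratic differentials on $\CP$ with at most simple poles and no marked points. First I would invoke the decomposition displayed just above the statement, namely $c^{pocket}_{j_1,j_2}=\sum_{i\,:\,d_i\ge 1} c_i$, where $c_i$ is the Siegel--Veech constant of the pocket configuration whose cylinder is based at the zero $P_i$. Substituting the closed form \eqref{eq:Siegel:Veech:constant:pocket}, i.e.\ $c_i=\tfrac{d_i+1}{2(k-4)}\cdot\tfrac1{\pi^2}$, turns the right-hand side into $\tfrac{1}{2(k-4)\pi^2}\sum_{i\,:\,d_i\ge 1}(d_i+1)$, so everything comes down to evaluating that last sum.

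For the combinatorial identity I would use the only two facts at hand: the constraint $\sum_{i=1}^k d_i=-4$ defining the stratum, and the fact that, since there are no marked points, every $d_i$ is either $-1$ (a simple pole) or a positive integer (a zero). Letting $p$ be the number of simple poles, there are $k-p$ zeros, and summing over zeros only gives $\sum_{i\,:\,d_i\ge 1} d_i = -4-(-p) = p-4$, hence $\sum_{i\,:\,d_i\ge 1}(d_i+1) = (p-4)+(k-p) = k-4$. Plugging this back yields $c^{pocket}_{j_1,j_2}=\tfrac{k-4}{2(k-4)\pi^2}=\tfrac1{2\pi^2}$, as claimed.

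There is essentially no analytic obstacle here; the content of the Corollary lies entirely in the preceding computation of the individual constants $c_i$, which itself rests on Theorem~\ref{theorem:volume} through \eqref{eq:Siegel:Veech:constant:pocket}. The only points requiring a word of care are degenerate ones: the identity is used with $k-4$ in a denominator, so a pocket configuration of this type exists only when the stratum actually contains a zero $P_i$ with $d_i\ge 1$, which forces $k\ge 5$ and in particular excludes the pillowcase stratum $\cQ(-1^4)$ (consistent with the standing assumption throughout \S\ref{sec:subsec:sv:pocket}); when no such zero exists the sum is empty and the statement is vacuous. I would also note that the striking feature of the answer --- its independence of the stratum --- is precisely the cancellation of the factor $k-4$ between numerator and denominator, which is why it is convenient to keep the un-summed constant $c_i$ in the form \eqref{eq:Siegel:Veech:constant:pocket} rather than simplifying it away beforehand.
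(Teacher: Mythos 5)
Your proposal is correct and follows essentially the same route as the paper: sum the individual pocket constants $c_i=\tfrac{d_i+1}{2(k-4)}\cdot\tfrac{1}{\pi^2}$ over the zeroes and use $\sum_i d_i=-4$ to show $\sum_{i:\,d_i\ge 1}(d_i+1)=k-4$, which cancels the denominator. The only cosmetic difference is notation (you write $p$ for the number of poles where the paper writes $n$), and your remarks on the degenerate cases are a harmless addition.
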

\begin{proof}
By  assumption  the  stratum  $\cQ(d_1, \dots, d_k)$ does not contain
marked points. We can order $d_i$ in the reverse lexicographic order,
so  that  $d_1,  \dots,  d_m$  are  positive  (i.e. correspond to the
zeroes)  and  $d_{m+1},  \dots,  d_{m+n}$  are  equal  to  $-1$ (i.e.
correspond to the simple poles).

Since we live on $\CP$ we have
$
\sum_{i=1}^k d_i = -4
$
which is equivalent to
$
\sum_{i=1}^m d_i = n-4
$.
Hence,
\begin{multline*}
c^{pocket}_{j_1,j_2}=
\cfrac{1}{2(k-4)}\sum_{i=1}^m (d_i+1)\cfrac{1}{\pi^2}=\\
=\cfrac{1}{2(n+m-4)}\Big(\sum_{i=1}^m d_i+\sum_{i=1}^m 1\Big)\cfrac{1}{\pi^2}=
\cfrac{1}{2(n+m-4)}\big(n-4+m\big)\cfrac{1}{\pi^2}=\cfrac{1}{2\pi^2}\,.
\end{multline*}
\end{proof}

\begin{proof}[Proof of Theorem~\ref{theorem:pi:2:general}]
Note  that  Theorem~\ref{theorem:pi:2:general}  counts  the number of
generalized  diagonals  joining  two  fixed corners of a right-angled
billiard,  while  in the ``pocket'' configuration we count the number
of  closed  flat  geodesics  on the induced cylinder, which are twice
longer.  Rescaling, we get an extra factor 4 for the counting problem
in        this        alternative       normalization.       Applying
Theorem~\ref{theorem:SV:for:billiards}, and taking into consideration
the  factor  $\frac{1}{4}$  in formula~\eqref{eq:SV:for:billiards} we
get the answer
$$
N_{ij}(\Pi,L)\mysim
\frac{1}{4}\cdot 4\cdot c^{pocket}_{i,j}
\frac{\pi L^2}{\text{Area of the billiard table }\Pi}\,.
$$
Plugging in expression~\eqref{c:pocket} for $c^{pocket}_{i,j}$ we get
formula~\eqref{eq:theorem:L:shaped:almost:all}.
\end{proof}

\subsection{A ``dumbbell'', i.e. a simple cylinder separating the sphere and joining a pair of
distinct zeroes}.
\label{sec:subsec:sv:dumbell}

Consider  a  configuration  $\cC$  of type IV, where we have a single
cylinder filled with closed regular geodesics, such that the cylinder
is  bounded  by  a  separatrix  loop on each side. We assume that the
separatrix  loop  bounding the cylinder on one side is emitted from a
fixed  zero  $P_i$  of  order $d_i\ge 1$ and that the separatrix loop
bounding  the cylinder on the other side is emitted from a fixed zero
$P_j$ of order $d_j\ge 1$.

Such a cylinder separates the original surface $S$ in two parts;
let $P_{i_1}, \dots, P_{i_{k_1}}$ be the list of singularities
(zeroes and simple poles) which get to the first part and $P_{j_1},
\dots, P_{j_{k_2}}$ be the list of singularities (zeroes and
simple poles) which get to the second part. In particular, we have
$i\in\{i_1, \dots, i_{k_1}\}$ and $j\in\{j_1, \dots,
j_{k_2}\}$.
We assume that $S$ does not have any marked points.
Denoting as usual by $d_n$ the order of the singularity $P_n$
we can represent the sets with multiplicities $\alpha:=\{d_1, \dots, d_k\}$
as a disjoint union of the two subsets
$$
\{d_1, \dots, d_k\}=
\{d_{i_1}, \dots d_{i_{k_1}}\}\sqcup
\{d_{j_1}, \dots, d_{j_{k_2}}\}.
$$

This information is considered to be part of the
configuration.

\begin{theorem}
\label{th:SV:IV}
The Siegel--Veech constant $c_\cC$ for this configuration
is expressed as follows:
\begin{multline}
\label{eq:ratio:Siegel:Veech:constant:dumbell}
c_\cC=\cfrac{d_i\cdot d_j}{4}\ \cdot\\
\cfrac{
  (\dim_{\C{}}\cQ(d_{i_1}, \dots,d_i-2, \dots, d_{i_{k_1}})-1)!
  (\dim_{\C{}}\cQ(d_{j_1}, \dots,d_j-2, \dots, d_{j_{k_2}})-1)!}
{(\dim_{\C{}}\cQ(d_1,d_2,\dots,d_k)-2)!}\ \cdot\\
\cdot\cfrac{\Vol \cQ_1(d_{i_1}, \dots,d_i-2, \dots, d_{i_{k_1}})\,\cdot\,
       \Vol \cQ_1(d_{j_1}, \dots,d_j-2, \dots, d_{j_{k_2}})}
{\Vol \cQ_1(d_1, \dots, d_k)}\,.
\end{multline}
Plugging in Theorem~\ref{theorem:volume} we get:
\begin{equation}
\label{eq:Siegel:Veech:constant:dumbell}
c_\cC =\cfrac{(d_i+1)(d_j+1)}{2}\cdot
\cfrac{(k_1-3)!\,(k_2-3)!}{(k-4)!}\cdot\cfrac{1}{\pi^2}\,.
\end{equation}
\end{theorem}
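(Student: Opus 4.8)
The plan is to follow the principal-boundary strategy of \S\ref{sec:subsec:principal:boundary}, combining the surgeries already used for configuration~II (a separatrix loop at a zero) and configuration~III (a cylinder capped by such a loop). The first step is to identify the principal boundary stratum $\cQ(\alpha'_\cC)$. Contracting the saddle connections of a type~IV configuration (equivalently, the core curve of the cylinder, which has the same length as each of the two bounding loops) makes both loops shrink with it, and the limiting surface splits into two nondegenerate pieces $S'_a\in\cQ(\alpha'_a)$ and $S'_b\in\cQ(\alpha'_b)$. On the $S'_a$--side the loop at $P_i$ cuts its cone angle $(d_i+2)\pi$ into a sector of angle $\pi$ adjacent to the cylinder and a sector of angle $(d_i+1)\pi$; exactly as on the zero side of a ``pocket'' (\S\ref{sec:subsec:sv:pocket}), the cylinder carries off the $\pi$--sector and closing the loop removes a further half-turn, so $P_i$ becomes a zero of order $d_i-2$ and $\alpha'_a=\{d_{i_1},\dots,d_i-2,\dots,d_{i_{k_1}}\}$; symmetrically $\alpha'_b=\{d_{j_1},\dots,d_j-2,\dots,d_{j_{k_2}}\}$. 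Hence the principal boundary is the \emph{disconnected} stratum $\cQ(\alpha'_\cC)=\cQ(\alpha'_a)\times\cQ(\alpha'_b)$; since no singularity is lost, $k_1+k_2=k$, so $\dim_\cx\cQ(\alpha'_a)=k_1-2$, $\dim_\cx\cQ(\alpha'_b)=k_2-2$, and $\dim_\cx\cQ(\alpha'_\cC)=k-4=\dim_\cx\cQ(\alpha)-2$.

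Next I would carry out the inverse plumbing construction and count its parameters, mimicking \S\ref{sec:subsec:sv:pocket}. Given $S'=S'_a\sqcup S'_b$ and a short vector $\pm\vec v$, one creates a geodesic hole of holonomy $\pm\vec v$ adjacent to $P_i$ on $S'_a$ and another adjacent to $P_j$ on $S'_b$, and glues in a flat cylinder joining the two holes. Since $P_i$ now carries cone angle $\pi d_i$ there are $d_i$ geodesic rays in line direction $\pm\vec v$ at $P_i$, and likewise $d_j$ at $P_j$, which gives $d_i\cdot d_j$ ways of performing the gluing. The remaining moduli of the connecting cylinder --- the waist vector $\pm\vec v$ (a \emph{closed} saddle connection), the height $h$, and the twist $t$ (which records the holonomy along a saddle connection joining the \emph{distinct} singularities $P_i$, $P_j$ across the cylinder) --- form precisely the same four-real-dimensional space $\cR$ of ``pockets'' used in \S\ref{sec:subsec:sv:pocket}, carrying the same disintegrated measure $d\nu(T)=d\vec v\cdot 4\,dh\,dt$; in particular the Lemma of \S\ref{sec:subsec:sv:pocket}, $\mathit{Cusp}(\epsilon)=\Vol\cR_1^\epsilon=2\pi\epsilon^2$, applies verbatim.

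The third step is the volume estimate, which is that of \S\ref{sec:subsec:sv:pocket} with the connected boundary $\cQ_1(\alpha')$ replaced by the disconnected $\cQ_1(\alpha'_\cC)$ --- whose volume is given by \eqref{eq:total:volume:of:nonprimitive:stratum} --- and with the factor $d_i$ replaced by $d_i d_j$. Integrating over the overall scaling $\rho$ of $S'_a\sqcup S'_b$ and the scaling $r_T$ of the cylinder, subject to $\rho^2+r_T^2\le 1$ and to the bound on the waist length, and using $\mathit{Cusp}(\epsilon)=2\pi\epsilon^2$ and $\dim_\cx\cR=2$, one gets the same integral as in \eqref{eq:final:initial:integral}--\eqref{eq:pocket:almost:answer}, namely $\Vol\cQ_1^\epsilon(\alpha,\cC)=\pi\epsilon^2\cdot\frac{d_i d_j}{\dim_\cx\cQ(\alpha)-2}\cdot\Vol\cQ_1(\alpha'_\cC)+o(\epsilon^2)$. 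Feeding this into \eqref{eq:bC:equals:Vol:over:Vol} and \eqref{eq:b:equals:2c} gives $c_\cC=\frac{d_i d_j}{2(\dim_\cx\cQ(\alpha)-2)}\cdot\Vol\cQ_1(\alpha'_\cC)/\Vol\cQ_1(\alpha)$; expanding $\Vol\cQ_1(\alpha'_\cC)$ by \eqref{eq:total:volume:of:nonprimitive:stratum} supplies an extra $\tfrac{1}{2}$ (which together with the $\tfrac{1}{2}$ just above accounts for the $\tfrac{1}{4}$ of the statement) and the factorial quotient $\frac{(\dim_\cx\cQ(\alpha'_a)-1)!(\dim_\cx\cQ(\alpha'_b)-1)!}{(\dim_\cx\cQ(\alpha'_\cC)-1)!}$, and the identity $\dim_\cx\cQ(\alpha)-2=\dim_\cx\cQ(\alpha'_\cC)$ merges the prefactor with $(\dim_\cx\cQ(\alpha'_\cC)-1)!$ into a single $(\dim_\cx\cQ(\alpha)-2)!=(k-4)!$ in the denominator; this yields exactly \eqref{eq:ratio:Siegel:Veech:constant:dumbell}.

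Finally, \eqref{eq:Siegel:Veech:constant:dumbell} is obtained by substituting the product formula $\Vol\cQ_1=2\pi^2\prod_l v(d_l)$ of Theorem~\ref{theorem:volume} (which is proved only in \S\ref{sec:induction}) together with $v(d-2)/v(d)=(d+1)/(d\pi^2)$ --- the parity factor in \eqref{eq:v} cancelling because $d$ and $d-2$ have the same parity --- so that $\Vol\cQ_1(\alpha'_a)\,\Vol\cQ_1(\alpha'_b)/\Vol\cQ_1(\alpha)=2\pi^2(d_i+1)(d_j+1)/(d_i d_j\pi^4)$, and then using $(\dim_\cx\cQ(\alpha'_a)-1)!=(k_1-3)!$ and $(\dim_\cx\cQ(\alpha'_b)-1)!=(k_2-3)!$ in \eqref{eq:ratio:Siegel:Veech:constant:dumbell}. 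The one genuinely nonroutine point, everything else being a transcription of \S\ref{sec:subsec:sv:typeII} and \S\ref{sec:subsec:sv:pocket}, is the dimension and factorial bookkeeping now that the boundary stratum is disconnected: one must combine the disconnected-volume formula \eqref{eq:total:volume:of:nonprimitive:stratum} with the pocket-type cone integral correctly, checking that the variable integrated against $\rho^{2\dim_\cx\cQ(\alpha'_\cC)-1}\,d\rho$ is the \emph{common} scaling of $S'_a\sqcup S'_b$ (the area ratio $\area(S'_a):\area(S'_b)$ being already absorbed into $\Vol\cQ_1(\alpha'_\cC)$), and that a dumbbell has a \emph{single} connecting cylinder whose moduli coincide with those of an ordinary pocket.
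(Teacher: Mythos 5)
Your proposal is correct and follows essentially the same route as the paper: the paper's own proof explicitly reduces configuration~IV to the ``pocket'' computation of \S\ref{sec:subsec:sv:pocket} by replacing the factor $d_i$ with $d_i d_j$ and the connected boundary stratum with the disconnected one, then invokes \eqref{eq:total:volume:of:nonprimitive:stratum}, \eqref{eq:bC:equals:Vol:over:Vol} and \eqref{eq:b:equals:2c}, exactly as you do. Your bookkeeping of the two factors of $\tfrac12$, of $\dim_\cx\cQ(\alpha'_\cC)=\dim_\cx\cQ(\alpha)-2$, and of the ratio $v(d-2)/v(d)=(d+1)/(d\pi^2)$ all check out against \eqref{eq:ratio:Siegel:Veech:constant:dumbell} and \eqref{eq:Siegel:Veech:constant:dumbell}.
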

\begin{proof}[Proof of~\eqref{eq:ratio:Siegel:Veech:constant:dumbell}]
The proof is completely analogous to computation of the Siegel--Veech
constant  for  configuration  III. Denote by $\alpha'_a$ the set with
multiplicities  obtained  from  $\{d_{i_1},  \dots  d_{i_{k_1}}\}$ by
replacing   the   entry   $d_i$  by  $d_i-2$.  Similarly,  denote  by
$\alpha'_b$  the  set  with  multiplicities obtained from $\{d_{j_1},
\dots, d_{j_{k_2}}\}$ by replacing the entry $d_j$ by $d_j-2$. Define
$\alpha':=\alpha'_a\sqcup\alpha'_b$.   Contracting   the  two  saddle
connections  we get a disconnected flat surface $S'$ in the principal
boundary stratum $\cQ(\alpha')$.

Given  a  flat  surface  $S'\in\cQ(\alpha')$  and  a  holonomy vector
$\pm\vec  v$  we  have $d_i$ separatrix rays in direction $\pm\vec v$
adjacent  to  the  point  $P_i$  of $S'$ and $d_j$ separatrix rays in
direction $\pm\vec v$ adjacent to the point $P_j$.

Following line-by-line the proof of~\eqref{eq:ratio:Siegel:Veech:constant:pocket}
in the previous section we get an expression for
$\Vol\cQ_1^{\epsilon}(\alpha,\cC)$ completely analogous to~\eqref{eq:pocket:almost:answer}:
the only adjustment consists in replacing the factor $d_i$ by the
product $d_i d_j$:
$$
\Vol\cQ_1^{\epsilon}(\alpha,\cC)=
\pi\epsilon^2\cdot \frac{d_i d_j}{\dim_\cx\cQ(\alpha)-2}
\cdot\Vol\cQ_1(\alpha')
 + o(\epsilon^2)\,.
$$
Applying expression~\eqref{eq:total:volume:of:nonprimitive:stratum}
from ~\S\ref{sec:subsec:Strata:of:Disconnected:Surfaces} for
$\Vol\cQ_1(\alpha')$ and taking into consideration that
$\dim_\cx\cQ(\alpha')=\dim_\cx\cQ(\alpha)-2$ we can rewrite
the latter expression as
\begin{multline*}
\Vol\cQ_1^{\epsilon}(\alpha,\cC)=
\pi\epsilon^2\cdot \frac{d_i d_j}{2}\cdot\\
\cfrac{
  (\dim_{\C{}}\cQ(d_{i_1}, \dots,d_i-2, \dots, d_{i_{k_1}})-1)!
  (\dim_{\C{}}\cQ(d_{j_1}, \dots,d_j-2, \dots, d_{j_{k_2}})-1)!}
{(\dim_{\C{}}\cQ(d_1,d_2,\dots,d_k)-2)!}\ \cdot\\
\Vol \cQ_1(d_{i_1}, \dots,d_i-2, \dots, d_{i_{k_1}})\,\cdot\,
\Vol \cQ_1(d_{j_1}, \dots,d_j-2, \dots, d_{j_{k_2}})
 + o(\epsilon^2)\,.
\end{multline*}
Applying~\eqref{eq:bC:equals:Vol:over:Vol} and~\eqref{eq:b:equals:2c}
to the above expression we
obtain~\eqref{eq:ratio:Siegel:Veech:constant:dumbell}.
\end{proof}

\subsection{Siegel--Veech  constant $\mathbf{c_{area}}$}
\label{sec:subsec:carea}

Consider   an  $\SL$-invariant  manifold  in  a  stratum  of  Abelian
differentials   or  a  $\PSL$-invariant  manifold  in  a  stratum  of
quadratic   differentials.   Denote   by   $c_{cyl}$  the  associated
Siegel--Veech constant responsible for counting the maximal cylinders
of  closed  geodesics  and  denote  by  $c_{area}$  the Siegel--Veech
constant  responsible  for counting the cylinders of closed geodesics
counted with weight
$$
\frac{\text{(area of the cylinder)}}{\text{(area of the surface)}}\,.
$$
\smallskip

\noindent
In~\cite{Vorobets} Ya.~Vorobets proved the following result:

\begin{NoNumberTheorem}[Vorobets, 2003]
For  any  connected  component $\cH^{comp}(\alpha)$ of any stratum of
Abelian    differentials    and   for   almost   any   flat   surface
$S\in\cH^{comp}_1(\alpha)$   the  ratio  of  Siegel--Veech  constants
$c_{area}/c_{cyl}$ satisfies the following relation:
$$
\cfrac{c_{area}}{c_{cyl}} = \cfrac{1}{2g-2+n} =
\cfrac{1}{\dim_{\C{}} \cH(\alpha)-1}\,.
$$
\end{NoNumberTheorem}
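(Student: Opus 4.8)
The plan is to evaluate both Siegel--Veech constants, one configuration at a time, by the principal--boundary method of \S\ref{sec:subsec:principal:boundary} and \S\S\ref{sec:subsec:sv:typeI}--\ref{sec:subsec:sv:dumbell}, now carried out for the strata of Abelian differentials as in~\cite{Eskin:Masur:Zorich}. Fix a connected component $\cH^{comp}(\alpha)$, put $d:=\dim_\cx\cH^{comp}(\alpha)=2g-1+n$ (with $n$ the number of zeros), so $d-1=2g-2+n$. By Veech's Siegel--Veech formula and the Eskin--Masur equidistribution theorem, for $\mu_1$--a.e.\ $S$ the numbers of cylinders (resp.\ of area--weighted cylinders) of length $\le L$ grow like $c_{cyl}\pi L^2$ (resp.\ $c_{area}\pi L^2$), and $c_{cyl}=\sum_\cC c^\cC_{cyl}$, $c_{area}=\sum_\cC c^\cC_{area}$ are finite sums over the cylinder configurations $\cC$ of $\cH^{comp}(\alpha)$ (each $\cC$ a maximal family of $m=m(\cC)$ homologous parallel cylinders; $c^\cC_{cyl}$ registers all $m$ of them, $c^\cC_{area}$ weights them by relative area). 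Since the claimed ratio is $\cC$--independent, it is enough to prove $c^\cC_{area}=\frac{1}{d-1}c^\cC_{cyl}$ for each $\cC$.

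For a fixed $\cC$ I would run the $\widehat\chi_\epsilon$--argument of \S\ref{sec:subsec:principal:boundary} twice: with the usual Siegel--Veech transform $\widehat\chi_\epsilon$, whose value on $S\in\cH_1^{\epsilon,thick}(\cC)$ is $m$ (the $m$ short homologous cylinders share one holonomy vector $\pm\vec v$ but are counted separately), and with the area--weighted transform $\widehat\chi^{area}_\epsilon(S)=\sum_C\frac{\area(C)}{\area(S)}\chi_\epsilon(\mathrm{hol}\,C)$, whose value on such $S$ is the total relative area of those $m$ cylinders. Parametrise $\cH_1^\epsilon(\cC)$ near the principal boundary $\cH(\alpha'_\cC)$ exactly as in \S\S\ref{sec:subsec:sv:pocket}--\ref{sec:subsec:sv:dumbell}: by a boundary surface $r_S S'_{(1)}$, a ``cylinder group'' $r_T T_{(1)}$ with $T_{(1)}$ in the unit hyperboloid $\cR_1$ of cylinder groups (so $\dim_\cx\cR=m+1=:n_T$ and $\dim_\cx\cH(\alpha'_\cC)=d-m-1=:n_S$, whence $n_S+n_T=d$), a holonomy vector $\pm\vec v$, and the $m$ heights and twists. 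In these coordinates $\mu\big(C(\cH_1^\epsilon(\cC))\big)$ and its area--weighted analogue both equal
\[
(\text{combinatorial factor})\cdot\Vol\cH_1(\alpha'_\cC)\cdot 2\pi\epsilon^2\cdot(\text{a double integral over }\{r_S,r_T>0,\ r_S^2+r_T^2\le 1\})+o(\epsilon^2);
\]
the combinatorial factor (the count of separatrix rays, the factor from~\eqref{eq:total:volume:of:nonprimitive:stratum} if $\cH(\alpha'_\cC)$ is disconnected, the ambient $\Vol\cH_1$, and the universal constant in $\mathit{Cusp}(\delta)$) is common to the two and cancels in $c^\cC_{area}/c^\cC_{cyl}$.

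The key point is that the area weight is the exact reciprocal of the ``cusp factor''. A surface $r_S S'_{(1)}\sqcup r_T T_{(1)}$ in the cone has area $\tfrac12(r_S^2+r_T^2)$, of which the cylinder group occupies $\tfrac12 r_T^2$, so its relative area is $r_T^2/(r_S^2+r_T^2)$; whereas, because $\mathit{Cusp}(\delta)=(\mathrm{const})\delta^2$ (the analogue for an $m$--cylinder group of the Lemma in \S\ref{sec:subsec:sv:pocket}, verified by the same elementary integral), the factor inserted in the \emph{un}weighted volume is $\mathit{Cusp}\big(\epsilon\sqrt{r_S^2+r_T^2}/r_T\big)=(\mathrm{const})\epsilon^2(r_S^2+r_T^2)/r_T^2$. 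Hence the area--weighted double integral is $I_1=\int_0^1 r_S^{2n_S-1}dr_S\int_0^{\sqrt{1-r_S^2}}r_T^{2n_T-1}dr_T$ and the unweighted one is $I_2=\int_0^1 r_S^{2n_S-1}dr_S\int_0^{\sqrt{1-r_S^2}}r_T^{2n_T-1}\frac{r_S^2+r_T^2}{r_T^2}dr_T$. The substitution $u=r_S^2$ and the Beta integral give $I_1=\frac{(n_S-1)!(n_T-1)!}{4(n_S+n_T)!}$ and $I_2=\frac{(n_S-1)!(n_T-2)!(n_S+n_T-1)}{4(n_S+n_T)!}$, so $I_1/I_2=\frac{n_T-1}{n_S+n_T-1}=\frac{m}{d-1}$. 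Because the unweighted count carries the extra factor $m$ while the area weight already sums over all $m$ cylinders, $c^\cC_{area}/c^\cC_{cyl}=I_1/(m\,I_2)=\frac1m\cdot\frac{m}{d-1}=\frac{1}{d-1}$; summing over $\cC$ gives $c_{area}/c_{cyl}=\frac{1}{d-1}=\frac{1}{2g-2+n}$.

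The main obstacle I expect is organizational rather than conceptual: one must check that the principal--boundary apparatus of \S\S\ref{sec:subsec:sv:typeI}--\ref{sec:subsec:sv:dumbell}, written there for configurations I--IV on $\CP$, applies verbatim to an arbitrary cylinder configuration on an arbitrary component of an arbitrary stratum of Abelian differentials --- i.e.\ that the reconstruction surgery from $(S',\vec v,\text{heights},\text{twists})$ and the Masur--Smillie estimates $\int_{\cH_1^{\epsilon,thin}}\widehat\chi_\epsilon\,d\mu_1=o(\epsilon^2)$ (and its area--weighted version) hold uniformly in $m$ and for disconnected boundary strata, so that only $\cH_1^{\epsilon,thick}$ matters to leading order, and that $\mathit{Cusp}(\delta)\asymp\delta^2$ for $m$--cylinder groups. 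All of this is available in~\cite{Eskin:Masur:Zorich}; the content of the proof is simply that, once the common factors are removed, the area weight cancels the cusp factor and the residual Beta integrals produce exactly $1/(d-1)$. (The same identity can be read off the marked--point stratum: $\area(S)\,N_{area}(S,L)=\int_S\#\{\text{cylinders of length}\le L\text{ through }p\}\,dp$, so integrating over $\cH_1(\alpha)$ exhibits $c_{area}$ as a Siegel--Veech constant on $\cH_1(0,\alpha)$, whose total volume equals that of $\cH_1(\alpha)$, and $1/(d-1)$ is the effect of averaging the marked point over the degenerating cylinder.)
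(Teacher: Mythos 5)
Your proposal is correct, and it is essentially the method this paper itself uses — not for the stated theorem, which it only cites from Vorobets, but for the directly analogous Proposition~\ref{pr:Vorobets:constant} on $\CP$: there the ratio $c_{area}(\cC)/c_{cyl}(\cC)$ is likewise written as the limit of a quotient of two cone integrals, the area weight $r_T^2/(r_S^2+r_T^2)$ cancels the cusp factor $\mathit{Cusp}\bigl(\epsilon\sqrt{r_S^2+r_T^2}/r_T\bigr)\propto\epsilon^2(r_S^2+r_T^2)/r_T^2$, and the residual Beta integrals (your $I_1,I_2$, which I have checked) produce $1/(\dim_\cx-1)$. The one genuinely new point in passing from the $\CP$ configurations (where every cylinder configuration has $m=1$) to a general stratum of Abelian differentials is the interplay between the multiplicity $m$ of cylinders in a configuration, the dimension $n_T=m+1$ of the cylinder-group factor, and the fact that $N_{cyl}$ counts all $m$ cylinders separately; you handle this correctly, with the factor $m$ from the count cancelling $n_T-1=m$ from $I_1/I_2=(n_T-1)/(n_S+n_T-1)$, and your claim that $\mathit{Cusp}(\delta)\propto\delta^2$ persists for $m$-cylinder groups is right (the same elementary integral over the heights-simplex and twists-box gives $\delta^2/\bigl((m+1)2^{m+1}\bigr)$ up to the normalization constants). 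For the record, Vorobets's own published argument is closer to the marked-point identity you mention in your final parenthesis than to the principal-boundary computation, but both routes are valid and yield the same constant.
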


Note that a configuration of \^homologous saddle connections of $\CP$
involves  at  most one cylinder. The following proposition states the
Vorobets  formula  for  individual configurations involving cylinders
for  strata  of meromorphic quadratic differentials with simple poles
on $\CP$.

\begin{proposition}
\label{pr:Vorobets:constant}
For  any  stratum $\cQ_1(d_1,\dots,d_n)$ of meromorphic quadratic
differentials  with simple  poles on $\CP$  and  for any admissible
configuration $\cC$ of saddle connections involving a cylinder the
following equality holds:
$$
\frac{c_{area}(\cC)}{c(\cC)}=
\frac{1}{\dim_{\C{}} \cQ(d_1,\dots,d_n)-1}=
\frac{1}{n-3}\,.
$$
\end{proposition}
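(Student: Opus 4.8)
The plan is to reduce the proposition to the cusp-volume computations already carried out in \S\ref{sec:subsec:sv:pocket} and \S\ref{sec:subsec:sv:dumbell}; in particular, the argument stays at the level of the ``ratio of volumes'' formulas~\eqref{eq:ratio:Siegel:Veech:constant:pocket} and~\eqref{eq:ratio:Siegel:Veech:constant:dumbell} and does not use Theorem~\ref{theorem:volume}. By the classification of configurations of \^homologous saddle connections on $\CP$ invoked in the proof of Proposition~\ref{prop:only:four:types} (Theorem~2.2 of~\cite{Boissy:configurations}), the only admissible configurations $\cC$ involving a cylinder are those of types III and IV, so it suffices to treat these two cases.

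First I would observe that, exactly as the ordinary Siegel--Veech constant $c(\cC)$ is obtained in \S\ref{sec:subsec:principal:boundary} as a renormalized limit of $\Vol\cQ_1^{\varepsilon}(\alpha,\cC)/\Vol\cQ_1(\alpha)$ (formulas~\eqref{eq:bC:equals:Vol:over:Vol}, \eqref{eq:b:equals:2c}), the area Siegel--Veech constant $c_{\mathit{area}}(\cC)$ is produced by the \emph{same} argument with the counting function weighted by the scale-invariant quantity $w(S):=\area(\text{cylinder of }S)/\area(S)$; concretely, up to the same universal normalization constant,
\[
c_{\mathit{area}}(\cC)=\const\cdot\lim_{\varepsilon\to0}\frac{1}{\pi\varepsilon^2}\cdot\frac{1}{\Vol\cQ_1(\alpha)}\int_{\cQ_1^{\varepsilon}(\alpha,\cC)}w(S)\,d\mu_1(S)\,.
\]
The only point here needing justification is that inserting $w$ does not disturb the estimates: since $0\le w\le1$, the Eskin--Masur bound~\cite{Eskin:Masur} on the thin--thin locus $\cQ_1^{\varepsilon,thin}(\cC)$ and the Masur--Smillie estimate~\cite{Masur:Smillie} on $\cQ_1^{\varepsilon}\setminus\cQ_1^{\varepsilon,thick}$ apply unchanged. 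Hence $c_{\mathit{area}}(\cC)/c(\cC)$ equals the ratio of the $w$-weighted cusp volume to the unweighted one.

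Next I would re-run the pocket/parallelogram parametrization of $\cQ_1^{\varepsilon}(\alpha,\cC)$ used in the proofs of~\eqref{eq:ratio:Siegel:Veech:constant:pocket} and~\eqref{eq:ratio:Siegel:Veech:constant:dumbell}: a surface $S$ in the cone is reconstructed from $r_SS'$, with $S'\in\cQ_1(\alpha'_{\cC})$ of area $\tfrac12$, glued to a flat ``pocket'' $r_TT$, with $T\in\cR_1$ of area $\tfrac12$, subject to $r_S^2+r_T^2\le1$. The cylinder of $S$ is precisely this pocket, so $\area(\text{cylinder})=r_T^2/2$ and $\area(S)=(r_S^2+r_T^2)/2$, i.e. $w(S)=r_T^2/(r_S^2+r_T^2)$. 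Thus the only change relative to the master integral~\eqref{eq:complete:initial:integral} is the insertion of the factor $r_T^2/(r_S^2+r_T^2)$ in the integrand, and the crucial point is that by the Lemma of \S\ref{sec:subsec:sv:pocket} one has $\operatorname{Cusp}\bigl(\varepsilon\sqrt{r_S^2+r_T^2}/r_T\bigr)=2\pi\varepsilon^2\,(r_S^2+r_T^2)/r_T^2$, so this weight cancels that factor \emph{exactly}. The $w$-weighted cusp volume therefore collapses to $d_i\cdot\Vol\cQ_1(\alpha'_{\cC})\cdot2\pi\varepsilon^2\int_0^1 r_S^{2n_S-1}\,dr_S\int_0^{\sqrt{1-r_S^2}}r_T^{2n_T-1}\,dr_T+o(\varepsilon^2)$, with $n_S=\dim_{\C{}}\cQ(\alpha'_{\cC})$ and $n_T=\dim_{\C{}}\cR=2$, i.e. exactly the unweighted cusp volume with the factor $(r_S^2+r_T^2)/r_T^2$ deleted from the integrand; and this remaining integral is the Beta-type integral evaluated in \S\ref{sec:subsec:Strata:of:Disconnected:Surfaces}, equal to $\tfrac14(n_S-1)!(n_T-1)!/(n_S+n_T)!$. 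Dividing by the unweighted cusp volume computed in~\eqref{eq:postfinal:initial:integral}, the prefactor $d_i\cdot\Vol\cQ_1(\alpha'_{\cC})$ and all volume factors cancel, and substituting $n_T=2$, the dimension count $n_S=\dim_{\C{}}\cQ(\alpha)-2$ of \S\ref{sec:subsec:sv:pocket}, and $\dim_{\C{}}\cQ(d_1,\dots,d_n)=n-2$, a one-line manipulation of factorials yields
\[
\frac{c_{\mathit{area}}(\cC)}{c(\cC)}=\frac{1}{\dim_{\C{}}\cQ(d_1,\dots,d_n)-1}=\frac{1}{n-3}\,,
\]
independently of $\cC$ and of the splitting data $\alpha'_{\cC}$. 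The type IV case is the identical computation, differing from type III only by replacing the factor $d_i$ with $d_id_j$ and by applying the disconnected-strata formula~\eqref{eq:total:volume:of:nonprimitive:stratum} to $\Vol\cQ_1(\alpha'_{\cC})$, neither of which affects the $(r_S,r_T)$-integral or its ratio to the unweighted one.

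The main obstacle --- essentially the only delicate point --- is the second step above: verifying that the $w$-weighted cusp-volume description of $c_{\mathit{area}}(\cC)$ is valid, i.e. that multiplying the counting function by $w$ leaves intact both the thin-part negligibility of~\cite{Eskin:Masur} and~\cite{Masur:Smillie} and the harmlessness of the small non-canonical area changes inherent in the pocket/parallelogram surgeries. Both follow at once from $0\le w\le1$, so these estimates are only strengthened; after that, the proof is the bookkeeping of the exact cancellation displayed above. One could alternatively try to deduce the statement from Vorobets' theorem via the canonical double cover, but this requires matching cylinder configurations upstairs and is less direct than the computation just sketched.
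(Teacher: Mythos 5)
Your proposal is correct and follows essentially the same route as the paper: both reduce to the type III/IV cusp-volume parametrization, insert the scale-invariant weight $r_T^2/(r_S^2+r_T^2)$ into the master integral, observe its exact cancellation against the $\operatorname{Cusp}$ factor $(r_S^2+r_T^2)/r_T^2$, and evaluate the resulting Beta-type integral to get $1/(n_S+1)=1/(\dim_{\C{}}\cQ(\alpha)-1)$. Your explicit remark that $0\le w\le1$ keeps the Eskin--Masur and Masur--Smillie thin-part estimates intact is a small point the paper leaves implicit, but it does not change the argument.
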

\begin{proof}
The  proof  consists  in  an elementary adjustment of the computation
from  the  previous  two sections. We will present the computation of
$c_{area}(\cC)$  for  the  ``pocket configuration'' (configuration of
type     III)     following     the    analogous    computation    in
~\S\ref{sec:subsec:sv:pocket}.    The    computation   for   the
configuration of type IV is completely analogous and is omitted.

This   time   we   have   to   compute  the  integral  of  the  ratio
$\cfrac{r_T^2}{r_S^2+r_T^2}$  of  the  area $r_T^2/2$ of the cylinder
over  the  total  area  $(r_S^2+r_T^2)/2$  of  the entire surface. We
integrate  this  expression over $\cQ_1^{\epsilon}(\alpha,\cC)$. Note
that  this  ratio  is the same for proportional surfaces. Thus we can
integrate     with     respect     to    the    corresponding    cone
$C(\cQ_1^{\epsilon}(\alpha,\cC))$:
$$
\int_{\cQ_1^{\epsilon}(\alpha,\cC)} \cfrac{r_T^2}{r_S^2+r_T^2}\,\, d\mu_1 =
\dim_{\R}\cQ(\alpha)\cdot
\int_{C(\cQ_1^{\epsilon}(\alpha,\cC))} \cfrac{r_T^2}{r_S^2+r_T^2}\,\, d\mu(S)
$$
Moreover, the ratio of the corresponding Siegel--Veech constants
satisfies
\begin{equation}
\label{eq:ratio}
\cfrac{c_{area}(\alpha,\cC)}{c_{cyl}(\alpha,\cC)}=
\lim_{\epsilon\to 0}
\frac
{\int_{C(\cQ_1^{\epsilon}(\alpha,\cC))} \cfrac{r_T^2}{r_S^2+r_T^2}\,\, d\mu(S)}
{\int_{C(\cQ_1^{\epsilon}(\alpha,\cC))}  d\mu(S)}\,.
\end{equation}
The denominator
$$
\int_{C(\cQ_1^{\epsilon}(\alpha,\cC))}  d\mu(S) =
\mu(C(\cQ_1^{\epsilon}(\alpha,\cC)))
$$
of the above ratio is given by~\eqref{eq:final:initial:integral}.
To    evaluate    the    integral    in    the    numerator    we
modify~\eqref{eq:final:initial:integral}   by   multiplying   the
function   inside    the    integral    by    an   extra   factor
$r_T^2/(r_S^2+r_T^2)$ obtaining:
\begin{multline}
\label{eq:new:integral}
\int_{C(\cQ_1^{\epsilon}(\alpha,\cC))} \cfrac{r_T^2}{r_S^2+r_T^2}\,\, d\mu(S)
=d_i\cdot\Vol(\cQ_1(\alpha'))\ \cdot\\
\cdot 2\pi\epsilon^2\cdot
\int_0^1 r_S^{2n_S-1} dr_S
\int_0^{\sqrt{1-r_S^2}} r_T^{2n_T-1}
\,\,dr_T\
 + o(\epsilon^2),
\end{multline}
Taking  into  consideration  that $n_T=\dim_\C{}\cR=2$ and evaluating
the latter integral we obtain
\begin{equation}
\label{eq:postfinal:carea:integral}
\int_{C(\cQ_1^{\epsilon}(\alpha,\cC))} \cfrac{r_T^2}{r_S^2+r_T^2}\,\, d\mu(S)
=d_i\cdot\Vol\cQ_1(\alpha')\, \cdot
\cfrac{4\pi\epsilon^2}{2n_S(2n_S+2)(2n_S+4)}+o(\epsilon^2)\,.
\end{equation}
Plugging~\eqref{eq:postfinal:carea:integral}
and~\eqref{eq:postfinal:initial:integral}                        into
expression~\eqref{eq:ratio} and recalling the definition
$$
n_S=\dim_\C{}\cQ(\alpha')=\dim_\cx\cQ(\alpha)-2
$$
we obtain
$$
\cfrac{c_{area}(\alpha,\cC)}{c_{cyl}(\alpha,\cC)}=
\cfrac{2}{2n_S+2}=\cfrac{1}{\dim_\cx \cQ(\alpha)-1}\,,
$$
which completes the proof of proposition~\ref{pr:Vorobets:constant}.
\end{proof}

Proposition~\ref{pr:Vorobets:constant}  immediately implies the following
statement.

\begin{corollary}
\label{cr:Vorobets:formula}
For  any  stratum  $\cQ_1(d_1,\dots,d_n)$  of  meromorphic  quadratic
differentials  with  simple poles on $\CP$ the Siegel--Veech constant
$c_{area}$  is  expressed  in terms of the Siegel--Veech constants of
configurations as follows:
$$
c_{area}=
\cfrac{1}{n-3}\,\cdot\,
\sum_{\substack{\mathit{Configurations}\ \cC\\
\mathit{containing\ a\ cylinder}}}
c_\cC\,.
$$
\end{corollary}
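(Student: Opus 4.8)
The plan is to obtain the corollary from Proposition~\ref{pr:Vorobets:constant} by summing over configurations, so the only real content is to check that both the unweighted and the area-weighted cylinder counts of the whole stratum decompose as sums over the finitely many configurations that contain a cylinder.

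First I would record this decomposition. For a flat sphere $S$ outside a set of measure zero, Proposition~\ref{prop:only:four:types} guarantees that the only parallel saddle connections on $S$ are those occurring in configurations of types I--IV, and among these only types III and IV involve a cylinder of closed regular geodesics; by Boissy's classification (cited in the excerpt) the list of such configurations in $\cQ(d_1,\dots,d_n)$ is finite. Consequently every maximal cylinder of closed geodesics on such an $S$ is the cylinder of exactly one configuration $\cC$ of type III or IV, and conversely each such configuration contributes exactly one cylinder. Hence the Siegel--Veech counting function of all cylinders on $S$, both in its unweighted form and in the form weighted by $\tfrac{\text{area of the cylinder}}{\text{area of }S}$, is the sum of the corresponding counting functions of the individual configurations. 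Since the Siegel--Veech transform and the Siegel--Veech formula are linear, the associated constants add:
\[
c_{cyl}=\sum_{\cC}c_\cC,\qquad c_{area}=\sum_{\cC}c_{area}(\cC),
\]
where both sums run over all configurations $\cC$ of type III or IV admissible in $\cQ(d_1,\dots,d_n)$, and $c_\cC$ denotes the cylinder Siegel--Veech constant of the configuration $\cC$ (equal to $c_{cyl}(\cC)$ since $\cC$ has a unique cylinder).

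Next I would apply Proposition~\ref{pr:Vorobets:constant}: for each such configuration $\cC$ one has
\[
c_{area}(\cC)=\frac{1}{\dim_{\C{}}\cQ(d_1,\dots,d_n)-1}\,c_\cC=\frac{1}{n-3}\,c_\cC,
\]
with the factor $\tfrac{1}{n-3}$ independent of $\cC$. Summing this identity over all configurations of types III and IV and combining with the second displayed equality above yields
\[
c_{area}=\sum_{\cC}c_{area}(\cC)=\frac{1}{n-3}\sum_{\cC}c_\cC,
\]
which is precisely the asserted formula.

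The only step requiring genuine care — and hence the main (modest) obstacle — is the first: justifying that the area-weighted cylinder count of the stratum really is the finite sum of the area-weighted counts of the type III and IV configurations, with no cylinder omitted and none counted twice. This is exactly what Proposition~\ref{prop:only:four:types} provides (each cylinder is bounded by \^homologous saddle connections, which forces it into a unique type III or IV configuration), together with the finiteness of the configuration list on $\CP$; everything after that is the one-line summation above.
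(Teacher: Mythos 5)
Your argument is correct and is exactly the reasoning the paper has in mind: the paper simply states that the corollary follows "immediately" from Proposition~\ref{pr:Vorobets:constant}, the implicit content being precisely your decomposition of the (weighted and unweighted) cylinder counting functions into the finitely many type III and IV configurations, followed by summing the ratio identity $c_{area}(\cC)=\tfrac{1}{n-3}\,c_\cC$ over $\cC$. Your spelling out of the decomposition step via Proposition~\ref{prop:only:four:types} and the linearity of the Siegel--Veech formula is a faithful expansion of what the paper leaves tacit.
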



\section{Computation of the volumes of the moduli spaces}
\label{sec:induction}

In this section, we prove Theorem~\ref{theorem:volume}. The approach
taken here is somewhat indirect.

\subsection{An identity for the Siegel--Veech constant}

The idea is to prove

formula~\eqref{eq:volume} in Theorem~\ref{theorem:volume}

for   the   volume   by  induction,  using  the  formulas  expressing
Siegel--Veech   constants   in  terms  of  the  volumes.  Namely,  by
\cite[Theorem~3]{Eskin:Kontsevich:Zorich} one has:

$$
c_{\mathit{area}}(\cQ(d_1,\dots,d_k))=
-\cfrac{1}{8\pi^2}\,\sum_{j=1}^k \cfrac{d_j(d_j+4)}{d_j+2}\,.
$$

On  the  other  hand,  by  the Vorobets formula applied to $\CP$ (see
Corollary~\ref{cr:Vorobets:formula}  in  ~\S\ref{sec:subsec:carea})  one
has
$$
c_{\mathit{area}}(\cQ(d_1,\dots,d_k))=
\cfrac{1}{\dim_\mathbb{C}\cQ(d_1,\dots,d_k)-1}\cdot
\sum_{\substack{\mathit{Configurations}\ \cC\\
\mathit{containing\ a\ cylinder}}}
c_\cC\,.
$$

In  view  of  \S\ref{sec:subsec:configurations},  for $\CP$ there are
exactly  two configurations containing a cylinder: a ``pocket'' and a
``dumbbell''. The formulas for the Siegel--Veech constants were given
in \S\ref{sec:Siegel:Veech}.

Taking              into              consideration              that
$\dim_\mathbb{C}\cQ(d_1,\dots,d_k)-1=k-3$,    for    any   collection
$d_1,\dots,d_k$ of integers in $\{-1,1,2,3\}$ satisfying the relation
$$
\sum_{i=1}^k d_i=-4
$$
we get the following identity:
\begin{equation}
\label{eq:carea:carea}
-\cfrac{1}{8\pi^2}\,\sum_{j=1}^k \cfrac{d_j(d_j+4)}{d_j+2}
=
\cfrac{1}{k-3}\cdot\left(
\sum_{\substack{``pocket''\\configurations}} c_\cC +
\sum_{\substack{``dumbbell''\\configurations}} c_\cC
\right)\,.
\end{equation}
If we plug in the expressions
(\ref{eq:ratio:Siegel:Veech:constant:pocket}) and
(\ref{eq:ratio:Siegel:Veech:constant:dumbell}) into
(\ref{eq:carea:carea}), we get a formula of the form:
\begin{equation}
\label{eq:recurrence:volumes}
\Vol \cQ_1(d_1, \dots, d_n) = \text{ Explicit polynomial in
  volumes of simpler strata.}
\end{equation}
The formulas (\ref{eq:recurrence:volumes}) clearly determine the
volumes. Thus, to prove Theorem~\ref{theorem:volume}, it is enough to
show that the expressions for the volumes given by
Theorem~\ref{theorem:volume} satisfy the recurrence relation
(\ref{eq:recurrence:volumes}), or equivalently to prove the following:
\begin{theorem}
\label{theorem:carea:identity}
The explicit expressions (\ref{eq:Siegel:Veech:constant:pocket}) and
(\ref{eq:Siegel:Veech:constant:dumbell}) for the Siegel--Veech
constants satisfy the identity
(\ref{eq:carea:carea}).
\end{theorem}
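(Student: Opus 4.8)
The plan is to reduce Theorem~\ref{theorem:carea:identity} to a purely finite combinatorial identity and then prove that identity by induction on the number of singularities.

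\emph{Reduction.} After substituting the closed forms~\eqref{eq:Siegel:Veech:constant:pocket} and~\eqref{eq:Siegel:Veech:constant:dumbell} into~\eqref{eq:carea:carea} the common factor $\pi^{-2}$ cancels, so it suffices to prove a rational identity. The ``pocket'' contribution is immediate: since~\eqref{eq:Siegel:Veech:constant:pocket} is independent of the chosen pair of simple poles, summing over all $\binom{p}{2}$ such pairs (with $p=\#\{j:d_j=-1\}$) and over all zeroes, and using the relation $\sum_{i:\,d_i\ge 1}(d_i+1)=k-4$ forced by $\sum_j d_j=-4$, gives $\pi^2\!\sum_{\mathrm{pockets}}c_\cC=\tfrac12\binom{p}{2}$ — this is just~\eqref{c:pocket} repackaged. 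For the ``dumbbell'' contribution I would regroup the sum by the induced partition of the $k$ named singularities into two lobes $B\sqcup B^{c}$ with $\sum_{l\in B}d_l=-2$: performing the sum over the two attachment zeroes replaces the weight $(d_i+1)(d_j+1)$ by $\sigma(B)\sigma(B^{c})$, where $\sigma(B):=\sum_{l\in B,\ d_l\ge 1}(d_l+1)$, and the absence of order-$0$ singularities forces $\sigma(B)=|B|-2$. Writing $\tfrac{(|B|-2)!\,(k-|B|-2)!}{(k-4)!}=\binom{k-4}{|B|-2}^{-1}=(k-3)\int_0^1 t^{|B|-2}(1-t)^{k-|B|-2}\,dt$ and assembling the partitions into the generating polynomial $\prod_{l=1}^k(ty^{d_l}+1-t)$ (whose coefficient of $y^{-2}$ is $\sum_{\sum_B d=-2}t^{|B|}(1-t)^{k-|B|}$) one obtains
\[ \pi^2\!\!\sum_{\mathrm{dumbbells}}\!\!c_\cC\;=\;\frac{k-3}{4}\int_0^1 t^{-2}(1-t)^{-2}\,[y^{-2}]\!\prod_{l=1}^k\bigl(ty^{d_l}+1-t\bigr)\,dt\;-\;\frac{1}{2}\binom{p}{2}, \]
where the subtracted term comes precisely from the two partitions in which one lobe is a pair of poles (this uses $k\ge 6$, i.e.\ that the stratum is not the pillowcase $\cQ(-1^4)$). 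Adding the pocket contribution, the two copies of $\tfrac12\binom{p}{2}$ cancel and the whole of~\eqref{eq:carea:carea} collapses to the single assertion
\[ \int_0^1 t^{-2}(1-t)^{-2}\,[y^{-2}]\!\prod_{l=1}^k\bigl(ty^{d_l}+1-t\bigr)\,dt\;=\;-\frac12\sum_{j=1}^{k}\frac{d_j(d_j+4)}{d_j+2}\;=\;2-k+2\sum_{j=1}^{k}\frac{1}{d_j+2}, \]
equivalently, expanding the coefficient extraction and integrating term by term, to the finite identity
\[ \sum_{\substack{S\subseteq\{1,\dots,k\}\\ \sum_{l\in S}d_l=-2}}\frac{(|S|-2)!\,(k-|S|-2)!}{(k-3)!}\;=\;2-k+2\sum_{j=1}^{k}\frac{1}{d_j+2}. \]

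\emph{The finite identity.} I would prove this last identity (now for an arbitrary tuple $d_1,\dots,d_k$ of integers $\ge-1$ with $\sum_j d_j=-4$) by induction on $k$. The base case is $\cQ(-1^4)$, where the left side is $\binom{4}{2}=6$ and the right side is $2-4+2\cdot4=6$. For the inductive step I compare the tuple $(d_1,\dots,d_k)$ with the one obtained by replacing a single entry $d_l$ by the pair $\{d_l+1,\,-1\}$ — the combinatorial shadow of the singularity-splitting surgery dual to collapsing a type~I saddle connection, which raises $k$ by one, preserves $\sum_j d_j=-4$, and generates every admissible tuple from $\cQ(-1^4)$. Splitting each subset $S$ on the new side into four cases according to its intersection with the two new singularities, and simplifying the resulting ratios of factorials via the Beta-integral representation used above, reduces the step to verifying that the net change of the left side equals $\tfrac{2}{d_l+3}-\tfrac{2}{d_l+2}+1$ — which is the net change of the right side, and an elementary one-variable identity.

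The inductive step just described — keeping track of exactly how the weighted subset sum transforms under the surgery and checking that the change depends on $d_l$ alone, not on the rest of the tuple — is the part I expect to require the most care. A purely mechanical alternative is to view the target as a terminating hypergeometric summation and discharge it by Zeilberger-style creative telescoping (the ``MultiSum'' package); part of the verification is carried out in Appendix~\ref{sec:appendix:identity}.
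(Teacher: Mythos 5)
Your reduction of~\eqref{eq:carea:carea} to the single finite identity
\begin{equation*}
\sum_{\substack{S\subseteq\{1,\dots,k\}\\ \sum_{l\in S}d_l=-2}}\frac{(|S|-2)!\,(k-|S|-2)!}{(k-3)!}\;=\;2-k+2\sum_{j=1}^{k}\frac{1}{d_j+2}
\end{equation*}
is correct: the pocket sum does collapse to $\tfrac12\binom{p}{2}$, the dumbbell regrouping by lobes with $\sigma(B)=|B|-2$ is right, and the two $\tfrac12\binom{p}{2}$ terms cancel as you say. This identity is also true (I checked it on $\cQ(-1^4)$, $\cQ(1,-1^5)$, $\cQ(2,-1^6)$, $\cQ(1,1,-1^6)$), and it is a genuinely different and rather cleaner target than the identity~\eqref{eq:apr2012} that the paper extracts, which it then attacks with generating functions in the $s_i$ and Mohanty's multivariate Lagrange-inversion formula, proving $F^2=G$.

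The gap is in the inductive step, and it is not the kind that more care fixes: the step as described does not close. Write $A_c(t)=[y^{c}]\prod_{j\ne l}(ty^{d_j}+1-t)$ for the remaining $k-1$ singularities. Carrying out exactly the four-case split and Beta-integral bookkeeping you propose, the change of the left-hand side under $d_l\mapsto\{d_l+1,-1\}$ comes out to
\begin{equation*}
\int_0^1\frac{-A_{-2}+A_{-1}+A_{-d_l-3}-A_{-d_l-2}}{t(1-t)}\,dt\;=\;2\int_0^1\frac{A_{-1}(t)-A_{-2}(t)}{t(1-t)}\,dt,
\end{equation*}
the last equality by the symmetry $A_c(t)=A_{\sum_{j\ne l}d_j-c}(1-t)$. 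So the inductive step requires showing that this integral equals $\tfrac12+\tfrac{1}{d_l+3}-\tfrac{1}{d_l+2}$, i.e.\ that it depends only on $\sum_{j\ne l}d_j$ and not on the individual degrees of the remaining $k-1$ singularities. That is a multivariate identity over all subsets of the residual tuple, of essentially the same character and difficulty as the identity you are trying to prove; it is emphatically not ``an elementary one-variable identity,'' and attempting to prove it by the same splitting move spawns yet further identities of the form $\int_0^1 (A_c-A_{c-1})/(t(1-t))\,dt=g(c,N)$. The Zeilberger fallback does not rescue this either: the target has an arbitrary number of free parameters $d_1,\dots,d_k$ and is not a single terminating hypergeometric sum. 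To make your route work you would need to formulate and prove the stronger family of identities (for all shifts $c$, with the total degree as the only parameter) as the actual induction hypothesis, or else evaluate the subset sum directly — for instance by a generating-function argument in the spirit of the paper's Appendix~\ref{sec:appendix:identity}.
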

The proof of the Theorem~\ref{theorem:carea:identity} is quite involved
and is done in Appendix~\ref{sec:appendix:identity}.
This completes the proof of Theorem~\ref{theorem:volume}.


\section{Counting trajectories and ergodic theory on moduli space}
\label{sec:ergodic}
In  this section we will prove Theorem~\ref{theorem:SV:for:billiards}.
We   modify  appropriately  the
strategy  from ~\S\ref{sec:subsec:Reduction:to:ergodic:theory}
to obtain
the  asymptotic  formula  (\ref{eq:SV:for:billiards}). The key tool
is Theorem~\ref{theorem:birkhoff:chaika} proved by Jon Chaika in
Appendix~\ref{sec:chaika}.


\subsection{Pointwise asymptotics}
\label{sec:point:weak}

To understand the asymptotics for any set of special trajectories for the flat metric associated to $q \in \cQ_1$, we use (\ref{eq:circ:av}) to reduce the problem to understanding
\begin{equation}
\frac{1}{2\pi}\int_{0}^{2\pi} \widehat{f}(g_t
  r_{\theta} q)d\theta,
\end{equation}
where $\widehat{f}$ is the indicator function of the trapezoid defined in \S\ref{Siegel:Veech:formulas}. We are particularly interested in the metrics $q_{\Pi}$, $\Pi \in \cB$. If $\widehat{f} \in \mathcal L_{c}$ (in the notation of \S\ref{sec:chaika}), we could directly apply Theorem~\ref{theorem:birkhoff:chaika} to conclude that
$$\lim_{t \rightarrow \infty}  \frac{1}{2\pi}\int_{0}^{2\pi} \widehat{f}(g_t
  r_{\theta} q_{\Pi})d\theta = \frac{3}{4} b_\cC(\cQ)$$ for almost every $\Pi \in \cB$. Following an argument from~\cite{EMS}, we will approximate $\widehat{f}$ by such
functions. Fix  $\epsilon  >0$,  let  $h_{\epsilon}:  \cQ_1  \rightarrow  \R$ be a
continuous function with
\begin{equation}\label{eq:h:def}
h_{\epsilon}(q)  = \begin{cases} 1 & l(q) > \epsilon \\ 0 & l(q) <
\epsilon/2\end{cases}
\end{equation}

\noindent Here, $l(q)$ denotes the length of the shortest saddle
connection on $q$. The function $h_{\epsilon}$ is a smoothed version of the
indicator function of the compact part of the stratum $\cQ_1$. Given $\phi: \cQ_1 \rightarrow \R$, define
\begin{equation}
\label{eq:At:def}
\left(A_t\phi\right)(q) = \frac{1}{2\pi}\int_0^{2\pi} \phi(g_t r_{\theta}
q) d\theta.
\end{equation}

\noindent For any $q \in \cQ_1$,
\begin{equation}
\label{eq:ahat:bound}
\left(A_t(\widehat{f}h_{\epsilon})\right)(q) \le
\left(A_t\widehat{f}\right)(q) = \left(A_t(\widehat{f}h_{\epsilon})\right)(q)+
\left(A_t(\widehat{f}(1-h_{\epsilon}))\right)(q).
\end{equation}

\noindent We follow~\cite[p.435, proof of Theorem 2.4]{EMS} . Fix $1>\eta > \delta >0$. \cite[Theorem 5.1]{Eskin:Masur} shows there is a $C(\delta)$ so that for all $q \in \cQ$
\begin{equation}\label{eq:fhat:bound}\widehat{f}(q) \le
\frac{C(\delta)}{l(q)^{1+\delta}}\,.
\end{equation}
On the other hand, $1-h_{\epsilon}(q) > 0$ implies $l(q) \le \epsilon$, so
$$
\widehat{f}(q)(1-h_{\epsilon}(q))
\le \widehat{f}(q)\le
\frac{C(\delta)}{l(q)^{1+\eta}}
\cdot l(q)^{\eta-\delta}
\le \epsilon^{\eta - \delta} \frac{C(\delta)}{l(q)^{1+\eta}}\,.
$$

Thus,
$$\left(A_t(\widehat{f}(1-h_{\epsilon}))\right)(q) \le C(\delta)\epsilon^{\eta
-\delta}\left(A_tl^{-1-\eta}\right)(q).$$

\noindent\cite[Theorem 5.2]{Eskin:Masur} states that for $\eta <1$, there is a $C_1 = C_1(\eta, \Pi)$ so that for all $t>0$,
$$\left(A_t l^{-1-\eta}\right)(q_{\Pi}) < C_1(\eta,\Pi)\,.$$

Since  $\widehat{f}h_{\epsilon}$  is  continuous and compactly
supported,
for any $\Pi$ from the set of full measure to which
Theorem~\ref{theorem:birkhoff:chaika} applies we get
$$
\lim_{t \rightarrow \infty}
 \left(A_t (\widehat{f}h_{\epsilon})\right)(q_{\Pi})\,
= \frac{1}{\mu_1(\cQ_1)} \int_{\cQ_1} \widehat{f}h_{\epsilon}(q)\,d\mu_1(q)
$$

\noindent So we have
\begin{equation}
\label{eq:atliminf}
\liminf_{t \rightarrow \infty}
 A_t \widehat{f}(q_{\Pi}) \geq
\frac{1}{\mu_1(\cQ_1)}\int_{\cQ_1} \widehat{f}h_{\epsilon}(q)\,d\mu_1(q)
\end{equation}
\noindent and
\begin{equation}\label{eq:atlimsup}
\limsup_{t \rightarrow \infty} A_t\widehat{f}(q_{\Pi})
\le \frac{1}{\mu_1(\cQ_1)}\int_{Q_1} \widehat{f} h_{\epsilon}(q)\, d\mu_1(q)
\, +\, C(\delta)C_1(\eta,\Pi)\epsilon^{\eta - \delta}\,.
\end{equation}

\noindent Combining  (\ref{eq:atliminf}) and (\ref{eq:atlimsup}) and
letting   $\epsilon   \rightarrow   0$,   we   obtain, as   desired, Theorem~\ref{theorem:SV:for:billiards}

\qed

\medskip

\noindent We     complete     this     section     with     the     proof    of
Theorem~\ref{theorem:Narea:weak:asymp}                           from
~\S\ref{sec:subsec:introduction:beginning}.
\begin{proof}[Proof of Theorem~\ref{theorem:Narea:weak:asymp}]
The    proof    is    completely    analogous   to   the   proof   of
Theorem~\ref{theorem:SV:for:billiards}   above;   we   just  have  to
carefully  follow  the  normalization  which  is  different  from the
previous case.

By~\cite[Theorem~3]{Eskin:Kontsevich:Zorich} one has:
\begin{multline}
\label{eq:carea:surface}
c_{\mathit{area}}(\cQ(k_1-2,\dots,k_n-2))=
\\
-\cfrac{1}{8\pi^2}\,\sum_{j=1}^n \cfrac{(k_j-2)(k_j+2)}{k_j}=
\cfrac{1}{8\pi^2}\,
\sum_{j=1}^n\left(\cfrac{4}{k_j}-k_j\right)
\,.
\end{multline}
The  length of a closed trajectory in $\Pi$ is the same as the length
of the associated closed geodesic on the covering flat sphere $S$. By
definition,  the  area of the band of closed trajectories on $\Pi$ is
the  same  as  the  area  of  each of the maximal cylinders on $\CP$.
However,  the  flat area of $S$ is twice the area of $\Pi$. Thus, the
ratio
$$
\frac{(\text{area of the band of periodic trajectories on }\Pi)}{(\text{area of }\Pi)}
$$
is twice larger then the corresponding ratio
$$
\frac{(\text{area of the maximal cylinder of periodic geodesics on }S)}{(\text{area of }S)}\,.
$$
Taking  into  consideration  that the bands of closed trajectories in
the  polygon  $\Pi$ are in the natural one-to-two correspondence with
the  maximal  cylinders  of  closed regular geodesics on the covering
flat sphere $S$, see Figure~\ref{fig:billiard:and:sphere}, we get
$$
\Narea(\Pi,L)=\Narea(S,L)\,.
$$
It remains to note that
$$
\Narea(S,L)\sim
c_{\mathit{area}}(\cQ)
\cdot
\cfrac{L^2}{\area(S)}=
\frac{c_{\mathit{area}}(\cQ)}{2}
\cdot
\cfrac{L^2}{\area(\Pi)}
$$
to    conclude    that   the   constant   in   the   weak   quadratic
asymptotic~\eqref{eq:billiard:carea}                               in
Theorem~\ref{theorem:Narea:weak:asymp}    is    one   half   of   the
Siegel--Veech            constant            $c_{\mathit{area}}(\cQ)$
from~\eqref{eq:carea:surface}.
\end{proof}





\appendixmode

\section{Proof of combinatorial identity}
\label{sec:appendix:identity}

\noindent In this appendix, we prove Theorem~\ref{theorem:carea:identity}.
Our   proof   follows   the   following   scheme.  In
section~\ref{ss:General:identity:to:prove} we rewrite the conjectural
identity~\eqref{eq:carea:carea}      in      a      more     detailed
form~\eqref{eq:end2011}   and   then  applying  elementary  algebraic
manipulations  we rewrite it again in the form~\eqref{eq:apr2012}. In
section~\ref{sec:subsec:multinomial:coefficients}  we  rearrange  the
summation  in~\eqref{eq:apr2012}  and in section~\ref{ss:Notation} we
introduce  multiindex notation. Combining this rearrangement with new
notation    we    rewrite    the    conjectural   identity   in   the
form~\eqref{eq:identity:Dec:2014}.

In section~\ref{ss:Generating:Functions} we introduce
generating  functions  $F(\sss)$  and  $G(\sss)$  as  power series in
(multi)variable  $\sss$ with coefficients involved in the conjectural
combinatorial     identity~\eqref{eq:identity:Dec:2014}.     The    desired
combinatorial  identity~\eqref{eq:identity:Dec:2014} now wraps to the
identity  $F^2(\sss)  \stackrel{?}{=} G(\sss)$. At this stage we have
just  gained  a  more  concise  and  convenient  form  for  the
conjectural identity, nothing serious has happened.

In  section~\ref{ss:Mohanty:Formula}  we introduce an
entire  collection  of  auxiliary  generating  functions  $M_a(\sss)$
indexed  by  a  positive  integer  $a$  (and  depending on an integer
multiindex parameter $\bbb$):
$$
M_a(\sss):=\sum_{\kk   \in (\Z^{\geq 0})^m} A(a; \bbb; \kk) \sss^{\kk}\,,
$$
where  the  \textit{Mohanty coefficient} $A(a; \bbb; \kk)$ is defined
in     section~\ref{ss:Mohanty:Formula}.     By    a   theorem    of
\mbox{Mohanty}~\cite{Mohanty},    \textit{all}    these    generation
functions  $M_a(\sss)$  are expressed in terms of $M_1(\sss)$ denoted
by  $z(\sss):=M_1(\sss)$.  Moreover,  \textit{all}  these  generation
functions  are  expressed in terms of $z(\sss)$ in a very simple way,
namely,
$$
M_a(\sss) = z^a(\sss)\,.
$$
By the same theorem of Mohanty, the basic generating function $z(\sss)$
satisfies the functional relation
\begin{equation}
\tag{$\ast$}
1- z + \sum_{i=1}^{m} s_i z^{b_i} = 0\,.
\end{equation}
Note that this relation is \textit{polynomial} in the basic generating
function $z$ and in formal variables $\sss=(s_1,\dots,s_m)$.

The   strategy   of  the  proof  is  to  express  our
generating  functions  $F(\sss)$  and  $G(\sss)$  as  polynomials  in
Mohanty  functions  $M_a(\sss)  =  z^a(\sss)$  and  formal  variables
$\sss$.  As  soon  as  we  get  the corresponding expressions for $F$
(Lemma~\ref{lemma:F}      in     section~\ref{sec:F})     and     $G$
(Lemma~\ref{lemma:G}   in   section~\ref{sec:G})   we   express   the
difference $G-F^2$ as a polynomial in $z$ and formal variables $\sss$
and  show (Theorem~\ref{theorem:id}) that in the resulting polynomial
one  can  factor  out  the  square  of  expression~$(\ast)$. Since by
Mohanty's  Theorem  this  expression is identically zero, this proves
that  $G-F^2$  is  identically  zero,  which  completes  the proof of
Theorem~\ref{theorem:carea:identity}.

\subsection{General identity to prove}
\label{ss:General:identity:to:prove}

Let $d_1,\dots,d_m$ be the degrees of zeroes only. Let the number $n$
of  simple  poles  is  expressed as $n=4+\sum_{i=1}^m d_i$. The total
number   $k=m+n$   of   all  singularities  is,  thus,  expressed  as
$k=4+\sum_{i=1}^m (d_i+1)$.

Recall  that all zeroes and poles are \textit{named}.
A  ``pocket''  configuration  is  uniquely  defined  by a choice of a
distinguished  zero  (at the base of the cylinder) and by a choice of
an  unordered  pair  of simple poles (corners of the ``pocket''); all
choices  of  a  zero  and of a pair of poles are admissible. When the
distinguished  zero  at  the  base  of the cylinder has degree $d_i$,
formula~\eqref{eq:Siegel:Veech:constant:pocket}  gives  the following
value  for  the  Siegel--Veech  constant  $c_\cC$  for  an individual
``pocket''  configuration  (with distinguished zero and distinguished
pair of fixed poles):
$$
c_\cC =\cfrac{d_i+1}{2(\sum_{i=1}^m (d_i+1))}\cdot\cfrac{1}{\pi^2}\,,
$$
where    we    have    replaced   $k-4$   in   the   denominator   of
formula~\eqref{eq:Siegel:Veech:constant:pocket}  by $k-4=\sum_{i=1}^m
(d_i+1)$.   For  each  choice  of  the  zero  in the ``pocket''
configuration there are
$$
\begin{pmatrix}n\\2\end{pmatrix}=
\begin{pmatrix}4+\sum_{i=1}^m d_i\\2\end{pmatrix}
$$
ways  to chose a pair of distinguished poles. Hence, the total impact
of    all    ``pocket''   configurations   to   the   right-hand-side
of~\eqref{eq:carea:carea}                  (based                  on
formula~\eqref{eq:Siegel:Veech:constant:pocket}) can be written as
\begin{multline*}
\cfrac{1}{k-3}\cdot
\sum_{\substack{``pocket''\\configurations}} c_\cC
=\\=
\cfrac{1}{\left(1+\sum_{i=1}^m (d_i+1)\right)}\cdot
\left(\begin{array}{c} 4+\sum_{i=1}^m d_i \\[-\halfbls] \\ 2 \\ \end{array}\right)
\sum_{i=1}^m
\cfrac{d_i+1}{2(\sum_{i=1}^m (d_i+1))}\cdot\cfrac{1}{\pi^2}
=\\=
\cfrac{1}{2\pi^2}\cdot
\cfrac{1}{\left(1+\sum_{i=1}^m (d_i+1)\right)}\cdot
\left(\begin{array}{c} 4+\sum_{i=1}^m d_i \\[-\halfbls] \\ 2 \\ \end{array}\right)\,.
\end{multline*}

A  ``dumbbell''  configuration  $\cC$  is  uniquely
defined  by  a choice of the following data. We need to choose zeroes
go  to  one  part of the ``dumbbell''; all the remaining zeroes go to
the  complementary  part.  In  other  words,  we have to consider all
partitions of the set $\{1,\dots,m\}$ enumerating the zeroes into two
nonempty                     complementary                    subsets
$\{i_1,\dots,i_{m_1}\}\sqcup\{j_1,\dots,j_{m_2}\}$.   For  each  such
partition we have to consider all possible choices of a distinguished
zero  (at  the base of the cylinder) in each of the two groups. After
that,   we   have  to  choose  $n_1=2+\sum_{i=1}^{m_1}  d_i$  out  of
$n=4+\sum_{i=1}^m d_i$ simple poles which go to the first part of the
``dumbbell'';  the  remaining simple poles go to the other part. When
all  these  data are chosen and when the distinguished two zeros (one
in  each  of the two groups) at the base of the cylinder have degrees
$d_i,  d_j$,  formula~\eqref{eq:Siegel:Veech:constant:dumbell}  gives
the  following  value  for the Siegel--Veech constant $c_\cC$ for the
individual ``dumbbell'' configuration:
$$
c_\cC =\cfrac{(d_i+1)(d_j+1)}{2}\cdot
\cfrac{\left(-1+\sum_{i=1}^{m_1} (d_i+1)\right)!\,
\left(-1+\sum_{j=1}^{m_2} (d_j+1)\right)!}
{\left(\sum_{i=1}^m (d_i+1)\right)!}
\cdot\cfrac{1}{\pi^2}\,.
$$
Here     we     have     replaced     $k$    in    the    denominator
of~\eqref{eq:Siegel:Veech:constant:dumbell}    by   $k=4+\sum_{i=1}^m
(d_i+1)$, and have replaced $k_1$ and $k_2$ in the numerator
of~\eqref{eq:Siegel:Veech:constant:dumbell}                        by
$k_1=m_1+n_1=2+\sum_{i=1}^{m_1}         (d_i+1)$        and        by
$k_2=m_2+n_2=2+\sum_{j=1}^{m_2} (d_j+1)$ correspondingly.

For   each   partition   of   the   set
$\{1,\dots,m\}$  enumerating the  zeroes   into   two   nonempty   subsets
$\{i_1,\dots,i_{m_1}\}\sqcup\{j_1,\dots,j_{m_2}\}$
(which   makes  part  of  the  ``dumbbell''  configuration) there are
$$\cfrac{\big(4+\sum_{i=1}^m d_i\big)!}
{\big(2+\sum_{i=1}^{m_1} d_i\big)! \big(2+\sum_{j=1}^{m_2} d_j\big)!}
$$
ways  to  partition  the  simple  poles  between  two  parts  of  the
``dumbbell''. Taking into consideration this counting and plugging in
the    explicit conjectural  expressions~\eqref{eq:Siegel:Veech:constant:pocket}
and~\eqref{eq:Siegel:Veech:constant:dumbell}  for  the  Siegel--Veech
constants $c_\cC$ into~\eqref{eq:carea:carea} we observe that
the right-hand-side of~\eqref{eq:carea:carea} can be read as
\begin{multline*}
\cfrac{1}{k-3}\cdot\left(
\sum_{\substack{``pocket''\\configurations}} c_\cC +
\sum_{\substack{``dumbbell''\\configurations}} c_\cC
\right)
\ \stackrel{?}{=}\\\stackrel{?}{=}\
\cfrac{1}{2\pi^2}\cdot
\cfrac{1}{\left(1+\sum_{i=1}^m (d_i+1)\right)}\cdot
\left(
\left(\begin{array}{c} 4+\sum_{i=1}^m d_i \\[-\halfbls] \\ 2 \\ \end{array}\right)
\right.
\ +\\ \\+\
\sum_{1\le i<j \le m}(d_i+1)(d_j+1)
\sum_{\substack{
\text{partitions of }\{1,...,m\}\text{ into}\\
\{i_1,\dots,i_{m_1}\}\sqcup
\{j_1,\dots,j_{m_2}\}\\
\text{such that }i\text{ is in the first subset}\\
\text{ and } j\text{ is in the second subset}}}
\frac{\big(4+\sum_{i=1}^m d_i\big)!}
{\big(2+\sum_{i=1}^{m_1} d_i\big)! \big(2+\sum_{j=1}^{m_2} d_j\big)!}
\cdot
   \\ \\
\left.
\frac
{\big(-1+\sum_{i=1}^{m_1} (d_i+1)\big)!
\big(-1+\sum_{j=1}^{m_2}(d_j+1)\big)!}
{\big(\sum_{i=1}^m (d_i+1)\big)!}
\right)\,.
\end{multline*}

\noindent Multiplying both parts of the
conjectural identity by the common factor
$$
4\pi^2\cdot\left(
1+\sum_{i=1}^m(d_i+1)
\right)
$$
moving the binomial coefficient
$$
\left(\begin{array}{c} 4+\sum_{i=1}^m d_i \\[-\halfbls] \\ 2 \\ \end{array}\right)
$$
coming from the ``pocket'' configuration
to the left-hand-side of the identity and simplifying the resulting
expressions we
get the following conjectural identity:

\begin{multline}
\label{eq:end2011}
\left(
6+\sum_{i=1}^m\frac{d_i(d_i+1)}{d_i+2}
\right)\cdot
\left(
1+\sum_{i=1}^m(d_i+1)
\right)
-
\left(
4+\sum_{i=1}^m d_i
\right)
\left(
3+\sum_{i=1}^m d_i
\right)
\stackrel{?}{=}\\ \\ \stackrel{?}{=}
2\cdot\frac{\big(4+\sum_{i=1}^m d_i\big)!}
{\big(\sum_{i=1}^m (d_i+1)\big)!}\ \cdot
\sum_{1\le i<j \le m}(d_i+1)(d_j+1)\ \cdot
\\ \\
\cdot \sum_{\substack{
\text{partitions of }\{1,...,m\}\text{ into}\\
\{r_1,\dots,r_{m_1}\}\sqcup
\{s_1,\dots,s_{m_2}\}\\
\text{such that }i\text{ is in the first subset}\\
\text{ and } j\text{ is in the second subset}}}
\frac
{\big(-1+\sum_{i=1}^{m_1} (d_{r_i}+1)\big)!\cdot
\big(-1+\sum_{j=1}^{m_2}(d_{s_j}+1)\big)!}
{\big(2+\sum_{i=1}^{m_1} d_{r_i}\big)!\cdot
\big(2+\sum_{j=1}^{m_2} d_{s_j}\big)!}
\,.
\end{multline}
This is the identity which we need to prove.

Changing the order of the summation we can first sum over
all possible partitions of the set of indices $\{1,\dots,m\}$
and having chosen the partition we consider all possible ways
to select a distinguished element $i$ in the first subset and
a distinguished element $j$ in the second subset.
Note, however, that we will see each of the elements
of the above sum twice. Thus,
collecting the
resulting sums we can rewrite the sum in the
right-hand-side of the above expression as follows:
\begin{multline*}
\sum_{\substack{
\text{partitions of }\{1,...,m\}\text{ into}\\
\text{two nonempty sets}\\
\{r_1,\dots,r_{m_1}\}\sqcup
\{s_1,\dots,s_{m_2}\}
}}
\frac
{\big(-1+\sum_{i=1}^{m_1} (d_{r_i}+1)\big)!\cdot
\big(-1+\sum_{j=1}^{m_2}(d_{s_j}+1)\big)!}
{\big(2+\sum_{i=1}^{m_1} d_{r_i}\big)!\cdot
\big(2+\sum_{j=1}^{m_2} d_{s_j}\big)!}
\,\cdot
\\
\cdot\left(
\sum_{1\le i\le m_1}(d_{r_i}+1)
\right)\left(
\sum_{1\le j\le m_2}(d_{s_j}+1)
\right)\,\stackrel{}{=}
\\ \\ \stackrel{}{=}
\sum_{\substack{
\text{partitions of }\{1,...,m\}\text{ into}\\
\text{two nonempty sets}\\
\{r_1,\dots,r_{m_1}\}\sqcup
\{s_1,\dots,s_{m_2}\}
}}
\frac
{\big(\sum_{i=1}^{m_1} (d_{r_i}+1)\big)!\cdot
\big(\sum_{j=1}^{m_2}(d_{s_j}+1)\big)!}
{\big(2+\sum_{i=1}^{m_1} d_{r_i}\big)!\cdot
\big(2+\sum_{j=1}^{m_2} d_{s_j}\big)!}
\,.
\end{multline*}
Hence, we can rewrite the right-hand-side in~\eqref{eq:end2011} as
a sum over the ratios of binomial coefficients:
\begin{multline*}
\frac{\big(4+\sum_{i=1}^m d_i\big)!}
{\big(\sum_{i=1}^m (d_i+1)\big)!}\ \cdot
\sum_{\substack{
\text{partitions of }\{1,...,m\}\text{ into}\\
\text{two nonempty sets}\\
\{r_1,\dots,r_{m_1}\}\sqcup
\{s_1,\dots,s_{m_2}\}
}}
\frac
{\big(\sum_{i=1}^{m_1} (d_{r_i}+1)\big)!\cdot
\big(\sum_{j=1}^{m_2}(d_{s_j}+1)\big)!}
{\big(2+\sum_{i=1}^{m_1} d_{r_i}\big)!\cdot
\big(2+\sum_{j=1}^{m_2} d_{s_j}\big)!}
\,\stackrel{}{=}
      \\ \stackrel{}{=}
\sum_{\substack{
\text{partitions of }\{1,...,m\}\text{ into}\\
\text{two nonempty sets}\\
\{r_1,\dots,r_{m_1}\}\sqcup
\{s_1,\dots,s_{m_2}\}
}}
\frac{
\left(\begin{array}{c}
4+\sum_{i=1}^m d_i \\
[-\halfbls]\\
2+\sum_{l=1}^{m_1} d_{r_l} \\ \end{array}\right)
}{
\left(\begin{array}{c}
\sum_{i=1}^m (d_i+1) \\
[-\halfbls]\\
\sum_{l=1}^{m_1} (d_{r_l}+1) \\ \end{array}\right)
}\,.
\end{multline*}

Finally, omitting the conditions that the subsets of the partition
are nonempty, we get two extra terms. It is immediate to verify
that their sum is equal to
$$
\left(
4+\sum_{i=1}^m d_i
\right)
\left(
3+\sum_{i=1}^m d_i
\right)
$$
and, thus, we can rewrite the needed conjectural
identity~\eqref{eq:end2011}
as follows:
\begin{multline}
\label{eq:apr2012}
\left(
6+\sum_{i=1}^m\frac{d_i(d_i+1)}{d_i+2}
\right)\cdot
\left(
1+\sum_{i=1}^m(d_i+1)
\right)\ \stackrel{?}{=}
\\ \stackrel{?}{=}\
\sum_{\substack{
\text{partitions of }\{1,...,m\}\text{ into}\\
\text{two complementary sets}\\
\{r_1,\dots,r_{m_1}\}\sqcup
\{s_1,\dots,s_{m_2}\}
}}
\frac{
\left(\begin{array}{c}
4+\sum_{i=1}^m d_i \\
[-\halfbls]\\
2+\sum_{l=1}^{m_1} d_{r_l} \\ \end{array}\right)
}{
\left(\begin{array}{c}
\sum_{i=1}^m (d_i+1) \\
[-\halfbls]\\
\sum_{l=1}^{m_1} (d_{r_l}+1) \\ \end{array}\right)
}\,.
\end{multline}

We will show that (\ref{eq:apr2012})  is valid for any nonempty
collection of nonnegative integers $\{d_1,\dots,d_m\}$.

\subsection{Identity in terms of multinomial coefficients.}
\label{sec:subsec:multinomial:coefficients}

Let $n_d$ be the total number of entries $d$ in the set
(with multiplicities) $\{d_1,\dots,d_m\}$. The left-hand-side of
conjectural identity~\eqref{eq:apr2012} can be expressed as
$$
\left(
6+\sum_{d}\frac{d(d+1)}{d+2}\,n_d
\right)\cdot
\left(
1+\sum_{d}(d+1)n_d
\right)\,.
$$

The right-hand-side can be represented in terms of $n_d$ as
\begin{multline*}
\cfrac
{\left(4+\sum_{d}d\cdot n_d\right)!}
{\left(\sum_{d}(d+1)n_d\right)!}
\cdot
\sum_{k_1=0}^{n_1}\sum_{k_2=0}^{n_2}\cdots
\left(\begin{array}{c}n_1\\k_1\end{array}\right)
\left(\begin{array}{c}n_2\\k_2\end{array}\right)
\cdots
\\
\cfrac
{\left(\sum_{d}(d+1)k_d\right)!\cdot
\left(\sum_{d}(d+1)(n_d-k_d)\right)!}
{\left(2+\sum_{d}d\cdot k_d\right)!\cdot
\left(2+\sum_{d}d\cdot (n_d-k_d)\right)!}\ =
\\ \\
=
\cfrac
{n_1!\,n_2!\,\dots\left(4+\sum_{d}d\cdot n_d\right)!}
{\left(\sum_{d}(d+1)n_d\right)!}\cdot
\sum_{k_1=0}^{n_1}\sum_{k_2=0}^{n_2}\cdots
\cdots
\\
\cfrac
{\left(\sum_{d}(d+1)k_d\right)!}
{k_1!\,k_2!\cdot\dots\cdot\left(2+\sum_{d}d\cdot k_d\right)!}
\cdot
\cfrac
{\left(\sum_{d}(d+1)(n_d-k_d)\right)!}
{(n_1-k_1)!\,(n_2-k_2)!\cdot\dots\cdot
\left(2+(\sum_{d}d\cdot (n_d-k_d)\right)!}\,.
\end{multline*}

Note now that the common factor is (up to four missing factors) is a
multinomial coefficient and that the bottom line is a product of two
``complementary'' multinomial coefficients (with two missing factors
each).

\subsection{Notation}
\label{ss:Notation}
To simplify the otherwise complicated
factorials and terms, we introduce some notation: $\kk = (k_1, \ldots,
k_m), \sss = (s_1, \ldots, s_m)$, and $\dd = (d_1, \ldots, d_m)$ are
all $m$-tuples. We will think of $\kk, \dd \in (\Z^{\geq 0})^m$, and
$\sss$ as variables. We write $$\one = (1, 1, \ldots, 1) :=
\sum_{i=1}^m \ee_i,$$ where $\ee_i$ are the standard basis
vectors. Let $n$ denote an integer.

\begin{description}

\item[Inner Product] $$\kk \cdot \dd := \sum_{i=1}^m k_i d_i$$ is the standard inner product.
\medskip
\item[Factorials] $$\kk! := \Pi_{i=1}^m k_i !$$
\medskip
\item[Multinomial Coefficients] $$\binom{n}{\kk} : = \binom{n}{k_1, \ldots, k_m, n- \kk\cdot\one}$$
\medskip
\item[Deletion of variables] Here, we can have $i=j$: $$\kk^{i} = \kk - \ee_i, \kk^{i,j}= \kk - \ee_i - \ee_j$$
\medskip
\item[Powers] $$\sss^{\kk} = \Pi_{i=1}^m s_i ^{k_i}$$
\end{description}
\medskip

We  redefine  notations $\mathbf{d}$  and $m$ denoting from now on by
$\mathbf{d}$  the  original  set  $\{d_1,\dots,d_m\}$ with suppressed
multiplicities. In other words, we define the new
$\mathbf{d}$ as the set of distinct entries of the original set $\{d_1,\dots,d_m\}$.
We  also  redefine $m$ denoting by $m$ the cardinality of the new
set $\mathbf{d}$. Applying manipulations
performed in ~\S\ref{sec:subsec:multinomial:coefficients}
we can rewrite the identity we need to prove in the following way:
\begin{multline}
\label{eq:identity:Dec:2014}
\cfrac{
6+\sum_{i=1}^m\cfrac{d_i(d_i+1)}{d_i+2}\,n_i
}{\Big(2+(\dd+\one)\cdot \nn\Big)\cdot
\Big(3+(\dd+\one) \cdot \nn \Big)\cdot
\Big(4+(\dd+\one) \cdot \nn \Big)}\,\cdot
\left(\begin{array}{c}
4+(\dd+\one)\cdot \nn\\
[-\halfbls]\\
\nn
\end{array}\right) \stackrel{?}{=} \\
\stackrel{?}{=}
\sum_{\kk=0}^{\nn}
\cfrac{1}{\Big(1+(\dd+\one)\cdot \kk\Big)\Big(2+(\dd+\one)\cdot \kk \Big)}
\,\cdot
\left(\begin{array}{c}
2+(\dd+\one)\cdot \kk\\
[-\halfbls]\\
\kk
\end{array}\right)\cdot
\\ \cdot\,
\cfrac{1}{\Big(1+(\dd+\one)\cdot (\nn-\kk)\Big)\Big(2+(\dd+\one)(\nn-\kk)\Big)}
\,\cdot
\left(\begin{array}{c}
2+(\dd+\one)\cdot (\nn-\kk)\\
[-\halfbls]\\
\nn-\kk
\end{array}\right)\,.
\end{multline}

\subsection{Generating Functions}
\label{ss:Generating:Functions}

We define
$$
F(\sss) : = \sum_{\kk \in (\Z^{\geq 0})^m} \frac{ \binom{2
    + (\dd+\one)\cdot \kk}{\kk} }{(1 + (\dd+\one)\cdot\kk)(2 +
  (\dd+\one)\cdot\kk)} \sss^{\kk}\,,
$$
and
\begin{multline*}
G(\sss) := \sum_{\kk \in
  (\Z^{\geq 0})^m} \frac{ 6 + \sum_{i=1}^m \frac{d_i (d_i +1)}{d_i+2}
  k_i}{(2 + (\dd+\one)\cdot\kk)(3 + (\dd+\one)\cdot\kk)(4 +
  (\dd+\one)\cdot\kk)}\,\cdot
\\
\cdot\binom{4 + (\dd+\one)\cdot \kk}{\kk}
\sss^{\kk}\,.
\end{multline*}

In terms of these generating functions, the conjectural
identity to be proved becomes
$$F^2 \stackrel{?}{=} G\,.$$

\subsection{Mohanty's Formula}
\label{ss:Mohanty:Formula}
Our main tools are the combinatorial identities developed by \mbox{Mohanty}~\cite{Mohanty}.
We recall formulas (31) and (32) of~\cite{Mohanty}, in our own
notation. Given $a \in \N$, $\bbb, \kk \in (\Z^{\geq 0})^m$, define
the \textit{Mohanty coefficient} $$A(a; \bbb; \kk) := \frac{a}{a+
  \bbb\cdot\kk} \binom{a+\bbb\cdot\kk}{\kk}.$$ We have

\begin{theorem}\label{Mh}[Mohanty~\cite{Mohanty}, (31) and (32)]
With notation as above, we have $$\sum_{\kk \in (\Z^{\geq 0})^m} A(a; \bbb; \kk) \sss^{\kk} = z^a,$$ where $$1- z + \sum_{i=1}^{m} s_i z^{b_i} = 0.$$
\end{theorem}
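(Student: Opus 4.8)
The plan is to derive Theorem~\ref{Mh} from the ordinary one–variable Lagrange--B\"urmann inversion formula by a homogenization trick, rather than to invoke a multivariate Lagrange--Good formula (which would also work, taking $w_i = s_i z^{b_i}$, $\phi_i(\mathbf w) = (1+\sum_j w_j)^{b_i}$, $H(\mathbf w) = (1+\sum_j w_j)^a$, but requires the Jacobian version of the formula). First I would record that, since $\partial_z\bigl(1-z+\sum_i s_iz^{b_i}\bigr)\big|_{z=1,\,\sss=\mathbf 0}=-1\neq 0$, the equation $z=1+\sum_{i=1}^m s_iz^{b_i}$ has a unique solution $z=z(\sss)\in\Q[[s_1,\dots,s_m]]$ with $z(\mathbf 0)=1$; put $u:=z-1$, so $u=\sum_i s_i(1+u)^{b_i}$ with $u(\mathbf 0)=0$.

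\textbf{Homogenization.} Next I would introduce an auxiliary variable $t$ and substitute $s_i=t\sigma_i$, treating $\sigma_1,\dots,\sigma_m$ as indeterminates. Over the field $\Q(\sigma_1,\dots,\sigma_m)$ the series $u=u(t)$ is the unique element of $t\,\Q(\sigma)[[t]]$ solving $u=t\,\Psi(u)$ with $\Psi(u):=\sum_{i=1}^m\sigma_i(1+u)^{b_i}$, whose constant term $\Psi(0)=\sigma_1+\dots+\sigma_m$ is a nonzero element of $\Q(\sigma)$, so the Lagrange hypothesis is met. Coefficients of the original series are recovered via $[\sss^{\kk}]\,z^a=[\,t^{\kk\cdot\one}\,\sss^{\kk}/t^{\kk\cdot\one}\,]\,z^a$, i.e.\ $[\sss^{\kk}]z^a$ equals the coefficient of $t^{\kk\cdot\one}\prod_i\sigma_i^{k_i}$ in $z^a(t\sigma)$.

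\textbf{Lagrange inversion and bookkeeping.} Then I would apply the Lagrange--B\"urmann formula with $w=u$, $\phi=\Psi$, $H(u)=(1+u)^a=z^a$: for $n\ge1$, $[t^n]z^a=\tfrac1n[u^{n-1}]\bigl(a(1+u)^{a-1}\Psi(u)^n\bigr)$, while $[t^0]z^a=1$. Expanding $\Psi(u)^n=\sum_{k_1+\dots+k_m=n}\binom{n}{k_1,\dots,k_m}\bigl(\prod_i\sigma_i^{k_i}\bigr)(1+u)^{\bbb\cdot\kk}$ by the multinomial theorem and using $[u^{n-1}](1+u)^{a-1+\bbb\cdot\kk}=\binom{a-1+\bbb\cdot\kk}{n-1}$, comparison of the coefficient of $t^{n}\prod_i\sigma_i^{k_i}$ (with $n=\kk\cdot\one$) gives
\[
[\sss^{\kk}]\,z^a=\frac{a}{\kk\cdot\one}\binom{\kk\cdot\one}{k_1,\dots,k_m}\binom{a-1+\bbb\cdot\kk}{\kk\cdot\one-1}\qquad(\kk\neq\mathbf 0),
\]
and $[\sss^{\mathbf 0}]z^a=1$. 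A routine factorial manipulation, using $\binom{\kk\cdot\one}{k_1,\dots,k_m}=(\kk\cdot\one)!/\kk!$ and the multinomial convention $\binom{N}{\kk}=N!/\bigl(\kk!\,(N-\kk\cdot\one)!\bigr)$, rewrites the right side as $\frac{a}{a+\bbb\cdot\kk}\binom{a+\bbb\cdot\kk}{\kk}=A(a;\bbb;\kk)$ (and this formula also reproduces $A(a;\bbb;\mathbf 0)=1$). Hence $z^a=\sum_{\kk}A(a;\bbb;\kk)\sss^{\kk}$; specializing $\sigma_i\mapsto s_i$, $t\mapsto 1$ yields the stated identity.

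\textbf{Where the difficulty lies.} There is no serious obstacle here: the content is organizational. The two points that genuinely need care are (i) justifying the homogenization so that the Lagrange hypothesis $\Psi(0)\neq0$ holds — which is why one works over $\Q(\sigma_1,\dots,\sigma_m)[[t]]$ — and (ii) keeping straight that the variable playing the role of ``$t$'' in Lagrange inversion is the total-degree grading variable $t$, not any individual $s_i$. The final binomial simplification is mechanical; one should only note that it remains valid (both sides vanishing) when $\kk\cdot\one-1>a-1+\bbb\cdot\kk$, which is automatic from the multinomial-coefficient convention. For the application in this appendix one then specializes $a,\bbb$ and the sum over $\kk$ exactly as in~\S\ref{sec:subsec:multinomial:coefficients}.
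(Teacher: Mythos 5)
Your proof is correct, but note that the paper does not actually prove this statement: it is quoted verbatim (in the authors' notation) from Mohanty's paper, formulas (31) and (32), and used as a black box in the rest of Appendix~A. So there is nothing in the text to compare against; what you have supplied is a self-contained derivation where the paper relies on a citation. Your route — substituting $z=1+u$, homogenizing $s_i=t\sigma_i$ over $\Q(\sigma_1,\dots,\sigma_m)$ so that $\Psi(0)=\sigma_1+\dots+\sigma_m$ is invertible and one-variable Lagrange--B\"urmann applies to $u=t\Psi(u)$, and then matching coefficients — checks out: with $n=\kk\cdot\one\ge 1$ the inversion formula gives $[t^n\sigma^{\kk}]z^a=\frac{a}{n}\frac{n!}{\kk!}\binom{a-1+\bbb\cdot\kk}{n-1}=\frac{a\,(a+\bbb\cdot\kk-1)!}{\kk!\,(a+\bbb\cdot\kk-n)!}=\frac{a}{a+\bbb\cdot\kk}\binom{a+\bbb\cdot\kk}{\kk}=A(a;\bbb;\kk)$, and the $\kk=\mathbf 0$ term is $1$ on both sides. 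Your two points of care are the right ones (the homogenization to make the Lagrange hypothesis hold, and the degenerate case $\kk\cdot\one>a+\bbb\cdot\kk$, where both sides vanish by the multinomial convention; in the paper's application $\bbb=\dd+\one$ has all entries $\ge 1$, so that case never occurs). The only thing your argument buys beyond the paper is independence from the reference; conversely, one should make sure the coefficient identity being proved really is Mohanty's (31)--(32) in the normalization the paper uses, which your computation confirms.
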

\medskip
\noindent Since we will use only one $\bbb$, namely $\bbb = \dd + \one$, we will abbreviate the Mohanty coefficient  by defining $A(a;\kk) = A(a; \dd+\one; \kk)$.

In the rest of the appendix we prove:

\begin{theorem}
\label{theorem:id}
$$F^2 = G.$$ More precisely, \begin{equation}\label{eq:id}G(\sss) = F^2(\sss) - \frac{1}{4}z^2 \left(1-z + \sum_{i=1}^m s_i z^{d_i +1}\right)^2\,,\end{equation} where $z$ is as in Mohanty's formula Theorem~\ref{Mh} for $A(a;\kk) = A(a; \dd+\one; \kk)$, so $$1-z + \sum_{i=1}^m s_i z^{d_i +1} = 0.$$
\end{theorem}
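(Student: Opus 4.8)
By the reductions of \S\ref{sec:subsec:multinomial:coefficients} and the definitions of $F$ and $G$ that follow, the assertion $F^2=G$ is equivalent to the combinatorial identity \eqref{eq:apr2012}, and the refined form \eqref{eq:id} differs from it only by a term proportional to $z^2\bigl(1-z+\sum_i s_iz^{d_i+1}\bigr)^2$, which is identically zero since $z$ is the Mohanty root. The plan is to write $F$ and $G$ as integrals of explicit polynomials, by combining Mohanty's formula (Theorem~\ref{Mh}) with Euler's Beta integral and a single change of variable, and then to obtain $F^2=G$ by integration by parts.

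First I would record, alongside Theorem~\ref{Mh} (which reads $\sum_{\kk}A(a;\kk)\sss^{\kk}=z^a$ for $\bbb=\dd+\one$), its companion $\sum_{\kk}\binom{a+(\dd+\one)\cdot\kk}{\kk}\sss^{\kk}=z^a/D$ with $D:=1-\sum_i(d_i+1)s_iz^{d_i}$, which follows from the same Lagrange inversion without the Mohanty prefactor. Writing $N:=(\dd+\one)\cdot\kk$, the Beta identities $\tfrac1{(N+1)(N+2)}=\int_0^1 t^N(1-t)\,dt$ and $\tfrac1{(N+2)(N+3)(N+4)}=\tfrac12\int_0^1 t^{N+1}(1-t)^2\,dt$ clear the denominators of $F$ and $G$; after the substitution $s_i\mapsto s_it^{d_i+1}$ inside the sums, the two displayed formulas give $F=\int_0^1(1-t)\,Z(t)^2/D(t)\,dt$ and, for $G$, the integral of $\tfrac12 t(1-t)^2$ against $6\,Z(t)^4/D(t)+\sum_i\tfrac{d_i(d_i+1)}{d_i+2}\,s_i\partial_{s_i}\!\bigl(Z(t)^4/D(t)\bigr)$, where $Z(t)$ is the rescaled Mohanty root, $Z(t)=1+\sum_i s_it^{d_i+1}Z(t)^{d_i+1}$ (so $Z(0)=1$, $Z(1)=z$), and $D(t)$ its discriminant.

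The crucial step is the change of variable $\xi:=t\,Z(t)$: then $Z(t)$ becomes the explicit polynomial $Z(\xi)=1+\sum_i s_i\xi^{d_i+1}$, the quantities $dt$, $1-t$, $D(t)$ all become rational in $\xi$, and the powers of $Z$ cancel, giving $(1-t)\tfrac{Z^2}{D}\,dt=P(\xi)\,d\xi$ and $t(1-t)^2\tfrac{Z^4}{D}\,dt=\xi\,P(\xi)^2\,d\xi$, where $P(\xi):=1-\xi+\sum_i s_i\xi^{d_i+1}$ and $z$ is the relevant root, $P(z)=0$. Hence $F=\int_0^z P(\xi)\,d\xi$. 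For the derivative pieces of $G$, one pulls $s_i\partial_{s_i}$ through the $t$-integral; the boundary contribution from the upper limit $z$ is proportional to $\partial_\xi\bigl(\tfrac12\int_0^\xi\eta P(\eta)^2d\eta\bigr)\big|_{\xi=z}=\tfrac12 z P(z)^2=0$, so these pieces reduce to $\sum_i\tfrac{d_i(d_i+1)}{d_i+2}s_i\int_0^z\xi^{d_i+2}P(\xi)\,d\xi$. Introducing the potential $W(\xi):=\sum_i\tfrac{s_i\xi^{d_i+2}}{d_i+2}$ and noting that the equidimensional operator $\xi^2\partial_\xi^2-2\xi\partial_\xi+2$ multiplies $\xi^{d+2}$ by $d(d+1)$, one has $\sum_i\tfrac{d_i(d_i+1)}{d_i+2}s_i\xi^{d_i+2}=\xi^2W''-2\xi W'+2W$; using $W'=P+\xi-1$, $W''=P'+1$ and $W=\int_0^\xi P+\tfrac{\xi^2}{2}-\xi$ this collapses to $\xi^2P'-2\xi P+2\int_0^\xi P$, so that $G=3\int_0^z\xi P^2\,d\xi+\int_0^z\bigl(\xi^2P'-2\xi P+2\!\int_0^\xi\!P\bigr)P\,d\xi$.

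Finally I would integrate by parts: $\int_0^z\xi^2P'P\,d\xi=\tfrac12 z^2P(z)^2-\int_0^z\xi P^2\,d\xi$ and $\int_0^z\bigl(\int_0^\xi P\bigr)P\,d\xi=\tfrac12\bigl(\int_0^z P\bigr)^2$; the copies of $\int_0^z\xi P^2\,d\xi$ cancel, leaving $G=\bigl(\int_0^z P\bigr)^2+(\text{term}\propto z^2P(z)^2)=F^2$, since $P(z)=0$; keeping $z$ formally independent yields the precise identity \eqref{eq:id}. The step I expect to be the main obstacle is the treatment of the numerator of $G$: cleanly extracting the $s_i\partial_{s_i}$ contribution, checking that the boundary term at $\xi=z$ drops out by $P(z)=0$, and matching the $\tfrac1{d_i+2}$ factors with the potential $W$ and its antiderivative. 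Once these closed forms are correctly in hand, the concluding integration by parts is routine.
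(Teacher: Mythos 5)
Your plan is correct, and it takes a genuinely different route from the paper. The paper proves the theorem in three steps: it derives the closed form $F=\sum_i\tfrac{s_i}{d_i+2}z^{d_i+2}-\tfrac12 z^2+z$ (Lemma~\ref{lemma:F}) by expanding in Mohanty coefficients and reducing to the Pascal identity for multinomials; it derives an explicit closed form for $G$ (Lemma~\ref{lemma:G}) by a rather intricate decomposition of the numerator and partial-fraction splitting of the denominator, producing a sum of shifted Mohanty coefficients $A(a;\kk^i)$, $A(a;\kk^{i,j})$; and it then verifies $G=F^2-\tfrac14z^2P(z)^2$ by brute-force matching of simple powers, single sums and double sums. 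Your route replaces all of this with Beta-integral representations of the denominators, the companion Lagrange formula $\sum_{\kk}\binom{a+(\dd+\one)\cdot\kk}{\kk}\sss^{\kk}=z^a/D$, and the substitution $\xi=tZ(t)$. I have checked the key identities: $(1-t)\tfrac{Z^2}{D}\,dt=P(\xi)\,d\xi$ and $t(1-t)^2\tfrac{Z^4}{D}\,dt=\xi P(\xi)^2\,d\xi$ do hold (since $D=(Z-\xi Z')/Z$ and $dt=(Z-\xi Z')Z^{-2}d\xi$), the endpoint $t=1$ does map to $\xi=z$, your formula $F=\int_0^zP$ reproduces Lemma~\ref{lemma:F} exactly, the operator identity $\xi^2W''-2\xi W'+2W=\xi^2P'-2\xi P+2\int_0^\xi P$ is correct, and the final integrations by parts close up. The interchange of $\sum_{\kk}$ with $\int_0^1$ is harmless since it is done coefficient-by-coefficient, and the resummations converge for small $|s_i|$, which suffices for an identity of formal power series. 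What your approach buys is brevity and transparency --- $G$ is never computed in closed form, only shown to equal $F^2$ modulo multiples of $P(z)^2$; what the paper's approach buys is the explicit expression of Lemma~\ref{lemma:G}, which your argument does not produce.

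One small correction: your final sentence, claiming that ``keeping $z$ formally independent yields the precise identity \eqref{eq:id},'' is not justified by your bookkeeping. You discard several terms proportional to $P(z)^2$ along the way (the boundary term when pulling $s_i\partial_{s_i}$ through the integral, and the term $\tfrac12 z^2P(z)^2$ from $\int_0^z\xi^2P'P\,d\xi$), and the one you do exhibit carries coefficient $+\tfrac12$, not $-\tfrac14$. Since the correction term in \eqref{eq:id} vanishes identically once $P(z)=0$, this does not affect the theorem --- its content is exactly $F^2=G$ --- but you should either drop the claim about recovering the coefficient $-\tfrac14$ or track all multiples of $P(z)^2$ explicitly.
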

\medskip
\noindent To prove this formula, we will derive formulas for $F$ (\S\ref{sec:F}) and $G$ (\S\ref{sec:G}), and show (\ref{eq:id}).

\subsection{Formula for $F$}
\label{sec:F} \noindent Our first lemma is the formula for $F$:

\begin{lemma}\label{lemma:F} $$F(\sss) = \sum_{i=1}^{m} \frac{s_i}{d_i+2} z^{d_i+2} - \frac{1}{2}z^2 + z, $$ where $$1-z + \sum_{i=1}^m s_i z^{d_i +1} = 0\,.$$

 \end{lemma}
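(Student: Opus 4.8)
The plan is to establish Lemma~\ref{lemma:F} by recognizing the coefficients in $F$ as built out of Mohanty coefficients $A(a;\kk)$, and then integrating twice with respect to an auxiliary scaling variable. First I would observe that for $\bbb = \dd+\one$ the Mohanty coefficient is
$$
A(a;\kk)=\frac{a}{a+(\dd+\one)\cdot\kk}\binom{a+(\dd+\one)\cdot\kk}{\kk},
$$
so that
$$
\frac{\binom{2+(\dd+\one)\cdot\kk}{\kk}}{(1+(\dd+\one)\cdot\kk)(2+(\dd+\one)\cdot\kk)}
=\frac{1}{2}\cdot\frac{A(2;\kk)}{1+(\dd+\one)\cdot\kk}
=\frac{1}{2}\cdot\frac{A(1;\kk)\cdot\frac{(1+(\dd+\one)\cdot\kk)\cdots}{\cdots}}{\cdots},
$$
i.e. I would relate $A(2;\kk)$ and $A(1;\kk)$ by an elementary ratio and peel off the two denominator factors. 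The cleanest device: introduce $F(\sss)=\sum_\kk c_\kk \sss^\kk$, and note that $(1+(\dd+\one)\cdot\kk)(2+(\dd+\one)\cdot\kk)c_\kk = \binom{2+(\dd+\one)\cdot\kk}{\kk}$, which is not itself a Mohanty coefficient but $\tfrac12 A(2;\kk)\cdot(1+(\dd+\one)\cdot\kk)$ is — so I would instead expand $A(2;\kk)$ directly and integrate.

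The key computational step is the following. By Theorem~\ref{Mh}, $\sum_\kk A(a;\kk)\sss^\kk = z^a$ where $z=z(\sss)$ solves $1-z+\sum_i s_i z^{d_i+1}=0$. The trick for handling the denominator factors $1+(\dd+\one)\cdot\kk$ and $2+(\dd+\one)\cdot\kk$ is to use a homogeneity/scaling parameter: replace $s_i$ by $t^{d_i+1}s_i$ (or equivalently track the grading by $(\dd+\one)\cdot\kk$). Under $s_i\mapsto t^{d_i+1}s_i$, the term $A(a;\kk)\sss^\kk$ is multiplied by $t^{(\dd+\one)\cdot\kk}$, while $z$ becomes the solution $z(t)$ of $1-z+t\sum_i s_i z^{d_i+1}\cdot t^{\dd\cdot(\cdots)}$ — more precisely one checks $z(t\text{-scaled}) = $ the series whose $t$-degree on each monomial is $(\dd+\one)\cdot\kk$. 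Then $\int_0^1 z^a(t)\,\frac{dt}{t}$-type integrals produce exactly factors $\frac1{a+(\dd+\one)\cdot\kk}$. Concretely I expect: $\sum_\kk \frac{A(2;\kk)}{2+(\dd+\one)\cdot\kk}\sss^\kk = \int_0^1 \sum_\kk A(2;\kk)\,t^{1+(\dd+\one)\cdot\kk}\,dt$ and a further integration gives the double denominator. Carrying the two integrations through, and using $\frac{dz}{ds_i}$ relations from the defining equation $1-z+\sum_i s_i z^{d_i+1}=0$ (namely differentiating implicitly to get $\partial z/\partial s_i$ in terms of $z^{d_i+1}$ and $\sum_j (d_j+1)s_j z^{d_j}$), the sum telescopes to the closed form $\sum_i \frac{s_i}{d_i+2}z^{d_i+2} - \tfrac12 z^2 + z$.

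An alternative, possibly smoother route to the same Lemma: guess the right-hand side $\Phi(\sss):=\sum_i \frac{s_i}{d_i+2}z^{d_i+2}-\tfrac12 z^2+z$ and verify it equals $F$ by checking that both sides satisfy the same first-order PDE system in the $s_i$ with the same initial value at $\sss=0$ (where $z=1$, $F=\tfrac12$, and $\Phi=\sum_i\frac{s_i}{d_i+2}-\tfrac12+1$ — wait, at $\sss=0$, $z=1$ so $\Phi=0+(-\tfrac12)+1=\tfrac12$, matching $c_{\mathbf 0}=\binom{2}{0}/(1\cdot2)=\tfrac12$). One computes $\partial F/\partial s_i$ by differentiating the series term-by-term; the factor $k_i$ brought down combines with the denominators to again produce a Mohanty-type series, which by Theorem~\ref{Mh} is a power of $z$; matching this against $\partial\Phi/\partial s_i$ computed via implicit differentiation of the Mohanty equation closes the argument. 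I would use whichever of these two bookkeeping schemes (iterated integration versus PDE-matching) produces fewer index gymnastics.

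The main obstacle I anticipate is purely the combinatorial bookkeeping: correctly matching the two denominator factors $1+(\dd+\one)\cdot\kk$ and $2+(\dd+\one)\cdot\kk$ against the structure of $A(1;\kk)$ and $A(2;\kk)$, and keeping the shifts $\kk\mapsto\kk^i$ straight when a factor $s_i$ or $k_i$ is extracted. In particular one must be careful that $A(a;\kk)$ for $a$ shifted by the vector $\dd+\one$ reindexes correctly — the identity $\binom{a+(\dd+\one)\cdot\kk}{\kk}$ with $\kk\to\kk^i$ changes $a$ effectively by $d_i+1$, and it is exactly these shifts $a\to a+d_i+1$ that generate the sum $\sum_i \frac{s_i}{d_i+2}z^{d_i+2}$ in the answer. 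Once Lemma~\ref{lemma:F} is in hand, squaring $F$ and comparing with the formula for $G$ (to be derived in \S\ref{sec:G}) plus the error term $-\tfrac14 z^2(1-z+\sum_i s_i z^{d_i+1})^2$ — which vanishes identically since its second factor is zero by the Mohanty equation — should be a routine, if lengthy, algebraic identity in $z$ and the $s_i$, repeatedly reducing higher powers of $z$ via $z=1+\sum_i s_i z^{d_i+1}$.
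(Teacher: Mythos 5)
Your second bookkeeping scheme (PDE-matching) is correct and genuinely different from the paper's argument, and it does go through. The paper works in the opposite direction: it expands each term of the claimed right-hand side as a Mohanty series, reads off the coefficient of $\sss^{\kk}$ (using the reindexing $d_i+2+(\dd+\one)\cdot\kk^{i}=1+(\dd+\one)\cdot\kk$), and reduces the resulting coefficient identity to Pascal's rule $\binom{n}{\kk}=\binom{n-1}{\kk}+\sum_{i}\binom{n-1}{\kk^{i}}$. Your differentiation route instead rests on the absorption identity $k_i\binom{n}{\kk}=n\binom{n-1}{\kk^{i}}$: term-by-term differentiation gives
$\partial F/\partial s_i=\sum_{\kk}\frac{1}{d_i+2}A(d_i+2;\kk)\,\sss^{\kk}=z^{d_i+2}/(d_i+2)$,
while implicit differentiation gives
$\partial\Phi/\partial s_i=z^{d_i+2}/(d_i+2)+\frac{\partial z}{\partial s_i}\bigl(1-z+\sum_{j}s_jz^{d_j+1}\bigr)$,
and the bracket vanishes by the defining equation of Theorem~\ref{Mh}; together with the matching constant terms $F(\mathbf{0})=\Phi(\mathbf{0})=\tfrac12$ this forces $F=\Phi$ as formal power series. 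What this buys is a conceptual explanation of the shape of the answer: the corrections $-\tfrac12z^2+z$ are exactly what is needed to cancel the $\partial z/\partial s_i$ contributions, and the whole computation is no longer than the paper's. By contrast, your first scheme (iterated integration against the scaling $s_i\mapsto t^{d_i+1}s_i$) is the weaker of the two as stated: the scaled root does not transform simply (it is not $z\mapsto tz$), so the closed-form evaluation of the resulting $t$-integrals is not immediate, and you would still need the same index shifts to finish. I would commit to the differentiation argument and discard the integration sketch.
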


 \medskip


\begin{proof} We expand the right hand side using Mohanty's formula, and equalize the $\sss^{\kk}$ terms of the right hand and left hand sides. The right hand side expands, term-by-term, as:

$$\sum_{i=1}^{m} \frac{s_i}{d_i+2} z^{d_i+2} \longmapsto \sum_{i=1}^{m} \frac{s_i}{d_i +2} \frac{d_i+2}{d_i + 2 + (\dd+\one)\cdot \kk} \binom{d_i + 2 + (\dd+\one)\cdot \kk}{\kk}$$

$$\frac{1}{2}z^2 \longmapsto \frac{1}{2} \frac{2}{2 + (\dd+\one)\cdot \kk}\binom{2+ (\dd+\one)\cdot \kk}{\kk}$$

$$z \longmapsto  \frac{1}{1 + (\dd+\one)\cdot \kk}\binom{1+ (\dd+\one)\cdot \kk}{\kk}\,.$$
The $\sss^{\kk}$ terms of each of the second and third expressions can be read off directly. For the first, we have:

$$\sum_{i=1}^{m} \frac{s_i}{d_i+2} z^{d_i+2} \longmapsto \sum_{i=1}^{m} \frac{1}{d_i+ 2 + (\dd+\one)\cdot \kk^{i}} \binom{d_i + 2 + (\dd+\one)\cdot \kk^{i}}{\kk^{i}}\,.$$
Observing that $$a + d_i + (\dd + \one) \cdot \kk^i = a-1 + (\dd+\one) \cdot \kk,$$ we can re-write this as

$$\sum_{i=1}^{m} \frac{s_i}{d_i+2} z^{d_i+2} \longmapsto \sum_{i=1}^{m} \frac{1}{1+ (\dd+\one)\cdot \kk} \binom{1+ (\dd+\one)\cdot \kk}{\kk^{i}}\,.$$
Thus, our identity reduces to showing that :

$$\frac{ \binom{2 + (\dd+\one)\cdot \kk}{\kk} }{(1 + (\dd+\one)\cdot\kk)(2 + (\dd+\one)\cdot\kk)}$$
is the sum of
$$\sum_{i=1}^{m} \frac{1}{1+ (\dd+\one)\cdot \kk} \binom{1+ (\dd+\one)\cdot \kk}{\kk^{i}},$$
and
$$\frac{1}{1 + (\dd+\one)\cdot \kk}\binom{1+ (\dd+\one)\cdot \kk}{\kk} - \frac{1}{2 + (\dd+\one)\cdot \kk}\binom{2+ (\dd+\one)\cdot \kk}{\kk}\,.$$
Multiplying through by $$(1 + (\dd+\one)\cdot\kk)(2 + (\dd+\one)\cdot\kk),$$
our identity reduces to showing that
$$\binom{2 + (\dd+\one)\cdot \kk}{\kk}$$ equals
\begin{multline*}
(2+ (\dd+\one)\cdot\kk) \left(\binom{1+ (\dd+\one)\cdot\kk}{\kk} + \sum_{i=1}^m \binom{1+ (\dd+\one)\cdot\kk}{\kk^i}\right) -\\
 (1+ (\dd+\one)\cdot\kk)\binom{2+ (\dd+\one)\cdot\kk}{\kk}\,.
\end{multline*}
Moving the last term to the left hand side, and canceling the resulting $(2+ (\dd+\one)\cdot\kk)$, our identity reduces to:
$$\binom{2+ (\dd+\one)\cdot\kk}{\kk} = \binom{1+ (\dd+\one)\cdot\kk}{\kk} + \sum_{i=1}^m \binom{1+ (\dd+\one)\cdot\kk}{\kk^i} ,$$ which is the basic identity for multinomial coefficients $$\binom{n}{\kk} = \binom{n-1}{\kk} + \sum_{i=1}^m \binom{n-1}{\kk^i},$$ with $n = 2+ (\dd+\one)\cdot\kk$.
\end{proof}

\subsection{Formula for $G$}\label{sec:G} Our second main lemma is a formula for $G$:

\begin{lemma}
\label{lemma:G}
\begin{multline*}
G(\sss) = \frac{3}{4}z^2 -\frac{1}{2} z^3 + \frac{1}{2}\left( \sum_{i=1}^m\frac{d_i s_i z^{d_i+4}}{d_i+2}   -\sum_{i=1}^m \frac{(d_i-2)s_iz^{d_i+3}}{d_i+2}\  -\right.\\
\left.\sum_{i, j=1}^{m} \frac{d_i (d_i+4)s_i s_j  z^{4+d_i+d_j}}{(d_i+2)(4+d_i+d_j)}\right)\,.
\end{multline*}
\end{lemma}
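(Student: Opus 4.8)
The plan is to follow the template of the proof of Lemma~\ref{lemma:F}: expand the proposed closed form for $G$ into a power series in $\sss$ via Mohanty's formula (Theorem~\ref{Mh}), read off the coefficient of $\sss^{\kk}$, and compare it with the defining coefficient of $G$, reducing the comparison to elementary identities for multinomial coefficients. Throughout, $z=z(\sss)$ denotes the formal power series with $z(0)=1$ solving $1-z+\sum_{i=1}^m s_iz^{d_i+1}=0$, and we abbreviate $M:=(\dd+\one)\cdot\kk$, $D:=\dd\cdot\kk$, $K:=\kk\cdot\one$ (so $M=D+K$), together with $\Sigma:=\sum_i\frac{d_i(d_i+4)}{d_i+2}k_i$, $\Sigma':=\sum_i\frac{(d_i-2)(d_i+3)}{d_i+2}k_i$, $\Sigma'':=\sum_i\frac{d_i(d_i+1)}{d_i+2}k_i$.

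First I would expand each summand of the claimed formula using $z^a=\sum_{\kk}A(a;\kk)\sss^{\kk}$, hence $s_iz^a=\sum_{\kk}A(a;\kk^i)\sss^{\kk}$ and $s_is_jz^a=\sum_{\kk}A(a;\kk^{i,j})\sss^{\kk}$, with the convention $\kk^{i,i}=\kk-2\ee_i$. The exponents in Lemma~\ref{lemma:G} have been chosen precisely so that the Mohanty denominators collapse: from $(\dd+\one)\cdot\kk^i=M-(d_i+1)$ and $(\dd+\one)\cdot\kk^{i,j}=M-(d_i+1)-(d_j+1)$ one gets $(d_i+4)+(\dd+\one)\cdot\kk^i=3+M$, $(d_i+3)+(\dd+\one)\cdot\kk^i=2+M$, and $(4+d_i+d_j)+(\dd+\one)\cdot\kk^{i,j}=2+M$. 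Collecting terms, the coefficient of $\sss^{\kk}$ on the right-hand side of Lemma~\ref{lemma:G} equals
\begin{multline*}
\frac{3}{2(2+M)}\binom{2+M}{\kk}-\frac{3}{2(3+M)}\binom{3+M}{\kk}+\frac{1}{2(3+M)}\sum_i\frac{d_i(d_i+4)}{d_i+2}\binom{3+M}{\kk^i}\\
-\frac{1}{2(2+M)}\sum_i\frac{(d_i-2)(d_i+3)}{d_i+2}\binom{2+M}{\kk^i}-\frac{1}{2(2+M)}\sum_{i,j}\frac{d_i(d_i+4)}{d_i+2}\binom{2+M}{\kk^{i,j}},
\end{multline*}
whereas the defining coefficient of $\sss^{\kk}$ in $G$ is $\frac{6+\Sigma''}{(2+M)(3+M)(4+M)}\binom{4+M}{\kk}$.

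Next I would rewrite each multinomial coefficient above as an explicit scalar multiple of $\binom{2+M}{\kk}$, using $\binom{n}{\kk}=\frac{n}{n-K}\binom{n-1}{\kk}$ and $\binom{n}{\kk^i}=\frac{k_i}{n-K+1}\binom{n}{\kk}$; note that $n-K=2+D$ for $n=2+M$, so the factors that appear are $3+D$ and $4+D$. The only step that is not bookkeeping is the collapse of the double sum: because the weight $\frac{d_i(d_i+4)}{d_i+2}$ depends on $i$ alone, the diagonal contributions ($i=j$, where $\kk^{i,i}=\kk-2\ee_i$ contributes a ratio $\frac{k_i(k_i-1)}{(3+D)(4+D)}$ to $\binom{2+M}{\kk}$) combine with the off-diagonal ones via $\sum_{j\ne i}k_j+(k_i-1)=K-1$ to give
$$\sum_{i,j}\frac{d_i(d_i+4)}{d_i+2}\binom{2+M}{\kk^{i,j}}=\frac{K-1}{(3+D)(4+D)}\,\Sigma\,\binom{2+M}{\kk}.$$
After substituting, cancelling $\binom{2+M}{\kk}$, clearing denominators (multiply through by $2(2+M)(3+D)(4+D)$), using $\Sigma(2+M)-(K-1)\Sigma=\Sigma(3+D)$ and $3(3+D)(4+D)-3(2+M)(4+D)=3(4+D)(1-K)$ (since $D-M=-K$), the identity to be proved becomes the polynomial identity
$$12+2\Sigma''=3(4+D)(1-K)+\Sigma(3+D)-\Sigma'(4+D).$$

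Finally, two elementary observations remove all the rational weights: $\Sigma-\Sigma'=\sum_i\frac{3(d_i+2)}{d_i+2}k_i=3K$, and $\Sigma'+2\Sigma''=\sum_i\frac{3(d_i^2+d_i-2)}{d_i+2}k_i=3\sum_i(d_i-1)k_i=3(D-K)$, the latter using the factorization $d^2+d-2=(d+2)(d-1)$. Substituting $\Sigma=\Sigma'+3K$, the right-hand side of the last display equals $3(4+D)(1-K)+3K(3+D)-\Sigma'=3(4+D-K)-\Sigma'=12+3(D-K)-\Sigma'$, so the identity reduces to $\Sigma'+2\Sigma''=3(D-K)$, which is exactly the second observation. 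I expect the main obstacle to be organizational rather than conceptual: handling the $i=j$ diagonal in the double sum and keeping the three shifted families $\binom{2+M}{\cdot}$, $\binom{3+M}{\cdot}$, $\binom{4+M}{\cdot}$ straight while reducing everything to $\binom{2+M}{\kk}$ --- after which only the short algebra above remains. All manipulations are identities of formal power series in $\sss$, so no convergence issue arises.
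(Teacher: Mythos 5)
Your proposal is correct; I checked the key steps and they all go through: the Mohanty denominators do collapse to $2+M$ and $3+M$ exactly as you claim, the double sum collapses via $\sum_{j\neq i}k_j+(k_i-1)=K-1$, and the two closing identities $\Sigma-\Sigma'=3K$ and $\Sigma'+2\Sigma''=3(D-K)$ follow from $d(d+4)-(d-2)(d+3)=3(d+2)$ and $(d-2)(d+3)+2d(d+1)=3(d+2)(d-1)$. The route is the reverse of the paper's: the paper \emph{derives} the closed form by taking the defining $\sss^{\kk}$-coefficient of $G$, splitting it into a numerator, a partial-fraction factor, and a multinomial coefficient, recognizing each resulting piece as a Mohanty coefficient $A(a;\kk^i)$, $A(4+2d_i;\kk^{i,i})$, $A(4+d_i+d_j;\kk^{i,j})$, and then summing over $\kk$; you instead \emph{verify} the stated closed form by expanding it with Mohanty's formula and matching coefficients, which is exactly the paper's strategy for Lemma~\ref{lemma:F} transplanted to $G$. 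The trade-off is that the paper's direction explains where the exponents $d_i+3$, $d_i+4$, $4+d_i+d_j$ come from, while yours buys a cleaner endgame: after normalizing everything to multiples of $\binom{2+M}{\kk}$ the whole lemma reduces to the single scalar identity $12+2\Sigma''=3(4+D)(1-K)+\Sigma(3+D)-\Sigma'(4+D)$, which is easy to audit. Either argument is a complete proof.
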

\medskip
\noindent Before we prove this lemma, we prove Theorem~\ref{theorem:id} assuming it.

\subsection{Proof of Theorem~\ref{theorem:id}}
\label{ss:Proof:of:Theorem:A:2}
We want to show (\ref{eq:id}) assuming Lemma~\ref{lemma:G} (and Lemma~\ref{lemma:F}), that is, we want to show
\begin{equation}
\label{eq:G:Fsquared}
G(\sss) \stackrel{?}{=} F^2(\sss) -  \frac{1}{4}z^2 \left(1-z + \sum_{i=1}^m s_i z^{d_i +1}\right)^2\,.
\end{equation}
Expanding $F^2(\sss)$ using $$F(\sss) = \sum_{i=1}^{m} \frac{s_i}{d_i+2} z^{d_i+2} - \frac{1}{2}z^2 + z,$$ we obtain $$F^2(\sss) = \left( \sum_{i=1}^{m} \frac{s_i}{d_i+2} z^{d_i+2}\right)^2 + \left(z-\frac{1}{2}z^2\right)^2 + 2  \left( \sum_{i=1}^{m} \frac{s_i}{d_i+2} z^{d_i+2}\right) \left(z-\frac{1}{2}z^2\right)\,.$$
Expanding this
expression for $F^2(\sss)$, expanding the second term
in the right-hand side of~\eqref{eq:G:Fsquared} and
simplifying,
we obtain three types of terms in the resulting expression for
the right-hand side of~\eqref{eq:G:Fsquared}:
 \medskip
 \begin{description}
 \item[Simple powers] $\frac{3}{4}z^2 - \frac{1}{2} z^3$
 \medskip
 \item[Single sums] $\sum_{i=1}^m \left( \frac{2}{d_i+2} - \frac{1}{2} \right) s_i z^{d_i+3} + \sum_{i=1}^m \left( \frac{1}{2} - \frac{1}{d_i + 2} \right) s_i z^{d_i+4}$
 \medskip
 \item[Double sum] $\sum_{i,j =1}^{m} \left(\frac{1}{(d_i+2)(d_j+2)} - \frac{1}{4}\right) s_i s_j z^{4 + d_i + d_j}$
  \end{description}
 Expanding Lemma~\ref{lemma:G} in a similar fashion, we have the corresponding terms for $G(\sss)$:
  \begin{description}
 \item[Simple powers] $\frac{3}{4}z^2 - \frac{1}{2} z^3$
 \medskip
 \item[Single sums] $\sum_{i=1}^m \left( \frac{2-d_i}{2(d_i+2)}\right) s_i z^{d_i+3} + \sum_{i=1}^m \left( \frac{d_i}{2(d_i + 2)} \right) s_i z^{d_i+4}$
 \medskip
 \item[Double sum] $-\sum_{i,j =1}^{m} \left(\frac{d_i(d_i+4)}{2(d_i+2)} \frac{1}{4+d_i+d_j}\right) s_i s_j z^{4 + d_i + d_j}$
  \end{description}
A  quick  inspection shows that the simple powers and single sums are
equal.  For  the  double sum, we need to combine the $(i,j)$ and $(j,
i)$  terms  in  both  sums  (note that the terms are identical in the
$F^2$  expansion,  but  not  in  the  $G$ expansion), and check their
equality. The $F^2$ term is thus
$$
-2 \left(\frac{1}{4} - \frac{1}{(d_i+2)(d_j+2)}\right)
$$
and the $G$ term is
$$
-\frac{1}{4+d_i+d_j} \left( \frac{d_i(d_i+4)}{2(d_i+2)} +\frac{d_j(d_j+4)}{2(d_j+2)}\right)\,.
$$
To check their equality, we reorganize and obtain:
 $$\frac{-4+(d_i+2)(d_j+2)}{2(d_i+2)(d_j+2)} \stackrel{?}{=} \frac{1}{2(4+d_i+d_j)} \left( \frac{d_i(d_i+4)(d_j+2) + d_j(d_j+4)(d_i+2)}{(d_i+2)(d_j+2)}\right)$$
Cancelling and cross-multiplying, this reduces to
 $$(4+d_i+d_j)(d_i d_j   + 2d_i + 2d_j) = (d_i^2 + 4d_i)(d_j+2) + (d_j^2 + 4d_j)(d_i+2),$$
 which is easily verified. \qed

\subsection{Proof of Lemma~\ref{lemma:G}}
Recall that the $\sss^{\kk}$ term for $G$
is
\begin{equation}
\label{eq:Gterm}
\frac{ 6 + \sum_{i=1}^m \frac{d_i
      (d_i +1)}{d_i+2} k_i}{(2 + (\dd+\one)\cdot\kk)(3 +
    (\dd+\one)\cdot\kk)(4 + (\dd+\one)\cdot\kk)} \binom{4 +
    (\dd+\one)\cdot \kk}{\kk}
\end{equation} We
observe $$\frac{d_i(d_i+1)}{(d_i+2)} = d_i - 1 + \frac{2}{d_i+2},$$
and write $$6 = \frac{3}{2} \left( \left(4 + \dd\cdot\kk\right) - \dd
  \cdot \kk \right).$$ Using these, we rewrite the term
(\ref{eq:Gterm}) as the product of three terms:
\begin{description}

\item[Numerator] $\left( \frac{3}{2}(4 + \dd \cdot \kk) - \sum_{i=1}^m \left(\frac{1}{2} d_i + 1 - \frac{2}{d_i+2} \right) k_i \right)$

\medskip

\item[Partial Fractions] $\left( \frac{1}{2+ (\dd+\one)\cdot \kk} - \frac{1}{3+ (\dd+\one)\cdot \kk} \right)$

\medskip

\item[Multinomial Coefficient] $\frac{1}{4+ (\dd+\one)\cdot \kk}\binom{4+ (\dd+\one)\cdot \kk}{\kk} = \frac{ (3+(\dd+\one)\cdot\kk)!}{\kk! (4 + \dd\cdot\kk)!}$
\medskip

\end{description}
We consider terms from this triple product in turn.

\subsubsection{$k_i$-terms}\label{sec:ki} First, we consider the individual term $$\left(\frac{1}{2} d_i + 1 - \frac{2}{d_i+2} \right) k_i \left( \frac{1}{2+ (\dd+\one)\cdot \kk} - \frac{1}{3+ (\dd+\one)\cdot \kk} \right) \frac{ (3+(\dd+\one)\cdot\kk)!}{\kk! (4 + \dd\cdot\kk)!}.$$ Keeping the $\left(\frac{1}{2} d_i + 1 - \frac{2}{d_i+2} \right)$ term outside for now, and considering only the first part of the difference, we are interested in $$k_i  \frac{1}{2+ (\dd+\one)\cdot \kk}  \frac{ (3+(\dd+\one)\cdot\kk)!}{\kk! (4 + \dd\cdot\kk)!} = (3 + (\dd+\one)\cdot\kk) \frac{(1 + (\dd+ \one)\kk)!}{\kk^i! (4 + \dd\cdot\kk)!}$$
Expanding $$(3 + (\dd+\one)\cdot\kk) = (4 + \dd \cdot \kk )+ \one \cdot \kk^i,$$ we first consider
\begin{eqnarray*} (4+ \dd\cdot\kk) \frac{(1 + (\dd+ \one)\kk)!}{\kk^i! (4 + \dd\cdot\kk)!} &=& \frac{(1 + (\dd+ \one)\kk)!}{\kk^i! (3 + \dd\cdot\kk)!} \\ &=& \frac{(d_i+2+ (\dd+ \one)\kk^i)!}{\kk^i! (d_i + 3 + \dd\cdot\kk^i)!} \\ &=& \frac{1}{d_i+3}\frac{d_i+3}{d_i+3 + (\dd+\one)\cdot\kk^i} \binom{d_i+3 + (\dd+\one)\cdot\kk^i}{\kk^i} \\ &=& \frac{1}{d_i+3} A(d_i+3; \kk^i)\end{eqnarray*}
Now expanding $\one \cdot \kk^i = (k_i-1) + \sum_{j \neq i} k_j$, we have the terms \begin{eqnarray*} (k_i - 1) \frac{(1 + (\dd+ \one)\kk)!}{\kk^i! (4 + \dd\cdot\kk)!} &=& \frac{1}{4+2d_i}\frac{4+2d_i}{4 + 2d_i + (\dd+\one)\cdot \kk^{i,i}}\binom{4 + 2d_i + (\dd+\one)\cdot \kk^{i,i}}{\kk^{i,i}}  \\ &=& \frac{1}{4+2d_i} A(4+2d_i;\kk^{i,i}) \end{eqnarray*} and
\begin{eqnarray*}
(k_j) \frac{(1 + (\dd+ \one)\kk)!}{\kk^i! (4 + \dd\cdot\kk)!} &=& \frac{1}{4+d_i +d_j}\frac{4+d_i + d_j}{4 + d_i +d_j+ (\dd+\one)\cdot \kk^{i,j}}\binom{4 + d_i +d_j + (\dd+\one)\cdot \kk^{i,j}}{\kk^{i,j}}  \\ &=& \frac{1}{4+d_i+d_j} A(4+d_i+d_j;\kk^{i,j})
\end{eqnarray*}
Collecting all of these, we have
\begin{multline*}
k_i  \frac{1}{2+ (\dd+\one)\cdot \kk}  \frac{ (3+(\dd+\one)\cdot\kk)!}{\kk! (4 + \dd\cdot\kk)!}=
\\
=\frac{1}{d_i+3} A(d_i+3; \kk^i)  + \sum_{j=1}^{m} \frac{1}{4+d_i+d_j} A(4+d_i+d_j; \kk^{i,j})
\end{multline*}
Next, we work with the factor
\begin{eqnarray*} -k_i \frac{1}{3+(\dd+\one)\cdot\kk} \frac{ (3+(\dd+\one)\cdot\kk)!}{\kk! (4 + \dd\cdot\kk)!} &=& -\frac{ (2+(\dd+\one)\cdot\kk)!}{\kk^i! (4 + \dd\cdot\kk)!}\\ &=& \frac{(3+d_i+ (\dd+\one)\cdot \kk^i)!}{\kk^i! (4 +d_1+ \dd\cdot\kk^i)!}\\ &=& -\frac{1}{d_i+4} A(4+d_i; \kk^i)\\ \end{eqnarray*}
These $k_i$ terms come with the factor of $$\left(\frac{1}{2} d_i + 1 - \frac{2}{d_i+2} \right) = \frac{d_i(d_i+4)}{2(d_i+2)},$$ so we have that their total contribution is: $$ \sum_{i=1}^m \frac{d_i(d_i+4)}{2(d_i+2)} \left( \frac{A(d_i+3; \kk^i)}{d_i+3} - \frac{A(d_i+4; \kk^i)}{d_i+4} + \sum_{j=1}^m \frac{A(4+d_i+d_j; \kk^{i,j})}{4+d_i+d_j} \right).$$ Summing over $\kk \in (\Z^{\geq 0})^m$, we obtain, using Mohanty's formula, \begin{equation}\label{eq:ki} \sum_{i=1}^m \frac{d_i(d_i+4)}{2(d_i+2)} \left( \frac{s_i z^{d_i+3}}{d_i+3} - \frac{s_i z^{d_i+4}}{d_i+4} + \sum_{j=1}^m \frac{s_i s_j z^{4+d_i+d_j}}{4+d_i+d_j} \right)\end{equation}

\subsubsection{$\frac{3}{2}\left(4+ \dd \cdot \kk\right)$-terms} We now expand the $\frac{3}{2}\left(4+ \dd \cdot \kk\right)$-terms, keeping $\frac 3 2$ on the outside for now. That is, we consider $$(4 + \dd \cdot \kk)\left( \frac{1}{2+ (\dd+\one)\cdot \kk} - \frac{1}{3+ (\dd+\one)\cdot \kk} \right)\frac{ (3+(\dd+\one)\cdot\kk)!}{\kk! (4 + \dd\cdot\kk)!}.$$
As above, we first work with the term \begin{eqnarray*} (4 + \dd \cdot \kk)\frac{1}{2+ (\dd+\one)\cdot \kk}\frac{ (3+(\dd+\one)\cdot\kk)!}{\kk! (4 + \dd\cdot\kk)!} &=&(3+(\dd+\one)\cdot\kk) \frac{ (1+(\dd+\one)\cdot\kk)!}{\kk! (3 + \dd\cdot\kk)!}  \\ &=&( (3+\dd \cdot \kk) + \one \cdot \kk) \frac{ (1+(\dd+\one)\cdot\kk)!}{\kk! (3 + \dd\cdot\kk)!} \end{eqnarray*}
The $\one \cdot \kk$ term can be split up into individual terms, and as above, we have  $$k_i \frac{(1 + (\dd+ \one)\kk)!}{\kk! (3 + \dd\cdot\kk)!} = \frac{(1 + (\dd+ \one)\kk)!}{\kk^i! (3 + \dd\cdot\kk)!} = \frac{1}{d_i+3} A( d_i+3; \kk^i)$$
The $(3 + \dd \cdot \kk)$ term yields
\begin{eqnarray*}(3+\dd \cdot \kk) \frac{ (1+(\dd+\one)\cdot\kk)!}{\kk! (3 + \dd\cdot\kk)!} &=& \frac{ (1+(\dd+\one)\cdot\kk)!}{\kk! (2 + \dd\cdot\kk)!}\\ &=& \frac{1}{2}\frac{2}{2+(\dd +\one)\cdot\kk}\binom{2+(\dd+\one)\cdot\kk}{\kk}\\ &=& \frac{1}{2}A(2;\kk)\\ \end{eqnarray*}
Thus we have $$(4 + \dd \cdot \kk)\frac{1}{2+ (\dd+\one)\cdot \kk}\frac{ (3+(\dd+\one)\cdot\kk)!}{\kk! (4 + \dd\cdot\kk)!} = \frac{A(2;\kk)}{2} + \sum_{i=1}^m \frac{A(d_i+3; \kk^i)}{d_i+3}$$
We are left with the term \begin{eqnarray*} -(4 + \dd \cdot \kk)\frac{1}{3+ (\dd+\one)\cdot \kk}\frac{ (3+(\dd+\one)\cdot\kk)!}{\kk! (4 + \dd\cdot\kk)!} &=& -\frac{1}{3}\frac{3}{3+(\dd+\one)\cdot\kk} \binom{3+(\dd+\one)\cdot\kk}{\kk} \\ &=& -\frac{1}{3}A(3;\kk) \\\end{eqnarray*}
Combining the above, and recalling the coefficient of $\frac 3 2$, and summing over $\kk \in (\Z^{\geq 0})^m$ we have the total contribution of the $\frac{3}{2}\left(4+ \dd \cdot \kk\right)$-terms:
\begin{equation}\label{eq:32} \frac 3 2 \left( \frac 1 2 z^2 - \frac 1 3 z^3 + \sum_{i=1}^m \frac{s_iz^{d_i+3}}{d_i+3} \right) \end{equation}

\subsubsection{Combining  terms}  To  conclude,  we combine equations
(\ref{eq:ki}) and (\ref{eq:32}) to obtain
\begin{multline*}
G(\sss) = \frac 3 2
\left( \frac 1 2 z^2 - \frac 1 3 z^3 + \sum_{i=1}^m
\frac{s_iz^{d_i+3} }{d_i+3}\right) -\\
- \sum_{i=1}^m
\frac{d_i(d_i+4)}{2(d_i+2)} \left( \frac{s_i z^{d_i+3}}{d_i+3} -
\frac{s_i z^{d_i+4}}{d_i+4} + \sum_{j=1}^m \frac{s_i s_j
z^{4+d_i+d_j}}{4+d_i+d_j} \right).
\end{multline*}
Collecting terms, we have
\begin{multline*}
G(\sss) = \frac 3 4 z^2 - \frac 1 2 z^3 + \sum_{i=1}^m \frac{s_i
z_i^{d_i+3}}{d_i+3} \left (\frac 3 2 -
\frac{d_i(d_i+4)}{2(d_i+2)}\right) +\\
+ \frac{1}{2}\left( \sum_{i=1}^m
\frac{d_i s_i z^{d_i+4} }{d_i+2}- \sum_{i,j=1}^m
\frac{d_i(d_i+4)}{d_i+2} \frac{s_i s_j
z^{4+d_i+d_j}}{4+d_i+d_j}\right)
\end{multline*}
Finally, using
$$
\frac 3 2 - \frac{d_i(d_i+4)}{2(d_i+2)}
= -\frac{1}{2} \frac{ (d_i+3)(d_i-2)}{d_i+2}\,,
$$
we get
$$
\frac{s_i z_i^{d_i+3}}{d_i+3} \left (\frac 3 2 - \frac{d_i(d_i+4)}{2(d_i+2)}\right)
= -\frac{1}{2}\frac{d_i-2}{d_i+2} z^{d_i+3.}
$$
Substituting this into our expression for $G$, we obtain as desired
\begin{multline*}
G(\sss) =\frac{3}{4}z^2 -\frac{1}{2} z^3 + \frac{1}{2}\left( \sum_{i=1}^m\frac{d_i s_i z^{d_i+4}}{d_i+2}   -\sum_{i=1}^m \frac{(d_i-2)s_iz^{d_i+3}}{d_i+2}\right.\ -\\
\left.\sum_{i, j=1}^{m} \frac{d_i (d_i+4)s_i s_j  z^{4+d_i+d_j}}{(d_i+2)(4+d_i+d_j)}\right)
\end{multline*}
\qed


\section{Counting pillowcase covers}
\label{sec:pillowcase:covers}

In this Appendix we describe the original approach to calculating the
volume  of  the  moduli  space of Abelian and quadratic differentials
suggested  by  \mbox{H.~Masur},  M.~Kontsevich,  and the authors, and
developed   with   success   by   A.~Eskin   and  \mbox{A.~Okounkov},
see~\cite{Eskin:Okounkov,   EO2}.   This   approach   was  also  used
in~\cite{Zorich:volumes} and~\cite{EMS}. The key idea is to translate
the volume calculation into a counting problem for ``integer points''
,  which  geometrically  correspond to \textit{square-tiled surfaces}
for   the   moduli   spaces   of   Abelian   differentials   and   to
\textit{pillowcase   covers}  for  the  moduli  spaces  of  quadratic
differentials.

In  \S\ref{subsec:integer:points}  we  show why volume calculation is
equivalent     to     counting     the     lattice     points.     In
\S\ref{subsec:lattice:square:tiled:pillowcase}    we    recall    the
definition  of  the  \textit{pillowcase cover}, show that counting of
lattice  points  is equivalent to the counting problem for pillowcase
covers and prove Theorem~\ref{th:pillowcases:as:volume}.

\subsection{Reduction    of    volume    calculation    to   counting
lattice points}
\label{subsec:integer:points}

The   volume   of   a   stratum   $\cQ_1(d_1,\dots,d_k)$  is  defined
by~\eqref{eq:normalization} as
$$
\Vol\cQ_1(d_1,\dots,d_k) =
\dim_{\R{}} \cQ(d_1,\dots,d_k)\cdot\mu(C(\cQ_1(d_1,\dots,d_k))\,,
$$
where  $\mu(C(\cQ_1(d_1,\dots,d_k))$  is  the  total  volume  of  the
``cone''  $C(\cQ_1(d_1,\dots,d_k))\subset\cQ(d_1,\dots,d_k)$ measured
by means of the volume element $d\mu$ on $\cQ(d_1,\dots,d_k)$ defined
in  ~\S\ref{sec:subsec:notmalization}.  The  total volume of the
cone  $C(\cQ_1(d_1,\dots,d_k))$  is  the  limit  of the appropriately
normalized Riemann sums.

The  volume  element  $d\mu$ is defined as a linear volume element in
cohomological  coordinates,  normalized  by certain specific lattice.
Chose  a  positive  $\epsilon$ such that $1/\epsilon$ is integer, and
consider   a   sublattice   of   the   initial   lattice   of   index
$(1/\epsilon)^{\dim_{\R{}}  \cQ(d_1,\dots,d_k)}$  partitioning  every
side   of   the   initial   lattice  into  $1/\epsilon$  pieces.  The
corresponding  Riemann  sums  count  the  number  of  points  of  the
sublattices  which  get  inside  the cone. Thus, by definition of the
measure $\mu$ we get
\begin{multline*}
\mu(C(\cQ_1(d_1,\dots,d_k))=
\\
\lim_{\epsilon\to 0}
\epsilon^{\dim_{\R{}}\! \cQ(d_1,\dots,d_k)}
\big(\text{Number of points of the $\epsilon$-sublattice inside } 
C(\cQ_1(d_1,\dots,d_k))\big).
\end{multline*}

We  assume that $1/\epsilon$ is integer. Note that a flat surface $S$
represents  a  point  of  the  $\epsilon$-lattice, if and only if the
surface     $(1/\epsilon)\cdot     S$     (in     the     sense    of
definition~\eqref{eq:Cstar:action}) represents a point of the integer
lattice.  Denoting  by  $C(\cQ_\Deg(d_1,\dots,d_k))$  the set of flat
surfaces   in  the  stratum  $\cQ(d_1,\dots,d_k)$  of  area  at  most
$\Deg/2$, and taking into consideration that
$$
\area((1/\epsilon)\cdot S)=1/\epsilon^2\cdot\area(S)
$$
we can rewrite the above relation as
\begin{multline}
\label{eq:lattice:points}
\mu(C(\cQ_1(d_1,\dots,d_k))=
\lim_{\Deg\to+\infty}
\Deg^{-\dim_{\C{}} \cQ(d_1,\dots,d_k)}\cdot
\\
\big(\text{Number of lattice points inside the cone }
C(\cQ_\Deg(d_1,\dots,d_k)\big)\,.
\end{multline}

\subsection{Lattice  points,  square-tiled  surfaces,  and pillowcase
covers}
\label{subsec:lattice:square:tiled:pillowcase}

Let  $\Lambda \subset \cx$ be a lattice, and let $\T^2 = \cx/\Lambda$
be the associated torus. The quotient
$$
\cP : = \T^2/\pm
$$
by  the  map  $z  \rightarrow  -z$ is known as the \textit{pillowcase
orbifold}.  It  is  a  sphere with four $(\Z/2)$-orbifold points (the
corners  of  the  pillowcase). The quadratic differential $(dz)^2$ on
$\T^2$  descends  to  a  quadratic differential on $\cP$. Viewed as a
quadratic  differential  on  the  Riemann sphere, $(dz)^2$ has simple
poles  at  corner  points. When the lattice $\Lambda$ is the standard
integer  lattice $\Z\oplus i\Z$, the flat torus $\T^2$ is obtained by
isometrically  identifying  the  opposite sides of a unit square, and
the  pillowcase  $\cP$  is  obtained by isometrically identifying two
squares    with    the    side    $1/2$    by   the   boundary,   see
Figure~\ref{fig:pillowcase:cover}.

\begin{figure}[htb]
\includegraphics{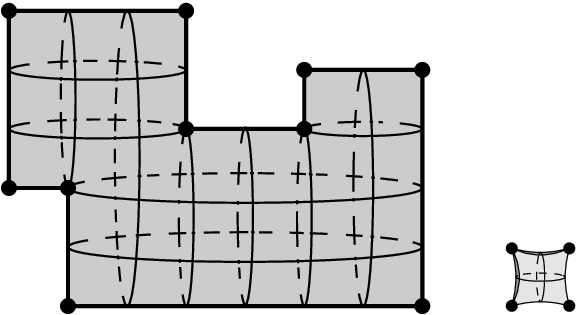}
  %
\vspace{92pt}
\caption{
\label{fig:pillowcase:cover}
Pillowcase  cover  (on the left) over the pillowcase orbifold (on the
right).  A general pillowcase cover
it is not necessarily glued from two identical polygons. }
\end{figure}

Consider  a  connected  ramified cover $\hat\cP$ over $\cP$ of degree
$\Deg$  having  ramification  points  only  over  the  corners of the
pillowcase.  Clearly,  $\hat\cP$  is  tiled by $2\Deg$ squares of the
size $(1/2)\times(1/2)$ in such way that the squares do not superpose
and  the vertices are glued to the vertices. Coloring the two squares
of the pillowcase $\cP$ one in black and the other in white, we get a
chessboard  coloring of the square tiling of the cover $\hat\cP$:
the white squares are always glued to the black ones and vice versa.

\begin{lemma}
\label{lm:pillowcase}
Let $S$ be a flat surface in the stratum $\cQ(d_1,\dots,d_k)$.
The following properties are equivalent:
\begin{enumerate}
\item
The surface $S$ represents a lattice point in $\cQ(d_1,\dots,d_k)$;
\item
$S$ is a cover over $\cP$ ramified only over the corners of the pillow;
\item
$S$ is tiled by black and white $(1/2)\times(1/2)$ squares respecting
the chessboard coloring.
\end{enumerate}
\end{lemma}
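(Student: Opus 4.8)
We prove the two biconditionals (2)$\Leftrightarrow$(3) and (1)$\Leftrightarrow$(2), which together give the lemma. The first is essentially combinatorial, and the second — where the content lies — will be carried out through the canonical double cover $\hat S$ of \S\ref{sec:subsec:coordiantes:in:stratum}.

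\textbf{Tilings versus covers, (2)$\Leftrightarrow$(3).} Color the two isometric $(1/2)\times(1/2)$ squares of $\cP$ black and white. If $S\to\cP$ is a connected cover ramified only over the four corners, then pulling back the two-square decomposition of $\cP$ decomposes $S$ into $(1/2)\times(1/2)$ squares whose vertex set is exactly the preimage of the corners; since adjacent squares of $\cP$ have opposite colors, the pulled-back coloring is a chessboard coloring, giving (3). Conversely, given a chessboard tiling of $S$ by $(1/2)\times(1/2)$ squares, sending each white (resp.\ black) square isometrically onto the white (resp.\ black) square of $\cP$, compatibly with the edge and vertex identifications forced by the chessboard condition, defines a map $S\to\cP$ that is a local isometry away from the vertices; a nonconstant local isometry of flat surfaces with controlled singularities is a ramified covering, here branched at worst over the corners, which is (2).

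\textbf{Covers versus lattice points, (1)$\Leftrightarrow$(2).} Let $p\colon\hat S\to S$ be the canonical double cover, $p^\ast q=\omega^2$, with hyperelliptic involution $\sigma$, so $\sigma^\ast\omega=-\omega$ and $S=\hat S/\sigma$. By the normalization of \S\ref{sec:subsec:notmalization}, $S$ is a lattice point precisely when the $\omega$-periods over a basis of $H_1^-(\hat S,\{\hat P_1,\dots,\hat P_N\};\Z)$ lie in $\Z\oplus i\Z$; taking the basis of differences of lifts of the arcs $[P_i,P_{i+1}]$ of a broken line through the singularities (and, using that a simple pole is a branch point, one more arc to the remaining pole), and recalling that such a difference has $\omega$-period equal to twice the flat displacement $P_{i+1}-P_i$ as in the billiard formula of \S\ref{sec:subsec:The:subspace:of:billiards}, this condition reads: all singularities of $q$ lie in $\tfrac12(\Z\oplus i\Z)$ (in suitable developed coordinates) and all absolute $\omega$-periods lie in $\Z\oplus i\Z$. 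Now suppose (2): form the fiber product of $S\to\cP$ with the double cover $\T^2=\cx/(\Z\oplus i\Z)\to\cP$; since $q$ is not a global square on $S$ (it has simple poles, as $\sum d_i=-4$ forces at least four of them) this fiber product is a connected double cover of $S$, hence the canonical one $\hat S$, carrying a holomorphic map $\hat S\to\T^2$ that pulls $dz$ back to $\omega$, so the $\omega$-periods are those of $dz$ and (1) follows from the bookkeeping above. Conversely suppose (1): a developing map $\mathrm{dev}\colon\hat S\to\cx$ based at a $\sigma$-fixed point (a preimage of a simple pole) satisfies $\mathrm{dev}\circ\sigma=-\mathrm{dev}$; by (1) it descends to a branched covering $\hat S\to\cx/\Lambda'\to\T^2$ with $\Lambda'\subseteq\Z\oplus i\Z$, branched only over the images of the zeros of $\omega$, which lie among the four half-lattice points of $\T^2$; by $\sigma$-equivariance this covering passes to the quotient $S=\hat S/\sigma\to\T^2/\pm=\cP$, branched only over the corners, which is (2).

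\textbf{The main difficulty.} The delicate point is the normalization matching in (1)$\Leftrightarrow$(2): one must keep track of the factor $2$ built into the anti-invariant cycles $\hat\gamma$ of \S\ref{sec:subsec:hat:homologous:saddle:connections}, of the index-$4^{N-1}$ feature of the lattice, of the $\Z/2\Z$ linear holonomy (which is exactly what makes $\mathrm{dev}\circ\sigma=-\mathrm{dev}$, hence the descent to the $\pm$-quotient, possible), and of the distinction between closed saddle connections, whose holonomy is genuinely integral, and saddle connections joining distinct singularities, whose holonomy may be half-integral and is rescued by that factor $2$. Verifying that ``all chosen relative periods lie in $\Z\oplus i\Z$'' is equivalent to ``all singularities lie in $\tfrac12(\Z\oplus i\Z)$ and all absolute periods lie in $\Z\oplus i\Z$'', and that the finitely many choices (broken line, base point, branch of $\mathrm{dev}$) do not affect the conclusion, is where the care is needed; the remaining steps are formal.
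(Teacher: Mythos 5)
Your proposal is correct, and at its core it is the same argument as the paper's: the heart of the matter is that the map $P\mapsto\bigl(\int_{\gamma(P)}\pm\sqrt{q}\ \mathrm{mod}\ \Z\oplus i\Z\bigr)/\pm$ to $\cP=(\cx/\Z\oplus i\Z)/\pm$ is well defined exactly when $S$ is a lattice point. The difference is organizational. The paper proves the cycle $(1)\Rightarrow(2)\Rightarrow(3)\Rightarrow(1)$ and works directly on $S$, choosing a local branch of $\sqrt{q}$ along embedded paths from a base singularity; you instead prove the two biconditionals $(2)\Leftrightarrow(3)$ and $(1)\Leftrightarrow(2)$ and do everything upstairs on the canonical double cover $\hat S$, using the $\sigma$-anti-equivariant developing map of $\omega$ for $(1)\Rightarrow(2)$ and the (normalized) fiber product of $S\to\cP$ with $\T^2\to\cP$ for $(2)\Rightarrow(1)$. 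The paper's construction is precisely the descent to $S$ of your developing map, so nothing new is needed; what your packaging buys is that the normalization bookkeeping (the factor $2$ in the anti-invariant cycles $\hat\gamma$, closed versus non-closed saddle connections, singularities landing on half-lattice points) is made fully explicit rather than absorbed into the phrase ``a similar consideration'' with which the paper dispatches $(3)\Rightarrow(1)$. The one point you should state rather than assume is that the differences of lifts of the arcs $[P_i,P_{i+1}]$ form a $\Z$-basis (not merely a $\Q$-basis) of $H_1^-(\hat S,\{\hat P_1,\dots,\hat P_N\};\Z)$, since that is what lets the lattice condition on those particular cycles control all the relative and absolute periods you invoke; the paper asserts exactly this in \S\ref{sec:subsec:coordiantes:in:stratum}, so it is available.
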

\begin{proof}
We  have  just proved that (2) implies (3). To prove that (1) implies
(2)  we  define  the following map from $S$ to $\cP$. Fix a zero or a
pole  $P_0$  on  $S$.  For  any  $P\in S$ consider a path $\gamma(P)$
joining  $P_0$  to  $P$  having  no  self-intersections and having no
zeroes or poles inside. The restriction of the quadratic differential
$q$   to   such   $\gamma(P)$   admits  a  well-defined  square  root
$\omega=\pm\sqrt{q}$,  which is a holomorphic form on the interior of
$\gamma$. Define
\begin{equation}
\label{eq:definition:of:cover}
P\mapsto \left(\int_{\gamma(P)} \omega\mod \Z\oplus i\Z\right)/\pm\,.
\end{equation}
Of  course,  the  path  $\gamma(P)$ is not uniquely defined. However,
since  the  flat  surface  $S$  represents  a  lattice point (see the
definition in ~\S\ref{sec:subsec:notmalization}), the difference
of  the  integrals  of $\omega$ over any two such paths $\gamma_1(P)$
and  $\gamma_2(P)$  belongs to $\Z\oplus i\Z$, so taking the quotient
over the integer lattice and over $\pm$ we get a well-defined map. By
definition  of  the pillowcase $\cP$ we have,
$\cP=\left(\C{}/(\Z\oplus  i\Z)\right)/\pm$.
Thus, we have defined a map $S\to\cP$. It
follows  from the definition of the map, that it is a ramified cover,
and  that  all  regular  points  of  the flat surface $S$ are regular
points  of  the cover. Thus, all ramification points are located over
the corners of the pillowcase.

A similar consideration shows that (3) implies (1).
\end{proof}

Let $\operatorname{Sq}_\Deg(d_1,\dots,d_k)$ be the number of surfaces
in  the  stratum $\cQ(d_1,\dots,d_k)$ tiled with at most $\Deg$ black
and   $\Deg$   white  squares  respecting  the  chessboard  coloring.
Lemma~\ref{lm:pillowcase}       allows       us       to      rewrite
formula~\eqref{eq:lattice:points} as follows:
$$
\mu(C(\cQ_1(d_1,\dots,d_k))=
\lim_{\Deg\to+\infty}
\Deg^{-\dim_{\C{}} \cQ(d_1,\dots,d_k)}\cdot
\operatorname{Sq}_\Deg(d_1,\dots,d_k)
\,.
$$
Taking into consideration~\eqref{eq:normalization} we get
\begin{multline}
\label{eq:Vol:through:square:tiled}
\Vol\cQ_1(d_1,\dots,d_k)=2\dim_\cx\cQ(d_1,\dots,d_k)\,\cdot
\\
\lim_{\Deg\to+\infty}
\Deg^{-\dim_{\C{}} \cQ(d_1,\dots,d_k)}\cdot
\operatorname{Sq}_\Deg(d_1,\dots,d_k)
\,.
\end{multline}

We  now  state  and  prove  two  Lemmas  which we use in the proof of
Theorem~\ref{th:pillowcases:as:volume}.

\begin{lemma}
\label{lm:Eskin:Okounkov:versa:square:tiled}
For  any  $\eta,  \nu$  as above the following asymptotic relation is
valid:
\begin{equation}
\label{eq:Eskin:Okounkov:versa:square:tiled}
\lim_{N\to+\infty}
\frac
{\sum_{d=1}^N\operatorname{Cov}^0_{4d}(\eta,\nu)}
{\sum_{d=1}^N\operatorname{Cov}^{0,{\scriptscriptstyle \boxplus}}_{4d}(\eta,\nu)}
=2^{\ell(\eta)}\,,
\end{equation}
where $\ell(\eta)$ is the number of entries in $\eta$.
\end{lemma}

\begin{proof}
Let  $P$  be  a  zero of even degree of a quadratic differential, and
$U_\epsilon(P)$  a  multidisc  of  flat radius $\epsilon$ centered at
$P$.  Choosing  $\epsilon$  sufficiently  small,  we  can assume that
$U_\epsilon(P)$  is  embedded  into  the  ambient  flat surface. (For
example,  for  the  flat  surface $\hat\cP$ induced from the standard
$\frac{1}{2}\times\frac{1}{2}$  square  pillowcase  $\cP$ by means of
the cover $\pi_{\scriptscriptstyle \boxplus}:\hat\cP \to \cP$ one can
choose any $\epsilon$ satisfying $0<\epsilon<\frac{1}{2}$.) Choose an
orientation  of  the  vertical  direction in $U_\epsilon(P)$. For any
vector  $\vec  v\in\R^2$  such that $\|\vec v\|< \epsilon$ there is a
unique way to move the zero $P$ in direction $\vec v$ by the distance
$\|\vec  v\|$  via a local move inside $U(P)$ keeping the flat metric
outside  of $U(P)$ unchanged. The corresponding local surgery (called,
\begin{figure}[htb]
\includegraphics{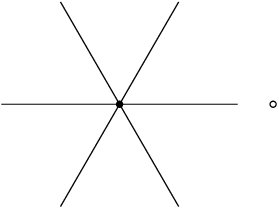}
\includegraphics{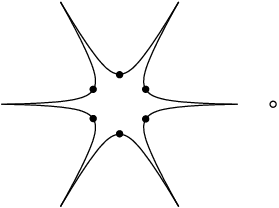}
\includegraphics{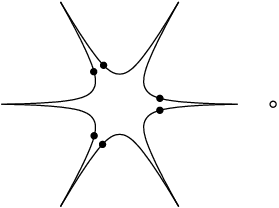}
\includegraphics{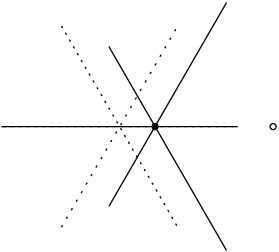}
\vspace{55pt}
\caption{
\label{fig:moving:zero}
Cartoon of a local move of a zero of even degree.
}
\end{figure}
depending  on the author and the context \textit{Schiffer variation},
or  \textit{deformation along the kernel (or Rel) foliation}, etc) is
represented  in  Figure~\ref{fig:moving:zero},  where  the separatrix
rays  (``prongs'')  adjacent  to $P$ are chosen to be parallel to the
vector  $\vec  v$.  This  local  deformation  can be  performed as
follows.  Make short slits along all prongs in direction $\vec v$ and
open  them.  The original conical point of a     cone angle $\pi\cdot
2k$  gives rise to $2k$ marked points on the sides of the slits. Move
the marked points along the sides of the slit by the distance $\|\vec
v\|$  in  direction  $\vec  v$  and  zip the slits back up to the new
marked  points.  All  new  marked points get identified into a single
conical  singularity  with  the  cone  angle $\pi\cdot 2k$. Note that
usually,  the flat surfaces obtained after deformations along vectors
$\vec   v$   and   $-\vec   v$   are   generically   non-isomorphic.

Consider           now           a          pillowcase          cover
$\pi_{\scriptscriptstyle\boxplus}:\hat\cP  \to  \cP$ as above. As the
base  sphere  choose  the  standard  pillowcase $\cP$ endowed with the
quadratic  differential $q_0=(dz)^2$. It has four simple poles at the
corners  of  the  pillow  and  no  other  singularities. Pulling back
$(dz)^2$  via  $\pi_{\scriptscriptstyle\boxplus}$  gives  a quadratic
differential  on  the  covering  $\CP$ with zeros and simple poles of
degrees   $\{\nu_i-2\}$   and   $\{2\eta_j-2\}$  and  with  no  other
singularities.   Thus,   by   construction   the   pillowcase   cover
$\hat\cP:=(\CP,\pi^\ast q_0)$ belongs to $\cQ(\eta,\nu)$.

Move  the  zero  $P_1$ of degree $2\eta_1-2$ in direction $\vec v$ as
above.  The  deformed flat surface inherits a structure of a ramified
cover over the pillowcase orbifold $\cP$. The corresponding cover can
be       defined      by      a      formal      construction      as
equation~\eqref{eq:definition:of:cover},  or  can  be  seen  in plain
terms  as  follows. We have deformed our flat structure only inside a
neighborhood  $U(P_1)$, so we let the projection of the complement of
$U(P_1)$ to the pillowcase orbifold $\cP$ unchanged. The neighborhood
$U(P_1)$  is  glued  from  even  number  $2k$  of  half-disks  as  in
Figure~\ref{fig:moving:zero};   we  define  the  projection  of  each
half-disk  to  the pillowcase orbifold $\cP$ unchanged.The definition matches on the common boundaries of the
half-discs.  By construction, the deformed cover
$\pi':\hat\cP'    \to    \cP$,    has    the   ramification   profile
$(2\eta_2,\dots,2\eta_{\ell(\eta)},\nu,2^{2d-|\eta|+\eta_1-|\nu|/2})$
over  $0\in\cP$, profile $(\eta_1,1^{4d-\eta_1})$ over the projection
of  the  deformed  zero $P_1'$, and profile $(2^{2d})$ over the other
three corners of $\cP$. The cover $\pi'$ is unramified elsewhere.

Consider  now  the  same  pillowcase  cover  $\pi_{\scriptscriptstyle
\boxplus}:\hat\cP \to \cP$ as above and move the zero $P_1$ of degree
$2\eta_1-2$  of  the  initial quadratic differential $q$ in direction
$-\vec  v$.  Clearly  we  get a pillowcase cover $\pi'':\hat\cP'' \to
\cP$  with  exactly  the  same  profile  as  $\pi':\hat\cP' \to \cP$.
Moreover, since the zero $P_1$ of $q$ on the original cover $\hat\cP$
was  projected  to  a  conical singularity of the pillowcase orbifold
$\cP$ with the cone angle $\pi$, moving from the corresponding corner
of the pillow in directions $\vec v$ and $-\vec v$ we get to the same
point  of  the  pillowcase  orbifold  $\cP$. In other words, the zero
$P_1'$  of  the  deformed  quadratic  differential $q'$ on $\hat\cP'$
obtained  by  moving  the  zero $P_1$ of $q$ in direction $\vec v$ is
projected  to  the same point of the pillowcase orbifold $\cP$ as the
zero  $P_1''$  of  $q''$  on  $\hat\cP''$ obtained by moving the zero
$P_1$  of  $q$ in direction $-\vec v$. The number of covers $\hat\cP$
for  which  the  resulting  covers  $\hat\cP'$  and  $\hat\cP''$  are
isomorphic   has   asymptotics   of   lower   order   in   $N$   than
$\sum_{d=1}^N\operatorname{Cov}^0_{4d}(\eta,\nu)$.

Moving all even-order zeroes $P_1,\dots,P_{\ell(\eta)}$ in directions
of     pairwise-distinct     vectors    $\pm\vec    v_1,\dots,\pm\vec
v_{\ell(\eta)}$ we establish a $2^{\ell(\eta)}$-to-one correspondence
(up to a term of lower order asymptotics in $N$) between covers $\pi$
and $\pi_{\scriptscriptstyle \boxplus}$.
\end{proof}

Quadratic   differentials   induced   from  $dz^2$  on  the  standard
pillowcase         orbifold         via         the        pillowcase
covers~\eqref{eq:Eskin:Okounkov:pillowcase:cover}    constructed   in
\S\ref{sec:subsec:counting:pillowcase:covers}   have   the  following
structure.  All  zeroes  of  odd  degrees  of  such differentials are
projected  to the same corner of the pillow, while all zeroes of even
degrees  are  projected  to  pairwise  distinct  non-corner  points.
Quadratic   differentials   induced   from  $dz^2$  on  the  standard
pillowcase      orbifold      via      the      pillowcase     covers
$\pi_{\scriptscriptstyle\boxplus}:\hat\cP  \to  \cP$  as  above  have
slightly  different  structure. Namely, \textit{all} their zeroes (no
matter  of  odd  or  even  degree)  project to the same corner of the
pillowcase   orbifold,  so  they  are  really  \textit{square-tiled}.
Moreover,   since   all   preimages   under  $\pi_{\scriptscriptstyle
\boxplus}$  of the three remaining corners of the pillowcase orbifold
are  regular  points  of  the flat metric, the resulting flat surface
$\hat\cP$  can  be  tiled  with  $1\times  1$  squares  (compared  to
$\frac{1}{2}\times\frac{1}{2}$  squares  of the pillowcase orbifold).
Our  next  Lemma  proves, that (in genus zero) this new square tiling
with larger squares admits a chessboard coloring.

\begin{lemma}
\label{lm:chessboard:coloring}
Any pillowcase cover $\pi_{\scriptscriptstyle\boxplus}:\hat\cP \to
\cP$ of genus zero with a ramification profile as above decomposes
into  two consecutive covers
$$
\pi: \hat\cP \rightarrow \cP_4 \rightarrow \cP\,,
$$
where $\cP_4 \rightarrow \cP$ is a cover of order $4$ of a pillowcase
orbifold  of  size  $1\times  1$ over the standard pillowcase of size
$\frac{1}{2}\times\frac{1}{2}$.
\end{lemma}

\begin{remark}
\label{rm:genus:0:only}
Without  the  condition that the genus of $\hat\cP$ as above is equal
to  zero  the  assertion  of Lemma~\ref{lm:chessboard:coloring} is no
longer true in general.
\end{remark}

\begin{proof}[Proof of Lemma~\ref{lm:chessboard:coloring}]
Consider  the  decomposition  of  $\hat\cP$ into  maximal  horizontal
cylinders.  We  associate  to  this decomposition a finite graph. The
edges  of  the  graph  are  in  one-to-one  correspondence  with  the
cylinders. The vertices of the graph are in one-to-one correspondence
with  connected  components  of singular horizontal layers. Two edges
have  common  vertex  if  the  corresponding  maximal  cylinders  are
adjacent  to  the same connected component of the critical horizontal
layer.

Note,  that  for  square-tiled  surfaces  of genus zero the resulting
graph  is, actually, a tree which we denote by $\operatorname{T}$. To
prove  the  Lemma  we  first  prove that perimeters of all horizontal
cylinders  are even integer numbers. The proof is an induction in the
number of horizontal cylinders.

The  base  of  induction corresponds to the case when $\hat\cP$ has a
single  horizontal  cylinder.  Then  $\hat\cP$ has only two singular
layers,  one  on  each  side  of  the  cylinder.  Consider one of the
horizontal   singular   layers   as  a  graph  of  horizontal  saddle
connections. Since $\hat\cP$ is a sphere, the corresponding graph is,
actually,  a tree. By construction each saddle connection has integer
length.   The  waist  curve  of  the  cylinder  follows  each  saddle
connection twice, so its perimeter is twice the sum of lengths of all
saddle  connections  in the layer, and hence the perimeter is an even
number.

When  the  number  of  horizontal  cylinders  is  greater  than  one,
analogous  consideration  shows  that  perimeters  of  all  cylinders
represented   by   the   extremity   edges   (leaves)   of  the  tree
$\operatorname{T}$   have  even  perimeters.  Chopping  one  of  these
cylinders   out   from   the   initial  flat  surface  $\hat\cP$  and
isometrically  identifying  the  parts of the boundary in the natural
way  we  get  a  new pillowcase cover $\tilde\cP$ satisfying the same
properties  as before. By induction all its horizontal cylinders have
even perimeters.

Having  proved  that  the  perimeters of all horizontal cylinders are
even  integers  we apply the induction in the number of cylinders one
more  time  proving  now  that  the tiling of $\hat\cP$ with $1\times
1$ squares admits chessboard coloring.
\end{proof}

Now         everything         is        ready        to        prove
Theorem~\ref{th:pillowcases:as:volume}.

\begin{proof}[Proof of Theorem~\ref{th:pillowcases:as:volume}]
By     Lemma~\ref{lm:chessboard:coloring}    a    pillowcase    cover
$\pi_{\scriptscriptstyle\boxplus}:\hat\cP  \to \cP$ of genus zero and
of  degree $4d$ with ramification data $(\eta,\nu)$ as above uniquely
defines   a   square-tiled   pillowcase   cover   of  degree  $d$  in
$\cQ(\eta,\nu)$.

Reciprocally,  consider  an  arbitrary square-tiled surface $S$ as in
Lemma~\ref{lm:pillowcase}  above  in the stratum $\cQ(\eta,\nu)$, and
let  $d$  be  the  degree  of  the  corresponding  cover  over $\cP$.
Subdividing   each  square  into  four;  considering  the  underlying
pillowcase  as  $\cP_4$  and  postcomposing  the  initial cover $S\to
\cP_4$  with  the  cover $\cP_4\to\cP$ we get a pillowcase cover with
singularity pattern $(\eta,\nu)$. This implies that
   %
$$
\sum_{d=1}^{\Deg}
\operatorname{Cov}^{0,{\scriptscriptstyle \boxplus}}_{4d}(\eta,\nu)
=
\operatorname{Sq}_\Deg(\nu_1-2,\nu_2-2,\dots,2\eta_1-2,2\eta_2-2,\dots)\,.
$$
Applying  equation~\eqref{eq:Eskin:Okounkov:versa:square:tiled}  from
Lemma~\ref{lm:Eskin:Okounkov:versa:square:tiled},     taking     into
consideration that
$$
\dim_{\C{}}\cQ(\nu_1-2,\nu_2-2,\dots,2\eta_1-2,2\eta_2-2,\dots)=
\ell(\nu)+\ell(\eta)-2
$$
and applying equation~\eqref{eq:Vol:through:square:tiled} we complete
the proof of Theorem~\ref{th:pillowcases:as:volume}.
\end{proof}

\section{Equidistribution of Circle Translates\\   \smallskip by Jon Chaika}
\label{sec:chaika}
We use a variation of an argument of G.~A.~Margulis to obtain equidistribution of circles from exponential mixing of the Teichm\"uller geodesic flow on $\cQ_1$. The strategy is similar in spirit to \cite[Section 3.6]{Eskin:Margulis:Mozes:31}.

\subsection{Notation} As in \S\ref{sec:subsec:introduction:beginning},
let $\cB = \cB(k_1,
\dots, k_n)$ be the space of directional billiard tables, that is,
billiard tables with
interior angles $(\tfrac{\pi}{2} k_1, \dots, \tfrac{\pi}{2}
k_n)$ and a distinguished direction. $\cB$ has the natural
the measure $\mu_\cB$ which is the product measure of the Lebesgue
measure arising from the side lengths and the angular measure
$d\phi$.

Given $\Pi \in \cB$, let $q_{\Pi}$ be the meromorphic quadratic
differential given by gluing together two copies of $\Pi$. Recall that $\Pi \mapsto q_{\Pi}$ from $\cB$ to $\cQ =
\cQ(k_1-2, \dots, k_n-2)$ is a local embedding where $\cQ(d_1, \dots, d_n)$ is the stratum
of quadratic differentials with zeros of order $d_1, \dots,
d_n$. Using this map, we may view $\cB$ as a subset of $\cQ$. Let
$\cQ_1 \subset \cQ$ denote the subset of surfaces of flat area $1/2$ (see Convention~\ref{con:area:1:2}),
and let $\mu_1= \mu_{\cQ_1}$ denote the Masur-Veech measure on $\cQ_1$. Let
$\cB_1$ denote the intersection of $\cB \subset \cQ$ with $\cQ_1$.
We abuse notation by denoting the restriction of the measure $\mu_\cB$
to $\cB_1$ again by $\mu_{\cB}$. Let $\tilde{\cB}_1$ denote the subset of $\cB_1$ where the direction of the flow is parallel to one of the sides, and let $\mu_{\tilde{\cB}}$ be the restriction of the measure to $\tilde{\cB}_1$. We recall notation for important one-parameter subgroups of $SL(2, \R)$:
\begin{displaymath}
g_t=\left(\begin{array}{cc}e^t&0\\0&e^{-t}\end{array}\right) \mbox{ and } r_{\theta}=
  \left(\begin{array}{cc}\cos \theta &\sin \theta \\
                        -\sin\theta & \cos \theta \end{array}\right)
\end{displaymath}

\subsection{Equidistribution} We define the function space $$\mathcal L_{c}^{0} : = \left\{ f\in C_c(\cQ_1):  f \mbox{ is } 1\mbox{-Lipschitz }, \int_{\cQ_1} f d\mu_1 = 0\right\}$$ with respect to the Euclidean metric induced by local coordinates on $\cQ_1$ (see, for example~\cite[\S5]{AG} for a formal definition of this distance). We denote $\mathcal L_{c}$ the space without the mean $0$ condition.

\begin{theorem}
\label{theorem:birkhoff:chaika} Let $f \in \mathcal L_c^0$. Then for $\mu_{\mathcal{B}}$-almost every right angled billiard $q_{\Pi}$ we have
$$ \underset{T \to \infty}{\lim} \frac 1 {2\pi}
\int_0^{2\pi}f(g_Tr_{\theta}q_{\Pi})\, d\theta=\frac{1}{\mu_1(\cQ_1)}
\int_{\cQ_1}  f \, d\mu_1=0\,.$$
\end{theorem}
\medskip
\noindent

\subsection{Small arcs and strategy}\label{sec:arcs} The strategy to prove Theorem~\ref{theorem:birkhoff:chaika} is to break the integral over the circle into (exponentially) small arcs so that the limit converges as desired. Let $\mathcal{M}_{\epsilon} \subset \cQ_1$ be the $\epsilon$-thick part of the stratum, that is, the set of $q \in \cQ_1$ so that all saddle connections on $q$ have length at  least $\epsilon$.

\begin{prop}
\label{small chunks}
Let $f \in \mathcal L_c^0$ and $\delta>0$. Define $$S_{\delta} :=\left\{\theta \in [0,2\pi)\setminus\left(B\left(0,\delta\right)\cup B\left(\frac \pi 2, \delta\right) \cup B\left(\pi,\delta\right) \cup B\left(\frac {3\pi}2,\delta\right)\right)\right\}.$$ $S_{\delta}$ avoids neighborhoods of directions parallel to the sides. There exists an exponentially decaying function $v:\mathbb{R}^+\to [0,\pi)$ such that for any $\epsilon>0$
$$\lim_{N \rightarrow \infty} \frac 1 {2 v(\epsilon
  N)}\int_{-v(\epsilon N)}^{v(\epsilon
  N)}f(g_{\epsilon N}r_{\theta+\phi}q_{\Pi})d\phi=\frac{1}{\mu_1(\cQ_1)}\int_{\cQ_1}fd\mu_1  $$
for $\mu_{\mathcal{B}}$-almost every $q_{\Pi} \in \tilde{\cB_1} \cap \mathcal{M}_{\delta}$, Lebesgue almost every $\theta \in S_{\delta}$.
\end{prop}

\medskip
\noindent We prove Theorem~\ref{theorem:birkhoff:chaika} assuming Proposition~\ref{small chunks} in \S\ref{sec:proof:main}. To prove Proposition~\ref{small chunks}, we  estimate the $L^2$-norms of the functions $$F_N(q)=\frac 1 {2v(N)}\int_{-v(N)}^{v(N)}f(g_Nr_{\theta}q)d\theta,$$ on $\mathcal M_{\delta} \cap r_{\theta} \tilde{\mathcal B_1}$, the $\delta$-thick part of the rotated billiard subvariety. To estimate
$$\int_{r_\theta\tilde{\cB_1}\cap \mathcal{M}_\delta}(F_N(q))^2\,d\mu_{r_\theta\mathcal{B}} =
\int_{r_\theta\tilde{\cB_1}\cap \mathcal{M}_\delta}\frac 1 {4v(N)^2}\int_{-v(N)}^{v(N)}\int_{-v(N)}^{v(N)}(g_Nr_x q)f(g_Nr_y q)\,dx\,dy\,d\mu_{r_\theta\mathcal{B}},$$
we separate the domain of integration into two pieces: one where $x$ and $y$ are very close, and another when they are sufficiently separated. Heuristically, for $x$ and $y$ sufficiently separated, the translates $g_N r_xq$ and $g_N r_y q$ move away from each other exponentially in $N$, and thus become uncorrelated due to exponential mixing of the Teichm\"uller flow. This is made precise in Proposition~\ref{distributes}. For $x$ and $y$ sufficiently close, we estimate trivially by the measure of the set.

\subsection{Exponential recurrence,  mixing, and contraction}\label{sec:recmix} To implement the above strategy, we need three crucial technical results on Teichm\"uller geodesic flow.
\subsubsection{Exponential Recurrence} Athreya~\cite{Ath thesis} showed that most (in an exponential sense) trajectories spend at least half their life in the thick part of a stratum. Precisely, let
$$G_L(\epsilon)=\left\{q \in \cQ_1:\left |\left\{
0\le
t<T:g_tq\in\mathcal{M}_{\epsilon}\right \}\right|>\frac T 2 \text{ for all }T>L\right\}$$ denote the set of $q \in \cQ_1$ so that the $g_t$-trajectory of $q$ eventually (after time $L$) spends at least half its life in $\mathcal M_{\epsilon}$.

\begin{theorem}[\protect{\cite[Theorem 2.3]{Ath thesis}}]
\label{theorem:often:compact}
For all small enough $\epsilon>0$, there exists $C,\xi>0$ such that for all $L>0$
$$ \mu_1(G_L(\epsilon))>(1-Ce^{-\xi L}) \mu_1(\cQ_1).$$
\end{theorem}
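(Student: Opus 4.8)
The plan is to deduce the statement about \emph{all} $T>L$ from a single-time large deviation estimate together with a geometric summation. For fixed $q$ the map $T\mapsto\bigl|\{0\le t<T:g_tq\in\mathcal{M}_{\epsilon}\}\bigr|$ is $1$-Lipschitz and nondecreasing in $T$, so if $q\notin G_L(\epsilon)$ then there is an integer $n\ge\lfloor L\rfloor$ with $\bigl|\{0\le t<n:g_tq\in\mathcal{M}_{\epsilon}\}\bigr|\le\frac{n}{2}+1$; denote this set $E_n$. Hence $\cQ_1\setminus G_L(\epsilon)\subseteq\bigcup_{n\ge\lfloor L\rfloor}E_n$, and the theorem follows as soon as one produces constants $C_0,\xi_0>0$, independent of $n$, with $\mu_1(E_n)\le C_0e^{-\xi_0 n}$: summing the geometric series over integers $n\ge\lfloor L\rfloor$ then gives $\mu_1(\cQ_1\setminus G_L(\epsilon))\le Ce^{-\xi L}$, which is the asserted estimate (up to the harmless factor $\mu_1(\cQ_1)$ on the right-hand side), the range $L<1$ being absorbed into $C$.

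The bound $\mu_1(E_n)\le C_0e^{-\xi_0n}$ is a large deviation estimate for the fraction of time the Teichm\"uller geodesic spends in the thin part $\mathcal{M}_{\epsilon}^{c}=\{\ell<\epsilon\}$, where $\ell(q)$ is the length of the shortest saddle connection. The engine is the Eskin--Masur theory of the Margulis function: there is a proper continuous $u\colon\cQ_1\to[1,\infty)$ with $u\asymp\ell^{-(1+\eta)}$ (so $\{u\le M\}$ is compact for each $M$, and $\mathcal{M}_{\epsilon}^{c}\subseteq\{u>M_\epsilon\}$ with $M_\epsilon\to\infty$ as $\epsilon\to0$), and constants $t_0>0$, $0<\sigma<1$, $b>0$ such that $\frac{1}{2\pi}\int_0^{2\pi}u(g_{t_0}r_\theta q)\,d\theta\le\sigma u(q)+b$ for every $q$, together with the crude distortion bound $u(g_sq)\le e^{2|s|}u(q)$; see \cite[\S5]{Eskin:Masur}. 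First I would pass to the discrete skeleton: since $\ell(g_sq)\ge e^{-|s-jt_0|}\ell(g_{jt_0}q)$, a visit of $g_sq$ to $\mathcal{M}_{\epsilon}^{c}$ forces $g_{jt_0}q\in\mathcal{M}_{e^{-t_0}\epsilon}^{c}$ at the nearest skeleton time, so with $\epsilon'=e^{-t_0}\epsilon$ and $N=\lceil n/t_0\rceil$ one has $\int_0^n\chi_{\mathcal{M}_{\epsilon}^{c}}(g_sq)\,ds\le t_0\sum_{0\le j<N}\chi_{\mathcal{M}_{\epsilon'}^{c}}(g_{jt_0}q)$ (shrinking $\epsilon$ at the very end absorbs this loss). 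Then an exponential-moment argument — iterating the Margulis inequality along the orbit and using $\chi_{\mathcal{M}_{\epsilon'}^{c}}\le\chi_{\{u>M_{\epsilon'}\}}\le u/M_{\epsilon'}$ — should yield, for small $\lambda>0$,
\[
\int_{\cQ_1}\exp\!\Bigl(\lambda\sum_{0\le j<N}\chi_{\mathcal{M}_{\epsilon'}^{c}}(g_{jt_0}q)\Bigr)\,d\mu_1(q)\ \le\ C_1\,\rho(\epsilon,\lambda)^{\,N},
\]
with $\rho(\epsilon,\lambda)\to1$ as $\epsilon\to0$, uniformly in small $\lambda$. Fixing $\lambda$ small and then $\epsilon$ small enough that $\rho<e^{\lambda/2}$, Chebyshev's inequality on $E_n\subseteq\bigl\{\sum_{0\le j<N}\chi_{\mathcal{M}_{\epsilon'}^{c}}(g_{jt_0}q)\ge(\tfrac{n}{2}-1)/t_0\bigr\}$ gives $\mu_1(E_n)\le C_0e^{-\xi_0 n}$ with $\xi_0=(\tfrac{\lambda}{2}-\log\rho)/t_0>0$.

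The step I expect to be the real obstacle is the displayed exponential-moment bound. The Margulis inequality is a statement about \emph{circle averages} $g_{t_0}r_\theta q$, whereas what is needed is control along the \emph{deterministic} geodesic skeleton $\{g_{jt_0}q\}$, so one cannot simply expand the product of exponentials and integrate term by term. Turning the one-step contraction into exponential control of the lengths of excursions into the cusp is precisely the technical heart of \cite{Ath thesis}: it uses recurrence of the $g_t$-action, the disintegration of $\mu_1$ along $SO(2)$-orbits, and sub/supermartingale estimates, and it does not invoke exponential mixing of $g_t$ (which became available only later and underlies the alternative proof of Theorem~\ref{theorem:bikhoff:chaika}); for this part I would refer to \cite{Ath thesis}. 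The remaining ingredients — compactness of $\mathcal{M}_{\epsilon}$ (Mumford/Masur--Smillie), the inclusion $\mathcal{M}_{\epsilon}^{c}\subseteq\{u>M_\epsilon\}$, the continuous-to-discrete comparison, and the geometric summation — are routine.
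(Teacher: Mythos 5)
The paper offers no proof of this statement: it is quoted verbatim as Theorem~2.3 of \cite{Ath thesis}, so your ultimate deferral of the technical heart to that reference puts you on the same footing as the paper itself. The scaffolding you add is correct: the passage from a single-time large-deviation bound $\mu_1(E_n)\le C_0e^{-\xi_0 n}$ to the ``for all $T>L$'' statement via monotonicity of the occupation time, a union bound over integers $n\ge\lfloor L\rfloor$, and a geometric series is exactly the routine step, and the Eskin--Masur function $u\asymp \ell^{-(1+\eta)}$ with the circle-average contraction $\frac{1}{2\pi}\int_0^{2\pi}u(g_{t_0}r_\theta q)\,d\theta\le\sigma u(q)+b$ from \cite{Eskin:Masur} is indeed the engine of the cited proof. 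The one point worth recording is that the specific mechanism you float for the remaining step --- an exponential-moment bound for the Birkhoff sum $\sum_j\chi_{\mathcal{M}_{\epsilon'}^{c}}(g_{jt_0}q)$ along the \emph{deterministic} geodesic skeleton --- is not how \cite{Ath thesis} proceeds, and, as you yourself observe, it cannot be extracted from the circle-average inequality by expanding and integrating term by term. The argument there instead establishes a large-deviation estimate for circle averages from a \emph{fixed} basepoint (the set of $\theta$ for which $g_tr_\theta q$ spends a definite fraction of $[0,T]$ in the thin part has measure at most $C(q)e^{-\alpha T}$, with $C(q)$ controlled by $u(q)$), via an iteration of the contraction over the tree of circle-average compositions; the statement for $\mu_1$-generic $q$ then follows by integrating over the basepoint, using $SO(2)$-invariance of $\mu_1$ and integrability of $u$. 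Since you flag precisely this step as the obstacle and cite the correct source for it, the proposal is acceptable as a proof-by-reference with accurate surrounding reductions, but your own sketched route through the displayed exponential-moment inequality should not be presented as if it were known to close.
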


\subsubsection{Exponential Mixing} Avila-Resende~\cite{AR}, building on work of Avila-Gouezel-Yoccoz~\cite{AGY}, showed that the Teichm\"uller geodesic flow is \emph{exponentially mixing}. Let $d$ be the Riemannian distance on $\SL$ induced by the Killing form. For functions $f, g$ on $\cQ_1$, and $M \in SL(2, \R)$, define the $M$-correlation $$\mathcal C (f, g, M) = \left|\int_{\mathcal{Q}_1}f(Mq)g(q)d\mu_1-\int _{\mathcal{Q}_1}fd\mu_1\int_{\mathcal{Q}_1}gd\mu_1\right|$$

\begin{theorem}(\cite{AR}, \cite[Theorem 2.14]{AGY}) \label{thm:uniform bound} There exists constants $C,\lambda$ so that if
 $h_1,h_2$ are Lipshitz and compactly supported then there exists $C_K$ that depend only on the smallest systole of a surface in the compact support such that for any $M \in SL(2, \R)$,
$$\mathcal C (h_1, h_2, M)  \leq \\ C \left(C_K+\|h_1\|_{\infty}+\|h_1\|_{Lip}\right)\left(C_K+\|h_2\|_{\infty}+\|h_2\|_{Lip}\right)e^{-\lambda d(M,id)}.
$$
\end{theorem}

\subsubsection{Exponential Contraction} Eskin-Mirzakhani-Rafi~\cite{EMR}, following Forni~\cite{Forni}, proved an important result on the hyperbolicity of the Teichm\"uller flow. For a subset $A \subset \R^n$ we use $|A|$ to denote its Lebesgue measure.

\begin{theorem}[\protect{\cite[Lemma 8.3]{EMR}}]
\label{theorem:stable}
Given a fixed compact part $\mathcal{M}_{\frac{\epsilon}2}$ there exists $c, \tilde{C}>0$ such that if $q$ and $q'$ differ only along a stable manifold for $g_t$ (that is, if they share the same horizontal foliation) then
$$d_S(g_t q,g_t q')<\tilde{C}d_S(q,q')e^{-c\left|\left\{t>0:g_t q\in \mathcal{M}_{\frac{\epsilon}2}\right\}\right|}.$$ $d_S$ denotes Hodge distance along the stable manifold. See \cite[Section 8.2]{EMR}.
\end{theorem}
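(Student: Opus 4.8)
The statement is quoted from \cite[Theorem~8.2]{EMR}, so within the paper one simply invokes it; here is how I would go about proving such a contraction estimate. The plan is to analyse the forward $g_t$-action on stable leaves using the Avila--Gou\"ezel--Yoccoz Finsler metric on $\cQ_1$ (equivalently, a modified Hodge norm), and to exploit two features of it. The stable leaf through $q$ consists of the flat surfaces with the same horizontal measured foliation as $q$, so in period coordinates a tangent vector $v$ to it has vanishing real part and, under $g_t=\mathrm{diag}(e^t,e^{-t})$, its flat (Euclidean) period-coordinate length contracts exactly: $\|(g_t)_\ast v\|^{\mathrm{flat}}_{g_tq}=e^{-t}\|v\|^{\mathrm{flat}}_q$. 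The intrinsic metric, however, is roughly the flat metric weighted by $1/\mathrm{sys}(q)$ (the inverse length of the shortest saddle connection), so it can expand when the orbit enters a cusp; on the thick part $\mathcal M_{\epsilon/2}$, on the other hand, it is bi-Lipschitz to the flat metric with constant depending only on $\epsilon$ and the stratum.

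First I would record, for every stable tangent vector $v$, the pointwise estimate
\[
\frac{d}{dt}\,\log\big\|(g_t)_\ast v\big\|_{g_tq}\ \le\ -\,c\,\mathbf{1}_{\mathcal M_{\epsilon/2}}(g_tq),
\]
whose two halves are: (i) the modified Hodge norm does not expand stable vectors under forward $g_t$, so the right side is $\le 0$ in general; (ii) there is a definite contraction rate $-c$ as long as the orbit stays in the compact set $\mathcal M_{\epsilon/2}$, which is hyperbolicity of the Teichm\"uller flow made uniform over $\mathcal M_{\epsilon/2}$. Integrating from $0$ to $t$ yields
\[
\big\|(g_t)_\ast v\big\|_{g_tq}\ \le\ \|v\|_q\,\exp\!\Big(-c\,\big|\{s\in[0,t]:g_sq\in\mathcal M_{\epsilon/2}\}\big|\Big).
\]
For $q,q'$ on a common stable leaf with $q'$ close to $q$, the distance $d(q,q')$ is comparable to $\|v\|_q$ for the connecting tangent vector $v$, and $d(g_tq,g_tq')$ is bounded by the length of the $g_t$-image of the connecting leafwise path, hence by $\sup_{0\le s\le t}\big\|(g_s)_\ast v\big\|_{g_sq}$ up to a comparison constant; since the bound above is monotone in $t$, absorbing all constants into $C_2$ gives the asserted inequality. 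If $q'$ is not close to $q$, one chains the estimate over a partition of the leafwise geodesic into small steps.

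The main obstacle is the pointwise estimate above, in particular the uniform contraction (ii) on $\mathcal M_{\epsilon/2}$ together with the global non-expansion (i): these force the passage from the flat metric to the modified Hodge / AGY norm and require controlling how the saddle connection realising the systole --- and hence the weight $1/\mathrm{sys}$ --- jumps along the orbit. This is where the bound $\big|\tfrac{d}{dt}\log\mathrm{sys}(g_tq)\big|\le 1$ and the simplicial stratification of period coordinates near the boundary by collections of short saddle connections enter, ensuring that on each such piece of the cusp the weighted norm still fails to expand stable vectors while $g_t$ drags $q$ inward. Once this structural input is in hand the remainder is the elementary integration above; in the paper, of course, it is imported wholesale from \cite{EMR}.
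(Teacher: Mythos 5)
The paper does not prove this statement: Theorem~\ref{theorem:stable} is imported verbatim as Theorem~8.2 of \cite{EMR}, so there is no internal proof to compare against, and your opening observation that one simply invokes it is exactly what the paper does. Your sketch does track the architecture of the actual argument in \cite{EMR}: a Finsler (modified Hodge / AGY) norm for which stable tangent vectors are never expanded by forward $g_t$ and are contracted at a definite rate while the orbit is in the thick part, integrated over $[0,t]$. Two places where the outline is looser than the real proof are worth flagging. First, the exponent in the conclusion is the occupation time in $\mathcal{M}_{\epsilon/2}$ of the orbit of the single point $q$, whereas your integration must be carried out along a path in the stable leaf joining $q$ to $q'$, whose intermediate points have their own occupation times; one needs that stable-leaf proximity together with $g_sq\in\mathcal{M}_{\epsilon}$ forces the whole translated path into $\mathcal{M}_{\epsilon/2}$ (this is why the radius is halved in the statement relative to the $\epsilon$ used in $G_L(\epsilon)$), and the ``chain over a partition'' remark does not by itself supply this comparability of occupation times. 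Second, the global non-expansion in your step (i) is genuinely delicate for \emph{relative} cohomology classes near the boundary of the stratum, where the unmodified Hodge norm can expand; constructing a norm for which (i) and (ii) hold simultaneously is the technical heart of \cite{EMR} and does not follow from the systole-derivative bound you cite. You correctly identify this as the main obstacle, but as written it remains an assumption rather than an argument.
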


\subsection{Mixing on open sets} In this section we state and prove our first key lemma Proposition~\ref{prop:local}, which will play a crucial role in the proof of Proposition~\ref{small chunks}. Proposition~\ref{prop:local} uses exponential mixing (Theorem~\ref{thm:uniform bound}) in a crucial fashion.


Given an open set $U\subset \mathcal{M}_{\delta}$, let $\partial_{\epsilon} U$ denote the $\epsilon$-neighborhood of the boundary $\partial U$. We say $U$ is \emph{polynomially regular} with regularity polynomial $P$ if there is a polynomial $P$ so that $$\mu_1(\partial_{\epsilon} U) \le P(\epsilon).$$

\begin{prop}\label{prop:local}Let  $\delta>0$ and let $U \subset \mathcal{M}_{\delta}$ be polynomially regular. Let $f, h \in \mathcal L_c^0$. Then there exist $\hat{\lambda},D>0$ and $\ell_0<1$ so that for all $0<\ell<\ell_0$
$$\left|\int_Uf(g_{t}q)h(g_{(1+ \ell)t}q)d\mu_1(q)\right|\leq De^{-\hat{\lambda} \ell}.$$  Moreover, the constants only depend on $\delta$, $\|f\|_{\infty}$, $\|h\|_{\infty}$ and the regularity polynomial of $U$.
\end{prop}

The first step in the proof is the following effective equidistribution lemma for translates of polynomially regular sets.
\begin{lemma}\label{lem:uniform bound} Let $\delta>0$, $U \subset \mathcal{M}_{\delta}$ be polynomially regular and $f \in \mathcal L^c_0$. There exist $E, \lambda_f>0$ so that
$$\left|\frac 1 {\mu_1(U)}\int_{U}f(g_tq)d\mu_1(q)\right|
\leq Ee^{-\lambda_ft}.$$
The constants depend only on $\delta,f$ and the regularity polynomial of $U$.
\end{lemma}
\begin{proof} Let $U_{r}=\{q\in U: B(q,r)\subset U\}$ and $$h_{\epsilon}(q)=\chi_{U}(q)\left(1-\frac 1 {\epsilon}d(q,U_{\epsilon})\right).$$ Notice that $h_\epsilon$ is $\epsilon^{-1}$-Lipshitz. We will obtain the lemma by applying exponential mixing (Theorem \ref{thm:uniform bound}) to the functions $f$ and $h_{\epsilon}$.
$$ \left|\int_Uf(g_tq)d\mu_1\right|
\leq  \left|\int_{\mathcal{Q}_1}h_{\epsilon}(q)f(g_tq)d\mu_1\right|
+\|\chi_U-h_{\epsilon}\|_{\infty}\mu_1\left\{q:\chi_U(q) \neq h_{\epsilon}(q)\right\}\|f\|_{\infty}.$$
 By Theorem \ref{thm:uniform bound} we have that there exists $C_3$ (subsuming the various constants)
$$  \left|\int_{\mathcal{Q}_1}h_{\epsilon}(q)f(g_tq)d\mu_1\right|
\leq C_3\frac 1 {\epsilon}e^{-\lambda t}.$$
By our assumption on $U$ there exists $C,d$ (essentially the leading coefficient and degree of the regularity polynomial) so that $$\|\chi_U-h_{\epsilon}\|_{\infty}\mu_1\left\{q:\chi_U(q) \neq h_{\epsilon}(q)\right\}\|f\|_{\infty}\leq C\epsilon^d.$$
 Letting $\epsilon=e^{-\frac{\lambda}{4}t}$ we obtain $$\int_Uf(g_tq)d\mu_1\leq C_{\epsilon,f}e^{\frac{\lambda}{4}t}e^{-\lambda t}+Ce^{-\frac{\lambda d}{4}t}.$$
\noindent To complete the lemma, let $\lambda_f=\min\left\{\frac{\lambda}{2}, \frac {d\lambda}4\right\}$.
\end{proof}
\noindent Applying this to small balls, we obtain
\begin{cor}\label{lem:uniform bound2} Let $\epsilon>0$, $q_0\in \mathcal{M}_{\epsilon+e^{-k}}$ and $f \in \mathcal L_c^0$. Then there exists $C_{k,\epsilon,f}>0$ so that for all $k>0$,
$$\left|\frac 1 {\mu_1(B(q_0,e^{-k})))}\int_{B(q_0,e^{-k})}f(g_tq)d\mu_1(q)\right|
\leq C_{k,\epsilon,f}e^{-\lambda_ft}.$$
Moreover, $C_{k,\epsilon,f}$ can be chosen to be $C_{\epsilon,f}e^{kL_{\epsilon,f}}$ where $L_{\epsilon,f}$ depends only on $\epsilon,f$.
\end{cor}
\medskip\noindent  Applying this corollary to $f = \chi_U - \mu_1(U)$ for a  polynomially regular set, we obtain


\begin{cor}\label{cor:helpful open} Let $\delta>0$. Let $U \subset \mathcal{M}_{\delta}$ be polynomially regular. There exist $k_2,D_2,\lambda_2$ so that for any $q_0 \in \mathcal{M}_\delta$ we have, for all $r>0$
$$\left|\frac 1 {\mu_1(B(q_0,e^{-r}))} \int _{B(q_0,e^{-r})}\chi_U(g_{k_2r}q)d\mu_1(q)-\mu_1(U)\right|<D_2e^{-\lambda_2 r}.$$ $k_2$ can be chosen to be either positive or negative and the corollary holds for all large enough (in absolute value) $k_2$. The constants can be chosen to only depend on $\delta$ and the regularity polynomial of $U$.
\end{cor}
\begin{proof}Let $$H_{r,q_0}(q)=\left(e^{2r}-1\right)d\left(q,B(q_0,e^{-r})\right)\chi_{B\left(q_0,e^{-r}+e^{-2r}\right)}\left(q\right).$$


\noindent By the regularity of $U$ and using period coordinates on the stratum we have that there exist $C'_1,d_1$ and $C_2',d_2$ so that
$$\left|\int_{B(q_0,e^{-r})}\chi_U(g_{kr}q) d\mu_1-\int_{\cQ_1}h_{\epsilon}(q)H_{r,q_0}(q)d\mu_1\right|<
C'_1\epsilon^{d_1}+C'_2e^{-2rd_2}.$$
 By Theorem \ref{thm:uniform bound}, since $$\|H_{r,q_0}\|_{Lip}\leq e^{2r}\mbox{ and }\|h_\epsilon\|_{Lip}\leq \frac 1 \epsilon$$ we have that there exists $C_3$ (subsuming the various constants) so that
  $$\left|\int_{\cQ_1}h_{\epsilon}(g_{kr}q)H_{q_0,r}(q)d\mu_1-\int H_{q,r}d\mu_1 \int h_{\epsilon}d\mu_1\right|<C_3\frac 1 {\epsilon}e^{2r}e^{-\lambda k }.$$
Combining these two estimates the corollary follows.
\end{proof}

\noindent Note that if $f \in \mathcal L_c^0$ then $f\circ g_t$ is $Ce^{3t}$-Lipshitz. We use this observation to prove Proposition \ref{prop:local} by splitting $U$ into balls of size $e^{-4t}$ where $f\circ g_t$ is basically constant. Then we apply Lemma \ref{lem:uniform bound2} to these balls. This gives us the required independence.
\begin{proof}[Proof of Proposition \ref{prop:local}]

We want to estimate $$\left|\int_U f(g_tq)h(g_{t+s}q)d\mu_1(q)\right|=\left|\int_{\cQ_1} f(q)h(g_sq)\chi_U(g_{-t}q)d\mu_1(q)\right|$$

By doing an extra integration over small balls, we rewrite this as
$$\left|\int_{\cQ_1}\frac 1 {\mu_1(B(q_0,e^{- 4s}))}\int_{B(q_0,e^{-4 s})}f(q)h(g_sq)\chi_U(g_{-t}q)d\mu_1(q) d\mu_1(q_0)\right|$$
\noindent Since $f, h \in \mathcal L_{c}^0$ we can estimate their values in small balls by values at the center points, allowing us to bound from above the previous integral by
$$\left|\int_{\cQ_1}\frac 1 {\mu_1(B(q_0,e^{- 4s}))}\int_{B(q_0,e^{-4 s})} \left(f(q_0)h(g_sq_0)+O(e^{-4s})\|f\|_{Lip}\|h \circ g_s\|_{Lip}\right)\chi_U(g_{-t}q)d\mu_1(q) d\mu_1(q_0)\right|$$
Integrating the error term out, we derive the further estimate $$\left|\int_{\cQ_1}\frac 1 {\mu_1(B(q_0,e^{- 4s}))}f(q_0)h(g_sq_0)\int_{B(q_0,e^{-4 s})} \chi_{U}(g_{-t}q)d\mu_1(q) d\mu_1(q_0)\right|+O\left(e^{-\frac{3s}4}\right).$$

\noindent By Corollary \ref{cor:helpful open} if $s<\frac t {k_2}$ then for any $q_0$ in the support of $f$ (which is assumed to be compact)
$$\left|\frac 1 {\mu_1(B(q_0,e^{-4s}))}\int_{B(q_0,e^{-4 s})} \chi_{U}(g_{-t}q)d\mu_1(q)-\mu_1(U)\right|<D_2e^{-\lambda_2 s}.$$ So we obtain
\begin{multline}\left|\int_{\cQ_1}\frac 1 {\mu_1(B(q_0,e^{- 4s}))}f(q_0)h(g_sq_0)\int_{B(q_0,e^{-4 s})} \chi_{U}(g_{-t}q)d\mu_1(q) d\mu_1(q_0)\right|=
\\ \left|\mu(U)\int_{\cQ_1}f(q_0)h(g_sq_0)d\mu_1(q_0)\right|+O\left(D_2e^{-\lambda_2 s}\right)\|f\|_{\infty}\|h\|_{\infty}.
\end{multline}
Applying Theorem~\ref{thm:uniform bound} Proposition~\ref{prop:local} follows.
 \end{proof}

\medskip

\subsection{Correlation of translates} In this subsection, we state our other key lemma Proposition~\ref{distributes}, which estimates the correlation of $f \in \mathcal L_c^0$ with a translate of $f$ along a thickened $g_t$-translate of the billiard manifold $\tilde{\mathcal B_1}$. Fix $\epsilon>0$ so that Theorem~\ref{theorem:often:compact} holds.
\begin{proposition}\label{distributes}
If $f \in \mathcal L_c^0$, $\delta,a>0$ and $\theta \notin \{0,\frac \pi 2,\pi, 3 \frac {\pi}2\}$ then there exist constants
 $C_1',\lambda'$ and  $C_{2,\theta}<1$ such that for any $M \in \SL$ with $d(M,Id)\leq e^{tC_{2,\theta}}$ we have
 $$\left|\int_{-a}^a\int_{r_\theta\tilde{\mathcal{B}_1}\cap \mathcal{M}_\delta} f(Mg_{t+\ell} q)f(g_{t+\ell} q)\,
 d\mu_{r_{\theta}\tilde{\mathcal{B}_1}}(q)d\ell\right| < C_\theta e^{-\lambda' _\theta d(M,Id)}.$$  Here, and below, $\mu_{r_\theta \cB_1}$ denotes $(r_\theta) _*\mu_{\tilde{\cB_1}}$. $C_\theta, C_{2,\theta}$ and $\lambda'_\theta$ depend on $f$, $\delta$, $a$ and $\theta$. Moreover fixing $f,a,\delta$ the dependence on $\theta$ is continuous.
\end{proposition}

\medskip
\noindent The proof of Proposition~\ref{distributes} relies on all of the technical ingredients from the previous sections: exponential recurrence (Theorem~\ref{theorem:often:compact}) exponential mixing (Theorem~\ref{thm:uniform bound}) and exponential contraction (Theorem~\ref{theorem:stable}) as well as Proposition~\ref{prop:local}. Our key lemma is:
\begin{lemma}
\label{hm}
Let $\delta>0$ and $U\subset \mathcal{M}_{\delta}$ be polynomially regular. Let $f,h \in \mathcal L_c^0$.  Then there exist constants
 $\hat{C},C_2,\lambda$ such that for any $M \in \SL$ with $\|M\|<C_2t$ we have
 $$\left|\int_{U} f(g_tq)h(Mg_tq)\, d\mu_1(q)\right|<\hat{C}e^{-\lambda d(M,Id)}.$$ As in Proposition \ref{prop:local},
 $\hat{C}$ and $\lambda$ depend on $f,h,\delta$ and the regularity polynomial of $U$.
\end{lemma}
\begin{proof}
There exist $\theta, \phi \in S^1$, $s \in \mathbb{R}$ with $e^s=\|M\|$ so that $M=r_{\theta}g_s r_{\phi}$.
 \begin{multline*}\int_{U} f(g_tq)h(Mg_tq)\, d\mu_1(q)=\int_{Q}f(q)h(r_\theta g_s r_\phi q)\chi_U(g_{-t}q)d\mu_1(q)=\\
 \int_{\cQ_1}(f \circ r_{-\phi}(q))(h\circ r_{\theta}(g_sq)) \chi_U(g_{-t}r_{-\phi}q)d\mu_1(q).
 \end{multline*}
 We now follow the approach of the proof of Proposition \ref{prop:local} and for convenience introduce $h_{\theta}=h\circ r_\theta, f_{\phi}=f\circ r_{-\phi} \in \mathcal L_c^0$.
 \begin{multline*}
\left|\int_{\cQ_1}\frac 1 {\mu_1(B(q,e^{- 4s}))}\int_{B(q,e^{-4 s})}f_{\phi}(\omega)h_{\theta}(g_s \omega)\chi_U(g_{-t}r_{-\phi}\omega)d\mu_1(\omega) d\mu_1(q)\right|=\\
\left|\int_{\cQ_1}\frac 1 {\mu_1(B(q,e^{- 4s}))}f_1(q)h_1(g_sq)\int_{B(q,e^{-4 s})} \chi_{U}(g_{-t}r_{-\phi}\omega)d\mu_1(\omega) d\mu_1(q)\right|+O\left(e^{-\frac{3s}4}\right).
 \end{multline*}
Now
\begin{multline*}\left|\frac 1 {\mu_1(B(q,e^{-4s}))}\int_{B(q,e^{-4 s})} \chi_{U}(g_{-t}r_{-\phi}\omega)d\mu_1(\omega)-\mu_1(U)\right|=\\
\left|\frac 1 {\mu_1(B(q,e^{-4s}))}\int_{r_{-\phi}B(q,e^{-4 s})} \chi_{U}(g_{-t}\omega)d\mu_1(\omega)-\mu_1(U)\right|<D_2e^{-\lambda_2 s}
\end{multline*}
since $r_{-\phi}B(q,e^{-4s})$ has the same regularity polynomial as $B(q,e^{-4s})$. This implies the lemma.
 \end{proof}

\medskip

\begin{proof}[Proof of Proposition \ref{distributes}]

  Let $\theta \notin \{0,\frac \pi 2 , \pi, \frac {3 \pi}2\}$. Intersect $r_\theta\tilde{\mathcal B}_1$ with $\mathcal{M}_{\epsilon }$ and flow it by a small interval $\{g_{\ell}, |\ell| <a \}$, and consider $$\bigcup_{|\ell|<a}g_\ell r_\theta \tilde{\mathcal{B}}_1\cap \mathcal{M}_\epsilon.$$ Thicken it by  $c'>0$
  along the stable manifold for $g_t$, and call the resulting set $V$. We pick $a$ and $c'$ small enough so that the
 intersection of $V$
with the support of $f$ has a local product structure (as stable $\times$ unstable $\times$ flow) with respect to the Teichm\"uller flow $g_t$. By the continuity of the trigonometric functions in the proof of Proposition \ref{prop:map:onto} $a$ and $c'$ can be chosen to depend continuously on $\theta$. Let $\Phi$ denote the local projection from $V$ to the $a$-thickened and $\theta$-rotated billiard subvariety$$\bigcup_{|\ell|<a}g_\ell r_\theta \tilde{\mathcal{B}_1}.$$ By Proposition~\ref{prop:map:onto},
$$\mu_1(V)>0.$$ By Corollary \ref{hm}, if $\|M\|_{op}<\hat{C}_{2,\theta}$ we have that
$$\left|\int_{V\cap G_t}f(Mg_t q)f(g_t q)\, d\mu_1\right|\leq \hat{C}_\theta e^{-\lambda_\theta d(M,Id)}+\mu_1(G_t^c).$$ By the continuity of $r_{\theta}$ and the construction of $V$ the polynomial that bounds the decay of an $\epsilon$ neighborhood of the boundary of $V$ can be chosen to depend continuously on $\theta$. So  $\hat{C}_\theta$ and $\lambda_\theta$ depend continuously on $\theta$.
 By exponential recurrence (Theorem~\ref{theorem:often:compact}) $\mu_1(G_t^c)$ decays exponentially in $t$ and so there exists $C''_\theta,\lambda''_\theta$ such that
$$\left|\int_{V\cap G_t}f(Mg_t q)f(g_t q)\,d\mu_1\right|\leq C''_\theta e^{-\lambda''_\theta d(M,Id)}.$$
Now for each $q \in V$ there exists $q' \in r_\theta \tilde{\mathcal{B}_1}$ on
the same stable manifold which is distance at most $c'$ away. It
follows from the exponential contraction of $g_t$ (Theorem~\ref{theorem:stable}) that for $t$ large enough and $q' \in G_t$ then
$$d(g_t q,g_t q')<\tilde{C}c'e^{-\frac c 2t}.$$ By our assumption that $f \in \mathcal L_c^0$ it follows that

$$\left|f(g_t q)-f(g_t q'\right)|<\tilde{C}c'e^{-\frac c2t}$$ and $$|f(Mg_t q)-f(Mg_t q')|\leq \|M\|_{op}\tilde{C}c'e^{-\frac c 2t}, $$ where $\|\cdot\|_{op}$ denotes the operator norm of $\SL$ acting linearly on $\R^2$.
 By our assumption on $V$ and the fact that $f$ is 1-Lipschitz it follows that
\begin{multline*}
\left|\int_{V\cap  G_t}f(Mg_t q)f(g_t q) d\mu_1(q)\right|\geq\\ \left|\int_{V\cap G_t}f\left(Mg_t\Phi(q)+e_q\right)\left(f(g_t\Phi(q)+e'_q\right)d\mu_1(q)\right|
\geq\\
\left| \int_{[0,c']^k}\int_{-a}^a\int_{r_\theta\tilde{\mathcal{B}_1}\cap \mathcal{M}_{\frac \epsilon 2} }\left(f(Mg_{\ell+t} q')+e_{q',\ell,s}\right)\left(f(g_{t+\ell} q')+e'_{q',\ell,s}\right) d\mu_{r_\theta\tilde{\mathcal{B}}}(q')d\ell d\lambda^k(s)\right| +\zeta e^{-\xi t}\,,
\end{multline*}
  where $$|e'_q|,|e'_{q',s}|< \tilde{C}c'e^{-\frac c 2t}\mbox{ and }|e_q|,|e_{q',s}|< \|M\|_{op}\tilde{C}c'e^{-\frac c 2t}.$$ In the last inequality of the integral estimate, $\zeta e^{-\xi t}$ comes from the exponential recurrence result Theorem \ref{theorem:often:compact}. This establishes the Proposition.
\end{proof}

\subsection{Proof of Theorem~\ref{theorem:birkhoff:chaika}}\label{sec:proof:main} We prove our main Theorem~\ref{theorem:birkhoff:chaika} assuming our key tool Proposition~\ref{small chunks}. By Proposition~\ref{small chunks}, for every $\delta>0$ there exists a $T_0$
 such that for any
 $T>T_0$, $T\in \epsilon \mathbb{N}$ and set $S$ with $$\mu_{\mathcal{B}} (S) \geq \mu_{\mathcal{B}}(\mathcal{B}_1 \cap \mathcal{M}_\delta)-\delta$$ we have that for each $q \in S$ a subset $G_{q}$ of $S_{\delta}$ with
 $$\lambda(G_q)\geq 2\pi-8\delta-\delta$$ such that for each $\theta \in G_{q}$ we have
 $$\left| \frac 1 {2v(T)}\int_{-v(T)}^{v(T)}f(g_Tr_{\theta+\phi}q)d\phi \right |<\delta.$$
It follows that for $q \in S$ and $T>T_0$ we have
$$\left|\frac 1 {2\pi}\int_{0}^{2\pi}f(g_Tr_{\phi}q)\, d\phi \right |=
\left|\frac 1 {2\pi}\int_0^{2\pi}\frac 1 {2v(T)}\int_{-v(T)}^{v(T)} f(g_Tr_{\theta+\phi}q)d\phi d\theta\right|$$
\noindent We break this integral into two pieces, over $G_q$ and its complement. We have
$$\left|\frac 1 {2\pi}\int_{G_q}\frac 1 {2v(T)}\int_{-v(T)}^{v(T)} f(g_Tr_{\theta+\phi}q)d\phi d\theta\right| \le \delta $$ and $$\left|\int_{G_q^c}\frac 1 {2v(T)}\int_{-v(T)}^{v(T)} f(g_Tr_{\theta+\phi}q)d\phi d\theta\right| \le 9 \delta\|f\|_{\infty}$$
 So we have $$\left|\frac 1 {2\pi}\int_{0}^{2\pi}f(g_Tr_{\phi}q)\, d\phi \right | \le \delta+9\delta \|f\|_{\infty}
\leq 10\delta\max\left(1, \|f\|_{\infty}\right).$$
Because $f$ is $1$-Lipschitz we have that if  $$\underset{T \to \infty}{\limsup}\, \left|\frac 1 {2\pi}\int_0^{2\pi}f(g_Tr_\phi q)d\phi\right|\leq
\epsilon+ \underset{N  \in \mathbb{N}\epsilon}{\limsup}\, \left|\frac 1 {2\pi}\int_0^{2\pi}f(g_Nr_\phi q)d\phi\right|.$$ Since $\delta$ and $\epsilon$ are arbitrary Theorem~\ref{theorem:birkhoff:chaika} follows.

 \qed\medskip
\subsubsection{Proof of Proposition~\ref{small chunks}}

To prove Proposition \ref{small chunks}, we estimate the $L^2$-norms of the integrals $$F_N(q)=\frac 1 {2v(N)}\int_{-v(N)}^{v(N)}f(g_Nr_{\theta}q)d\theta,$$ where we define $v(N)$ below. We have
$$\int_{r_\theta\tilde{\cB_1}\cap \mathcal{M}_\delta}(F_N(q))^2\,d\mu_{r_\theta\tilde{\mathcal{B}}} =
\int_{r_\theta\tilde{\cB_1}\cap \mathcal{M}_\delta}\frac 1 {4v(N)^2}\int_{-v(N)}^{v(N)}\int_{-v(N)}^{v(N)}(g_Nr_x q)f(g_Nr_y q)\,dx\,dy\,d\mu_{r_\theta\mathcal{B}}.$$ Changing the order of integration, we obtain
$$\int_{-v(N)}^{v(N)}\int_{g_Nr_x(r_\theta\tilde{\cB_1}\cap \mathcal{M}_\delta)}\frac 1 {4v(N)^2} \int_{-v(N)}^{v(N)}f(g_Nr_{x-y}g_{-N}q)f(q)\,
dy\, d((g_N)_*\mu_{r_{x+\theta}\tilde{\mathcal{B}}})\,dx$$
To ease notation, $\mu_{\mathcal{B}}$ will denote $(g_N)_*\mu_{r_{x+\theta}\tilde{\cB}}$ for the remainder of this proof. We note that
\begin{displaymath}
r_{\theta}
  =(h_{-\tan \theta})^{\tau}
  g_{\log\cos \theta}
  h_{\tan \theta}\,.
\end{displaymath}
where $$h_s=\left(\begin{array}{cc}1&s\\0&1\end{array}\right),
h_s^{\tau}=\left(\begin{array}{cc}1&0\\s&1\end{array}\right),$$
\noindent so the previous expression is
$$\int_{-v(N)}^{v(N)}\int_{g_Nr_{x+\theta}(\tilde{\cB_1}\cap \mathcal{M}_\delta)}\frac 1 {4v(N)^2}\int_{-v(N)}^{v(N)}f(h^{\tau}_{e^{-2N}\tan(x-y)}g_{\log(\cos(x-y))}h_{e^{2N}\tan(x-y)}q)f(q)\, dy\, d\mu_{\mathcal{B}}\,dx.$$

\noindent Now consider $C_{2,\theta}$ from Proposition \ref{distributes}. Recall that we defined $$S_{\delta} = [0,2\pi)\setminus\left(B\left(0,\frac{\delta}2\right)\cup B\left(\frac \pi 2, \frac{\delta}2\right) \cup B\left(\pi,\frac{\delta} 2\right) \cup B\left(\frac {3\pi}2,\frac \delta 2\right)\right),$$ and
$$C_2=\min_{\theta \in S_{\delta}}C_{2,\theta}.$$ Observe that this is defined and greater than zero because $C_{2,\theta}$ is continuous in $\theta$ and $S_{\delta}$ is compact. Let $$v(N)=\min \left\{e^{\left(-2+\frac{C_{2}}3\right)N},\frac \delta2\right\}.$$ Then $$\left|h^{\tau}_{e^{-2N}\tan(x-y)}g_{\log(\cos(x-y))}\right|<e^{-N} \mbox{ for } x, y \in [-v(N), v(N)].$$ Since $f$ is $1$-Lipschitz we can dominate the integral by

$$ e^{-N} + \int_{-v(N)}^{v(N)}\int_{g_Nr_{x+\theta}(\tilde{\cB_1}\cap \mathcal{M}_\delta)}\frac 1 {4v(N)^2}\int_{-v(N)}^{v(N)}f(h_{e^{2N}\tan(x-y)}q)f(q)\, dy\, d\mu_{\mathcal{B}}\,dx$$

\noindent We break the domain of integration into pieces, and we estimate the integral on each separately. Let $$\Delta_N : = \left\{x, y \in \left[-v(N), v(N)\right], |x-y| \le e^{\left(-2+\frac{C_2}6\right)N}\right\}$$ be a small neighborhood of the diagonal in $[-v(N), v(N)]^2$. The first piece $P_1$ is when $(x,y) \notin \Delta_N$:

   $$P_1 : = \left\{(x,y, q): (x,y) \notin \Delta_N, q \in g_N\left(r_{x+\theta}\cB_1\cap \mathcal{M}_\delta\right)\right\} $$

 \noindent We add an integral over time so we can estimate the integral over $P_1$ using Proposition~\ref{distributes}, yielding 
   \begin{multline*}\frac 1 {4v(N)^2}
   \left(
 \int_{-a}^a \int_{ P_1} f(h_{e^{2N}\tan(x-y)}g_\ell q)f(q)\, dy\, d\mu_{\mathcal{B}}\,dxd\ell \right)=\\ \frac 1 {4v(N)^2}
 \int_{(x,y) \notin \Delta_N}\int_{-a}^a \int_{r_\theta\tilde{\mathcal{B}_1}\cap \mathcal{M}_\delta}f(h_{e^{2N}\tan(x-y)}g_{N+\ell}q)f(g_{N+\ell}q) d\mu_{\mathcal{B}}\,d\ell\, dx\, dy \\
  \le  \frac 1 {4v(N)^2}
 \int_{(x,y) \notin \Delta_N}C_1e^{-\lambda' \frac {C_2}{6}N}dx\, dy\le C_1e^{-\lambda' \frac {C_2}{6}N}.
 \end{multline*}
 To see this is justified, first observe that the domain of integration is appropriate because for all large enough $N$, $\theta+x \in S_{\delta}$. Second, the size the matrices is appropriate  because by our choice of $v(N)$ we have $$e^{2N}2v(N)+1<e^{C_2N}$$ for all $N$ sufficiently large and
and so $$\|h_{e^{2N}\tan(x-y)}\|<2e^{C_2N}\mbox{ for all }x,y \in [-v(N),v(N)],$$ since $\tan$ is $2$-Lipschitz on $[-\frac\pi 4,\frac \pi 4]$. Moreover, since $$|\tan(x-y)|\geq \frac 1 2|x-y| \mbox{ and }x,y \notin \Delta_N$$ we have $$\|h_{e^{2N}\tan(x-y)}\|>\frac 1 2 e^{\frac {C_2} 6N}.$$

 \noindent
 Our second piece, $P_2$, is when $x$ and $y$ are close:
 $$P_2: = \left\{(x,y, q): (x,y) \in \Delta_N, q \in g_N\left(r_{x+\theta}\tilde{\cB_1}\cap \mathcal{M}_\delta\right)\right\} $$
 We have, via naive (measure and $\|\cdot\|_\infty$) estimates 
 \begin{eqnarray*} \frac 1 {4v(N)^2}
   \left(\int_{P_2}f(h_{e^{2N}\tan(x-y)}q)f(q)\, d\mu_{\mathcal{B}}\, dy\,dx\right) &\leq&   \frac{\|f\|_{\infty}^2 |\Delta_N|}{4v(N)^2}\\ &\leq& e^{-\frac{C_2}{6}N}\|f\|_{\infty}^2.\end{eqnarray*}
 Combining the estimates on the integrals over $P_1$ and $P_2$ we obtain
  $$ \int_{r_\theta\tilde{\cB_1}\cap \mathcal{M}_\delta}(F_N(q))^2\,d\mu_{r_\theta\mathcal{B}} \le
 C_1'e^{-\lambda' \frac {C_2}{6}N}+e^{-\frac{C_2}{6}N}\|f\|_{\infty}^2+e^{-N}.$$

\noindent
So there exists $\delta>0$ such that for all large enough $N$.
$$\int_{-a}^a\int_{r_\theta\tilde{\cB_1}\cap \mathcal{M}_\delta} \left(F_N(g_\ell q)\right)^2d\mu_{r_\theta\tilde{\mathcal B}}(q)d\ell \leq 
 e^{-\delta N}.$$
Let $$m_N(q, a) = \min_{\ell \in [-a,a]}  \left|F_N(g_\ell q)\right|.$$ We have, for any $\eta >0$, that 
$$\mu_{\mathcal B} \left\{ q\in r_\theta\tilde{\cB_1}\cap \mathcal{M}_\delta: m_N(q, a) > \eta \right\} \le\frac{1}{2a\eta^2} e^{-\delta N}.$$ Indeed,
 \begin{multline*}
 2a\eta^2\mu_{\mathcal B} \left\{ q\in r_\theta\tilde{\cB_1}\cap \mathcal{M}_\delta:
 m_N(q, a)> \eta \right\} \leq \\
  \int_{-a}^a\int_{r_\theta\tilde{\cB_1}\cap \mathcal{M}_\delta} \left(F_N(g_\ell q)\right)^2d\mu_{r_\theta\mathcal B}(q)d\ell.
  \end{multline*}
 Since for any $\eta >0$, we have that  $$\sum_{N=1}^{\infty} \frac{1}{\eta^2} e^{-\delta N} < \infty,$$ the easy half of the Borel-Cantelli lemma implies that for $\mu_{\mathcal B}$-almost every $q \in \tilde{\mathcal{B}_1} \cap \mathcal{M}_{\delta}$, the set $$\{N \geq 1: m_N(q, a) > \eta\}$$ is finite. Because $F$ is 1-Lipshitz, for any such $q$ we have that there exists $N_0$ so that $$|F_N(q)|<2a+\eta \mbox{ for all }N>N_0.$$ Since $\eta$ and $a$ are arbitrary and our estimates hold for all $\theta \in S_{\delta}$ the proposition follows.

 \qed\medskip

\end{document}